\documentclass[12pt]{article}

\usepackage{amsmath, amsthm, amssymb, stmaryrd}
\usepackage[all]{xy}
\usepackage{fullpage}
\usepackage{titlesec}
\usepackage{mathrsfs}
\usepackage{amsbsy}
\usepackage{turnstile}
\usepackage{bbm}
\usepackage{yhmath}
\usepackage{tensor}
\usepackage{graphics}
\usepackage{enumitem}
\usepackage{calligra}
\usepackage{verbatim}
\usepackage{setspace}
\usepackage{hhline}
\usepackage{array}

\xyoption{rotate}

\usepackage[pdftex,bookmarks=true]{hyperref}
\usepackage[svgnames]{xcolor}
\hypersetup{
    linkbordercolor = {PaleTurquoise},
    citebordercolor = {PaleGreen},
}

\usepackage{titletoc}

\titlecontents{section}
[0pt]                                               
{}%
{\contentsmargin{0pt}                               
    \thecontentslabel\enspace%
    }
{\contentsmargin{0pt}}                        
{\titlerule*[.5pc]{.}\contentspage}                 
[]                                                  

\interfootnotelinepenalty=10000

\makeatletter
\newcommand*{\@old@slash}{}\let\@old@slash\slash
\def\slash{\relax\ifmmode\delimiter"502F30E\mathopen{}\else\@old@slash\fi}
\makeatother

\titleformat{\section}{\normalsize\bfseries}{\thesection}{1em}{}
\titleformat{\subsection}{\normalsize\bfseries}{\thesubsection}{1em}{}

\numberwithin{equation}{subsection}

\theoremstyle{plain}

\newtheorem{prop}[subsection]{Proposition}
\newtheorem{lem}[subsection]{Lemma}
\newtheorem{cor}[subsection]{Corollary}
\newtheorem{thm}[subsection]{Theorem}

\theoremstyle{definition}

\newtheorem{defn}[subsection]{Definition}
\newtheorem{exa}[subsection]{Example}
\newtheorem{rem}[subsection]{Remark}
\newtheorem{para}[subsection]{}

\newtheorem{notn}[subsection]{Notation}

\newtheorem{exasub}[subsubsection]{Example}
\newtheorem{parasub}[subsubsection]{}

\newcommand*{\emptybox}{\leavevmode\hbox{}}

\DeclareMathOperator{\Hom}{Hom}

\newdir{ >}{{}*!/-10pt/@{>}}

\DeclareMathAlphabet{\mathpzc}{OT1}{pzc}{m}{it}
\DeclareMathAlphabet{\mathcalligra}{T1}{calligra}{m}{n}

\newcommand{\pref}[1]{\textnormal{(\ref{#1})}}

\newcommand{\A}{\ensuremath{\mathscr{A}}}
\newcommand{\B}{\ensuremath{\mathscr{B}}}
\newcommand{\C}{\ensuremath{\mathscr{C}}}
\newcommand{\D}{\ensuremath{\mathscr{D}}}
\newcommand{\E}{\ensuremath{\mathscr{E}}}
\newcommand{\F}{\ensuremath{\mathscr{F}}}
\newcommand{\G}{\ensuremath{\mathscr{G}}}

\newcommand{\normalJ}{\ensuremath{\mathscr{J}}}
\newcommand{\J}{\ensuremath{{\kern -0.4ex \mathscr{J}}}}

\newcommand{\K}{\ensuremath{\mathscr{K}}}

\newcommand{\sS}{\ensuremath{\mathscr{S}}}
\newcommand{\T}{\ensuremath{\mathscr{T}}}
\newcommand{\U}{\ensuremath{\mathscr{U}}}
\newcommand{\V}{\ensuremath{\mathscr{V}}}

\newcommand{\X}{\ensuremath{\mathscr{X}}}
\newcommand{\Y}{\ensuremath{\mathscr{Y}}}

\newcommand{\CAT}{\ensuremath{\operatorname{\textnormal{\text{CAT}}}}}
\newcommand{\eCAT}[1]{\ensuremath{#1\textnormal{-\text{CAT}}}}
\newcommand{\VCAT}{\ensuremath{\V\textnormal{-\text{CAT}}}}

\newcommand{\ALGCAT}{\ensuremath{\operatorname{\textnormal{\text{ALGCAT}}}}}
\newcommand{\ALGCATJ}{\ensuremath{\ALGCAT_{\kern -0.6ex \normalJ}}}
\newcommand{\ADA}{\ensuremath{\operatorname{\textnormal{\text{ADA}}}}}
\newcommand{\ADAJ}{\ensuremath{\ADA_{\kern -0.6ex \normalJ}}}

\newcommand{\NN}{\ensuremath{\mathbb{N}}}

\newcommand{\RR}{\ensuremath{\mathbb{R}}}

\newcommand{\TT}{\ensuremath{\mathbb{T}}}

\newcommand{\ZZ}{\ensuremath{\mathbb{Z}}}

\newcommand{\ob}{\ensuremath{\operatorname{\textnormal{\textsf{ob}}}}}

\newcommand{\End}{\ensuremath{\operatorname{\textnormal{End}}}}

\newcommand{\ca}[1]{\scalebox{0.85}{\raisebox{0.3ex}{$|$}} #1\scalebox{0.85}{\raisebox{0.3ex}{$|$}}}

\newcommand{\Th}{\ensuremath{\textnormal{Th}}}
\newcommand{\ThJ}{\ensuremath{\Th_{\kern -0.5ex \normalJ}}}

\newcommand{\otimesJ}[2]{\ensuremath{#1 \kern-0.15ex \otimes_{\kern -1ex \normalJ} \kern-0.5ex #2}}
\newcommand{\totimes}{\mathbin{\tilde{\otimes}}}
\newcommand{\totimesJ}[2]{\ensuremath{#1 \kern-0.15ex \totimes_{\kern -1ex \normalJ} \kern-0.5ex #2}}

\newcommand{\Vfp}{\ensuremath{\V_{\kern -0.5ex fp}}}
\newcommand{\TTperpJ}{\ensuremath{\TT^\perp_{\kern-.1ex\scriptscriptstyle\J}}}
\newcommand{\TTperpJwrt}[1]{\ensuremath{\TT^\perp_{\kern-.1ex\scriptscriptstyle\J#1}}}

\newcommand{\Ev}{\ensuremath{\textnormal{\textsf{Ev}}}}

\newcommand{\VCATJ}{\VCAT_{\kern -0.7ex \normalJ}}
\newcommand{\PhiJ}{\Phi_{\kern -0.8ex \normalJ}}
\newcommand{\MndJ}{\Mnd_{\kern -0.8ex \normalJ}}

\newcommand{\Set}{\ensuremath{\operatorname{\textnormal{\text{Set}}}}}
\newcommand{\FinCard}{\ensuremath{\operatorname{\textnormal{\text{FinCard}}}}}
\newcommand{\DF}{\ensuremath{\operatorname{\textnormal{\text{DF}}}}}

\newcommand{\SLat}{\ensuremath{\operatorname{\textnormal{\text{SLat}}}}}

\newcommand{\Sup}{\ensuremath{\operatorname{\textnormal{\text{Sup}}}}}
\newcommand{\Rel}{\ensuremath{\operatorname{\textnormal{\text{Rel}}}}}

\newcommand{\Ab}{\ensuremath{\operatorname{\textnormal{\text{Ab}}}}}

\newcommand{\Mod}[1]{\ensuremath{#1\textnormal{-\text{Mod}}}}

\newcommand{\Aff}[1]{\ensuremath{#1\textnormal{-\text{Aff}}}}
\newcommand{\Cvx}[1]{\ensuremath{#1\textnormal{-\text{Cvx}}}}

\newcommand{\Mat}{\ensuremath{\textnormal{\text{Mat}}}}

\newcommand{\Top}{\ensuremath{\operatorname{\textnormal{\text{Top}}}}}

\newcommand{\LCHaus}{\ensuremath{\operatorname{\textnormal{\text{LCHaus}}}}}
\newcommand{\LCAb}{\ensuremath{\operatorname{\textnormal{\text{LCAb}}}}}
\newcommand{\Conv}{\ensuremath{\operatorname{\textnormal{\text{Conv}}}}}

\newcommand{\Mnd}{\ensuremath{\operatorname{\textnormal{\text{Mnd}}}}}

\newcommand{\CMon}{\ensuremath{\operatorname{\textnormal{\text{CMon}}}}}

\newcommand{\Alg}[1]{\ensuremath{#1\kern -.5ex\operatorname{\textnormal{-Alg}}}}
\newcommand{\CAlg}[1]{\ensuremath{#1\kern -.5ex\operatorname{\textnormal{-\text{CAlg}}}}}
\newcommand{\CAlgPair}[2]{\ensuremath{(#1,#2)\kern -.5ex\operatorname{\textnormal{-CPair}}}}
\newcommand{\AlgPair}[2]{\ensuremath{(#1,#2)\kern -.5ex\operatorname{\textnormal{-Pair}}}}
\newcommand{\CinftyRing}{\ensuremath{C^\infty\kern -.5ex\operatorname{\textnormal{-\text{Ring}}}}}

\newcommand{\CRProf}{\ensuremath{\operatorname{\textnormal{\text{CRProf}}}}}
\newcommand{\CRProfJ}{\ensuremath{\CRProf_{\kern -0.6ex \normalJ}}}
\newcommand{\Algs}{\ensuremath{\operatorname{\textnormal{\text{Alg}}}}}
\newcommand{\AlgsJ}{\Algs_{\kern -0.6ex \normalJ}}

\newcommand{\BifoldAlg}{\ensuremath{\operatorname{\textnormal{\text{BAlg}}}}}
\newcommand{\BifoldAlgJ}{\BifoldAlg_{\kern -0.6ex \normalJ}}
\newcommand{\BAlg}{\BifoldAlg}
\newcommand{\RComBAlg}{\ensuremath{\operatorname{\textnormal{\text{RComBAlg}}}}}
\newcommand{\LComBAlg}{\ensuremath{\operatorname{\textnormal{\text{LComBAlg}}}}}
\newcommand{\ComBAlg}{\ensuremath{\operatorname{\textnormal{\text{ComBAlg}}}}}
\newcommand{\BalBAlg}{\ensuremath{\operatorname{\textnormal{\text{BalBAlg}}}}}
\newcommand{\SatAlgs}{\ensuremath{\operatorname{\textnormal{\text{SatAlg}}}}}
\newcommand{\CAlgs}{\ensuremath{\operatorname{\textnormal{\text{CAlg}}}}}
\newcommand{\CoCAlgs}{\ensuremath{\operatorname{\textnormal{\text{CoCAlg}}}}}
\newcommand{\SatCAlgs}{\ensuremath{\operatorname{\textnormal{\text{SatCAlg}}}}}
\newcommand{\SatCoCAlgs}{\ensuremath{\operatorname{\textnormal{\text{SatCoCAlg}}}}}
\newcommand{\BalAlgs}{\ensuremath{\operatorname{\textnormal{\text{BalAlg}}}}}
\newcommand{\CPair}{\ensuremath{\operatorname{\textnormal{\text{CPair}}}}}
\newcommand{\Pair}{\ensuremath{\operatorname{\textnormal{\text{Pair}}}}}
\newcommand{\LComPair}{\ensuremath{\operatorname{\textnormal{\text{LComPair}}}}}
\newcommand{\RComPair}{\ensuremath{\operatorname{\textnormal{\text{RComPair}}}}}
\newcommand{\ComPair}{\ensuremath{\operatorname{\textnormal{\text{ComPair}}}}}

\newcommand{\RCom}{\ensuremath{\operatorname{\mathsf{RCom}}}}
\newcommand{\LCom}{\ensuremath{\operatorname{\mathsf{LCom}}}}

\newcommand{\Com}{\ensuremath{\operatorname{\mathsf{Com}}}}

\newcommand{\Adjn}[6]{\xymatrix {#1 \ar@/_0.5pc/[rr]_{#2}^(0.4){#4}^(0.6){#5}^{\top} & & #6 \ar@/_0.5pc/[ll]_{#3}}}
\newcommand{\Equiv}[6]{\xymatrix {#1 \ar@/_0.5pc/[rr]_{#2}^(0.4){#4}^(0.6){#5}^{\sim} & & #6 \ar@/_0.5pc/[ll]_{#3}}}
\newcommand{\EquivAlt}[6]{\xymatrix {#1 \ar@{<-}@/_0.5pc/[rr]_{#3}^(0.4){#4}^(0.6){#5}^{\sim} & & #6 \ar@{<-}@/_0.5pc/[ll]_{#2}}}
\newcommand{\AlgDual}[4]{\xymatrix {#1:#2 \ar@/_0.2pc/[r] & #3:#4 \ar@/_0.2pc/[l] }}

\newcommand{\op}{\ensuremath{\textnormal{\tiny op}}}

\newcommand{\aff}{\ensuremath{\textnormal{\tiny aff}}}

\newcommand{\pushoutcorner}{\ar@{}[dr]|(.3)\ulcorner}
\newcommand{\pullbackcorner}{\ar@{}[dr]|(.3)\lrcorner}

\newcommand{\blanktwo}{\mathop{=}}

\newcommand{\s}{\textnormal{s}}
\newcommand{\sx}{{\textnormal{sc}}}
\newcommand{\si}{{\scriptscriptstyle\cong}}

\newcommand{\nml}{!}

\newcommand{\cmt}[1]{}

\begin{document}

\author{\normalsize  Rory B. B. Lucyshyn-Wright\thanks{We acknowledge the support of the Natural Sciences and Engineering Research Council of Canada (NSERC), [funding reference numbers RGPIN-2019-05274, RGPAS-2019-00087, DGECR-2019-00273].  Cette recherche a \'et\'e financ\'ee par le Conseil de recherches en sciences naturelles et en g\'enie du Canada (CRSNG), [num\'eros de r\'ef\'erence RGPIN-2019-05274, RGPAS-2019-00087, DGECR-2019-00273].}\let\thefootnote\relax\footnote{Keywords:  Lawvere theory; monad; enriched algebraic theory; enriched category; commutant; bifold algebra; tensor product.}\footnote{2020 Mathematics Subject Classification: 18C05, 18C10, 18C15, 18C40, 18D15, 18D20, 18E99, 18M05, 08A40, 08A65, 08B20, 08C05.}
\\
\small Brandon University, Brandon, Manitoba, Canada}

\title{\large \textbf{Bifold algebras and commutants\\ for enriched algebraic theories}}

\date{}

\maketitle

\abstract{
Commuting pairs of algebraic structures on a set have been studied by several authors and may be described equivalently as algebras for the tensor product of Lawvere theories, or more basically as certain bifunctors that here we call \textit{bifold algebras}.  The much less studied notion of \textit{commutant} for Lawvere theories was first introduced by Wraith and generalizes the notion of \textit{centralizer clone} in universal algebra.  Working in the general setting of enriched algebraic theories for a system of arities, we study the interaction of the concepts of bifold algebra and commutant.  We show that the notion of commutant arises via a universal construction in a two-sided fibration of bifold algebras over various theories.  On this basis, we study special classes of bifold algebras that are related to commutants, introducing the notions of \textit{commutant bifold algebra} and \textit{balanced bifold algebra}.  We establish several adjunctions and equivalences among these categories of bifold algebras and related categories of algebras over various theories, including commutative, contracommutative, saturated, and balanced algebras.  We also survey and develop examples of commutant bifold algebras, including examples that employ Pontryagin duality and a theorem of Ehrenfeucht and \L o\'s on reflexive abelian groups.  Along the way, we develop a functorial treatment of fundamental aspects of bifold algebras and commutants, including tensor products of theories and the equivalence of bifold algebras and commuting pairs of algebras.  Because we work relative to a (possibly large) system of arities in a closed category $\V$, our main results are applicable to arbitrary $\V$-monads on a finitely complete $\V$, the enriched theories of Borceux and Day, the enriched Lawvere theories of Power relative to a regular cardinal, and other notions of algebraic theory.
}

\section{Introduction} \label{sec:intro}

\begin{para}[\textbf{Bifold algebras}] Given a pair of algebraic structures on a set $C$, so that $C$ underlies both a $\T$-algebra and a $\U$-algebra for a pair of Lawvere theories $\T$ and $\U$, we may ask whether these algebra structures \textit{commute}, which means that every $\T$-operation $C^n \rightarrow C$ is a $\U$-algebra homomorphism (or, equivalently, every $\U$-operation is a $\T$-homomorphism).  A commuting pair of $\T$- and $\U$-algebra structures on the same set can also be expressed equivalently as a $\T$-algebra in the category of $\U$-algebras, or as a $\U$-algebra in the category of $\T$-algebras, or as an algebra for the \textit{tensor product} of the Lawvere theories $\T$ and $\U$ \cite[18.7]{Sch} (cf. \cite{Fr}, \cite[\S 13]{Wra:AlgTh}, \cite[\S 4]{Wra:AlgOverTh}).  Several authors have studied aspects of this topic, each employing selections from among the above equivalent formulations and variations thereupon, and with some authors employing not Lawvere theories or monads but instead the language of Birkhoff's universal algebra.  The latter approach is used in Freyd's classic paper \cite{Fr} and in Bergman's book \cite[10.13]{Berg}.  Schubert \cite[18.7]{Sch} gave one of the earliest treatments of tensor products of Lawvere theories \textit{per se}, and Wraith \cite{Wra:AlgTh,Wra:AlgOverTh} treated commutation and tensor products of infinitary Lawvere-Linton theories.  Replacing the category of sets with an arbitrary symmetric monoidal closed category $\V$, Kock \cite[\S 4]{Kock:DblDln} defined a notion of commutation for cospans of $\V$-monads on $\V$ (i.e. pairs of monad morphisms with the same codomain) and used this to define a notion of commuting pair of algebra structures on an object of $\V$.  Hyland, Plotkin, and Power \cite{HyPlPow} studied the tensor product of \textit{countable Lawvere $\V$-theories} enriched in a locally countably presentable cartesian closed category.  Garner and L\'opez Franco \cite{GaLf} showed that commuting cospans of $\V$-monads on $\V$ are instances of a more abstract concept of commuting cospan of monoids in a duoidal category, and they showed that the tensor product of $\alpha$-accessible $\V$-monads on a locally $\alpha$-presentable $\V$ is an instance of a construction in the duoidal setting.  Commuting cospans of \textit{$\V$-enriched algebraic theories for a system of arities $\J$} \cite{Lu:EnrAlgTh} were studied by the author \cite{Lu:Cmt,Lu:FDistn} and an equivalence with commuting cospans of \textit{$\J$-ary $\V$-monads} was established.

If $\T$ and $\U$ are Lawvere theories then, by a straightforward transposition, a \mbox{$\T$-algebra} in the category of $\U$-algebras can be expressed also as a functor \linebreak $\T \times \U \rightarrow \Set$ that preserves finite powers in each variable separately.  Here we introduce the term \textit{bifold algebra} for such functors\footnote{We note that Wraith used the term \textit{bimodel} for a substantially different concept \cite{Wra:AlgTh,Wra:AlgOverTh}.} and their generalization to the setting of enriched algebraic theories for a system of arities.
\end{para}

\begin{para}[\textbf{Commutants}]
Alongside the concept of bifold algebra, the other concept central to this paper is the much less studied notion of \textit{commutant} for algebraic theories, which generalizes the notion of \textit{centralizer clone} in universal algebra and was introduced by Wraith \cite[\S 10]{Wra:AlgTh} for infinitary Lawvere-Linton theories.  Commutants of morphisms of Lawvere theories and, more generally, enriched algebraic theories and monads were studied by the author in \cite{Lu:Cmt,Lu:CvxAffCmt,Lu:FDistn}, and a notion of centralizer for morphisms of monoids in duoidal categories was defined by Garner and L\'opez Franco \cite{GaLf}.  In this paper, we are concerned not directly with commutants of morphisms of enriched algebraic theories, but rather with a closely related notion of the \textit{commutant of an algebra} $A$ for an enriched algebraic theory $\T$:  In the classical case, given a Lawvere theory $\T$ and a $\T$-algebra $A$ with underlying set $C$, the commutant of $\T$ (with respect to $A$) is a Lawvere theory $\T^\perp_A$ whose $n$-ary operations are all the $\T$-algebra homomorphisms $C^n \rightarrow C$.  In other words, $\T^\perp_A$ consists of all those operations on $C$ that commute with all the $\T$-operations carried by $C$.  It follows that $C$ itself carries the structure of a $\T^\perp_A$-algebra $A^\perp$, which we call the \textit{commutant of $A$}, and the $\T$- and $\T^\perp_A$-algebra structures $A$ and $A^\perp$ on $C$ commute, so that $C$ carries the structure of a bifold algebra.  Commutants for algebras in this sense were approached indirectly in the author's papers \cite{Lu:Cmt,Lu:CvxAffCmt,Lu:FDistn} by encoding algebras as certain morphisms of theories, while in this paper we consider the commutant of an algebra as a central concept, for in fact it not only subsumes the commutant of a morphism of theories as a special case, but it also allows us to view the commutant notion as functorial with respect to a broader class of morphisms of algebras over various theories (\ref{para:str_mor_algs}, \ref{thm:cmtnt_adjn}).  The commutant $\T^\perp_A$ is equally the \textit{algebraic structure} of the functor $A:\T \rightarrow \Set$, in Lawvere's sense \cite{Law:PhD}, but while the notion of algebraic structure is applicable to any tractable set-valued functor, the special case of the commutant $\T^\perp_A$ exhibits a host of special phenomena not present for algebraic structure in general, so that the theory of commutants acquires a distinct character of its own, clear evidence of which is available in \cite[\S 8]{Lu:Cmt} and in Sections \ref{sec:face_adj_cmt_adj} through \ref{sec:bal_bif} of the present paper.
\end{para}

\begin{para}[\textbf{Aims of this paper}]
In this paper, we study the interaction of the concepts of bifold algebra and commutant.  We work in the general setting of enriched algebraic theories for a system of arities \cite{Lu:EnrAlgTh}.  In addition to developing a functorial treatment of fundamental aspects of bifold algebras and commutants for algebras in this setting, we study several novel aspects of the interconnected theory of bifold algebras and commutants.  In particular, one of the central insights of this paper is that the notion of commutant arises functorially via a universal construction in a certain two-sided fibration of bifold algebras over various theories.  Using this result, we define and study special classes of bifold algebras that are related to commutants, including \textit{commutant bifold algebras} and \textit{\mbox{balanced} bifold algebras}.  We establish several adjunctions and equivalences among these categories of bifold algebras and related categories of algebras over various theories, with attention to commutative algebras and a new notion of \textit{contracommutative algebra} that is related by a dual adjunction to commutative algebras.  We also survey and develop several examples of commutant bifold algebras and balanced bifold algebras.
\end{para}

\begin{para}[\textbf{Enriched algebraic theories for a system of arities}]
The framework of $\V$-enriched algebraic theories for a system of arities $\J \hookrightarrow \V$ \cite{Lu:EnrAlgTh} that we employ in this paper includes the following specific notions of algebraic theory as examples: (1) Lawvere theories in the usual sense, with $\V = \Set$ and $\J = \FinCard$ the full subcategory of finite cardinals; (2) infinitary Lawvere-Linton theories, by which we mean the \textit{varietal theories} of Linton \cite{Lin:Eq}, with $\J = \V = \Set$, equivalently, arbitrary monads on $\Set$; (3) the enriched Lawvere theories of Power \cite{Pow:EnrLaw} and their \mbox{$\alpha$-ary} generalization for a regular cardinal $\alpha$, which we may describe equivalently as $\alpha$-accessible $\V$-monads on a locally $\alpha$-presentable closed category $\V$, taking $\J = \V_\alpha$ to be the full subcategory of $\alpha$-presentable objects; (4) the $\V$-theories of Dubuc \cite{Dub:StrSem}, equivalently, arbitrary $\V$-monads on a closed category $\V$; (5) the enriched theories of Borceux and Day \cite{BoDay}, where $\V$ is a \textit{$\pi$-category} and $\J = \{n \cdot I \mid n \in \NN\}$ consists of the finite copowers of the unit object $I$ of $\V$.  Note that (4) generalizes (2), while (3) and (5) provide two distinct generalizations of (1); the theories of Borceux and Day in (5) may be described as single-sorted $\V$-enriched conical finite power theories, while Power's theories in (3) instead involve cotensors (i.e., fully enriched powers) by finitely presentable objects.  Also, (4) illustrates that in general $\J$ need not be small and that $\V$ need not be complete or cocomplete.  Indeed, via (4) we can apply the results in this paper to arbitrary $\V$-monads on a symmetric monoidal closed category $\V$ with finite limits.
\end{para}

\begin{para}[\textbf{Summary of the paper}]
In Section \ref{sec:background} we begin with a review of enriched algebraic theories and commutants for a system of arities $\J \hookrightarrow \V$.  Given a suitable $\V$-category $\C$, in Section \ref{sec:cls_alg} we consider the category $\Algs^\s(\C)$ of algebras over various theories, i.e. pairs $(\T,A)$ consisting of a theory $\T$ and a $\T$-algebra $A$ in $\C$, with \textit{strong morphisms} of algebras, and we discuss the \textit{full algebra} on an object of $\C$ and its universal property; these concepts facilitate working with multiple algebra structures on an object while allowing both the theory and the carrier to vary, while one may also fix a specific object $C$ of $\C$ and consider the category $\Algs(C)$ of algebras with carrier $C$.  We begin to apply these concepts in Section \ref{sec:comm_pairs_algs} to arrive at a convenient formalism for commuting pairs of algebra structures on an object and their relationship to the commutant of an algebra $(\T,A)$, which is another algebra
$$(\T,A)^\perp = (\T^\perp_A,A^\perp)$$
in which the theory $\T^\perp_A$ is the commutant of $\T$ with respect to $A$, while $A^\perp:\T^\perp_A \rightarrow \C$ is the associated $\T^\perp_A$-algebra on the same carrier as $A$.

In Section \ref{sec:bifold_algs} we introduce bifold algebras in $\C$, defined as $\V$-functors
$$D:\T \otimes \U \rightarrow \C$$
that preserve $\J$-cotensors in each variable separately, for specified theories $\T$ and $\U$, where $\T \otimes \U$ is the usual monoidal product of $\V$-categories.  Every bifold algebra $D$ determines a commuting pair of algebras $D_\ell$ and $D_r$, called the \textit{left and right faces} of $D$.  In Section \ref{thm:tens_prods} we show that bifold algebras for a fixed pair of theories $\T$ and $\U$ can be described equivalently as $\otimesJ{\T}{\U}$-algebras for a theory $\otimesJ{\T}{\U}$ called the tensor product, provided that $\J$ is small and $\V$ is locally bounded, which we do \textit{not} assume in the rest of the paper.  In Section \ref{sec:two_sided_fibr} we employ a version of the Grothendieck construction for two-sided fibrations \cite{Str:FibrYon2Cats} to define a category of bifold algebras over various theories, $\BAlg^\sx(\C)$, in which the morphisms are rather peculiar: Called \textit{strong cross-morphisms of bifold algebras}, these morphisms include a pair of theory morphisms that go in opposite directions.

This peculiar category of bifold algebras over various theories, $\BAlg^\sx(\C)$, holds the key to a fully functorial elucidation of commutants and their interaction with the notion of bifold algebra.  Indeed, this starts to become clear with the central results of Section \ref{sec:face_adj_cmt_adj}, where we begin by showing that the functors $L$ and $R$ that furnish the left and right faces of a bifold algebra have fully faithful left and right adjoints, respectively, as in the following diagram:
\begin{equation}\label{eq:face_ajdns}
\xymatrix{
& \BAlg^\sx(\C) \ar[dl]^L \ar[dr]_R^[@!-30]{\top} &\\
\Algs^\s(\C) \ar@/^3ex/@{ >-->}[ur]_[@!30]{\bot}^\RCom & & \Algs^\s(\C)^\op \ar@/_3ex/@{  >-->}[ul]_\LCom
}
\end{equation}
The left adjoint functor $\RCom$ sends each algebra $(\T,A)$ to a bifold algebra  $\RCom(\T,A)$ whose left face is $(\T,A)$ and whose right face is the commutant $(\T,A)^\perp$ of $(\T,A)$.  In particular, this shows that the notion of commutant arises by way of a universal construction in $\BAlg^\sx(\C)$, so that as a consequence it follows immediately that the assignment $(\T,A) \mapsto (\T,A)^\perp$ is functorial with respect to strong morphisms of algebras, because by composing $\RCom$ and $R$ we obtain a functor $\Com = (-)^{\perp}$ that sends each algebra $(\T,A)$ to its commutant $(\T,A)^\perp$ and is left adjoint to its opposite, as in the diagram
\begin{equation}\label{eq:cmt_adjn}
\xymatrix{
\Algs^\s(\C) \ar@{}[rr]|\top \ar@/_2ex/[rr]_{\Com} & & \Algs^\s(\C)^\op. \ar@/_2ex/[ll]_{\Com^\op}
}
\end{equation}

In Section \ref{sec:lrcmt_bif} we consider the coreflective subcategory $$\RComBAlg^\sx(\C) \;\;\hookrightarrow\;\; \BAlg^\sx(\C)$$
determined by the coreflective embedding $\RCom$ in \eqref{eq:face_ajdns}, and similarly the reflective subcategory $\LComBAlg^\sx(\C)$ determined by $\LCom$.  We say that a bifold algebra $D$ with carrier $C$ is a \textit{right-commutant bifold algebra} if $D$ lies in $\RComBAlg^\sx(\C)$, equivalently, if its right face $D_r$ is isomorphic in $\Algs(C)$ to the commutant $D_\ell^\perp$ of its left face $D_\ell$; analogously we define \textit{left-commutant bifold algebras}.  Consequently we obtain equivalences of categories
$$\RComBAlg^\sx(\C) \simeq \Algs^\s(\C)\;\;\;\;\text{and}\;\;\;\;\LComBAlg^\sx(\C) \simeq \Algs^\s(\C)^\op$$
that provide a sense in which a right-commutant or left-commutant bifold algebra is equivalently given by an algebra.  We characterize right-commutant (resp. left-commutant) bifold algebras $D:\T \otimes \U \rightarrow \C$ as those whose transpose $\U \rightarrow [\T,\C]$ (resp. $\T \rightarrow [\U,\C]$) is fully faithful.

In Section \ref{sec:cmt_bifold_algs}, we consider \textit{commutant bifold algebras}, which are those bifold algebras that are both left-commutant and right-commutant, and we establish an equivalence between commutant bifold algebras and \textit{saturated algebras}, which are those algebras $(\T,A)$ such that $(\T,A)^{\perp\perp} \cong (\T,A)$.  The commutant adjunction \eqref{eq:cmt_adjn} is idempotent, and its fixed points are the saturated algebras, which therefore form a reflective subcategory $\SatAlgs^\s(\C)$ of $\Algs^\s(\C)$.  We establish equivalences
$$\SatAlgs^\s(\C) \simeq \ComBAlg^\sx(\C) \;\simeq\; \SatAlgs^\s(\C)^\op\;$$
under which a commutant bifold algebra corresponds to its left and right faces, respectively, which are commutants of one another.  We show also that $\ComBAlg^\sx(\C)$ is reflective in $\RComBAlg^\sx(\C)$ and coreflective in $\LComBAlg^\sx(\C)$.

In Section \ref{sec:cmt_algs}, and with the benefit of the above functorial methods for bifold algebras and commutants, we establish the categorical equivalence of bifold algebras and commuting pairs of algebras.  In particular, we establish an equivalence of \mbox{$\V$-categories}
$$\Alg{(\T,\U)}(\C) \;\simeq\; \CAlgPair{\T}{\U}(\C),$$
natural in $\T$ and $\U$, between the $\V$-category of bifold $(\T,\U)$-algebras and a $\V$-category $\CAlgPair{\T}{\U}(\C)$ of commuting $\T$-$\U$-algebra pairs.  Also, we define a category $\CPair^\sx(\C)$ of commuting algebra pairs over various theories, with strong cross-morphisms, and we show that
$$\BAlg^\sx(\C) \;\simeq\; \CPair^\sx(\C)\;.$$
On this basis, we define the notions of right-commutant, left-commutant, and commutant algebra pair, for which we obtain corollaries to several of the above results on bifold algebras.  In Section \ref{sec:pres_refl_calgpair} we establish several fundamental results on the preservation and reflection of commuting algebra pairs, showing in particular that the hom $\V$-functors $\C(G,-):\C \rightarrow \V$ for the objects $G$ of any enriched generating class $\G$ jointly reflect commutation of algebra pairs.

In Section \ref{sec:comm_contracomm_bal} we discuss commutative algebras, which are those algebras that commute with themselves, and we study two further special classes of algebras: \textit{Contracommutative algebras}, which are those whose commutant is commutative, and \textit{balanced algebras}, which are those algebras $A$ such that $A \cong A^\perp$ in $\Algs(C)$ where $C = |A|$.  We show that an algebra $A$ is commutative if and only if $A^\perp$ is contracommutative, while  $A$ is balanced if and only if $A$ is commutative, contracommutative, and saturated.  We show that the commutant adjunction \eqref{eq:cmt_adjn} restricts to (1) a dual adjunction between commutative algebras and contracommutative algebras, which further restricts to (2) a dual equivalence between commutative saturated algebras and contracommutative saturated algebras, which in turn restricts to (3) an equivalence between the category $\BalAlgs^\s(\C)$ of balanced algebras and its opposite.  We show that $\BalAlgs^\s(\C)$ is a groupoid whose canonical anti-involution is isomorphic to the latter equivalence.

In Section \ref{sec:one_comm_face}, we return to our studies of left- and right-commutant bifold algebras, with special attention to the case in which the left face is commutative, or equivalently the right face is contracommutative.  We obtain an equivalence between the category of commutative algebras (resp. commutative saturated algebras) and the category of right-commutant (resp. commutant) bifold algebras with commutative left face.  We show that any bifold $(\T,\U)$-algebra of this special kind induces a central morphism from $\T$ to $\U$, and consequently there is a $\V$-functor $\Alg{\U}(\C) \rightarrow \Alg{(\T,\U)}(\C)$ that witnesses that every $\U$-algebra carries the structure of a bifold $(\T,\U)$-algebra.  As discussed in \ref{para:f_an}, these results are applicable in particular to the \textit{functional-analytic contexts} of \cite{Lu:FDistn},  which form the basis for a study of distribution monads via commutants for enriched theories and monads.

In Section \ref{sec:bal_bif}, we define the concept of a \textit{balanced bifold algebra}, which is a commutant bifold algebra $D$ whose left and right faces are isomorphic in $\Algs(C)$, where $C$ is the carrier of $D$.  We show that a commutant bifold algebra is balanced if and only if its left and right faces are both commutative.  We establish an equivalence $$\BalAlgs^\s(\C) \simeq \BalBAlg^\sx(\C)$$
between the category (indeed, groupoid) of balanced algebras and the category of balanced bifold algebras, which therefore is a groupoid.

In Section \ref{sec:exa_bif} we survey and develop various examples of commutant bifold algebras and commutants, involving (1) bimodules over pairs of rings; (2) internal rings, rigs (also known as semirings), and preordered rings in cartesian closed categories, and internal modules and affine spaces for internal rigs, including convex spaces for preordered rings; (3) semilattices, with and without top and/or bottom element; (4) topological groups, convergence groups, the circle group, and Pontryagin duality; (5) complete lattices with supremum-preserving maps; (6) the abelian group $\ZZ$ and a theorem of Ehrenfeucht and \L o\'s \cite{EhrLos} on the question of reflexivity of free abelian groups.  With regard to (4), we show that the circle group $\TT$ carries the structure of a commutant bifold algebra in three settings as a consequence of Pontryagin duality: (a) $\TT$ is a balanced bifold algebra in the category of locally compact Hausdorff spaces, with respect to the Lawvere theory of abelian groups, (b) $\TT$ is a balanced bifold algebra with respect to the theory of internal abelian groups in the category of convergence spaces, for the enriched Borceux-Day system of arities, and (c) $\TT$ is a commutant bifold algebra in $\Set$ for the infinitary Lawvere-Linton theories of abelian groups and of compact Hausdorff abelian groups.  With regard to (6) we show that the statement that $\ZZ$ is a balanced bifold algebra with respect to the Lawvere-Linton theory of abelian groups (so for the system of arities consisting of all sets) is equivalent to the non-existence of measurable cardinals.
\end{para}

\begin{para}[\textbf{Future work}]
In a subsequent paper \cite{Lu:AlgDual}, we shall employ the results of this paper in order to establish biequivalences between certain categories of bifold algebras and certain 2-categories of \textit{$\J$-algebraic dual adjunctions}, which are dual adjunctions between pairs of $\J$-algebraic $\V$-categories over $\V$ (in the sense of \cite[12.1]{Lu:EnrAlgTh}) and form the basis for a study of enriched algebraic dualization processes in analysis, order theory, and topology, as initiated in the talk \cite{Lu:CT2017}.
\end{para}

\section{Background}\label{sec:background}

\subsection{Enriched categories}\label{sec:enr_cats}

Throughout, we write $\V$ to denote a given symmetric monoidal closed category, and we write $\V_0$ to denote the ordinary category underlying $\V$.  We employ the theory of categories enriched in $\V$, as expounded in \cite{Ke:Ba,Dub}.  We use the notation and terminology of \cite{Ke:Ba}, except that we write cotensors as $[X,C]$ rather than $X \pitchfork C$ \cite[\S 3.7]{Ke:Ba}.  In referring to $\V$-categories, $\V$-functors, etcetera, we always include the prefix ``$\V$-''.

We employ the universe enlargement methodology that is discussed in \cite[\S 3.11]{Ke:Ba}.  Hence $\Set$ denotes the category of sets in some universe $\mathfrak{U}$ of \textit{small sets} for which $\V_0$ is locally small, while $\mathfrak{U}'$ is a universe for which both $\Set$ and $\V_0$ are $\mathfrak{U}'$-small sets, and for which all given $\V$-categories and (possibly large) sets under discussion are $\mathfrak{U}'$-small, and $\V'$ is a $\mathfrak{U}'$-complete and $\mathfrak{U}'$-cocomplete enlargement of $\V$ such that the full inclusion $\V \hookrightarrow \V'$ preserves all limits that exist in $\V$.

\subsection{Enriched algebraic theories}

\begin{parasub}\label{para:sys_arities}
A \textbf{system of arities} in $\V$ is a fully faithful symmetric strong monoidal $\V$-functor $j:\J \rightarrow \V$ \cite[3.1]{Lu:EnrAlgTh}.   As discussed in \cite[3.9]{Lu:EnrAlgTh}, we may assume for most purposes that $j$ is the inclusion of a full sub-$\V$-category $\J \hookrightarrow \V$ that is closed under the monoidal product and contains the unit object $I$ of $\V$.  A system of arities $j:\J \hookrightarrow \V$ is \textbf{eleutheric} \cite[\S 7]{Lu:EnrAlgTh} if for every $\V$-functor $F:\J \rightarrow \V$, the left Kan extension of $F$ along $j$ exists and is preserved by the $\V$-functor $\V(J,-):\V \rightarrow \V$ for each object $J$ of $\J$.  We recall several examples of systems of arities in \ref{para:exa_sys_ar} below.
\end{parasub}

\begin{parasub}\label{para:jcots}
Let $\J \hookrightarrow \V$ be a system of arities.  By a \textbf{$\J$-cotensor} in a $\V$-category $\C$ we mean a cotensor $[J,C]$ for some $J \in \ob\J$ and $C \in \ob\C$.  When discussing $\V$-categories $\C$ with $\J$-cotensors, we assume that $\C$ is equipped with a designated cotensor $[J,C]$ for all $J \in \ob\J$ and $C \in \ob\C$.  In particular, the object $[J,C]$ is equipped with a specified \textit{counit morphism} $\gamma_J:J \rightarrow \C([J,C],C)$ \cite[\S 3.7]{Ke:Ba}.  We assume that $[I,C] = C$ and that $\gamma_I$ is the identity morphism on $C$ in $\C_0$.

For each pair of $\V$-categories $\E$ and $\F$ with $\J$-cotensors, we know that $[\E,\F]$ exists as a $\V'$-category \pref{sec:enr_cats}, and we denote by
\begin{equation}\label{eq:jcot_pres_func_cat}[\E,\F]_\J\end{equation}
the full sub-$\V'$-category of $[\E,\F]$ consisting of the $\J$-cotensor preserving $\V$-functors.
\end{parasub}

\begin{parasub}\label{para:jth}
Given a system of arities $j:\J \hookrightarrow \V$, a \textbf{$\J$-theory} enriched in $\V$ (or a \mbox{\textbf{$j$-theory}}) is a \mbox{$\V$-category} $\T$ equipped with an identity-on-objects $\V$-functor $\tau:\J^\op \rightarrow \T$ that preserves $\J$-cotensors \cite[4.1]{Lu:EnrAlgTh}.  Equivalently, $\T$ is a $\V$-category whose objects are those of $\J$, in which each object $J$ is equipped with a morphism $\gamma_J:J \rightarrow \T(J,I)$ that exhibits $J$ as a cotensor $[J,I]$ in $\T$ \pref{para:jcots}, with the requirement that $\gamma_I$ is the identity morphism on $I$ in $\T_0$ \cite[5.8]{Lu:EnrAlgTh}.  It follows that $\T$ has $\J$-cotensors \cite[4.3, 4.5]{Lu:EnrAlgTh}, and that $\tau$ is the $\V$-functor $[-,I]:\J^\op \rightarrow \T$ that supplies the designated $\J$-cotensors $[J,I] = J$ of $I$ \cite[5.8]{Lu:EnrAlgTh}.

Given $\J$-theories $\T_1$ and $\T_2$, a \textbf{morphism of $\J$-theories} $M:\T_1 \rightarrow \T_2$ is a $\V$-functor that commutes with the associated $\V$-functors $\tau_i:\J^\op \rightarrow \T_i$ $(i = 1,2)$, equivalently, that strictly preserves the designated $\J$-cotensors of $I$ \cite[5.16]{Lu:EnrAlgTh}.  With these morphisms, $\J$-theories are the objects of a category $\ThJ$, and $\J^\op$ is an initial object of this category.  If $\J$ is eleutheric \pref{para:sys_arities}, then there is an equivalence between $\ThJ$ and the category of \textit{$\J$-ary $\V$-monads} on $\V$ \cite[11.8]{Lu:EnrAlgTh}.  A morphism of $\J$-theories $M$ is called a \textbf{subtheory embedding} if the $\V$-functor $M$ is faithful.
\end{parasub}

\begin{parasub}\label{para:talg}
Given a $\J$-theory $\T$ and a $\V$-category $\C$ with $\J$-cotensors, a \textbf{$\T$-algebra} in $\C$ is a $\V$-functor $A:\T \rightarrow \C$ that preserves $\J$-cotensors.  An \textbf{algebra} in $\C$ is a pair $(\T,A)$ consisting of a $\J$-theory $\T$ and a $\T$-algebra in $\C$.  We also write the algebra $(\T,A)$ simply as $A$.  The object $|A| = A(I)$ of $\C$ is called the \textbf{carrier} of $A$, where $I$ is the unit object of $\V$.  If $|A| = C$ then we call $A$ a \textbf{($\T$-)algebra on $C$}. Using the notation \eqref{eq:jcot_pres_func_cat}, $\T$-algebras in $\C$ are the objects of a $\V'$-category
$$\Alg{\T}(\C) = [\T,\C]_\J\;.$$
We are primarily concerned with cases where the ends needed in forming $[\T,\C]_\J$ exist in $\V$, in which case we say that $\Alg{\T}(\C)$ \textbf{exists as a $\V$-category}.  A $\V$-category $\C$ is \textbf{$\J$-admissible} if $\C$ has $\J$-cotensors and $\Alg{\T}(\C)$ exists for every $\J$-theory $\T$.  For example, if $\V$ has equalizers and $\J$ is eleutheric, then $\V$ itself is $\J$-admissible, by \cite[8.8, 8.9]{Lu:EnrAlgTh}.  Every $\V$-category $\C$ with $\J$-cotensors is $\J$-admissible if $\V$ has equalizers and has intersections of $(\ob\J)$-indexed families of strong monomorphisms (with common codomain) \cite[4.11]{Lu:Cmt}.  Obviously this applies in particular when $\J$ is small and $\V_0$ is complete, but we are interested also in examples beyond this case, e.g. when $\J = \V$ (See \ref{para:exa_sys_ar} below).
\end{parasub}

\begin{parasub}\label{para:carrier_func}
If $\C$ is a $\J$-admissible $\V$-category, then there is a faithful $\V$-functor
$$G^\T:\Alg{\T}(\C) \rightarrow \C$$
that is given by evaluation at the unit object $I$ (\cite[5.4]{Lu:EnrAlgTh}, \cite[4.8]{Lu:Cmt}).  Hence $G^\T$ sends each $\T$-algebra $A$ in $\C$ to its carrier $|A|$.  Accordingly, if $f:A \rightarrow B$ is a \mbox{\textbf{$\T$-homomorphism}}, i.e., a morphism in $\Alg{\T}(\C)$, then we write $|f| = f_I:|A| \rightarrow |B|$.  The $\V$-category $\Alg{\T}(\C)$ has $\J$-cotensors, formed pointwise, so we may choose designated $\J$-cotensors in $\Alg{\T}(\C)$ in such a way that they are sent by $G^\T$ to the designated $\J$-cotensors in $\C$ \pref{para:jcots}.  Every morphism of $\J$-theories $M:\T \rightarrow \U$ induces a $\V$-functor $M^*:\Alg{\U}(\C) \rightarrow \Alg{\T}(\C)$ that commutes with $G^\U$, $G^\T$ and preserves $\J$-cotensors.
\end{parasub}

\begin{parasub}[\textbf{Examples of systems of arities and their theories}]\label{para:exa_sys_ar}
\emptybox
\medskip

\noindent(1) By \cite[7.5(2)]{Lu:EnrAlgTh}, there is an eleutheric system of arities $\J = \FinCard \hookrightarrow \V = \Set$, for which $\J$-theories are Lawvere theories in the usual sense \cite{Law:PhD}.

\medskip

\noindent(2) If $\V$ is locally $\alpha$-presentable as a closed category, for a regular cardinal $\alpha$, then there is an eleutheric system of arities $\J = \V_\alpha \hookrightarrow \V$ consisting of the $\alpha$-presentable objects of $\V$ \cite[7.5(1)]{Lu:EnrAlgTh}, for which $\J$-theories are (the $\alpha$-ary generalization of) the \textit{enriched Lawvere theories} of Power \cite{Pow:EnrLaw}, which are equivalently described as $\V$-monads on $\V$ that preserve $\alpha$-filtered colimits.

\medskip

\noindent(3) There is an eleutheric system of arities $\J = \V$ \cite[7.5(3)]{Lu:EnrAlgTh}.  The category of $\V$-theories $\Th_\V$ for this system of arities $\V$ was considered by Dubuc in \cite{Dub:StrSem} and is equivalent to the category of $\V$-monads on $\V$ \cite[11.10]{Lu:EnrAlgTh}.  In the special case where $\J = \V = \Set$, we recover the notion of \textit{varietal theory} in the sense of Linton \cite{Lin:Eq}.  Here we call Linton's varietal theories \textit{Lawvere-Linton theories}.

\medskip

\noindent(4) There is an eleutheric system of arities $\J = \{I\} \hookrightarrow \V$ consisting of just the unit object $I$ of $\V$ \cite[7.5(4)]{Lu:EnrAlgTh}.  For this system of arities, an $\{I\}$-theory $\T$ is precisely a monoid $R$ in $\V$ (e.g. a ring if $\V = \Ab$), and a $\T$-algebra may be described equivalently as a left $R$-module in $\V$ \cite[5.3(3)]{Lu:EnrAlgTh}.  Moreover, $\Alg{\T}(\V)_0$ is the category of left $R$-modules in $\V$, which underlies a $\V$-category $\Mod{R} = \Alg{\T}(\V)$ provided that $\V$ has equalizers.

\medskip

\noindent(5) If $\V$ is cartesian closed and countably cocomplete, then there is an eleutheric system of arities $j:\DF(\V) \rightarrow \V$, where the objects of $\DF(\V)$ are the finite cardinals $n$, with $\DF(\V)(n,m) = \V(n \cdot 1, m \cdot 1)$ where $n\cdot 1$ denotes the $n$-th copower of the terminal object $1$ of $\V$ \cite[7.5(5)]{Lu:EnrAlgTh}.  $j$ is the evident $\V$-functor given on objects by $n \mapsto n \cdot 1$.  A $\DF(\V)$-theory $\T$ is a \textbf{discretely finitary-algebraic theory} enriched in $\V$ (\cite[4.2(6)]{Lu:EnrAlgTh}, \cite[\S 3.2]{Lu:FDistn}), i.e., a $\V$-category $\T$ whose objects are the finite cardinals $n$, each of which is equipped with a family of morphisms $\pi_i^n:n \rightarrow 1$ $(i = 1,...,n)$ that present $n$ as an $n$-th power of $1$ in $\T$, with $\pi_1^1$ the identity morphism on $1$.  We can also replace the assumption that $\V$ is cartesian closed with the assumption that $\V$ is a \textit{$\pi$-category} in the sense of \cite{BoDay}, in which case $\DF(\V)$-theories are essentially the enriched algebraic theories defined by Borceux and Day in that paper, as discussed in \cite[4.2(6)]{Lu:EnrAlgTh}.
\end{parasub}

\begin{parasub}\label{para:conv_cah}
Two examples of cartesian closed categories $\V$ to which refer are (i) the \textbf{Cahiers topos} \cite{Dub:ModSDG}, which is a well-adapted model of synthetic differential geometry, and (ii) the category of \textbf{convergence spaces} \cite{BeBu}, $\Conv$, into which the category of topological spaces $\Top$ embeds as a full, reflective subcategory; see, e.g., \cite[2.3]{Lu:FDistn} for a brief overview of convergence spaces and their cartesian closed structure.  Viewing topological spaces as certain convergence spaces, and sets as discrete topological spaces, we have full subcategory inclusions $\Set \hookrightarrow \Top \hookrightarrow \Conv$, the first of which preserves finite limits, and the second of which preserves all limits.  Consequently, Lawvere theories in the usual sense are precisely those $\DF(\Conv)$-theories that are \textit{discrete}, i.e. have discrete hom-objects, noting that a $\DF(\Conv)$-theory is discrete iff it is the free $\DF(\Conv)$-theory on a Lawvere theory \cite[6.3]{Lu:FDistn}.
\end{parasub}

\begin{parasub}[\textbf{{Given data and assumptions}}]\label{para:given_data}
\textit{Throughout the remainder of the paper, we let $j:\J \hookrightarrow \V$ be an arbitrary system of arities in a symmetric monoidal closed category $\V$, and we let $\C$ be a $\J$-admissible $\V$-category \pref{para:talg}.  We also assume that $\V_0$ has pullbacks, noting that we use this assumption only from Section \ref{sec:cmt_algs} onward.}
\end{parasub}

\begin{parasub}\label{para:jop_algs}
Recalling that $\J^\op$ is the initial $\J$-theory \pref{para:jth}, the $\V$-category $\Alg{\J^\op}(\C)$ is equivalent to $\C$ \cite[4.2]{Lu:Cmt}.  Indeed, $G^{\J^\op}:\Alg{\J^\op}(\C) \rightarrow \C$ is an equivalence, with a pseudo-inverse that sends each object $C$ of $\C$ to the $\V$-functor $[-,C]:\J^\op \rightarrow \C$ that supplies the designated $\J$-cotensors of $C$ \pref{para:jcots}.  In particular, it follows that there is a fully faithful $\V'$-functor $\Xi:\C \rightarrow [\J^\op,\C]$ given by $\Xi C = [-,C]$ $(C \in \C)$.
\end{parasub}

\begin{parasub}\label{para:normal_talgs}
If $A:\T \rightarrow \C$ is a $\T$-algebra, then since $A$ preserves $\J$-cotensors and $[J,I] = J$ in $\T$ \pref{para:jth}, there are isomorphisms $AJ \cong [J,\ca{A}]$ $(J \in \ob\J = \ob\T)$.  A $\V$-functor $A:\T \rightarrow \C$ is a \textbf{normal $\T$-algebra} \cite[5.10]{Lu:EnrAlgTh} if $A$ sends the designated $\J$-cotensors $[J,I] = J$ of $I$ in $\T$ to the designated $\J$-cotensors $[J,\ca{A}]$ of $\ca{A} = AI$ in $\C$ \pref{para:jcots}.  Every normal $\T$-algebra is a $\T$-algebra \cite[5.9, 5.10]{Lu:EnrAlgTh}, so normal $\T$-algebras in $\C$ are the objects of a full sub-$\V$-category $\Alg{\T}^\nml(\C)$ of $\Alg{\T}(\C)$.  The inclusion $\Alg{\T}^\nml(\C) \hookrightarrow \Alg{\T}(\C)$ is an equivalence $$\Alg{\T}^\nml(\C) \simeq \Alg{\T}(\C)\;,$$
by \cite[5.14]{Lu:EnrAlgTh}.  The latter equivalence associates to each $\T$-algebra $A$ in $\C$ a normal $\T$-algebra $A^\nml$ on $|A|$ that is called the \textbf{normalization} of $A$ and is equipped with an isomorphism $\varphi_A:A \xrightarrow{\sim} A^\nml$ with $|\varphi_A| = 1_{|A|}$.

By \cite[5.12]{Lu:EnrAlgTh}, a $\V$-functor $A:\T \rightarrow \C$ is a normal $\T$-algebra if and only if
$$A\tau = [-,AI]:\J^\op \rightarrow \C$$ with the notation of \ref{para:jth} and \ref{para:jop_algs}.   It follows that there is a pullback square
\begin{equation}\label{eq:pb_normal_talgs}
\xymatrix{
\Alg{\T}^\nml(\C) \ar@{^(->}[r] \ar[d]_{G^\T_\nml} & [\T,\C] \ar[d]^{\tau^* \:=\: [\tau,\C]}\\
\C \ar[r]^\Xi & [\J^\op,\C]
}
\end{equation}
in $\eCAT{\V'}$, where $\Xi$ is the fully faithful $\V'$-functor defined in \ref{para:jop_algs} and we write $G^\T_\nml$ for the restriction of $G^\T$ to normal $\T$-algebras.  The $\V'$-functor $\Xi$ sends every limit that exists in $\C$ to a pointwise limit in $[\J^\op,\C]$, and since $\tau$ is identity-on-objects, the $\V'$-functor $[\tau,\C]$ creates pointwise limits\footnote{I.e., given $\V'$-functors $W:\K \rightarrow \V'$ and $D:\K \rightarrow [\T,\C]$, any pointwise limit cylinder $(\{W,\tau^*D\},\lambda)$ that exists in $[\J^\op,\C]$ lifts uniquely to a cylinder in $[\T,\C]$, and the latter cylinder is a pointwise limit cylinder.}.  Since the square \eqref{eq:pb_normal_talgs} is a pullback, it follows that $G^\T_\nml$ creates $\V'$-enriched limits\footnote{I.e., given $\V'$-functors $W:\K \rightarrow \V'$ and $D:\K \rightarrow \Alg{\T}^\nml(\C)$, any limit cylinder $(\{W,G^\T_! D\},\lambda)$ that exists in $\C$ lifts uniquely to a cylinder in $\Alg{\T}^!(\C)$, and the latter cylinder is a limit cylinder.}.
\end{parasub}

\begin{parasub}\label{para:jth_enr_in_vprime}
With the notation of \ref{sec:enr_cats}, the composite $\J \hookrightarrow \V \hookrightarrow \V'$ is a system of arities $j':\J \hookrightarrow \V'$.  Hence, we may consider $j'$-theories, $\T$, which we call \textbf{$\J$-theories enriched in $\V'$}.  Conveniently, every $\V'$-category with $\J$-cotensors is a $\J$-admissible $\V'$-category, in view of \ref{sec:enr_cats} and \ref{para:talg}.  Thus all the definitions and results that we develop for a given $\J$-admissible $\V$-category $\C$ may be applied to \textit{any} $\V$-category or $\V'$-category $\X$ with $\J$-cotensors by employing the system of arities $j':\J \hookrightarrow \V'$, and we apply this technique tacitly in several instances throughout the paper.
\end{parasub}

\begin{parasub}\label{para:mor_jth}
Given $\J$-theories $\T$ and $\U$, a morphism of $\J$-theories $M:\T \rightarrow \U$ is equivalently defined as a normal $\T$-algebra on $I$ in $\U$ \cite[5.16]{Lu:EnrAlgTh}.  A morphism of $\J$-theories $M$ is uniquely determined by its components $M_{JI}:\T(J,I) \rightarrow \U(J,I)$ with $J \in \ob\J$ \cite[3.12]{Lu:Cmt}, and $M$ is an isomorphism if and only if $M_{JI}$ is an isomorphism for each $J \in \ob\J$ \cite[3.1.3]{Lu:FDistn}.
\end{parasub}

\subsection{Commutants and the full theory of an object}

\begin{parasub}\label{para:full_jth}
Given an object $C$ of $\C$, there is a $\J$-theory $\C_C$, called the \textbf{full $\J$-theory of $C$} in $\C$ \cite[3.16]{Lu:Cmt}, with $\C_C(J,K) = \C([J,C],[K,C])$ for all $J,K \in \ob\C_C = \ob\J$, and with composition and identities as in $\C$.  There is clearly a fully faithful $\V$-functor $\Gamma_C:\C_C \rightarrow \C$ given on objects by $J \mapsto [J,C]$, and $\Gamma_C$ is a normal $\C_C$-algebra on $C$.  Given any normal $\T$-algebra $A$ on $C$ in $\C$, there is a unique morphism of $\J$-theories $[A]:\T \rightarrow \C_C$ such that $A = \Gamma_C [A]$ \cite[3.16]{Lu:Cmt}.  The assignment $M \mapsto \Gamma_C M$ defines a bijective correspondence between morphisms of $\J$-theories $M:\T \rightarrow \C_C$ and normal $\T$-algebras on $C$ in $\C$ \cite[3.16]{Lu:Cmt}.
\end{parasub}

\begin{parasub}[\textbf{Commutants}]\label{para:cmt}
Let $A:\T \rightarrow \C$ be a $\T$-algebra.  The \textbf{commutant} of $\T$ with respect to $A$ \cite[7.1, 7.10]{Lu:Cmt} is the full $\J$-theory of $A$ in the $\V$-category $\Alg{\T}(\C)$ and is denoted by $\T^\perp_A = \Alg{\T}(\C)_A$.  Hence $\T^\perp_A$ is a $\J$-theory with hom-objects
$$\T^\perp_A(J,K) = \Alg{\T}(\C)([J,A],[K,A])\;\;\;\;\;\;(J,K \in \ob\J),$$
and with composition and identities as in $\Alg{\T}(\C)$, where $[J,A]$ denotes the pointwise cotensor of $A$ by $J$ \pref{para:carrier_func}.

By \ref{para:full_jth}, there is a fully faithful normal $\T^\perp_A$-algebra
\begin{equation}\label{eq:cl_alg_cmt}\Gamma_A\;:\;\T^\perp_A = \Alg{\T}(\C)_A \longrightarrow \Alg{\T}(\C)\end{equation}
with carrier $A$, and we write $A^\perp$ to denote the composite
\begin{equation}\label{eq:a_lowerperp}A^\perp = \bigl(\T^\perp_A \xrightarrow{\Gamma_A} \Alg{\T}(\C) \xrightarrow{G^\T} \C\bigr)\;.\end{equation}
Since $G^\T$ strictly preserves the designated $\J$-cotensors \pref{para:carrier_func}, we deduce that $A^\perp$ is a normal $\T^\perp_A$-algebra with $|A^\perp| = |A|$, and since $\Gamma_A$ and $G^\T$ are faithful we deduce that $A^\perp$ is faithful.  We call the algebra
\begin{equation}\label{eq:cmtnt_of_alg}(\T,A)^\perp = (\T^\perp_A,A^\perp)\end{equation}
the \textbf{commutant} of $(\T,A)$ and denote it also by $A^\perp$, in keeping with our convention in \ref{para:talg}.  By \ref{para:full_jth}, there is a unique morphism of $\J$-theories $[A^\perp]:\T^\perp_A \rightarrow \C_C$
such that $\Gamma_C[A^\perp] = A^\perp$, where $C = |A|$, and since $A^\perp$ is faithful we deduce that $[A^\perp]$ is a subtheory embedding
\begin{equation}\label{eq:subth_emb_cmt}[A^\perp]\;:\;\T^\perp_A \hookrightarrow \C_C,\end{equation}
discussed in \cite[7.10, 7.5]{Lu:Cmt}.  The components of $[A^\perp]$ are the monomorphisms
\begin{equation}\label{eq:gtjaa}[A^\perp]_{JK} = A^\perp_{JK} =  G^\T_{[J,A],[K,A]}\;:\;\Alg{\T}([J,A],[K,A]) \hookrightarrow \C([J,C],[K,C])\end{equation}
with $J,K \in \ob\J$.

Here we assume that $\C$ is $\J$-admissible (\ref{para:talg}, \ref{para:given_data}).  More generally, if $A$ is a $\T$-algebra in any $\V'$-category $\X$ with $\J$-cotensors, then in view of \ref{para:jth_enr_in_vprime} we can still form the commutant $\T^\perp_A$, which in this case is a $\J$-theory enriched in $\V'$. 
\end{parasub}

\begin{parasub}[\textbf{Commutation of cospans of $\J$-theories}]\label{para:cmtn}
Let $M:\T \rightarrow \sS$ and $N:\U \rightarrow \sS$ be morphisms of $\J$-theories.  Since $(\T,M)$ is a normal algebra on $I$ in $\sS$, we may consider its commutant $(\T^\perp_M,M^\perp)$, which is a normal algebra on $I$ \pref{para:cmt}, so $M^\perp:\T^\perp_M \rightarrow \sS$ is a morphism of $\J$-theories by \ref{para:mor_jth}.  Also, $M^\perp$ is faithful, by \ref{para:cmt}, so $M^\perp$ is a subtheory embedding.   We say that $M$ \textbf{commutes with} $N$ if $N$ factors through the commutant $M^\perp:\T^\perp_M \hookrightarrow \sS$ of $M$ (\cite[5.12, 7.8]{Lu:Cmt}, \cite[3.3.2]{Lu:FDistn}).  This relation of commutation is symmetric, as $M$ commutes with $N$ iff $N$ commutes with $M$ (\cite[5.8]{Lu:Cmt}, \cite[3.3.2]{Lu:FDistn}), in which case we say that $M$ and $N$ \textbf{commute}.  A morphism of $\J$-theories $M:\T \rightarrow \U$ is \textbf{central} if $M$ commutes with the identity morphism $1_\U:\U \rightarrow \U$.  A $\J$-theory $\T$ is \textbf{commutative} if $1_\T$ commutes with itself \cite[5.9, 5.13]{Lu:Cmt}.

If $(\T,A)$ and $(\U,B)$ are normal algebras on an object $C$ of $\C$, then we may ask whether the morphisms of $\J$-theories $[A]:\T \rightarrow \C_C$ and $[B]:\U \rightarrow \C_C$ commute \pref{para:full_jth}.  But the commutant of $[A]$ is isomorphic to the commutant $\T^\perp_A$ of $A$ \cite[7.10]{Lu:Cmt}, so $[A]$ commutes with $[B]$ if and only if $[B]$ factors through the subtheory embedding $[A^\perp]:\T^\perp_A \hookrightarrow \C_C$ of \eqref{eq:subth_emb_cmt}.
\end{parasub}

\section{Strong morphisms of algebras and the full algebra on an object}\label{sec:cls_alg}

By \ref{para:full_jth}, each object $C$ of $\C$ is the carrier of an algebra $(\C_C,\Gamma_C)$, where $\C_C$ is the full $\J$-theory of $C$ in $\C$.  We call $(\C_C,\Gamma_C)$ the \textbf{full algebra} on $C$.  In this section, we show that the full algebra has a universal property when considered as an object of a category of algebras over various theories, which we now define:

\begin{para}[\textbf{Algebras for various theories, with strong morphisms}]\label{para:str_mor_algs}
For each $\V$-category $\A$, we write $\A_\si$ for the subcategory of $\A_0$ consisting of all the objects of $\A$ and all the isomorphisms in $\A_0$, and we call $\A_\si$ the \textbf{groupoid core} of $\A$.  For each $\J$-theory $\T$, we write $\Alg{\T}_\si(\C)$ for the groupoid core of $\Alg{\T}(\C)$.  There is a functor $\Phi = \Alg{(-)}_\si(\C):\ThJ^\op \rightarrow \CAT$ that sends each morphism of $\J$-theories $M:\T \rightarrow \U$ to the functor $M^*:\Alg{\U}_\si(\C) \rightarrow \Alg{\T}_\si(\C)$ given by $M^*(B) = BM$ $(B \in \Alg{\T}_\si(\C))$.  Applying the Grothendieck construction to $\Phi$, we obtain a category
$$\Algs^\s(\C)$$
whose objects are algebras $(\T,A)$ in $\C$.  A morphism  \begin{equation}\label{eq:mor_in_algs_si}(M,f):(\T,A) \rightarrow (\U,B)\end{equation}
in $\Algs^\s(\C)$ will be called a \textbf{strong morphism of algebras} and consists of a morphism of $\J$-theories $M:\T \rightarrow \U$ and an isomorphism of $\T$-algebras $f:A \rightarrow M^*(B) = BM$.  There is a functor $\ca{\text{$-$}}:\Algs^\s(\C) \rightarrow \C_\si$ that is given on objects by $(\T,A) \mapsto |A|$ and sends each morphism \eqref{eq:mor_in_algs_si} to $|f|:|A| \rightarrow |BM| = |B|$.
\end{para}

\begin{para}[\textbf{Transport of structure for $\T$-algebras}]\label{para:transp_str}
Let $\T$ be a $\J$-theory.  By \ref{para:normal_talgs}, the faithful functor $G^\T_\nml:\Alg{\T}^\nml(\C)_0 \rightarrow \C_0$ creates limits, so this functor \textit{creates isomorphisms} in the sense of \cite[13.35, 13.36]{AHS}.  In view of \ref{para:normal_talgs}, it follows that for \textit{every} $\T$-algebra $A$ in $\C$ (not necessarily normal) and every isomorphism $c:|A| \rightarrow C$ in $\C$, there is a unique pair $(B,f)$ consisting of a normal $\T$-algebra $B$ on $C$ and a $\T$-homomorphism $f:A \rightarrow B$ with $|f| = c$; furthermore, $f$ is an isomorphism.
\end{para}

\begin{thm}[\textbf{The universal property of the full algebra}]\label{thm:univ_prop_gamma}
The functor $\ca{\text{$-$}}:\Algs^\s(\C) \rightarrow \C_\si$ has a fully faithful right adjoint $\Gamma_{(-)}$ that sends each object $C$ of $\C$ to its full algebra $(\C_C,\Gamma_C)$, and the counit of the resulting adjunction is an identity.  In particular,
$$\Algs^\s(\C)\bigl(A,\Gamma_C\bigr) \cong \C_\si\bigl(|A|,C\bigr)$$
naturally in $A \in \Algs^\s(\C)$ and $C \in \C_\si$.  Explicitly, given any algebra $(\T,A)$ and any isomorphism $c:|A| \rightarrow C$ in $\C$, there is a unique morphism $(M,f):(\T,A) \rightarrow (\C_C,\Gamma_C)$ in $\Algs^\s(\C)$ with $|f| = c$.
\end{thm}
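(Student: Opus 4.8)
The plan is to prove the single explicit statement at the end — that for each algebra $(\T,A)$ and each isomorphism $c:|A| \rightarrow C$ in $\C$ there is a unique strong morphism $(M,f):(\T,A) \rightarrow (\C_C,\Gamma_C)$ with $|f| = c$ — and then to read off the adjunction, the hom-bijection, the fact that the counit is an identity, and fully faithfulness as formal consequences. The two substantive ingredients will be the transport of structure for $\T$-algebras \pref{para:transp_str} and the bijective correspondence $M \mapsto \Gamma_C M$ between morphisms of $\J$-theories $M:\T \rightarrow \C_C$ and normal $\T$-algebras on $C$ \pref{para:full_jth}.

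For existence, I would first apply \pref{para:transp_str} to the algebra $A$ and the isomorphism $c:|A|\rightarrow C$, obtaining a normal $\T$-algebra $B$ on $C$ together with an isomorphism of $\T$-algebras $f:A \rightarrow B$ satisfying $|f| = c$. Since $B$ is a normal $\T$-algebra on $C$, \pref{para:full_jth} supplies a unique morphism of $\J$-theories $M = [B]:\T \rightarrow \C_C$ with $\Gamma_C M = B$. Reading $f$ as an isomorphism $A \rightarrow \Gamma_C M$, the pair $(M,f)$ is then a strong morphism $(\T,A) \rightarrow (\C_C,\Gamma_C)$ in the sense of \pref{para:str_mor_algs}, with $|f| = c$ by construction. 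For uniqueness, suppose $(M',f')$ is any such strong morphism with $|f'| = c$. Then $\Gamma_C M'$ is a normal $\T$-algebra on $C$ and $f':A \rightarrow \Gamma_C M'$ is an isomorphism with $|f'| = c$, so the uniqueness clause of \pref{para:transp_str} identifies $(\Gamma_C M', f')$ with $(B,f)$; hence $f' = f$ and $\Gamma_C M' = \Gamma_C M$, and injectivity of $M \mapsto \Gamma_C M$ \pref{para:full_jth} forces $M' = M$.

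It remains to package this as the stated adjunction. Since $|\Gamma_C| = \Gamma_C(I) = [I,C] = C$ under the convention $[I,C] = C$ \pref{para:jcots}, the identity $1_C:|\Gamma_C|\rightarrow C$ is a legitimate candidate counit, and the explicit universal property just proved says exactly that $(\Gamma_C, 1_C)$ is a terminal object of the comma category $\ca{\text{$-$}}\downarrow C$, i.e.\ that $1_C$ is a couniversal arrow from $\ca{\text{$-$}}$ to $C$. It follows formally that $\ca{\text{$-$}}$ has a right adjoint $\Gamma_{(-)}$ with $\Gamma_{(-)}(C) = (\C_C,\Gamma_C)$, that the counit of this adjunction is the identity, and that the natural bijection $\Algs^\s(\C)(A,\Gamma_C) \cong \C_\si(|A|,C)$ holds, sending $(M,f) \mapsto |f|$ — naturality in $A$ and $C$ being automatic from the couniversal-arrow formulation. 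Finally, since the counit is an isomorphism (indeed an identity), the standard criterion that a right adjoint is fully faithful precisely when its counit is invertible yields that $\Gamma_{(-)}$ is fully faithful.

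I expect no genuine obstacle beyond careful bookkeeping, since the real work is already discharged by the cited results; the one conceptual point to keep in view is the interplay between normal and general algebras. The full algebra $\Gamma_C$ is normal, whereas $A$ need not be, so it is essential that \pref{para:transp_str} first replace $A$ by an isomorphic normal algebra $B$ \emph{on the target object} $C$ before the correspondence of \pref{para:full_jth} can be invoked; the uniqueness argument likewise depends on combining the two separate uniqueness statements (transport of structure, then injectivity of $M \mapsto \Gamma_C M$) in the correct order.
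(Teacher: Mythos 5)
Your proposal is correct and follows essentially the same route as the paper: the paper likewise reduces everything to the final explicit statement and then combines transport of structure \pref{para:transp_str} with the bijection $M \mapsto \Gamma_C M$ of \pref{para:full_jth}, leaving the adjunction packaging as a formal consequence. Your write-up merely spells out the uniqueness bookkeeping and the couniversal-arrow formalities in more detail than the paper does.
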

\begin{proof}
It suffices to prove the last sentence, for the result then follows.  By \ref{para:transp_str}, there is a unique pair $(B,f)$ consisting of a normal $\T$-algebra $B$ on $C$ and an isomorphism $f:A \rightarrow B$ with $|f| = c$.  By \ref{para:full_jth} there is a unique morphism of $\J$-theories $M:\T \rightarrow \C_C$ such that $B = \Gamma_C M$, and the result follows, using \ref{para:full_jth}.
\end{proof}

\begin{para}[\textbf{Algebras on a fixed object}]\label{para:algs_on_c}
Let $C$ be an object of $\C$.   We write
$$\Alg{\T}(C)$$
to denote the fibre over $C$ of the functor $G^\T:\Alg{\T}(\C)_0 \rightarrow \C_0$, i.e. the category whose objects are $\T$-algebras on $C$ and whose morphisms are $\T$-homomorphisms $f$ with $|f| = 1_C$.  We also write $\Alg{\T}^\nml(C)$ for the full subcategory of $\Alg{\T}(C)$ consisting of normal $\T$-algebras on $C$.  By \ref{para:transp_str}, $\Alg{\T}^!(C)$ is discrete, so by \ref{para:normal_talgs} we find that the normalization $A^\nml$ of each $\T$-algebra $A$ on $C$ is the unique normal $\T$-algebra on $C$ with $A \cong A^\nml$ in $\Alg{\T}(C)$.  Hence $\Alg{\T}(C) \simeq \Alg{\T}^!(C)$, and therefore $\Alg{\T}(C)$ is both a groupoid and a preordered class, so merely a class equipped with an equivalence relation $\cong$.  We write
$$\Algs(C)$$
for the fibre over $C$ of the functor $\ca{\text{$-$}}:\Algs^\s(\C) \rightarrow \C_\si$ \pref{para:str_mor_algs}, so that $\Algs(C)$ is a category whose objects are algebras $(\T,A)$ on $C$, and whose morphisms $(M,f):(\T,A) \rightarrow (\U,B)$ consist of a morphism $M:\T \rightarrow \U$ and an isomorphism $f:A \rightarrow M^*(B)$ in $\Alg{\T}(C)$.  Since the groupoid $\Alg{\T}(C)$ is also preordered class, such a morphism $(M,f)$ in $\Algs(C)$ may be described more succinctly as a morphism of $\J$-theories $M:\T \rightarrow \U$ such that $A \cong M^*(B)$ in $\Alg{\T}(C)$. 
\end{para}

\begin{cor}\label{thm:ind_morph}
The full algebra $(\C_C,\Gamma_C)$ on an object $C$ of $\C$ is a terminal object of $\Algs(C)$.  Explicitly, given any $\T$-algebra $A$ on an object $C$ of $\C$ there is a unique morphism of $\J$-theories $M:\T \rightarrow \C_C$ such that $A \cong M^*(\Gamma_C)$ in $\Alg{\T}(C)$, with the notations of \ref{para:full_jth}, \ref{para:str_mor_algs}, and \ref{para:algs_on_c}.
\end{cor}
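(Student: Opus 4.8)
The plan is to obtain this as an immediate consequence of Theorem~\ref{thm:univ_prop_gamma}, specializing the universal property established there to the identity isomorphism on the fixed carrier $C$. The key observation is that $\Algs(C)$ is, by definition \pref{para:algs_on_c}, the fibre over $C$ of the carrier functor $\ca{\text{$-$}}:\Algs^\s(\C) \rightarrow \C_\si$ \pref{para:str_mor_algs}; thus a morphism $(M,f):(\T,A) \rightarrow (\U,B)$ of $\Algs^\s(\C)$ is a morphism of $\Algs(C)$ precisely when $\ca{A} = \ca{B} = C$ and $\ca{f} = 1_C$.

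First I would note that the full algebra $(\C_C,\Gamma_C)$ lies in $\Algs(C)$, since $\Gamma_C$ is a $\C_C$-algebra on $C$ with carrier $\ca{\Gamma_C} = C$ \pref{para:full_jth}. Then, given any $\T$-algebra $A$ on $C$, so that $\ca{A} = C$, I would apply the final sentence of Theorem~\ref{thm:univ_prop_gamma} with the isomorphism $c = 1_C : \ca{A} = C \rightarrow C$, obtaining a unique morphism $(M,f):(\T,A) \rightarrow (\C_C,\Gamma_C)$ in $\Algs^\s(\C)$ with $\ca{f} = 1_C$. Since the condition $\ca{f} = 1_C$ is exactly what characterizes the morphisms of the fibre $\Algs(C)$, this is equivalently the unique morphism $(\T,A) \rightarrow (\C_C,\Gamma_C)$ in $\Algs(C)$, whence the full algebra is terminal.

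For the explicit reformulation, I would invoke the succinct description of morphisms of $\Algs(C)$ from \pref{para:algs_on_c}: because $\Alg{\T}(C)$ is both a groupoid and a preordered class, a morphism in $\Algs(C)$ from $(\T,A)$ to $(\C_C,\Gamma_C)$ is given equivalently by a morphism of $\J$-theories $M:\T \rightarrow \C_C$ with $A \cong M^*(\Gamma_C)$ in $\Alg{\T}(C)$, the accompanying isomorphism $f$ being then uniquely determined. Hence the unique morphism found above corresponds to a unique such $M$, yielding the stated explicit form.

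I expect no serious obstacle, since the corollary is essentially the fibrewise restriction of Theorem~\ref{thm:univ_prop_gamma}. The only points needing care are the bookkeeping that identifies a morphism of the fibre $\Algs(C)$ with a morphism $(M,f)$ of $\Algs^\s(\C)$ satisfying $\ca{f} = 1_C$, and the passage between the pair $(M,f)$ and the single datum $M$ that is licensed by $\Alg{\T}(C)$ being a preordered groupoid \pref{para:algs_on_c}.
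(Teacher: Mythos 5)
Your proposal is correct and is essentially the paper's own proof: the paper simply takes $c = 1_C$ in Theorem~\ref{thm:univ_prop_gamma}, and your additional bookkeeping about the fibre $\Algs(C)$ and the passage from $(M,f)$ to $M$ just makes explicit what the paper leaves implicit.
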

\begin{proof}
This is obtained by taking $c = 1_C$ in \ref{thm:univ_prop_gamma}.
\end{proof}

Corollary \ref{thm:ind_morph} allows us to generalize to arbitrary $\T$-algebras the notation $[A]$ that we employed in \ref{para:full_jth} for a given normal $\T$-algebra $A$:

\begin{notn}\label{defn:ind_mor}
Given any $\T$-algebra $A$ on an object $C$ of $\C$, we write
$$[A]\;:\;\T \longrightarrow \C_C$$
to denote the unique morphism of $\J$-theories with $\Gamma_C [A] \cong A$ in $\Alg{\T}(C)$.  In the case where $A$ is a normal $\T$-algebra on $C$, we may characterize $[A]$ as the unique morphism of $\J$-theories with $\Gamma_C [A] = A$ \pref{para:full_jth}.  For an arbitrary $\T$-algebra $A$ on $C$, since $A \cong A^!$ in $\Alg{\T}(C)$ we find that $[A] = [A^\nml]$, so $\Gamma_C[A] = A^\nml$, and therefore
\begin{equation}\label{eq:ind_mor_comps_via_normaln}[A]_{JK} = A^\nml_{JK}\;:\;\T(J,K) \longrightarrow \C_C(J,K) = \C([J,C],[K,C])\end{equation}
for all $J,K \in \ob\J$.
\end{notn}

\begin{para}\label{para:morph_between_full_ths}
Let $H:\X \rightarrow \Y$ be a $\V$-functor that preserves $\J$-cotensors, where both $\X$ and $\Y$ are $\V$-categories with $\J$-cotensors, and let $X$ be an object of $\X$.  Then the composite $H\Gamma_X:\X_X \rightarrow \Y$ is an algebra on $HX$ in $\Y$, so by \ref{thm:ind_morph} there is a unique morphism
$$H_X = [H\Gamma_X]:(\X_X,H\Gamma_X) \rightarrow (\Y_{HX},\Gamma_{HX})$$
in $\Algs(HX)$.  For each algebra $(\T,A)$ on $X$, since $[A]:(\T,A) \rightarrow (\X_X,\Gamma_X)$ is a morphism in $\Algs(X)$, it follows that $[A]:\T \rightarrow \X_X$ also underlies a morphism $[A]:(\T,HA) \rightarrow (\X_X,H\Gamma_X)$ in $\Algs(HX)$, and the diagrams
$$
\xymatrix{
(\T,HA) \ar[d]_{[A]} \ar[dr]^{[HA]} & \;\;\;\;\text{in $\Algs(HX)$}                     &                      & \T \ar[d]_{[A]} \ar[dr]^{[HA]} & \;\;\;\;\text{in $\ThJ$}\\
(\X_X,H\Gamma_X) \ar[r]^(.45){H_X} &  (\Y_{HX},\Gamma_{HX}) &   & \C_C \ar[r]^(.45){H_X} &  \Y_{HX} 
}
$$
commute, by the uniqueness of $[HA]$.  Also, if $H$ is faithful, then $H_X$ is a subtheory embedding, because by definition $\Gamma_{HX}H_X \cong H\Gamma_X$ and $\Gamma_X$ is fully faithful.
\end{para}

\section{Commuting pairs of algebras}\label{sec:comm_pairs_algs}

We noted in \ref{para:cmtn} that the concept of commutation of cospans of $\J$-theories can be applied with respect to the morphisms of $\J$-theories $[A]$ and $[B]$ determined by a pair of normal algebras $(\T,A)$ and $(\U,B)$ on the same carrier.  We now formulate a convenient generalization of this notion of commutation that is applicable to arbitrary algebras on a given carrier and can be expressed without reference to $[A]$ or $[B]$.

\begin{prop}\label{thm:equiv_charns_b_pipe_a}
Let $A$ and $B$ be algebras on an object $C$ of $\C$.  Then there is at most one morphism $M:B \rightarrow A^\perp$ in $\Algs(C)$, where $A^\perp$ is the commutant of $A$ \eqref{eq:cmtnt_of_alg}.  Writing the algebras $A$ and $B$ as $(\T,A)$ and $(\U,B)$, respectively, a morphism $M:B \rightarrow A^\perp$ in $\Algs(C)$ is equivalently a morphism of $\J$-theories $M:\U \rightarrow \T^\perp_A$ that satisfies the following equivalent conditions: (1) $A^\perp M \cong B$ in $\Alg{\U}(C)$, (2) $[A^\perp] M = [B]$.  If $B$ is normal, then (1) and (2) are equivalent to the equation $A^\perp M = B$.
\end{prop}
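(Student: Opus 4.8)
The plan is to reduce everything to the succinct description of morphisms in $\Algs(C)$ from \ref{para:algs_on_c} together with the universal property of the induced morphism $[-]$ recorded in \ref{thm:ind_morph} and \ref{defn:ind_mor}. First I would unwind the definition: by \ref{para:algs_on_c}, a morphism $M:B \to A^\perp$ in $\Algs(C)$ is precisely a morphism of $\J$-theories $M:\U \to \T^\perp_A$ such that $M^*(A^\perp) = A^\perp M \cong B$ in $\Alg{\U}(C)$, and since $\Alg{\U}(C)$ is a groupoid this is exactly condition (1). This already delivers the claimed reformulation of morphisms $B \to A^\perp$ as $\J$-theory morphisms satisfying (1), so the remaining content is the equivalence (1) $\Leftrightarrow$ (2), the uniqueness assertion, and the normal case.

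For the equivalence (1) $\Leftrightarrow$ (2) I would invoke the characterization of $[B]$ as the \emph{unique} morphism of $\J$-theories $\U \to \C_C$ with $\Gamma_C[B] \cong B$ in $\Alg{\U}(C)$ (\ref{defn:ind_mor}, \ref{thm:ind_morph}), together with the identity $\Gamma_C[A^\perp] = A^\perp$ from \ref{para:cmt}. The key observation is that $[A^\perp]M:\U \to \C_C$ is itself a morphism of $\J$-theories with $\Gamma_C([A^\perp]M) = (\Gamma_C[A^\perp])M = A^\perp M$. Hence (1) reads exactly $\Gamma_C([A^\perp]M) \cong B$, which by the uniqueness property of $[B]$ is equivalent to $[A^\perp]M = [B]$, i.e. (2); conversely, applying $\Gamma_C(-)$ to (2) recovers $A^\perp M = \Gamma_C[B] \cong B$.

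For uniqueness I would use that $[A^\perp]:\T^\perp_A \hookrightarrow \C_C$ is a subtheory embedding (\ref{para:cmt}), so each component $[A^\perp]_{JK}$ is a monomorphism by \eqref{eq:gtjaa}. The point worth stressing is that uniqueness does \emph{not} follow merely from $\Alg{\U}(C)$ being a preordered class---that only makes the isomorphism datum redundant---but rather one must pin down the underlying theory morphism $M$ itself. Any morphism $M:B \to A^\perp$ satisfies (2), so if $M,M'$ are two such then $[A^\perp]M = [B] = [A^\perp]M'$; since all the morphisms of $\J$-theories involved are identity-on-objects, comparing components and cancelling the monomorphisms $[A^\perp]_{JK}$ yields $M = M'$.

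Finally, for the normal case I would first check that $A^\perp M$ is a normal $\U$-algebra: since $A^\perp$ is normal and $M$ commutes with the structural functors $\tau$ (being a morphism of $\J$-theories), one computes $(A^\perp M)\tau_\U = A^\perp \tau_{\T^\perp_A} = [-,C] = [-,(A^\perp M)I]$, which is the normality criterion of \ref{para:normal_talgs}. Then, if $B$ is also normal, condition (1) gives an isomorphism $A^\perp M \cong B$ between two normal $\U$-algebras on $C$ in the groupoid $\Alg{\U}(C)$; since $\Alg{\U}^\nml(C)$ is discrete (\ref{para:algs_on_c}, \ref{para:transp_str}), this isomorphism forces $A^\perp M = B$, while the reverse implication is trivial. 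I expect the verification that $A^\perp M$ is normal---and the accompanying care not to conflate $\cong$ with $=$---to be the only step requiring genuine attention; everything else is a direct application of the cited universal properties.
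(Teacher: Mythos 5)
Your proposal is correct and follows essentially the same route as the paper's proof: unwind the definition of morphisms in $\Algs(C)$ via \ref{para:algs_on_c}, use $\Gamma_C[A^\perp]=A^\perp$ and the uniqueness of $[B]$ for (1)$\Leftrightarrow$(2), derive uniqueness of $M$ from the subtheory embedding $[A^\perp]$, and use discreteness of normal algebras on $C$ for the normal case. Your extra care in explicitly verifying that $A^\perp M$ is normal and in noting that uniqueness requires faithfulness of $[A^\perp]$ rather than mere preorderedness of $\Alg{\U}(C)$ is sound but does not change the argument.
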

\begin{proof}
By \ref{para:algs_on_c}, a morphism $M:B \rightarrow A^\perp$ in $\Algs(C)$ is equivalently a morphism of $\J$-theories $M:\U \rightarrow \T^\perp_A$ satisfying (1).  By \ref{para:cmt}, $A^\perp = \Gamma_C [A^\perp]$, so if $M:\U \rightarrow \T^\perp_A$ is a morphism of $\J$-theories satisfying (2), then $A^\perp M =  \Gamma_C [A^\perp] M = \Gamma_C [B] \cong B$ in $\Alg{\U}(C)$, so (1) holds.  Conversely, if (1) holds, then $\Gamma_C [A^\perp] M = A^\perp M \cong B$ in $\Alg{\U}(C)$, so (2) holds, by the uniqueness of $[B]$ \pref{defn:ind_mor}.  Since $[A^\perp]$ is a subtheory embedding \pref{para:cmt}, there is at most one morphism of $\J$-theories $M$ satisfying (2).  If $B$ is normal and (1) holds, then $A^\perp M = B$ by \ref{para:algs_on_c}, since $A^\perp M$ is also normal \pref{para:cmt}.
\end{proof}

\begin{defn}\label{defn:cmt}
Let $A$ and $B$ be algebras on an object $C$ of $\C$.  We say that $A$ \textbf{commutes with} $B$ if there exists a (necessarily unique, \ref{thm:equiv_charns_b_pipe_a}) morphism $B \rightarrow A^\perp$ in $\Algs(C)$, which we then denote by
$$[B|A]\;:\;B \longrightarrow A^\perp\;.$$
Writing $A$ and $B$ as $(\T,A)$ and $(\U,B)$, respectively, we say that $(A,B)$ is a \textbf{commuting $\T$-$\U$-algebra pair} on $C$ if $A$ commutes with $B$, equivalently, if there exists a (necessarily unique) morphism of $\J$-theories $M:\U \rightarrow \T^\perp_A$ that satisfies the equivalent conditions (1) and (2) of \ref{thm:equiv_charns_b_pipe_a}, in which case we write $M$ as
\begin{equation}\label{eq:b_pipe_a}[B|A]\;:\;\U \longrightarrow \T^\perp_A\;.\end{equation}
If $A$ commutes with $B$ then $A^\perp[B|A] \cong B$ in $\Alg{\U}(C)$, and if $B$ is normal then $A^\perp [B|A] = B$ by \ref{thm:equiv_charns_b_pipe_a}.
\end{defn}

It is immediate that this notion of commutation for algebras specializes to recover the notion of commutation for morphisms of $\J$-theories \pref{para:cmtn}, as such morphisms may be regarded as normal algebras \pref{para:mor_jth}.  Conversely, we shall later confirm in \ref{thm:cmtn_via_normaln} that $A$ commutes with $B$ if and only if the associated morphisms of $\J$-theories $[A]$ and $[B]$ commute in the sense of \ref{para:cmtn}.

\begin{rem}\label{para:alg_com_w_cmt}
Every algebra $A$ commutes with its commutant $A^\perp$, as witnessed by the identity morphism on $A^\perp$ in $\Algs(C)$, where $C =|A|$.
\end{rem}

\begin{para}\label{para:elem_descn_cmtn}
We now record elementary descriptions of commutation and of $[B|A]$ that can be useful in treating some examples.  Let  $(\T,A)$ and $(\U,B)$ be algebras on an object $C$ of $\C$.  By \ref{thm:equiv_charns_b_pipe_a}, $A$ commutes with $B$ if and only if the morphism of $\J$-theories $[B]:\U \rightarrow \C_C$ factors through the subtheory embedding $[A^\perp]:\T^\perp_A \hookrightarrow \C_C$ of \ref{para:cmt}, in which case $[B|A]:\U \rightarrow \T^\perp_A$ is the unique morphism of $\J$-theories such that
$$[A^\perp][B|A] = [B]\;.$$
But in view of \ref{para:mor_jth}, $[B]$ factors through $[A^\perp]$ if and only if $[B]_{JI}:\U(J,I) \rightarrow \C_C(J,I)$ factors through the monomorphism $[A^\perp]_{JI}:\T^\perp_A(J,I) \hookrightarrow \C_C(J,I)$ for each $J \in \ob\J$.  Hence, by \eqref{eq:gtjaa} and \eqref{eq:ind_mor_comps_via_normaln} we find that $A$ commutes with $B$ if and only if for each $J \in \ob\J$ there exists a (necessarily unique) morphism $[B|A]_{JI}$ such that the following diagram commutes:
\begin{equation}\label{eq:diagr_b_pipe_a}
\xymatrix{
\U(J,I) \ar[drr]_{B^\nml_{JI}} \ar@{-->}[rr]^(.4){[B|A]_{JI}} & & \Alg{\T}(\C)([J,A],A) \ar@{^(->}[d]^{G^\T_{[J,A],A}}\\
                          & & \C([J,C],C)
}
\end{equation}
\end{para}

\section{Bifold algebras}\label{sec:bifold_algs}

\begin{notn}\label{notn:pres_jcots_in_each_var}
Given three $\V$-categories $\E,\F,\G$ with $\J$-cotensors, we write
$$[\E,\F;\G]_\J$$
to denote the full sub-$\V'$-category of $[\E\otimes\F,\G]$ consisting of the $\V$-functors $D:\E \otimes \F \rightarrow \G$ that preserve $\J$-cotensors in each variable separately, meaning that the $\V$-functors $D(-,F)$ and $D(E,-)$ preserve $\J$-cotensors for all $E \in \ob\E$ and $F \in \ob\F$.
\end{notn}

\begin{defn}\label{defn:tu_alg}
Given $\J$-theories $\T$ and $\U$, a \textbf{$(\T,\U)$-algebra} in $\C$ is a $\V$-functor $D:\T \otimes \U \rightarrow \C$ that preserves $\J$-cotensors in each variable separately.  A \textbf{bifold algebra} in $\C$ is a triple $(\T,\U,D)$ consisting of $\J$-theories $\T$ and $\U$ and a $(\T,\U)$-algebra $D$.  We often write simply $D$ to denote the bifold algebra $(\T,\U,D)$.    We call $|D| = D(I,I)$ the \textbf{carrier} of $D$, and if $|D| = C$ then we call $D$ a \textbf{bifold algebra on $C$}.  Using the notation of \ref{notn:pres_jcots_in_each_var}, we write
\begin{equation}\label{eq:def_tualg}\Alg{(\T,\U)}(\C) = [\T,\U;\C]_\J\;,\end{equation}
so that the objects of $\Alg{(\T,\U)}(\C)$ are $(\T,\U)$-algebras in $\C$.  We later show that $[\T,\U;\C]_\J$ necessarily exists as a $\V$-category \pref{thm:tsalgs_equiv_talgs_in_salg} under our blanket assumptions in \ref{para:given_data}.  The notation \eqref{eq:def_tualg} is contravariantly functorial in $\T,\U \in \ThJ$, so that we obtain a functor $\Alg{(-,\blanktwo)}(\C) = [-,\blanktwo;\C]_\J:\ThJ^\op \times \ThJ^\op \rightarrow \eCAT{\V'}$ (which in fact takes its values in $\VCAT$, \ref{thm:tsalgs_equiv_talgs_in_salg}).  There is a $\V'$-functor \begin{equation}\label{eq:carrier_func_on_tualg}|\text{$-$}|:\Alg{(\T,\U)}(\C) \rightarrow \C\end{equation}
 given by evaluation at $(I,I)$.  For each object $C$ of $\C$ we write $\Alg{(\T,\U)}(C)$ to denote the fibre over $C$ of the ordinary functor underlying \eqref{eq:carrier_func_on_tualg}.
\end{defn}

\begin{rem}\label{rem:equiv_formns_of_tualgs}
There are natural isomorphisms
\begin{equation}\label{eq:equiv_descns_tualgs}
[\T,[\U,\C]_\J]_\J \;\;\cong\;\; [\T,\U;\C]_\J \;\;\cong\;\; [\U,\T;\C]_\J \;\;\cong\;\; [\U,[\T,\C]_\J]_\J\;,
\end{equation}
so that $(\T,\U)$-algebras can be described equivalently as $\T$-algebras in $\Alg{\U}(\C)$, as $\U$-algebras in $\Alg{\T}(\C)$, or as $(\U,\T)$-algebras.
\end{rem}

\begin{notn}\label{notn:bif}
Given a $(\T,\U)$-algebra $D$, we write
$$D_\ell = D(-,I):\T \longrightarrow \C\;,\;\;\;\;\;\;\;D_r = D(I,-):\U \longrightarrow \C\;,$$
noting that $D_\ell$ is a $\T$-algebra on $|D|$ and $D_r$ is a $\U$-algebra on $|D|$.  We call $D_\ell$ and $D_r$ the \textbf{left face} and \textbf{right face} of $D$, respectively.  We also introduce notations for the three transposes of $D$ under the isomorphisms \eqref{eq:equiv_descns_tualgs}, writing
$$D_{\ell r}:\T \longrightarrow [\U,\C]_\J = \Alg{\U}(\C),\;\;\;\;\;D_{r\ell}:\U \longrightarrow [\T,\C]_\J = \Alg{\T}(\C),\;\;\;\;\text{and}$$
$$D^\circ:\U \otimes \T \longrightarrow \C$$
for the $\V$-functors given by $(D_{\ell r}J)K = D(J,K) = D^\circ(K,J) = (D_{r\ell}K)J$, $\V$-naturally in $J \in \T$, $K \in \U$.  Given a morphism $f:D \rightarrow E$ in $\Alg{(\T,\U)}(\C)$, we write
$$f_\ell = f_{-I}:D_\ell \rightarrow E_\ell\;,\;\;\;\;f_r = f_{I-}:D_r \rightarrow E_r\;,$$
and we define $f^\circ:D^\circ \rightarrow E^\circ$ by $f^\circ_{KJ} = f_{JK}$ $(J \in \T, K \in \U)$.
\end{notn}

In Section \ref{sec:exa_bif} we discuss various examples of bifold algebras, but for now we take note of one of the most basic classes of examples:

\begin{exa}[\textbf{Bimodules}]\label{exa:bim}
Consider the system of arities $\{I\} \hookrightarrow \V$ consisting of just the unit object $I$ of $\V$.  By \ref{para:exa_sys_ar}, $\{I\}$-theories $\T$ are precisely monoids $T$ in $\V$, and $\T$-algebras are left $T$-modules in $\V$, i.e. objects of $\V$ equipped with an associative, unital $T$-action.  Given monoids $T$ and $S$ in $\V$, let us regard $T$ and $U = S^\op$ as $\{I\}$-theories $\T$ and $\U$, respectively.  Then a $(\T,\U)$-algebra is equivalently described as a \mbox{\textit{$T$-$S$-bimodule}} in $\V$.  In particular, when $\V = \Ab$ is the category of abelian groups, with the usual tensor product and the system of arities $\{\ZZ\}$, we recover the usual notion of $T$-$S$-bimodule for a pair of rings $T$ and $S$.  
\end{exa}

Note that bifold algebras are quite different from \textit{bimodels} in the sense of Wraith \cite{Wra:AlgTh,Wra:AlgOverTh}, which provide a different way of generalizing bimodules.  Indeed, in the case where $\J = \V = \Set$, the concept of bimodel was defined in \cite[\S 8]{Wra:AlgTh} and \cite[\S 2]{Wra:AlgOverTh} as a coproduct-preserving functor\footnote{The notion of algebraic theory employed in \cite{Wra:AlgTh,Wra:AlgOverTh} is the dual of the notion of Lawvere-Linton theory.} $\T^\op \rightarrow \Alg{\U}(\Set)$, equivalently, a $\T$-algebra in $(\Alg{\U}(\Set))^\op$.

\begin{para}
Let $(\T,\U,D)$ be a bifold algebra, and write $A = D_\ell$, $B = D_r$, $C = |D|$, and $E = D_{r\ell}$ with the notation of \ref{notn:bif}.  Then $E:\U \rightarrow \Alg{\T}(\C)$ is a $\U$-algebra on $A$, so there is a unique morphism of $\J$-theories $[E]:\U \rightarrow \T^\perp_A$ such that $\Gamma_A[E] \cong E$ in $\Alg{\U}(A)$ \pref{defn:ind_mor}, recalling that $\T^\perp_A = \Alg{\T}(\C)_A$.  Hence, with the notation of \eqref{eq:a_lowerperp}, $A^\perp[E] =  G^\T\Gamma_A[E] \cong G^\T E = B$ in $\Alg{\U}(C)$, so $[E]$ witnesses that $A$ commutes with $B$, by \ref{defn:cmt}, and $[E] = [B|A]$.  Since we can apply this argument also to $D^\circ$, this proves the following:
\end{para}

\begin{prop}\label{thm:dl_commutes_w_dr}
Let $(\T,\U,D)$ be a bifold algebra.  Then $D_\ell$ commutes with $D_r$, and $[D_r|D_\ell] = [D_{r\ell}]:\U \rightarrow \T^\perp_{D_\ell}$ with the notation of \ref{defn:ind_mor}, \eqref{eq:b_pipe_a}, \ref{notn:bif}.  Also, $D_r$ commutes with $D_\ell$, and $[D_\ell|D_r] = [D_{\ell r}]$.
\end{prop}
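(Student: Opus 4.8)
The plan is to recognize the claim as an immediate consequence of viewing the transpose $D_{r\ell}$ as a $\U$-algebra inside $\Alg{\T}(\C)$ and then applying the universal property of the full algebra there. Writing $A = D_\ell$, $B = D_r$, and $C = |D|$ as in \ref{notn:bif}, the natural isomorphism $[\T,\U;\C]_\J \cong [\U,[\T,\C]_\J]_\J$ of \ref{rem:equiv_formns_of_tualgs} identifies $D$ with a $\J$-cotensor-preserving $\V$-functor $D_{r\ell}:\U \rightarrow [\T,\C]_\J = \Alg{\T}(\C)$, i.e.\ a $\U$-algebra in the $\V'$-category $\Alg{\T}(\C)$. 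Its carrier is $D_{r\ell}(I) = D(-,I) = A$, so $D_{r\ell}$ is a $\U$-algebra \emph{on} $A$.

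First I would invoke \ref{para:jth_enr_in_vprime} to license applying the full-algebra machinery of \ref{defn:ind_mor} inside $\Alg{\T}(\C)$, which carries (pointwise) $\J$-cotensors by \ref{para:carrier_func}. The crucial identification is that the full $\J$-theory of $A$ in $\Alg{\T}(\C)$ is by definition the commutant $\T^\perp_A = \Alg{\T}(\C)_A$ of \ref{para:cmt}; hence \ref{defn:ind_mor} yields a unique morphism of $\J$-theories $[D_{r\ell}]:\U \rightarrow \T^\perp_A$ with $\Gamma_A[D_{r\ell}] \cong D_{r\ell}$ in $\Alg{\U}(A)$, where $\Gamma_A$ denotes the full algebra on $A$ in $\Alg{\T}(\C)$.

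The remaining step is to verify condition (1) of \ref{thm:equiv_charns_b_pipe_a} for $M = [D_{r\ell}]$. Using $A^\perp = G^\T\Gamma_A$ from \eqref{eq:a_lowerperp}, I would compute $A^\perp[D_{r\ell}] = G^\T\Gamma_A[D_{r\ell}] \cong G^\T D_{r\ell}$ in $\Alg{\U}(C)$, obtained by postcomposing the preceding isomorphism with $G^\T$, which sends the fibre over $A$ to the fibre over $C = |A|$ and hence carries its identity component $1_A$ to $1_C$. A routine evaluation then identifies $G^\T D_{r\ell} = D_r = B$, since $(G^\T D_{r\ell})(K) = |D_{r\ell}K| = D(I,K) = D_r(K)$. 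Thus $A^\perp[D_{r\ell}] \cong B$ in $\Alg{\U}(C)$, so by \ref{thm:equiv_charns_b_pipe_a} and \ref{defn:cmt} the morphism $[D_{r\ell}]$ witnesses that $D_\ell$ commutes with $D_r$, and $[D_r|D_\ell] = [D_{r\ell}]$.

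For the symmetric statement I would apply the identical argument to the transpose bifold algebra $D^\circ:\U \otimes \T \rightarrow \C$ of \ref{notn:bif}, noting $(D^\circ)_\ell = D_r$, $(D^\circ)_r = D_\ell$, and $(D^\circ)_{r\ell} = D_{\ell r}$; this immediately gives that $D_r$ commutes with $D_\ell$ with $[D_\ell|D_r] = [D_{\ell r}]$. The argument is essentially formal, and I do not expect a genuine obstacle: the only point requiring care—and the conceptual crux of the whole proof—is the identification of the commutant $\T^\perp_A$ with the full $\J$-theory $\Alg{\T}(\C)_A$, which is precisely what allows the universal property of the full algebra (\ref{thm:ind_morph}) to supply the witnessing morphism and thereby do all the work.
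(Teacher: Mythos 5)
Your proposal is correct and follows essentially the same route as the paper: both identify $D_{r\ell}$ as a $\U$-algebra on $D_\ell$ in $\Alg{\T}(\C)$, apply the universal property of the full algebra there (using $\T^\perp_{D_\ell} = \Alg{\T}(\C)_{D_\ell}$) to obtain $[D_{r\ell}]$, verify $A^\perp[D_{r\ell}] \cong D_r$ by postcomposing with $G^\T$, and handle the symmetric claim via $D^\circ$. No gaps.
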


\section{Tensor products of \texorpdfstring{$\J$}{J}-theories}\label{thm:tens_prods}

Given $\J$-theories $\T$ and $\U$, we now discuss the possibility of forming a \textit{tensor product} of $\J$-theories, $\otimesJ{\T}{\U}$, whose algebras are equivalently described as $(\T,\U)$-algebras in the sense of \ref{defn:tu_alg}.  This we shall not require in the sequel, but we treat it here for completeness.  We assume \textit{in this section only} that $\J$ is small and that $\V$ is \textit{locally bounded} \cite[\S 6.1]{Ke:Ba} (and so, in particular, complete).  The following is a corollary to \cite[\S 6.5]{Ke:Ba}:
\begin{prop}\label{thm:tensor_in_vcatj}
Given any small $\V$-categories $\T$ and $\U$ with $\J$-cotensors, there is a small $\V$-category $\totimesJ{\T}{\U}$ that has $\J$-cotensors and is equipped with a $\V$-functor $P:\T \otimes \U \rightarrow \totimesJ{\T}{\U}$ that preserves $\J$-cotensors in each variable separately and has the following property:  For every $\V$-category $\X$ with $\J$-cotensors, there is an equivalence of $\V$-categories $[\totimesJ{\T}{\U},\X]_\J \simeq [\T,\U;\X]_\J$ given by composing with $P$.
\end{prop}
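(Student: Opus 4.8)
The plan is to exhibit $[\T,\U;\X]_\J$ as the category of models of a small $\V$-enriched limit sketch on $\T \otimes \U$, and then to take $\totimesJ{\T}{\U}$ to be the free $\V$-category with $\J$-cotensors generated by that sketch, whose existence and smallness are supplied by [Ke:Ba, §6.5] under our standing assumption that $\V$ is locally bounded and $\J$ is small. The required equivalence then falls out of the defining universal property of this free completion.

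First I would record, directly from \ref{notn:pres_jcots_in_each_var}, the cylinders that cut out $[\T,\U;\X]_\J$ inside $[\T\otimes\U,\X]$. A $\V$-functor $D:\T\otimes\U\to\X$ preserves $\J$-cotensors in the first variable exactly when, for every $J\in\ob\J$, $t\in\ob\T$ and $u\in\ob\U$, it carries to a cotensor cylinder in $\X$ the cylinder in $\T\otimes\U$ obtained by transporting the counit $\gamma_J$ of the designated cotensor $[J,t]$ in $\T$ \pref{para:jcots} along the insertion $(-,u):\T\to\T\otimes\U$; and symmetrically in the second variable, using the cotensors $[J,u]$ of $\U$ transported along $(t,-):\U\to\T\otimes\U$. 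Since $\ob\J$, $\ob\T$ and $\ob\U$ are small, these cylinders constitute a small set $\Sigma$, each member weighted by an object of $\J$; thus $(\T\otimes\U,\Sigma)$ is a small limit sketch whose declared weights all lie in the essentially small class of $\J$-cotensor weights, and a $\V$-functor into $\X$ is a model of this sketch precisely when it is an object of $[\T,\U;\X]_\J$.

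Next I would invoke [Ke:Ba, §6.5]. Because $\V$ is locally bounded and the sketch $(\T\otimes\U,\Sigma)$ is small with weights drawn from the small class of $\J$-cotensors, the free $\V$-category with $\J$-cotensors on this sketch exists and is again small; I set $\totimesJ{\T}{\U}$ to be it, with $P:\T\otimes\U\to\totimesJ{\T}{\U}$ the universal comparison $\V$-functor. By construction $P$ sends every cylinder of $\Sigma$ to a genuine $\J$-cotensor cylinder, which is precisely the assertion that $P$ preserves $\J$-cotensors in each variable separately, so $P$ is itself an object of $[\T,\U;\totimesJ{\T}{\U}]_\J$. Given this, composing a $\J$-cotensor-preserving $\V$-functor $G:\totimesJ{\T}{\U}\to\X$ with $P$ yields an object of $[\T,\U;\X]_\J$, so $(-)\circ P$ is a well-defined $\V'$-functor $[\totimesJ{\T}{\U},\X]_\J\to[\T,\U;\X]_\J$; and the universal property of the free completion says exactly that every model of $(\T\otimes\U,\Sigma)$ in $\X$ extends along $P$, uniquely up to canonical isomorphism, to a $\J$-cotensor-preserving $\V$-functor on $\totimesJ{\T}{\U}$, in a way that makes $(-)\circ P$ an equivalence of $\V$-categories. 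This is the asserted equivalence.

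The one genuinely substantive point is the appeal to local boundedness guaranteeing that $\totimesJ{\T}{\U}$ is small: without it the free $\J$-cotensor completion of the sketch would in general only be a legitimately large $\V'$-category, and it is exactly here that the hypotheses singled out for this section are used. By contrast, the translation between models of $\Sigma$ and separate-variable cotensor preservation, and the passage from the universal property to the displayed equivalence, are formal; the alternative descriptions in \eqref{eq:equiv_descns_tualgs} of \ref{rem:equiv_formns_of_tualgs} may be used to double-check that the sketch indeed captures the intended notion, but are not otherwise needed.
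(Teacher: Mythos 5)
Your proposal is correct and takes essentially the same route as the paper: the paper's own proof is a one-line appeal to (the dual of) \cite[\S 6.5]{Ke:Ba}, and your argument merely makes explicit the small $\J$-cotensor presketch on $\T \otimes \U$ to which that result is applied, together with the standard unpacking of its universal property. There is no substantive difference and no gap.
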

\begin{proof}
This follows from \cite[\S 6.5]{Ke:Ba}, which treats the general (dual) case of $\F$-colimits for a small class of weights $\F$, rather than $\F$-limits as in our case of $\J$-cotensors.
\end{proof}

\begin{notn}\label{def:otimesJ}
For the remainder of this section, we let $\T$ and $\U$ be $\J$-theories and let $\otimesJ{\T}{\U}$ denote the full $\J$-theory of the object $P(I,I)$ of $\totimesJ{\T}{\U}$ \pref{para:full_jth}.
\end{notn}

\begin{lem}\label{thm:lem_tensor_prod_jth}
There is an equivalence $\totimesJ{\T}{\U} \simeq \otimesJ{\T}{\U}$ such that the composite $\T \otimes \U \xrightarrow{P} \totimesJ{\T}{\U} \xrightarrow{\sim} \otimesJ{\T}{\U}$ is given on objects by $(J,K) \mapsto J \otimes K$.
\end{lem}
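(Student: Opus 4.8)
The plan is to show that the canonical fully faithful $\V$-functor $\Gamma_{P(I,I)}:\otimesJ{\T}{\U} \to \totimesJ{\T}{\U}$ supplied by the full-$\J$-theory construction \pref{para:full_jth} (applied to the object $P(I,I)$ of $\totimesJ{\T}{\U}$, so that $(\totimesJ{\T}{\U})_{P(I,I)} = \otimesJ{\T}{\U}$ by \ref{def:otimesJ}) is an equivalence, and that its quasi-inverse carries $P$ to the object assignment $(J,K) \mapsto J \otimes K$. The method is to play the universal property of $\totimesJ{\T}{\U}$ from \ref{thm:tensor_in_vcatj} off against the fully faithfulness of $\Gamma_{P(I,I)}$.

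First I would compute the objects $P(J,K)$. Since $J = [J,I]$ in $\T$ and $K = [K,I]$ in $\U$ \pref{para:jth}, and $P$ preserves $\J$-cotensors in each variable separately, we obtain $P(J,K) \cong [J,P(I,K)] \cong [J,[K,P(I,I)]]$. By associativity of cotensors together with the strong monoidality of $j$ and the closure of $\J$ under $\otimes$ \pref{para:sys_arities}, this gives $P(J,K) \cong [J \otimes K,\,P(I,I)]$ with $J \otimes K \in \ob\J$. But $[J\otimes K,P(I,I)] = \Gamma_{P(I,I)}(J\otimes K)$ by the definition of $\Gamma_{P(I,I)}$ \pref{para:full_jth}, so each object $P(J,K)$ lies, up to isomorphism, in the essential image of $\Gamma_{P(I,I)}$.

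Next, recall from \ref{para:full_jth} that $\Gamma_{P(I,I)}$ is fully faithful and, being a normal $\otimesJ{\T}{\U}$-algebra, preserves $\J$-cotensors \pref{para:talg}. Corestricting $P$ along the fully faithful $\Gamma_{P(I,I)}$, using the object-level isomorphisms just obtained to transport the enrichment, yields a $\V$-functor $Q:\T \otimes \U \to \otimesJ{\T}{\U}$ with $Q(J,K) = J \otimes K$ on objects and a $\V$-natural isomorphism $\Gamma_{P(I,I)}\,Q \cong P$. Because $\Gamma_{P(I,I)}$ is fully faithful and $\J$-cotensor preserving, and $\otimesJ{\T}{\U}$ has $\J$-cotensors, it reflects $\J$-cotensors; hence $Q$ preserves $\J$-cotensors in each variable \pref{notn:pres_jcots_in_each_var}, i.e. $Q \in [\T,\U;\otimesJ{\T}{\U}]_\J$. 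Applying the universal property of \ref{thm:tensor_in_vcatj} with $\X = \otimesJ{\T}{\U}$, the equivalence $[\totimesJ{\T}{\U},\otimesJ{\T}{\U}]_\J \simeq [\T,\U;\otimesJ{\T}{\U}]_\J$ given by composing with $P$ produces a $\J$-cotensor preserving $\V$-functor $\bar{Q}:\totimesJ{\T}{\U} \to \otimesJ{\T}{\U}$ with $\bar{Q}\,P \cong Q$.

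Finally I would verify that $\Gamma_{P(I,I)}$ and $\bar{Q}$ are mutually inverse. The composite $\Gamma_{P(I,I)}\,\bar{Q}$ preserves $\J$-cotensors and satisfies $\Gamma_{P(I,I)}\,\bar{Q}\,P \cong \Gamma_{P(I,I)}\,Q \cong P$; since the equivalence $[\totimesJ{\T}{\U},\totimesJ{\T}{\U}]_\J \simeq [\T,\U;\totimesJ{\T}{\U}]_\J$ of \ref{thm:tensor_in_vcatj} is fully faithful it reflects isomorphisms, so $\Gamma_{P(I,I)}\,\bar{Q} \cong \id$. Whiskering this isomorphism with $\Gamma_{P(I,I)}$ and using that post-composition with the fully faithful $\Gamma_{P(I,I)}$ is again fully faithful, hence reflects isomorphisms, gives $\bar{Q}\,\Gamma_{P(I,I)} \cong \id$. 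Thus $\bar{Q}$ is an equivalence $\totimesJ{\T}{\U} \simeq \otimesJ{\T}{\U}$, and the composite $\bar{Q}\,P \cong Q$ is given on objects by $(J,K) \mapsto J\otimes K$, as required. I expect the main obstacle to be the corestriction in the third paragraph: manufacturing $Q$ as a genuine $\J$-cotensor-preserving $\V$-functor (not merely an object assignment) out of the fully faithfulness of $\Gamma_{P(I,I)}$ and the chosen isomorphisms; once $Q$ is in hand, the remaining steps are formal manipulations of the universal property.
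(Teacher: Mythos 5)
Your proposal is correct and follows essentially the same route as the paper's proof: compute $P(J,K) \cong [J\otimes K, P(I,I)] \cong \Gamma_{P(I,I)}(J\otimes K)$, corestrict $P$ through the fully faithful $\Gamma_{P(I,I)}$ to obtain $Q$ with $Q(J,K)=J\otimes K$, invoke the universal property of $\totimesJ{\T}{\U}$ to produce $\bar Q$ (the paper's $Q^\sharp$), and deduce $\Gamma_{P(I,I)}\bar Q \cong \id$ and then $\bar Q\,\Gamma_{P(I,I)} \cong \id$ from fully faithfulness. The step you flag as the main obstacle (manufacturing $Q$ as a genuine $\V$-functor) is handled in the paper exactly as you suggest, by transporting along the chosen isomorphisms through the fully faithful $\Gamma_{P(I,I)}$.
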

\begin{proof}
Let $\E = \totimesJ{\T}{\U}$ and $E = P(I,I)$.  Then $\otimesJ{\T}{\U} = \E_E$ is equipped with a fully faithful $\E_E$-algebra $\Gamma_E:\E_E \rightarrow \E$ given on objects by $J \mapsto [J,E]$, by \ref{para:full_jth}, and we shall show that $\Gamma_E$ is an equivalence.  The $\V$-functor $P:\T \otimes \U \rightarrow \E$ is a $(\T,\U)$-algebra, and it therefore sends each object $(J,K) \in \ob(\T \otimes \U) = \ob\J \times \ob\J$ to a cotensor
$$P(J,K) = [J,P(I,K)] \cong [J,[K,P(I,I)]] \cong [J \otimes K,E]$$
of $E$ by $J \otimes K$ in $\E$, so $P(J,K) \cong \Gamma_E(J \otimes K)$.  Hence, since $\Gamma_E$ is fully faithful, we deduce that there is a $\V$-functor $Q:\T \otimes \U \rightarrow \E_E$ given on objects by $Q(J,K) = J \otimes K$ such that $\Gamma_E Q \cong P$.  Since $P$ preserves $\J$-cotensors in each variable, and $\Gamma_E$ is fully faithful, it follows that $Q$ preserves $\J$-cotensors in each variable.  Therefore, by \ref{thm:tensor_in_vcatj} there is a $\V$-functor $Q^\sharp:\E \rightarrow \E_E$ that preserves $\J$-cotensors and has $Q^\sharp P \cong Q$.  Hence $\Gamma_E Q^\sharp:\E \rightarrow \E$ preserves $\J$-cotensors and has $\Gamma_E Q^\sharp P \cong \Gamma_E Q \cong P$ so by \ref{thm:tensor_in_vcatj} we deduce that $\Gamma_E Q^\sharp \cong 1_\E$.  But $\Gamma_E$ is fully faithful, so it also follows that $Q^\sharp \Gamma_E \cong 1_{\E_E}$ since $\Gamma_E Q^\sharp \Gamma_E \cong \Gamma_E$.  Hence $\Gamma_E$ and $Q^\sharp$ are equivalences.
\end{proof}

\begin{thm}\label{thm:tensor_prod_jth}
Given $\J$-theories $\T$ and $\U$, there is a $\J$-theory $\otimesJ{\T}{\U}$ equipped with a $(\T,\U)$-algebra $\otimes_{\kern -.2ex\T\U}:\T \otimes \U \rightarrow \otimesJ{\T}{\U}$ that is given on objects by $(J,K) \mapsto J \otimes K$ and has the following property:  For every $\V$-category $\X$ with $\J$-cotensors, there is an equivalence of $\V$-categories
$$\Alg{(\otimesJ{\T}{\U})}(\X) = [\otimesJ{\T}{\U},\X]_\J \;\simeq\; [\T,\U;\X]_\J = \Alg{(\T,\U)}(\X)$$
given by composition with $\otimes_{\kern -.2ex\T\U}$.
\end{thm}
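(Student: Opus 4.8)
The plan is to assemble the ingredients already in place. I take $\otimesJ{\T}{\U}$ to be the full $\J$-theory of the object $P(I,I)$ of $\totimesJ{\T}{\U}$ as in \ref{def:otimesJ}, so that it is genuinely a $\J$-theory by \ref{para:full_jth}; and I let $\otimes_{\T\U}$ denote the composite $\T \otimes \U \xrightarrow{P} \totimesJ{\T}{\U} \xrightarrow{W} \otimesJ{\T}{\U}$, where $W$ is the equivalence furnished by Lemma \ref{thm:lem_tensor_prod_jth}. That lemma already tells us that $\otimes_{\T\U}$ is given on objects by $(J,K) \mapsto J \otimes K$, so the only property of $\otimes_{\T\U}$ itself left to check is that it is a $(\T,\U)$-algebra, i.e. that it preserves $\J$-cotensors in each variable separately. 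This follows because $P$ has this property by \ref{thm:tensor_in_vcatj}, while $W$, being an equivalence of $\V$-categories, has both a left and a right adjoint and is fully faithful, hence preserves and reflects $\J$-cotensors.

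Next I would establish the equivalence of functor categories by composing two equivalences. First, precomposition with $W$ gives an equivalence $W^* : [\otimesJ{\T}{\U},\X] \simeq [\totimesJ{\T}{\U},\X]$, since $W$ is one; and because $W$ preserves and reflects $\J$-cotensors, precomposing a $\J$-cotensor-preserving $\V$-functor with $W$ again preserves $\J$-cotensors, and likewise for a chosen pseudo-inverse of $W$, so $W^*$ restricts to an equivalence $[\otimesJ{\T}{\U},\X]_\J \simeq [\totimesJ{\T}{\U},\X]_\J$. Second, \ref{thm:tensor_in_vcatj} supplies the equivalence $P^* : [\totimesJ{\T}{\U},\X]_\J \simeq [\T,\U;\X]_\J$ given by composition with $P$. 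Composing and using the contravariant functoriality of precomposition, $P^* W^* = (WP)^* = (\otimes_{\T\U})^*$, I obtain the asserted equivalence $[\otimesJ{\T}{\U},\X]_\J \simeq [\T,\U;\X]_\J$ realized as composition with $\otimes_{\T\U}$. Under the standing assumptions of this section the categories involved exist as $\V$-categories by \ref{thm:tensor_in_vcatj}, so this is an equivalence of $\V$-categories, as required.

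The step deserving the most care is matching the composite equivalence with composition along the single $\V$-functor $\otimes_{\T\U}$: one must check that $W^*$ followed by $P^*$ agrees, up to the coherent isomorphism $\otimes_{\T\U} \cong WP$, with precomposition by $\otimes_{\T\U}$, and that the restriction of $W^*$ to the $\J$-cotensor-preserving subcategories is well defined in both directions. Since $W$ is an equivalence of $\V$-categories that have $\J$-cotensors on both sides, this verification is routine but should be stated explicitly. No genuinely new obstacle arises here, as the substantive work was carried out in Lemma \ref{thm:lem_tensor_prod_jth}; the present theorem is essentially a repackaging of \ref{thm:tensor_in_vcatj}, \ref{def:otimesJ}, \ref{thm:lem_tensor_prod_jth}, and \ref{para:full_jth} into the form of a genuine $\J$-theory together with its universal $(\T,\U)$-algebra.
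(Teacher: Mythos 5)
Your proposal is correct and is essentially the paper's own argument: the paper's proof of this theorem is literally the one-line "This follows from \ref{thm:tensor_in_vcatj} and \ref{thm:lem_tensor_prod_jth}," and your write-up just makes explicit the composition of the equivalence $P^*$ from \ref{thm:tensor_in_vcatj} with the (restricted) equivalence induced by the one in Lemma \ref{thm:lem_tensor_prod_jth}. The only micro-caveat is that the lemma gives $WP \cong Q$ rather than equality, so one should take $\otimes_{\T\U}$ to be the functor $Q$ given on objects by $(J,K)\mapsto J\otimes K$ and note that precomposition with it is isomorphic to $P^*W^*$ — exactly the routine verification you already flag.
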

\begin{proof}
This follows from \ref{thm:tensor_in_vcatj} and \ref{thm:lem_tensor_prod_jth}.
\end{proof}

\begin{defn}
We call the $\J$-theory $\otimesJ{\T}{\U}$ of \ref{thm:tensor_prod_jth} and \ref{def:otimesJ} the \textbf{tensor product} of the $\J$-theories $\T$ and $\U$.
\end{defn}

\section{A two-sided fibration of bifold algebras}\label{sec:two_sided_fibr}

For the remainder of the paper, we return to our general setting, imposing only the assumptions in \ref{para:given_data}.  There are multiple different categories of bifold algebras over various theories, but the one that will be most relevant in this paper will be obtained via the following general method for constructing split \textit{two-sided fibrations} \cite{Str:FibrYon2Cats}:

\begin{para}[\textbf{Two-sided Grothendieck construction}]\label{para:str_gr_constr_two-sided}
Let $\A$ and $\B$ be categories, and let $\Psi:\A^\op \times \B \rightarrow \CAT$ be a functor.  For each morphism $a:A \rightarrow A'$ in $\A$ and each object $B$ of $\B$, we write $a^* = \Psi(a,B):\Psi(A',B) \rightarrow \Psi(A,B)$.  Analogously, given an object $A$ of $\A$ and a morphism $b:B \rightarrow B'$ in $\B$, we write $b_! = \Psi(A,b):\Psi(A,B) \rightarrow \Psi(A,B')$.  There is a category $\mathsf{TwoSided}(\A,\B,\Psi)$ whose objects are triples $(A,B,X)$ with $A \in \ob\A$, $B \in \ob\B$, and $X \in \ob\Psi(A,B)$, where a morphism $(a,b,x):(A,B,X) \rightarrow (A',B',X')$ consists of morphisms $a:A \rightarrow A'$ in $\A$, $b:B \rightarrow B'$ in $\B$, and $x:b_!(X) \rightarrow a^*(X')$ in $\Psi(A,B')$.  Given morphisms $(a,b,x):(A,B,X) \rightarrow (A',B',X')$ and $(c,d,y):(A',B',X') \rightarrow (A'',B'',X'')$ in $\mathsf{TwoSided}(\A,\B,\Psi)$, the composite is $(c \cdot a, d \cdot b,a^*(y) \cdot d_!(x)):(A,B,X) \rightarrow (A'',B'',X'')$.  The category $\mathsf{TwoSided}(\A,\B,\Psi)$ is equipped with forgetful functors $P:\mathsf{TwoSided}(\A,\B,\Psi) \rightarrow \A$ and $Q:\mathsf{TwoSided}(\A,\B,\Psi) \rightarrow \B$ that constitute a \textit{split two-sided fibration} from $\A$ to $\B$, meaning that $P$ is a split fibration, $Q$ is a split op-fibration, and certain compatibility conditions are satisfied; split two-sided fibrations were introduced by Street in \cite[p. 123]{Str:FibrYon2Cats}, where they are called \textit{split bifibrations from $\A$ to $\B$}.
\end{para}

We now define the notion of \textit{strong cross-morphism} of bifold algebras, which will play a key role in this paper, as it enables Theorem \ref{thm:rcom_lcom} and its corollaries.

\begin{defn}\label{defn:bifold_algs_str_cross_morphs}
For each pair of $\J$-theories $\T$ and $\U$, we write $\Alg{(\T,\U)}_\si(\C)$ to denote the groupoid core of $\Alg{(\T,\U)}(\C)$.  In view of \ref{defn:tu_alg}, there is a functor $\Psi = \Alg{(-,\blanktwo)}_\si(\C):\ThJ^\op \times \ThJ^\op \rightarrow \CAT$ that is given on objects by $(\T,\U) \mapsto \Alg{(\T,\U)}_\si(\C)$.  Given morphisms of $\J$-theories $M:\T \rightarrow \T'$ and $N:\U \rightarrow \U'$, the associated functor $\Psi(M,N):\Alg{(\T',\U')}_\si(\C) \rightarrow \Alg{(\T,\U)}_\si(\C)$ is given by $$\Psi(M,N)(D) = D(M-,N\blanktwo)$$
naturally in $D \in \Alg{(\T,\U)}_\si(\C)$.  Writing
$$\BAlg^\sx(\C)\;=\;\mathsf{TwoSided}(\ThJ,\ThJ^\op,\Psi)\;,$$
with the notation of \ref{para:str_gr_constr_two-sided}, we find that $\BAlg^\sx(\C)$ is a category whose objects are bifold algebras $(\T,\U,D)$ in $\C$.  A morphism
$$(M,N,f)\;:\;(\T,\U,D) \longrightarrow (\T',\U',D')\;\;\;\;\text{in $\BAlg^\sx(\C)$}$$
will be called a \textbf{strong cross-morphism} and consists of morphisms of $\J$-theories $M:\T \rightarrow \T'$ and $N:\U' \rightarrow \U$ (noting the two different directions) together with an isomorphism of $(\T,\U')$-algebras
$$f\;:\;D(-,N\blanktwo) \overset{\sim}{\longrightarrow} D'(M-,\blanktwo)\;.$$
We call $\BAlg^\sx(\C)$ the \textbf{category of bifold algebras and strong cross-morphisms}.  By \ref{para:str_gr_constr_two-sided}, the forgetful functors in the following diagram constitute a two-sided fibration:
\begin{equation}\label{eq:2sided_fibr_balg}
\xymatrix{
& \BAlg^\sx(\C) \ar[dl]_P \ar[dr]^Q &\\
\ThJ & & \ThJ^\op
}
\end{equation}
Also, there is a functor $\ca{\text{$-$}}:\BAlg^\sx(\C) \rightarrow \C_\si$ that is given on objects by $(\T,\U,D) \mapsto |D|$ and on morphisms by $(M,N,f) \mapsto f_{II}$.
\end{defn}

\begin{rem}\label{rem:pq_jcons}
Since the categories $\Alg{(\T,\U)}_\si(\C)$ are groupoids, it follows that the functors $P$ and $Q$ in \eqref{eq:2sided_fibr_balg} are jointly conservative, meaning that a strong cross-morphism $(M,N,f)$ is invertible as soon as both $M$ and $N$ are invertible.
\end{rem}

\begin{para}[\textbf{The canonical anti-involution on strong cross-morphisms}]
There is an isomorphism of categories
$$\varbigcirc \;:\;\BAlg^\sx(\C) \overset{\sim}{\longrightarrow} \BAlg^\sx(\C)^\op$$
that is given on objects by $\varbigcirc(\T,\U,D) = (\U,\T,D^\circ)$, with the notation of \ref{notn:bif}, and sends each strong cross-morphism $(M,N,f):(\T,\U,D) \rightarrow (\T',\U',E)$ to the strong cross-morphism
$$\varbigcirc(M,N,f) = (N,M,(f^\circ)^{-1})\;:\; (\U',\T',E^\circ) \longrightarrow (\U,\T,D^\circ),$$
where $(f^\circ)^{-1}:E^\circ(-,M\blanktwo) \rightarrow D^\circ(N-,\blanktwo)$ is the isomorphism in $\Alg{(\U',\T)}(\C)$ consisting of the morphisms
$$(f^\circ_{KJ})^{-1} = f^{-1}_{JK}\;:\;E^\circ(K,MJ) = E(MJ,K) \rightarrow D(J,NK) = D^\circ(NK,J)$$
with $K \in \U'$ and $J \in \T$.  Clearly $\varbigcirc^\op:\BAlg^\sx(\C)^\op \rightarrow \BAlg^\sx(\C)$ is an inverse of $\varbigcirc$.
\end{para}

\begin{para}[\textbf{The left and right face functors on strong cross-morphisms}]\label{para:lr_face_functors}
We now define functors $L$ and $R$ as in the following diagram:
$$
\xymatrix{
& \BAlg^\sx(\C) \ar[dl]_L \ar[dr]^R &\\
\Algs^\s(\C) & & \Algs^\s(\C)^\op
}
$$
The \textbf{left face functor}, $L$, sends each bifold algebra $(\T,\U,D)$ to its left face $L(\T,\U,D) = (\T,D_\ell)$ and sends each strong cross-morphism of bifold algebras $(M,N,f):(\T,\U,D) \rightarrow (\T',\U',D')$ to the strong morphism of algebras
$$L(M,N,f) = (M,f_\ell):(\T,D_\ell) \rightarrow (\T',D'_\ell)\;,$$
where we employ the notation of \ref{notn:bif} in writing 
$$f_\ell = f_{-I}\;:\;D_\ell = D(-,I) = D(-,NI) \overset{\sim}{\longrightarrow} D'(M-,I) = M^*(D'_\ell)\;.$$
The \textbf{right face functor}, $R$, is given on objects by $R(\T,\U,D) = (\U,D_r)$ and sends each morphism $(M,N,f):(\T,\U,D) \rightarrow (\T',\U',D')$ to the strong morphism of algebras
$$R(M,N,f) = (N,f^{-1}_r):(\U',D'_r) \rightarrow (\U,D_r)\;,$$ where $f^{-1}_r:D'_r \rightarrow N^*(D_r)$ is the inverse of
$$f_r = f_{I-}\;:\;N^*(D_r) = D(I,N-) \overset{\sim}{\longrightarrow} D'(MI,-) = D'(I,-) = D_r'\;.$$
With these notations, $L = R^\op \varbigcirc$, and the following diagram commutes:
\begin{equation}\label{eq:lr_commt_w_carrier}
\xymatrix{
\Algs^\s(\C) \ar[d]_{|-|} & \BAlg^\sx(\C) \ar[l]_L \ar[d]_{|-|} \ar[r]^R & \Algs^\s(\C)^\op \ar[d]_{|-|^\op} \\
\C_\si \ar@{=}[r] & \C_\si \ar[r]^{i}_\sim & \C_\si^\op\\
}
\end{equation}
where $i$ denotes the identity-on-objects isomorphism given on morphisms by $c \mapsto c^{-1}$.
\end{para}

\section{The face adjunctions and the commutant adjunction}\label{sec:face_adj_cmt_adj}

The following theorem is central to the methodology of this paper:

\begin{thm}\label{thm:rcom_lcom}
The left face functor $L$ has a fully faithful left adjoint $\RCom$, and the right face functor $R$ has a fully faithful right adjoint $\LCom$, as in the following diagram:
\begin{equation}\label{eq:rcom_lcom}
\xymatrix{
& \BAlg^\sx(\C) \ar[dl]^L \ar[dr]_R^[@!-30]{\top} &\\
\Algs^\s(\C) \ar@/^3ex/@{ >-->}[ur]_[@!30]{\bot}^\RCom & & \Algs^\s(\C)^\op \ar@/_3ex/@{  >-->}[ul]_\LCom
}
\end{equation}
The left adjoint functor $\RCom$ sends each algebra $(\T,A)$ to a bifold algebra  $\RCom(\T,A)$ whose left face is $(\T,A)$ and whose right face is the commutant $(\T,A)^\perp = (\T^\perp_A,A^\perp)$ of $(\T,A)$.  The right adjoint functor $\LCom$ sends each algebra $(\T,A)$ to a bifold algebra whose left face is $(\T,A)^\perp$ and whose right face is $(\T,A)$. Furthermore, the unit of $\RCom \dashv L$ and the counit of $R \dashv \LCom$ are identities, so $\RCom$ is a section of $L$ and $\LCom$ is a section of $R$.  Also
\begin{equation}\label{eq:lcom_vs_rcom}\varbigcirc\RCom = \LCom^\op\;:\;\Algs^\s(\C) \longrightarrow \BAlg^\sx(\C)^\op
\end{equation}
\end{thm}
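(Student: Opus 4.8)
The plan is to construct $\RCom$ explicitly, deduce the adjunction $\RCom \dashv L$ from a single universal property, and then obtain $\LCom$ and the final equation \eqref{eq:lcom_vs_rcom} for free from the anti-involution $\varbigcirc$ together with the identity $L = R^\op\varbigcirc$ of \ref{para:lr_face_functors}, so that neither half of the diagram is argued twice. First I would define $\RCom(\T,A)$ to be the bifold $(\T,\T^\perp_A)$-algebra $D$ corresponding, under the equivalence $[\T,\T^\perp_A;\C]_\J \cong [\T^\perp_A,\Alg{\T}(\C)]_\J$ of \ref{rem:equiv_formns_of_tualgs}, to the canonical fully faithful $\T^\perp_A$-algebra $\Gamma_A:\T^\perp_A \to \Alg{\T}(\C)$ of \eqref{eq:cl_alg_cmt}. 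Then $D_{r\ell} = \Gamma_A$, so \ref{notn:bif} and \eqref{eq:a_lowerperp} give right face $D_r = G^\T\Gamma_A = A^\perp$, while $\Gamma_A(I) = A$ gives left face $D_\ell = A$; hence the faces are $(\T,A)$ and $(\T,A)^\perp = (\T^\perp_A,A^\perp)$, and $L\RCom(\T,A) = (\T,A)$.

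The crux is to show that $\RCom$ is left adjoint to $L$ with identity unit: for every bifold algebra $D'$ over $(\T',\U')$, the functor $L$ must induce a bijection between strong cross-morphisms $(M,N,g):\RCom(\T,A) \to D'$ and strong morphisms $(M,f):(\T,A) \to (\T',D'_\ell) = LD'$. Fixing $M$, I would transpose $g:D(-,N\blanktwo) \to D'(M-,\blanktwo)$ into $\Alg{\T}(\C)$, where it becomes an isomorphism $N^*\Gamma_A \xrightarrow{\sim} M^*D'_{r\ell}$ of $\U'$-algebras whose carrier component is exactly the isomorphism $f:A \xrightarrow{\sim} M^*(D'_\ell)$ read off by $L$. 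Conversely, given $(M,f)$, transport of structure \ref{para:transp_str} — applied in the $\J$-admissible $\V'$-category $\Alg{\T}(\C)$ by \ref{para:jth_enr_in_vprime} — produces a unique $\U'$-algebra $E$ on $A$ with an isomorphism $E \xrightarrow{\sim} M^*D'_{r\ell}$ over $f^{-1}$, and the terminal property of $\Gamma_A$ in $\Algs(A)$ (\ref{thm:ind_morph} applied in $\Alg{\T}(\C)$, using $\T^\perp_A = \Alg{\T}(\C)_A$ from \ref{para:cmt}) yields a unique morphism $N = [E]:\U' \to \T^\perp_A$ with $E \cong N^*\Gamma_A$ in $\Alg{\U'}(A)$. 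Since $\Alg{\U'}(A)$ is a preordered groupoid (\ref{para:algs_on_c}), each of these isomorphisms is unique, so composing them reconstructs a unique $g$ over $f$. This simultaneously defines $\RCom$ on morphisms, establishes $\RCom \dashv L$ with unit the identity, and — the unit being invertible — shows $\RCom$ is fully faithful and a section of $L$; naturality of the bijection in $D'$ follows from the uniqueness.

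For the right-hand half I would transport this adjunction along $\varbigcirc$. As $\varbigcirc$ is an isomorphism with inverse $\varbigcirc^\op$, the relation $\Theta F \dashv G\Theta^{-1}$ applied to $\RCom \dashv L$ gives $\varbigcirc\RCom \dashv L\varbigcirc^\op = R^\op$ (here $L\varbigcirc^\op = R^\op\varbigcirc\varbigcirc^\op = R^\op$), and passing to opposite adjunctions yields $R \dashv (\varbigcirc\RCom)^\op$. I would therefore set $\LCom := (\varbigcirc\RCom)^\op$, which is then the fully faithful right adjoint of $R$, with counit the opposite of the identity unit, hence an identity. With this definition the final equation \eqref{eq:lcom_vs_rcom} holds by construction, and the faces of $\LCom(\T,A) = \varbigcirc\RCom(\T,A) = (\T^\perp_A,\T,D^\circ)$ compute from \ref{notn:bif} to left face $A^\perp$ and right face $A$, matching the stated description.

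I expect the main obstacle to be the universal property of the middle paragraph. Because a strong cross-morphism reverses the direction of $N$ relative to $M$, the two transpositions and the identification of the carrier component of $g$ with $f$ must be tracked carefully; the genuine work is to make transport of structure and the terminal property of $\Gamma_A$ cooperate so that existence and uniqueness of the pair $(N,g)$ fall out together, with the preorder-groupoid structure of $\Alg{\U'}(A)$ being precisely what turns the construction into a bijection rather than a mere equivalence.
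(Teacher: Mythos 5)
Your proposal is correct and follows essentially the same route as the paper: the same explicit construction of $\RCom(\T,A)$ as the transpose of $\Gamma_A$, the same universal-arrow argument for $\RCom \dashv L$ (your two-step use of transport of structure plus the terminal property of $\Gamma_A$ in $\Algs(A)$ is exactly how the paper's Theorem \ref{thm:univ_prop_gamma}, which it invokes directly, is itself proved), and the same transport of the adjunction along $\varbigcirc$ to define $\LCom = \varbigcirc^\op\RCom^\op$ and obtain \eqref{eq:lcom_vs_rcom} by construction.
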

\begin{proof}
Given an algebra $(\T,A)$, the commutant $\T^\perp_A$ is the full $\J$-theory of the object $A$ of $\Alg{\T}(\C)$, so $\T^\perp_A$ is the domain of the full algebra $\Gamma_A:\T^\perp_A \rightarrow \Alg{\T}(\C)$ on $A$ \eqref{eq:cl_alg_cmt}.  Let $\Gamma^A:\T \otimes \T^\perp_A \rightarrow \C$ denote the transpose of $\Gamma_A$, so that $\Gamma^A(J,K) = (\Gamma_A K)J$ $(J \in \T, K \in \T^\perp_A)$.  Then $\Gamma^A$ is a $(\T,\T^\perp_A)$-algebra whose left face is $\Gamma_A I = A$ and whose right face is $G^\T \Gamma_A = A^\perp$ \pref{eq:a_lowerperp}.  Letting $\RCom(\T,A) = (\T,\T^\perp_A,\Gamma^A)$, we find that $L\RCom(\T,A) = (\T,A)$ and $R\RCom(\T,A) = (\T^\perp_A,A^\perp)$.  We claim that the identity morphism $1 = (1_\T,1_A):(\T,A) \rightarrow L\RCom(\T,A)$ in $\Algs^\s(\C)$ equips $\RCom(\T,A)$ with the structure of a universal arrow for the functor $L$.  To show this, let $(\T',\U',D)$ be a bifold algebra in $\C$, and let $(M,g):(\T,A) \rightarrow L(\T',\U',D) = (\T',D_\ell)$ be a morphism in $\Algs^\s(\C)$.  Then $M:\T \rightarrow \T'$ in $\ThJ$, and $g:A \rightarrow M^*(D_\ell)$ is an isomorphism in $\Alg{\T}(\C)$.  Let $E$ denote the composite $\V$-functor
$$\U' \xrightarrow{D_{r\ell}} \Alg{\T'}(\C) \xrightarrow{M^*} \Alg{\T}(\C),$$
which is a $\U'$-algebra by \ref{para:carrier_func} and is given by $EK = D(M-,K)$ $(K \in \U')$.  The carrier of $E$ is $|E| = D(M-,I) = M^*(D_\ell)$, so by the universal property of the full algebra $\Gamma_A$ \pref{thm:univ_prop_gamma}, there is a unique morphism $(N,h):(\U',E) \rightarrow (\T^\perp_A,\Gamma_A)$ in $\Algs^\s(\Alg{\T}(\C))$ with $|h| = g^{-1}:|E| = M^*(D_\ell) \rightarrow A$.  Explicitly, $h:E \Rightarrow \Gamma_A N$ is an isomorphism in $[\U',[\T,\C]_\J]_\J$.  Writing
$$f_{JK} = (h^{-1}_K)_J \;:\; \Gamma^A(J,NK) = (\Gamma_A N K)J \longrightarrow (EK)J = D(MJ,K),$$
we find that $(M,N,f):(\T,\T^\perp_A,\Gamma^A) \rightarrow (\T',\U',D)$ is the unique cross-morphism of bifold algebras $(M,N,f):\RCom(\T,A) \rightarrow (\T',\U',D)$ such that $L(M,N,f) = (M,g)$, as needed.

This shows that $L$ has a left adjoint $\RCom$ with the needed properties.  By \ref{para:lr_face_functors} we know that $R = L^\op \varbigcirc$, so by defining $\LCom = \varbigcirc^\op \RCom^\op$, we find that $\LCom$ is a right adjoint to $R$ with the needed properties.
\end{proof}

\begin{defn}
We call the adjunctions in \eqref{eq:rcom_lcom} the \textbf{left face adjunction} and the \textbf{right face adjunction}, respectively.  Given an algebra $(\T,A)$ in $\C$, we call $\RCom(\T,A)$ the \textbf{right-commutant bifold algebra determined by $(\T,A)$} and $\LCom(\T,A)$ the \textbf{left-commutant bifold algebra determined by $(\T,A)$}.
\end{defn}

\begin{rem}\label{rem:rcom_cmt_w_carrier}
The functor $\RCom$ commutes with the `carrier' functors $\ca{\text{$-$}}$ valued in $\C_\si$.  Indeed, since $\RCom$ is a section of $L$, this follows from \eqref{eq:lr_commt_w_carrier}.
\end{rem}

The following shows that the assignment to each algebra its commutant is functorial with respect to strong morphisms of algebras and thus determines a contravariant functor that is `self-adjoint'; this result can be seen as an extension of the more restrictive self-adjoint functoriality of the commutant of a morphism of $\J$-theories that was established in \cite[8.6]{Lu:Cmt}.

\begin{thm}\label{thm:cmtnt_adjn}
There is a functor $(-)^\perp = \Com:\Algs^\s(\C) \longrightarrow \Algs^\s(\C)^\op$ that sends each algebra $(\T,A)$ to its commutant $(\T,A)^\perp$ and is left adjoint to its opposite, as in the diagram
\begin{equation}\label{eq:cmt_adj}
\xymatrix{
\Algs^\s(\C) \ar@{}[rr]|\top \ar@/_2ex/[rr]_{\Com} & & \Algs^\s(\C)^\op. \ar@/_2ex/[ll]_{\Com^\op}
}
\end{equation}
Explicitly, $\Com = R\RCom$ and $\Com^\op = L\LCom$ with the notation of \ref{thm:rcom_lcom}.
\end{thm}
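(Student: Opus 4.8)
The plan is to recognize $\Com$ as a composite of the two left adjoints furnished by Theorem \ref{thm:rcom_lcom} and to read off its right adjoint as the composite of the corresponding right adjoints, taken in the reverse order. First I would recall the two face adjunctions $\RCom \dashv L$ and $R \dashv \LCom$ from \ref{thm:rcom_lcom}. Since $\RCom:\Algs^\s(\C) \to \BAlg^\sx(\C)$ and $R:\BAlg^\sx(\C) \to \Algs^\s(\C)^\op$ are each left adjoints, their composite $\Com = R\RCom:\Algs^\s(\C) \to \Algs^\s(\C)^\op$ is again a left adjoint, and by the standard composition of adjunctions its right adjoint is $L\LCom:\Algs^\s(\C)^\op \to \Algs^\s(\C)$.

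Second, I would pin down the two explicit descriptions. At the level of objects, the computation already performed inside the proof of \ref{thm:rcom_lcom} gives $R\RCom(\T,A) = R(\T,\T^\perp_A,\Gamma^A) = (\T^\perp_A,A^\perp) = (\T,A)^\perp$, so $\Com$ does send each algebra to its commutant. To see that the right adjoint $L\LCom$ coincides with $\Com^\op$, I would combine the identity $L = R^\op\varbigcirc$ from \ref{para:lr_face_functors} with $\LCom = \varbigcirc^\op\RCom^\op$, the latter being \eqref{eq:lcom_vs_rcom} after applying $(-)^\op$. Then $L\LCom = R^\op\varbigcirc\varbigcirc^\op\RCom^\op = R^\op\RCom^\op = (R\RCom)^\op = \Com^\op$, where I use that $\varbigcirc^\op$ is inverse to $\varbigcirc$.

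Putting these together, the composite adjunction reads $\Com = R\RCom \dashv L\LCom = \Com^\op$, which is precisely the self-adjunction \eqref{eq:cmt_adj} together with its two explicit formulas. The step that will demand the most care is not the existence of the adjunction, which is purely formal, but the bookkeeping across the opposite categories: one must track the variances of $L$, $R$, $\RCom$, $\LCom$, and $\varbigcirc$ so that the composite of right adjoints lands exactly as $\Com^\op$ (rather than as some unnamed right adjoint) and so that the adjunction comes out as $\Com \dashv \Com^\op$ in the stated direction. No construction beyond \ref{thm:rcom_lcom} and \ref{para:lr_face_functors} is required.
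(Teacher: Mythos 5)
Your proposal is correct and follows essentially the same route as the paper's proof: define $\Com = R\RCom$, read off its object-level action from Theorem \ref{thm:rcom_lcom}, obtain the adjunction by composing $\RCom \dashv L$ with $R \dashv \LCom$, and identify the composite right adjoint $L\LCom$ with $\Com^\op$ via $L = R^\op\varbigcirc$ and \eqref{eq:lcom_vs_rcom}. The paper performs the same cancellation of $\varbigcirc$ against $\varbigcirc^\op$, merely written in the other direction ($\Com^\op = R^\op\RCom^\op = R^\op\varbigcirc\LCom = L\LCom$).
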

\begin{proof}
Defining $\Com = R\RCom$, we find that $\Com$ is given on objects by $\Com(\T,A) = (\T,A)^\perp$ by \ref{thm:rcom_lcom}.  Using \eqref{eq:lcom_vs_rcom} and \ref{para:lr_face_functors}, we compute that
$$\Com^\op = R^\op \RCom^\op = R^\op \varbigcirc \LCom = L\LCom$$
since $\varbigcirc^{-1} = \varbigcirc^\op$, so by composing the adjunctions in \eqref{eq:rcom_lcom} we find that $\Com \dashv \Com^\op$.
\end{proof}

\begin{rem}\label{rem:com_cmt_w_car}
Since $\Com = R\RCom$, we deduce by \ref{rem:rcom_cmt_w_carrier} and \eqref{eq:lr_commt_w_carrier} that the following diagram commutes:
$$
\xymatrix{
\Algs^\s(\C) \ar[d]_{|-|} \ar[r]^\Com & \Algs^\s(\C)^\op \ar[d]^{|-|^\op}\\
\C_\si \ar[r]^i_\sim & \C_\si^\op
}
$$
\end{rem}

\begin{notn}\label{notn:unit_counit}
Let us write  $\varepsilon:\RCom L \Rightarrow 1$ to denote the counit of the adjunction $\RCom \dashv L$, and write $\eta:1 \Rightarrow \LCom R$ to denote the unit of the adjunction $R \dashv \LCom$.  In view of the proof of \ref{thm:rcom_lcom}, $\eta = \varbigcirc^\op \varepsilon^\op \varbigcirc$.  Writing $u:1 \Rightarrow \Com^\op\Com$ to denote the unit of the adjunction $\Com \dashv \Com^\op$, we find that
$$u = L\eta \RCom \;:\;1 = L \RCom \Longrightarrow L \LCom R \RCom = \Com^\op\Com$$
while the counit of this adjunction is $R\varepsilon\LCom$ but can be expressed also as
$$u^\op = R\varepsilon\LCom\;:\; \Com\Com^\op = R\RCom L\LCom \Longrightarrow R\LCom = 1,$$
by using \ref{para:lr_face_functors} and \eqref{eq:lcom_vs_rcom}, since $u = L\eta \RCom$ and $\eta = \varbigcirc^\op \varepsilon^\op \varbigcirc$.
\end{notn}

\begin{prop}\label{thm:bifold_alg_comm_r}
Given any bifold algebra $D$ on an object $C$ of $\C$, the morphism
$$R\varepsilon_D \;:\; RD \longrightarrow R\RCom LD = \Com LD\;\;\;\;\text{in $\Algs^\s(\C)$}$$
lies in the subcategory $\Algs(C)$ and is the (unique) morphism $[D_r|D_\ell]:D_r \rightarrow D_\ell^\perp$ in $\Algs(C)$ that witnesses that $D_\ell$ commutes with $D_r$ (\ref{defn:cmt}, \ref{thm:dl_commutes_w_dr}).  Similarly, 
$$L\eta_D \;:\; LD \longrightarrow L\LCom RD = \Com^\op RD\;\;\;\;\text{in $\Algs^\s(\C)$}$$
is the (unique) morphism $[D_\ell|D_r]:D_\ell \rightarrow D_r^\perp$ in $\Algs(C)$ that witnesses that $D_r$ commutes with $D_\ell$.
\end{prop}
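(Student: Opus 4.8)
The plan is to reduce everything to two facts: that the strong morphism $R\varepsilon_D$ has an \emph{identity carrier}, so that it descends to the fibre $\Algs(C)$, and that within $\Algs(C)$ there is at most one morphism of the relevant type, by \ref{thm:equiv_charns_b_pipe_a}. Granting the first fact, $R\varepsilon_D$ becomes a morphism $D_r \to D_\ell^\perp$ in $\Algs(C)$, and the second fact forces it to be the unique witnessing morphism $[D_r|D_\ell]$. The whole proof is thus a diagram chase through \eqref{eq:lr_commt_w_carrier} powered by the triangle identities of the face adjunctions, and the only genuine care needed is in tracking the several opposite categories and the contravariance of $R$.

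First I would exploit that the unit of $\RCom \dashv L$ is an identity (\ref{thm:rcom_lcom}): the corresponding triangle identity then reads $L\varepsilon \cdot \iota L = 1_L$ with $\iota$ the identity unit, so $L\varepsilon_D = 1_{LD}$ for every bifold algebra $D$. Chasing the left-hand square of \eqref{eq:lr_commt_w_carrier}, which asserts $|{-}| \circ L = |{-}|$, I obtain $|\varepsilon_D| = |L\varepsilon_D| = 1_{|D|} = 1_C$ in $\C_\si$. Then chasing the right-hand square of \eqref{eq:lr_commt_w_carrier}, which asserts $|{-}|^\op \circ R = i \circ |{-}|$ with $i$ the inversion isomorphism, I get $|R\varepsilon_D| = i(|\varepsilon_D|) = (1_C)^{-1} = 1_C$. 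Hence the carrier of $R\varepsilon_D$ is $1_C$, so $R\varepsilon_D$ lies in the subcategory $\Algs(C)$.

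It remains to identify $R\varepsilon_D$. By \ref{thm:rcom_lcom} its domain is $RD = (\U,D_r)$ and its codomain is $R\RCom LD = \Com LD = (\T^\perp_{D_\ell},D_\ell^\perp)$, both with carrier $C$; thus, viewed in $\Algs(C)$, $R\varepsilon_D$ is a morphism $D_r \to D_\ell^\perp$. By \ref{thm:dl_commutes_w_dr} we already know that $D_\ell$ commutes with $D_r$, and by \ref{thm:equiv_charns_b_pipe_a} there is at most one morphism $D_r \to D_\ell^\perp$ in $\Algs(C)$; consequently $R\varepsilon_D$ coincides with the unique witnessing morphism $[D_r|D_\ell]$ of \ref{defn:cmt}.

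Finally, the statement about $L\eta_D$ follows by the dual argument: since the counit of $R \dashv \LCom$ is an identity (\ref{thm:rcom_lcom}), the triangle identity gives $R\eta_D = 1_{RD}$, and the same two chases through \eqref{eq:lr_commt_w_carrier} (in the reverse order) yield $|L\eta_D| = 1_C$, placing $L\eta_D$ in $\Algs(C)$ as a morphism $D_\ell \to D_r^\perp = L\LCom RD = \Com^\op RD$; uniqueness in \ref{thm:equiv_charns_b_pipe_a} then identifies it with $[D_\ell|D_r]$. Alternatively one may deduce this half directly from the first via the relation $\eta = \varbigcirc^\op \varepsilon^\op \varbigcirc$ of \ref{notn:unit_counit} together with $L = R^\op\varbigcirc$ from \ref{para:lr_face_functors}. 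The main obstacle here is not conceptual but bookkeeping: applying each triangle identity on the correct side given that the relevant unit (resp. counit) is the identity, and transporting carriers correctly across the inversion isomorphism $i$ and the opposite categories in which $R$ and $\Com$ take values.
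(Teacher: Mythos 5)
Your proposal is correct and follows essentially the same route as the paper's proof: use the identity unit of $\RCom \dashv L$ (via the triangle identity) to get $L\varepsilon_D = 1_{LD}$, chase carriers through \eqref{eq:lr_commt_w_carrier} to place $R\varepsilon_D$ in $\Algs(C)$, and conclude by the uniqueness of $[D_r|D_\ell]$. The only cosmetic difference is that for the second half the paper reduces to the first via $L\eta = R^\op\varepsilon^\op\varbigcirc$, which is exactly the alternative you mention at the end.
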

\begin{proof}
$\RCom LD = \RCom D_\ell$ is a bifold algebra with left and right faces $D_\ell$ and $D_\ell^\perp$, respectively.  Hence $\varepsilon_D:\RCom D_\ell \rightarrow D$ is sent by $R:\BAlg^\sx(\C) \rightarrow \Algs^\s(\C)^\op$ to a morphism $R\varepsilon_D:D_r = RD \rightarrow R\RCom D_\ell = D_\ell^\perp$ in $\Algs^\s(\C)$.  But since the unit of the adjunction $\RCom \dashv L$ is an identity, it follows that $L\varepsilon_D$ is the identity morphism on $LD = D_\ell$, so since $L$ commutes with the `carrier' functors $\ca{\text{$-$}}$ valued in $\C_\si$ \pref{eq:lr_commt_w_carrier}, we deduce that the underlying morphism $|\varepsilon_D|$ in $\C_\si$ is the identity morphism on $C = |D|$.  Hence, we deduce by \eqref{eq:lr_commt_w_carrier} that $|R\varepsilon_D|$ is also the identity morphism on $C$.  Therefore $R\varepsilon_D:D_r \rightarrow D_\ell^\perp$ is a morphism in the subcategory $\Algs(C)$ of $\Algs^\s(\C)$, so $R\varepsilon_D = [D_r|D_\ell]$ by the uniqueness of $[D_r|D_\ell]$ \pref{defn:cmt}.  Next, $L\eta = L\varbigcirc^\op\varepsilon^\op \varbigcirc = R^\op\varepsilon^\op \varbigcirc$ by \ref{notn:unit_counit} and \ref{para:lr_face_functors}, so $L\eta D = R\varepsilon {D^\circ}$, recalling that $\varbigcirc(D) = D^\circ$ with the notation of \ref{notn:bif}.  The remaining claim follows, since $D^\circ_\ell = D_r$ and $D^\circ_r = D_\ell$.
\end{proof}

\begin{cor}\label{thm:unit_counit_com_adj}
For each algebra $A \in \ob\Algs^\s(\C)$, the commutant $A^\perp$ commutes with $A$, and the unit $u:1 \Rightarrow \Com^\op\Com$ of the adjunction $\Com \dashv \Com^\op$ consists of the morphisms
$$[A|A^\perp] \;:\; A \longrightarrow A^{\perp\perp}\;\;\;\;\;\;\;\;(A \in \ob\Algs^\s(\C))$$
in $\Algs(C)$ (and hence in $\Algs^\s(\C)$) that witness that $A^\perp$ commutes with $A$.  The counit of $\Com \dashv \Com^\op$ is $u^\op$ (by \ref{notn:unit_counit}) and so consists of these same morphisms.
\end{cor}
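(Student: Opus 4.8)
The plan is to obtain this as an immediate consequence of Proposition \ref{thm:bifold_alg_comm_r}, applied to the single right-commutant bifold algebra $D = \RCom(\T,A)$ determined by the algebra $A = (\T,A)$.

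First I would unpack the relevant data from Theorem \ref{thm:rcom_lcom}: the bifold algebra $D = \RCom(\T,A)$ has left face $D_\ell = A$ and right face $D_r = A^\perp$, and it is carried by $C = |A|$ (using \ref{rem:rcom_cmt_w_carrier}). Proposition \ref{thm:bifold_alg_comm_r} then tells us that the morphism $L\eta_D : LD \rightarrow L\LCom RD = \Com^\op RD$ lies in the fibre $\Algs(C)$ and is precisely the morphism $[D_\ell|D_r]:D_\ell \rightarrow D_r^\perp$ that witnesses that $D_r$ commutes with $D_\ell$. Substituting $D_\ell = A$ and $D_r = A^\perp$, this says exactly that $A^\perp$ commutes with $A$, witnessed by the morphism $[A|A^\perp]:A \rightarrow A^{\perp\perp}$ in $\Algs(C)$, which establishes the first assertion of the corollary.

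Next I would identify $L\eta_D$ with the unit component $u_A$. By \ref{notn:unit_counit} the unit of $\Com \dashv \Com^\op$ is $u = L\eta\RCom$, whose component at $A$ is $u_A = L\eta_{\RCom A} = L\eta_D$. Combined with the previous step, this gives $u_A = [A|A^\perp]:A \rightarrow A^{\perp\perp}$, as claimed, and since $[A|A^\perp]$ lies in $\Algs(C)$ it lies a fortiori in $\Algs^\s(\C)$. Finally, the statement about the counit requires no further argument: \ref{notn:unit_counit} already records that the counit of $\Com \dashv \Com^\op$ equals $u^\op$, so it is given componentwise by the same family of morphisms.

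Since every step is a direct substitution into results already in hand, I do not anticipate a genuine obstacle. The only point demanding care is the bookkeeping of the two opposite directions built into strong cross-morphisms: one must match the left and right faces of $\RCom(\T,A)$ to the correct slots $D_\ell, D_r$ of Proposition \ref{thm:bifold_alg_comm_r}, so that the relevant witness comes out as $[A|A^\perp]$ (via the ``$D_r$ commutes with $D_\ell$'' clause and the identity $u = L\eta\RCom$) rather than the morphism $[A^\perp|A] = R\varepsilon_D$ coming from the other face functor.
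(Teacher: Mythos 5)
Your proposal is correct and follows the paper's own proof exactly: the paper likewise obtains the corollary by applying Proposition \ref{thm:bifold_alg_comm_r} to the bifold algebra $D = \RCom A$ (whose faces are $D_\ell = A$ and $D_r = A^\perp$) and invoking \ref{notn:unit_counit} to identify $u_A$ with $L\eta_D$ and the counit with $u^\op$. You have simply written out the substitutions in more detail, and your closing caution about matching the correct face functor (using the ``$D_r$ commutes with $D_\ell$'' clause rather than $R\varepsilon_D$) is exactly the right bookkeeping point.
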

\begin{proof}
In view of \ref{notn:unit_counit}, this result is obtained by applying \ref{thm:bifold_alg_comm_r} to the bifold algebra $D = \RCom A$, since $D_r = A^\perp$ and $D_\ell = A$.
\end{proof}

\begin{cor}\label{eq:cmt_adjn_over_c}
For each object $C$ of $\C$, the functor $\Com$ in \ref{thm:cmtnt_adjn} restricts to a functor $\Com_C:\Algs(C) \rightarrow \Algs(C)^\op$.  Moreover, the adjunction \eqref{eq:cmt_adj} restricts to an adjunction $\Com_C \dashv \Com_C^\op:\Algs(C)^\op \rightarrow \Algs(C)$.
\end{cor}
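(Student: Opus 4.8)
The plan is to deduce both assertions from the compatibility of $\Com$ with carriers recorded in Remark \ref{rem:com_cmt_w_car}, together with the explicit description of the unit of $\Com \dashv \Com^\op$ furnished by Corollary \ref{thm:unit_counit_com_adj}. For the first assertion, I would check directly that $\Com$ carries the fibre $\Algs(C)$ into $\Algs(C)^\op$. On objects this is clear, since $|A^\perp| = |A| = C$ whenever $|A| = C$, by \ref{para:cmt}. On morphisms, if $(M,f)$ lies in $\Algs(C)$, so that its underlying carrier isomorphism $|(M,f)|$ is $1_C$, then Remark \ref{rem:com_cmt_w_car} gives $|\Com(M,f)| = i(1_C) = 1_C^{-1} = 1_C$, where $i$ is the identity-on-objects isomorphism $c \mapsto c^{-1}$; hence $\Com(M,f)$ is a morphism of $\Algs(C)^\op$. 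Functoriality of the resulting $\Com_C$ is inherited from $\Com$.

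For the second assertion, I would exhibit, for each $A \in \Algs(C)$, the morphism $u_A = [A|A^\perp]:A \to A^{\perp\perp} = \Com_C^\op\Com_C A$ of Corollary \ref{thm:unit_counit_com_adj} as a universal arrow from $A$ to $\Com_C^\op:\Algs(C)^\op \to \Algs(C)$; note that $u_A$ already lies in $\Algs(C)$ by that corollary. Given $B \in \Algs(C)$ and a morphism $g:A \to B^\perp = \Com_C^\op B$ in $\Algs(C)$, the universal property of $u_A$ for the ambient adjunction $\Com \dashv \Com^\op$ \pref{thm:cmtnt_adjn} produces a unique $\bar{g}:B \to A^\perp$ in $\Algs^\s(\C)$ with $\Com^\op(\bar{g})\cdot u_A = g$. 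The crux is to verify that $\bar{g}$ lies in the fibre over $C$: applying the carrier functor to this equation and using Remark \ref{rem:com_cmt_w_car} (which inverts carriers) gives $|\bar{g}|^{-1}\cdot 1_C = 1_C$, whence $|\bar{g}| = 1_C$, so $\bar{g} \in \Algs(C)$. Uniqueness of $\bar{g}$ in $\Algs(C)$ then follows at once from its uniqueness in $\Algs^\s(\C)$, since any fibre morphism with the required property is in particular such an ambient morphism. Thus $\Com_C^\op$ admits $\Com_C$ as a left adjoint with unit $(u_A)_A$, and this adjunction is by construction the restriction of \eqref{eq:cmt_adj}.

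I expect the single point needing care to be the verification that the transpose $\bar{g}$ remains in the fibre; every other step is bookkeeping with the carrier functor. An equivalent and equally short route, which I would note in passing, is to observe that by Corollary \ref{thm:unit_counit_com_adj} both the unit $u$ and the counit $u^\op$ of $\Com \dashv \Com^\op$ have all components at objects of $\Algs(C)$ lying in $\Algs(C)$; these restrict to a unit and counit for $(\Com_C,\Com_C^\op)$, and the two triangle identities descend from the ambient ones because the inclusion $\Algs(C) \hookrightarrow \Algs^\s(\C)$ is faithful and, by the first assertion, all the objects and morphisms occurring in the restricted triangles already lie in $\Algs(C)$.
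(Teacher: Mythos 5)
Your proposal is correct and rests on exactly the same two ingredients as the paper's proof, namely Remark \ref{rem:com_cmt_w_car} (to see that $\Com$ and $\Com^\op$ restrict to the fibre over $C$) and Corollary \ref{thm:unit_counit_com_adj} (to see that the unit and counit restrict); indeed, the ``equivalent and equally short route'' you note in passing is essentially verbatim the paper's one-line argument. Your primary route via universal arrows and the fibre-membership check on the transpose $\bar{g}$ is a sound, slightly more explicit elaboration of the same idea.
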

\begin{proof}
By \ref{rem:com_cmt_w_car}, the functors $\Com$ and $\Com^\op$ restrict as needed, and we deduce by \ref{thm:unit_counit_com_adj} that the unit and counit of $\Com \dashv \Com^\op$ restrict as needed also.
\end{proof}

\begin{cor}\label{thm:comm_symm}
Let $A$ and $B$ be algebras on an object $C$ of $\C$.  Then $A$ commutes with $B$ if and only if $B$ commutes with $A$.
\end{cor}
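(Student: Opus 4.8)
The plan is to derive the symmetry directly from the restricted commutant adjunction $\Com_C \dashv \Com_C^\op$ established in \ref{eq:cmt_adjn_over_c}, together with the hom-set characterization of commutation in \ref{defn:cmt}. First I would recall that, by \ref{defn:cmt}, the assertion that $A$ commutes with $B$ is equivalent to the nonemptiness of the hom-set $\Algs(C)(B,A^\perp)$, and likewise that $B$ commutes with $A$ is equivalent to the nonemptiness of $\Algs(C)(A,B^\perp)$. Hence it suffices to exhibit a bijection between these two hom-sets, for then one is inhabited if and only if the other is.

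The adjunction $\Com_C \dashv \Com_C^\op$, with left adjoint $\Com_C:\Algs(C) \rightarrow \Algs(C)^\op$ and right adjoint $\Com_C^\op:\Algs(C)^\op \rightarrow \Algs(C)$, supplies a natural isomorphism
$$\Algs(C)^\op(\Com_C A, B) \;\cong\; \Algs(C)(A, \Com_C^\op B)\;.$$
The next step is to unwind both sides. Since $\Com_C A = A^\perp$, the left side is $\Algs(C)(B, A^\perp)$; since $\Com_C^\op$ acts as $(-)^\perp$ on objects, the right side is $\Algs(C)(A, B^\perp)$. This yields the desired bijection $\Algs(C)(B, A^\perp) \cong \Algs(C)(A, B^\perp)$, and the corollary follows at once. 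Alternatively, and more concretely, if $A$ commutes with $B$ then there is a morphism $[B|A]:B \rightarrow A^\perp$ in $\Algs(C)$; applying the contravariant functor $\Com_C$ gives a morphism $A^{\perp\perp} \rightarrow B^\perp$ in $\Algs(C)$, which I would precompose with the unit morphism $[A|A^\perp]:A \rightarrow A^{\perp\perp}$ of \ref{thm:unit_counit_com_adj} to obtain a morphism $A \rightarrow B^\perp$, witnessing that $B$ commutes with $A$; the converse is the same argument with the roles of $A$ and $B$ interchanged.

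The only real hazard here is bookkeeping with the opposite categories: one must track carefully that $\Algs(C)^\op(\Com_C A, B)$ genuinely is $\Algs(C)(B, A^\perp)$ and that $\Com_C^\op$ restricts to $(-)^\perp$ on objects, so that the adjunction isomorphism lands on the two intended hom-sets rather than on their variants. Once the variances are pinned down, no further computation is required, and the statement is immediate from the restricted adjunction and \ref{defn:cmt}.
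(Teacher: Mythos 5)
Your proposal is correct and is essentially the paper's own argument: the paper proves this corollary by citing the restricted adjunction of \ref{eq:cmt_adjn_over_c} to obtain the bijection $\Algs(C)(B,A^\perp) \cong \Algs(C)(A,B^\perp)$ and then concluding via \ref{defn:cmt}. Your additional unwinding of the variances and the concrete unit-based alternative are fine but not needed beyond what the paper records.
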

\begin{proof}
By \ref{eq:cmt_adjn_over_c}, $\Algs(C)(B,A^\perp) \cong\Algs(C)(A,B^\perp)$, and the result follows.
\end{proof}

\section{Left- and right-commutant bifold algebras}\label{sec:lrcmt_bif}

One of the benefits of Theorem \ref{thm:rcom_lcom} is that it enables powerful methods for working with bifold algebras in which one face is isomorphic to the commutant of the other, as in the following definition:

\begin{defn}
A \textbf{right-commutant bifold algebra} is a bifold algebra $D$ in $\C$ such that $D_r \cong D_\ell^\perp$ in $\Algs(C)$ where $C = |D|$.  In other words, a bifold algebra on $C$ is right-commutant if its right face is isomorphic, in $\Algs(C)$, to the commutant of its left face.  Similarly, a \textbf{left-commutant bifold algebra} is a bifold algebra $D$ such that $D_\ell \cong D_r^\perp$ in $\Algs(C)$.  Let $\RComBAlg^\sx(\C)$ and $\LComBAlg^\sx(\C)$ denote the full subcategories of $\BAlg^\sx(\C)$  consisting of the right-commutant bifold algebras and the left-commutant bifold algebras, respectively.
\end{defn}

Using the notations of \ref{defn:cmt}, \ref{thm:dl_commutes_w_dr}, \ref{thm:rcom_lcom}, \ref{notn:unit_counit}, we now establish several equivalent characterizations of right-commutant bifold algebras:

\begin{prop}\label{thm:charns_rcom_balg}
Given a bifold algebra $D$ on an object $C$ of $\C$, the following are equivalent: (1) $D$ is a right-commutant bifold algebra; (2) $D \cong \RCom A$ in $\BAlg^\sx(\C)$ for some algebra $A$; (3) the counit morphism $\varepsilon_D:\RCom L D \rightarrow D$ is an isomorphism in $\BAlg^\sx(\C)$; (4) the unique morphism $[D_r|D_\ell]:D_r \rightarrow D_\ell^\perp$ in $\Algs(C)$ is an isomorphism; (5) the morphism of $\J$-theories $[D_r|D_\ell]:\U \rightarrow \T^\perp_{D_\ell}$ is an isomorphism, where we write $\T$, $\U$ to denote the pair of $\J$-theories underlying $D$.
\end{prop}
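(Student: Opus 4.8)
The plan is to route all four of the remaining conditions through condition~(4), which asserts that the canonical comparison $[D_r|D_\ell]:D_r \to D_\ell^\perp$ in $\Algs(C)$ is invertible. Concretely I would establish $(5)\Leftrightarrow(4)\Leftrightarrow(1)$ and $(2)\Leftrightarrow(3)\Leftrightarrow(4)$. The observation that makes the whole argument cohere is \ref{thm:bifold_alg_comm_r}, which identifies the abstract counit component $R\varepsilon_D$ with the concrete commutation datum $[D_r|D_\ell]$; this is the bridge between the adjunction $\RCom \dashv L$ of \ref{thm:rcom_lcom} and the hands-on notion of commutation from \ref{defn:cmt}.

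First I would dispatch $(4)\Leftrightarrow(5)$: by \ref{para:algs_on_c} a morphism in $\Algs(C)$ is invertible exactly when its underlying morphism of $\J$-theories is, and by \ref{defn:cmt} together with \ref{thm:dl_commutes_w_dr} the $\Algs(C)$-morphism $[D_r|D_\ell]$ has underlying $\J$-theory morphism $[D_r|D_\ell]:\U \to \T^\perp_{D_\ell}$, so the two invertibility statements coincide verbatim. Next, for $(1)\Leftrightarrow(4)$, the implication $(4)\Rightarrow(1)$ is immediate from the definition of a right-commutant bifold algebra. The reverse implication is the one genuinely subtle step: given only that $D_r \cong D_\ell^\perp$ in $\Algs(C)$, I would invoke \ref{thm:equiv_charns_b_pipe_a}, which guarantees that there is \emph{at most one} morphism $D_r \to D_\ell^\perp$ in $\Algs(C)$ (because $D_\ell^\perp$ is a commutant); hence any witnessing isomorphism must coincide with the canonical $[D_r|D_\ell]$, forcing the latter to be invertible.

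For $(3)\Leftrightarrow(4)$ I would use \ref{thm:bifold_alg_comm_r} in both directions. The implication $(3)\Rightarrow(4)$ is direct: applying $R$ to the isomorphism $\varepsilon_D$ shows that $R\varepsilon_D = [D_r|D_\ell]$ is an isomorphism. For $(4)\Rightarrow(3)$, I would use that the unit of $\RCom \dashv L$ is an identity (\ref{thm:rcom_lcom}), so by the triangle identity $L\varepsilon_D = 1_{LD}$; thus the $P$-component of $\varepsilon_D$ is invertible (indeed an identity), while $(4)$ makes its $Q$-component $R\varepsilon_D$ invertible, whence $\varepsilon_D$ itself is invertible by the joint conservativity of $P$ and $Q$ recorded in \ref{rem:pq_jcons}. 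Finally, for $(2)\Leftrightarrow(3)$: the implication $(3)\Rightarrow(2)$ is trivial upon setting $A = LD$, so that $\varepsilon_D^{-1}$ exhibits $D \cong \RCom A$. For $(2)\Rightarrow(3)$ I would again exploit the identity unit, which via the triangle identity gives $\varepsilon_{\RCom A} = 1_{\RCom A}$; naturality of $\varepsilon$ along any isomorphism $D \cong \RCom A$ then transports invertibility back to $\varepsilon_D$.

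The main obstacle is conceptual rather than computational, and it resides entirely in $(1)\Rightarrow(4)$: one must recognize that the bare existence of \emph{some} isomorphism $D_r \cong D_\ell^\perp$ is upgraded to invertibility of the \emph{specific} canonical morphism $[D_r|D_\ell]$ precisely because hom-sets into a commutant in $\Algs(C)$ are subsingletons, by \ref{thm:equiv_charns_b_pipe_a}. Once this is in hand, all the remaining implications are formal consequences of the adjunction $\RCom \dashv L$ with identity unit together with the two-sided fibration structure of \eqref{eq:2sided_fibr_balg}, and demand no delicate verification.
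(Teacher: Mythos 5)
Your proof is correct and follows essentially the same route as the paper's: both hinge on the identification $R\varepsilon_D = [D_r|D_\ell]$ from \ref{thm:bifold_alg_comm_r}, the identity unit of $\RCom \dashv L$, the joint conservativity of $P$ and $Q$ from \ref{rem:pq_jcons}, and the uniqueness clause of \ref{thm:equiv_charns_b_pipe_a} for $(1)\Rightarrow(4)$. The only (immaterial) difference is that you close the cycle via $(5)\Rightarrow(4)\Rightarrow(3)$ --- using that invertibility in $\Algs(C)$ is detected on underlying $\J$-theory morphisms --- whereas the paper goes $(5)\Rightarrow(3)\Rightarrow(4)$.
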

\begin{proof}
Clearly $(4)\Rightarrow(1)$, while $(1)\Rightarrow(4)$ by the uniqueness of $[D_r|D_\ell]$.  Also, $(2)\Leftrightarrow(3)$ since the unit of the adjunction $\RCom \dashv L$ is an identity.  Next $(3)\Rightarrow(4)$ by \ref{thm:bifold_alg_comm_r}, and clearly $(4)\Rightarrow(5)$.  Now supposing (5), it suffices to show (3).  Let us write $D$ as $(\T,\U,D)$.  Since the forgetful functors $P$ and $Q$ in \eqref{eq:2sided_fibr_balg} are jointly conservative \pref{rem:pq_jcons}, it suffices to show that the underlying morphisms of $\J$-theories $P\varepsilon_D$ and $Q\varepsilon_D$ are invertible.  Since the unit of $\RCom \dashv L$ is an identity, it follows that $L\varepsilon_D$ is the identity on $L(\T,\U,D) = (\T,D_\ell)$, so $P\varepsilon_D = 1_\T:\T \rightarrow \T$, while by \ref{thm:bifold_alg_comm_r} we find that $Q\varepsilon_D = [D_r|D_\ell]:\U \rightarrow \T^\perp_{D_\ell}$, which is invertible by (5).  Hence (3) holds. 
\end{proof}

\begin{rem}\label{rem:charns_lcom_balgs}
A bifold algebra $D$ is left-commutant iff $D^\circ$ is right-commutant, so by applying \ref{thm:charns_rcom_balg} to $D^\circ$ and using \ref{para:lr_face_functors}, \eqref{eq:lcom_vs_rcom}, and \eqref{notn:unit_counit}, we obtain analogous characterizations of left-commutant bifold algebras $D$ by exchanging the roles of $D_\ell$ and $D_r$ and replacing $\RCom$ with $\LCom$.  In particular, $D$ is left-commutant iff the unit morphism $\eta_D:D \rightarrow \LCom RD$ is an isomorphism in $\BAlg^\sx(\C)$.
\end{rem}

\begin{thm}\label{thm:rcmt_corefl}
The category of right-commutant bifold algebras $\RComBAlg^\sx(\C)$ is a replete, coreflective subcategory of $\BAlg^\sx(\C)$, and the category of left-commutant bifold algebras $\LComBAlg^\sx(\C)$ is a replete, reflective subcategory of $\BAlg^\sx(\C)$.  There are equivalences of categories
$$\RComBAlg^\sx(\C) \;\simeq\; \Algs^\s(\C),\;\;\;\;\LComBAlg^\sx(\C)\;\simeq\;\Algs^\s(\C)^\op$$
obtained as restrictions of the adjunctions $\RCom \dashv L$ and $R \dashv \LCom$ of \ref{thm:rcom_lcom}.
\end{thm}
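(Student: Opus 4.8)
The plan is to realize both subcategories directly as restrictions of the two adjunctions in \ref{thm:rcom_lcom}, treating the right-commutant case in full and then deducing the left-commutant case by means of the anti-involution $\varbigcirc$. Repleteness is immediate: by \ref{thm:charns_rcom_balg}, membership of $D$ in $\RComBAlg^\sx(\C)$ is equivalent to $D \cong \RCom A$ for some algebra $A$, a condition visibly invariant under isomorphism in $\BAlg^\sx(\C)$, and dually for $\LComBAlg^\sx(\C)$ via \ref{rem:charns_lcom_balgs}.

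For the equivalence $\RComBAlg^\sx(\C) \simeq \Algs^\s(\C)$, I would corestrict $\RCom$ to a functor $\Algs^\s(\C) \to \RComBAlg^\sx(\C)$, which is legitimate since every $\RCom A$ is right-commutant, and restrict $L$ along the full inclusion $\RComBAlg^\sx(\C) \hookrightarrow \BAlg^\sx(\C)$. This restricts the adjunction $\RCom \dashv L$ to an adjunction between $\Algs^\s(\C)$ and $\RComBAlg^\sx(\C)$ whose unit is still an identity (since that of $\RCom \dashv L$ is) and whose counit at each object of $\RComBAlg^\sx(\C)$ is the isomorphism $\varepsilon_D$ furnished by $(1)\Leftrightarrow(3)$ of \ref{thm:charns_rcom_balg}; an adjunction with invertible unit and counit is an adjoint equivalence.

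For coreflectivity, I propose the coreflector $\RCom L : \BAlg^\sx(\C) \to \RComBAlg^\sx(\C)$ together with the counit $\varepsilon$ of $\RCom \dashv L$ as the couniversal arrows. The crux is to verify that for each right-commutant $E$ and each $D$, postcomposition with $\varepsilon_D$ induces a bijection
$$\BAlg^\sx(\C)(E, \RCom L D) \xrightarrow{\ \sim\ } \BAlg^\sx(\C)(E, D)\;.$$
Since $\varepsilon_E$ is invertible (as $E$ is right-commutant), I may replace $E$ by $\RCom L E$, and then transpose both hom-sets across $\RCom \dashv L$; using $L\RCom = 1$ and the triangle identity $L\varepsilon = 1$, both sides are identified with $\Algs^\s(\C)(LE, LD)$ in such a way that the comparison map becomes the identity, whence it is a bijection. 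This exhibits $\RComBAlg^\sx(\C)$ as coreflective with coreflector $\RCom L$, and I expect this bijection to be the main point of the argument, the subtlety being the bookkeeping that both transposes land on $L\psi$ precisely because the unit of $\RCom \dashv L$ is an identity.

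Finally, for the left-commutant statements I would invoke the anti-involution $\varbigcirc : \BAlg^\sx(\C) \overset{\sim}{\to} \BAlg^\sx(\C)^\op$, which by \ref{rem:charns_lcom_balgs} carries $\LComBAlg^\sx(\C)$ isomorphically onto $\bigl(\RComBAlg^\sx(\C)\bigr)^\op$ and, by \eqref{eq:lcom_vs_rcom}, intertwines $\LCom$ with $\RCom$. Transporting the coreflectivity of $\RComBAlg^\sx(\C)$ through $\varbigcirc$ (which reverses the direction of adjunction) yields the reflectivity of $\LComBAlg^\sx(\C)$, together with the equivalence $\LComBAlg^\sx(\C) \simeq \Algs^\s(\C)^\op$ as a restriction of $R \dashv \LCom$. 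Alternatively, the left-commutant case admits a direct symmetric proof obtained from the above by replacing the triple $(\RCom, L, \varepsilon)$ with $(\LCom, R, \eta)$ and ``coreflective'' with ``reflective'' throughout.
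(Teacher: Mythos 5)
Your argument is correct and follows essentially the same route as the paper, whose proof simply cites Theorem \ref{thm:rcom_lcom}, Proposition \ref{thm:charns_rcom_balg}, and Remark \ref{rem:charns_lcom_balgs}; you have merely made explicit the standard fact that the essential image of a fully faithful left adjoint (here $\RCom$, with identity unit) is a replete coreflective subcategory with coreflector $\RCom L$, on which the adjunction restricts to an equivalence, and dualized via $\varbigcirc$. Your bookkeeping of the transposed hom-sets and the triangle identity $L\varepsilon = 1$ is exactly the verification the paper leaves implicit.
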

\begin{proof}
This follows from Theorem \ref{thm:rcom_lcom}, Proposition \ref{thm:charns_rcom_balg}, and Remark \ref{rem:charns_lcom_balgs}.
\end{proof}

The following further characterization of right- and left-commutant bifold algebras makes no reference to the notion of commutant (nor to $\RCom$ or $\LCom$):

\begin{thm}
Let $(\T,\U,D)$ be a bifold algebra in $\C$.  Then $D$ is right-commutant if and only if its transpose $D_{r\ell}:\U \rightarrow [\T,\C]_\J$ is fully faithful.  Similarly, $D$ is left-commutant if and only if its transpose $D_{\ell r}:\T \rightarrow [\U,\C]_\J$ is fully faithful.
\end{thm}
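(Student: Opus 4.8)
The plan is to reduce right-commutance to a statement about the canonical factorization of the transpose $D_{r\ell}$ through the commutant theory. First I would set $A = D_\ell$ and recall from \ref{notn:bif} that $D_{r\ell}:\U \to [\T,\C]_\J = \Alg{\T}(\C)$ is precisely a $\U$-algebra on $A$. By \ref{thm:dl_commutes_w_dr} the associated morphism of $\J$-theories is $[D_{r\ell}] = [D_r|D_\ell]:\U \to \T^\perp_A$, and by \ref{defn:ind_mor}, applied in the ambient $\V$-category $\Alg{\T}(\C)$ (whose full $\J$-theory of $A$ is exactly $\T^\perp_A$), this morphism is characterized by an isomorphism $\Gamma_A [D_{r\ell}] \cong D_{r\ell}$ in $\Alg{\U}(A)$, where $\Gamma_A:\T^\perp_A \hookrightarrow \Alg{\T}(\C)$ is the fully faithful normal $\T^\perp_A$-algebra of \eqref{eq:cl_alg_cmt}.

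The heart of the argument is then a short chain of equivalences. Since $\Gamma_A$ is fully faithful and $D_{r\ell} \cong \Gamma_A [D_{r\ell}]$ as $\V$-functors, the $\V$-functor $D_{r\ell}$ is fully faithful if and only if $[D_{r\ell}]$ is fully faithful. Next, $[D_{r\ell}]$ is a morphism of $\J$-theories and hence is identity-on-objects, so it is fully faithful if and only if it is an isomorphism of $\V$-categories. Finally, by condition (5) of \ref{thm:charns_rcom_balg}, the bifold algebra $D$ is right-commutant if and only if $[D_r|D_\ell] = [D_{r\ell}]$ is an isomorphism of $\J$-theories. Chaining these three equivalences yields that $D$ is right-commutant if and only if $D_{r\ell}$ is fully faithful.

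For the left-commutant case I would invoke the canonical anti-involution rather than repeat the argument. By \ref{rem:charns_lcom_balgs}, the bifold algebra $D$ is left-commutant if and only if $D^\circ$ is right-commutant, and a direct check from \ref{notn:bif} gives $(D^\circ)_{r\ell} = D_{\ell r}$, since $((D^\circ)_{r\ell}J)K = D^\circ(K,J) = D(J,K) = (D_{\ell r}J)K$. Applying the result just established to $D^\circ$ then shows that $D$ is left-commutant if and only if $D_{\ell r}$ is fully faithful.

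I do not anticipate a genuine obstacle here; the one point deserving a moment's care is the first transfer, namely that full faithfulness of $D_{r\ell}$ is equivalent to that of $[D_{r\ell}]$. This rests on two standard facts about $\V$-functors that I would record explicitly before use: a $\V$-natural isomorphism preserves and reflects full faithfulness, and postcomposition with a fully faithful $\V$-functor (here $\Gamma_A$) does the same, because the hom-components of $\Gamma_A [D_{r\ell}]$ factor through the invertible hom-components of $\Gamma_A$.
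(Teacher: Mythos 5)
Your proposal is correct and follows essentially the same route as the paper's own proof: identify $[D_{r\ell}]$ with $[D_r|D_\ell]$ via \ref{thm:dl_commutes_w_dr}, transfer full faithfulness across the fully faithful $\Gamma_{D_\ell}$ using $\Gamma_{D_\ell}[D_{r\ell}] \cong D_{r\ell}$, note that an identity-on-objects morphism of $\J$-theories is fully faithful iff it is an isomorphism, and conclude by \ref{thm:charns_rcom_balg}. The reduction of the left-commutant case to the right-commutant case via $D^\circ$ is likewise the intended symmetry argument.
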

\begin{proof}
Let $M = [D_r|D_\ell]:\U \rightarrow \T^\perp_{D_\ell}$.  Then $D$ is right-commutant if and only if $M$ is an isomorphism \pref{thm:charns_rcom_balg}.  But $M = [D_{r\ell}]$ by \ref{thm:dl_commutes_w_dr}, and hence $\Gamma_{D_\ell} M \cong D_{r\ell}$ by \ref{defn:ind_mor}, so since $\Gamma_{D_\ell}$ is fully faithful we find that $D_{r\ell}$ is fully faithful if and only if $M$ is fully faithful.  But $M$ is a morphism of $\J$-theories, so $M$ is fully faithful if and only if $M$ is an isomorphism.
\end{proof}

\section{Commutant bifold algebras and saturated algebras}\label{sec:cmt_bifold_algs}

\begin{prop}\label{thm:com_adj_idem}
The adjunction $\Com \dashv \Com^\op:\Algs^\s(\C)^\op \rightarrow \Algs^\s(\C)$ in \ref{thm:cmtnt_adjn} is idempotent (in the sense of \cite[2.8]{MacSto}), as is the adjunction $\Com_C \dashv \Com_C^\op$ in \ref{eq:cmt_adjn_over_c} for each object $C$ of $\C$.  Consequently, the monad $(-)^{\perp\perp} = \Com^\op\Com$ on $\Algs^\s(\C)$ is idempotent, as is the monad $(-)^{\perp\perp} = \Com_C^\op\Com_C$ on $\Algs(C)$.
\end{prop}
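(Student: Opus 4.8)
The plan is to verify one of the standard equivalent formulations of idempotency from \cite[2.8]{MacSto}: for an adjunction $F \dashv G$ with unit $\eta$, it suffices that $\eta G$ be a natural isomorphism (equivalently, that the induced monad $GF$ have invertible multiplication). Taking $F = \Com$ and $G = \Com^\op$, and recalling from \ref{thm:unit_counit_com_adj} that the unit is $u$ with components the commutation-witnessing morphisms $u_A = [A|A^\perp]:A \to A^{\perp\perp}$, this reduces the entire statement to showing that $u_{A^\perp}:A^\perp \to A^{\perp\perp\perp}$ is an isomorphism for every algebra $A$. I note at the outset that the self-dual shape of the adjunction—its counit is $u^\op$, by \ref{thm:cmtnt_adjn} and \ref{thm:unit_counit_com_adj}—makes the two triangle identities coincide and only exhibits $u_{A^\perp}$ as a split monomorphism; promoting this to an isomorphism is precisely the point at which a genuinely non-formal input is required, playing the role that antisymmetry plays for Galois connections between posets.

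That input is the uniqueness clause of \ref{thm:equiv_charns_b_pipe_a}: in $\Algs(C)$ there is at most one morphism into any commutant, so every endomorphism of a commutant object in $\Algs(C)$ is forced to be the identity. I would exploit this after restricting to a fixed carrier $C = |A|$, where \ref{eq:cmt_adjn_over_c} supplies the restricted adjunction $\Com_C \dashv \Com_C^\op$ on $\Algs(C)$. Writing $c_A = u_A$, I form the two morphisms $c_{A^\perp}:A^\perp \to A^{\perp\perp\perp}$ and $\Com_C(c_A):A^{\perp\perp\perp} \to A^\perp$ in $\Algs(C)$ (the latter by contravariance of $\Com_C$). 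Their two composites are endomorphisms of $A^\perp$ and of $A^{\perp\perp\perp} = (A^{\perp\perp})^\perp$, respectively; since each of these objects is a commutant, both composites equal the relevant identities by the uniqueness just recalled. Hence $c_{A^\perp}$ is an isomorphism with inverse $\Com_C(c_A)$, and in particular $u_{A^\perp}$ is invertible.

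Assembling the components, $u\Com^\op$ is a natural isomorphism, so by \cite[2.8]{MacSto} the adjunction $\Com \dashv \Com^\op$ of \ref{thm:cmtnt_adjn} is idempotent; the identical computation, read inside $\Algs(C)$, shows that $\Com_C \dashv \Com_C^\op$ of \ref{eq:cmt_adjn_over_c} is idempotent as well. The final assertion is then immediate from general theory, since the monad of an idempotent adjunction is idempotent: thus $(-)^{\perp\perp} = \Com^\op\Com$ on $\Algs^\s(\C)$ and $(-)^{\perp\perp} = \Com_C^\op\Com_C$ on $\Algs(C)$ both have invertible multiplication. The one delicate step is the upgrade from split monomorphism to isomorphism, and its whole weight rests on \ref{thm:equiv_charns_b_pipe_a}; everything else is the formal manipulation of units, counits, and whiskerings.
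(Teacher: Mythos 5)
Your proposal is correct and follows essentially the same route as the paper's proof: both reduce idempotency to showing that a whiskered unit of $\Com \dashv \Com^\op$ is invertible via the MacDonald--Stone criteria, and both obtain the crucial inverse by observing that the relevant composite is an endomorphism of a triple commutant $A^{\perp\perp\perp}$ in $\Algs(C)$, hence the identity by the uniqueness clause of \ref{thm:equiv_charns_b_pipe_a}. The only (immaterial) differences are that you verify $u\Com^\op$ is invertible where the paper verifies $\Com u$ is, and you run the argument inside $\Algs(C)$ directly rather than first reducing the restricted adjunction to the global one.
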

\begin{proof}
Since $\Com_C \dashv \Com_C^\op$ is obtained by restricting $\Com \dashv \Com^\op$, it suffices to show that the latter adjunction is idempotent.  By \ref{notn:unit_counit}, the counit of this adjunction is $u^\op:\Com\Com^\op \Rightarrow 1$, where $u:1 \Rightarrow \Com^\op\Com$ is the unit.  By adjointness, $u^\op\Com \cdot \Com u = 1_{\Com}$, so it suffices to show that $\Com u \cdot u^\op \Com = 1_{\Com\Com^\op\Com}$.  But for each algebra $A$ on an object $C$ of $\C$, $(\Com u \cdot u^\op \Com)_A$ is a morphism $A^{\perp\perp\perp} \rightarrow A^{\perp\perp\perp}$ in $\Algs(C)$ (by \ref{eq:cmt_adjn_over_c}), which must be an identity by the uniqueness in \ref{thm:equiv_charns_b_pipe_a}.
\end{proof}

The following definition provides a convenient new way of describing $\J$-theories $\T$ that are \textit{saturated} with respect to a $\T$-algebra $A$ in the sense of \cite[3.3.5]{Lu:FDistn}:

\begin{defn}
An algebra $A$ on an object $C$ of $\C$ is \textbf{saturated} if and only if the unit morphism $u_A:A \rightarrow \Com^\op\Com A = A^{\perp\perp}$ is an isomorphism in $\Algs^\s(\C)$ (equivalently, in $\Algs(C)$).  Let us write $\SatAlgs^\s(\C)$ to denote the full subcategory of $\Algs^\s(\C)$ consisting of the saturated algebras, and for each object $C$ of $\C$, write $\SatAlgs(C)$ for the full subcategory of $\Algs(C)$ consisting of saturated algebras on $C$.
\end{defn}

\begin{cor}\label{thm:charns_sat_algs}
Given an algebra $A$ on an object $C$ of $\C$, the following are equivalent: (1) $A$ is saturated, (2) $A \cong A^{\perp\perp}$ in $\Algs^\s(\C)$ (3) $A \cong B^\perp$ in $\Algs^\s(\C)$ for some algebra $B$, (4) $A \cong A^{\perp\perp}$ in $\Algs(C)$, (5) $A \cong B^\perp$ in $\Algs(C)$ for some algebra $B$ on $C$.
\end{cor}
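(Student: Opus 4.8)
The engine of the proof is the idempotence of the commutant adjunction established in \ref{thm:com_adj_idem}: it makes $(-)^{\perp\perp} = \Com^\op\Com$ an idempotent monad on $\Algs^\s(\C)$, with unit $u$ (\ref{notn:unit_counit}), and likewise makes $(-)^{\perp\perp} = \Com_C^\op\Com_C$ an idempotent monad on $\Algs(C)$ for each $C$ (\ref{eq:cmt_adjn_over_c}). By definition the saturated algebras are exactly the objects at which $u$ is invertible, i.e. the fixed points (local objects) of this idempotent monad. The plan is therefore to identify these fixed points in the standard way, recalling two facts about any idempotent monad $T$ with unit $\eta$ arising from an adjunction $F \dashv G$: first, $\eta$ is invertible at every object lying in the image of the right adjoint $G$ (in particular at every $TX$); and second, the class of objects at which $\eta$ is invertible is replete, by naturality of $\eta$.

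With these facts in hand, I would prove $(1) \Rightarrow (2) \Rightarrow (3) \Rightarrow (1)$ inside $\Algs^\s(\C)$. The implication $(1) \Rightarrow (2)$ is immediate, since saturation of $A$ means precisely that $u_A : A \to A^{\perp\perp}$ is an isomorphism. For $(2) \Rightarrow (3)$ I would simply take $B = A^\perp$, so that $B^\perp = A^{\perp\perp} \cong A$. The substantive step is $(3) \Rightarrow (1)$: given $A \cong B^\perp$, I observe that $B^\perp = \Com^\op B$ lies in the image of the right adjoint $\Com^\op$ of the adjunction $\Com \dashv \Com^\op$ (\ref{thm:cmtnt_adjn}), so the first idempotent-monad fact gives that $u_{B^\perp}$ is invertible, and then repleteness transports invertibility of the unit along the isomorphism $A \cong B^\perp$ to conclude that $u_A$ is invertible, i.e. $A$ is saturated.

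The chain $(1) \Leftrightarrow (4) \Leftrightarrow (5)$ would then be proved by the identical argument carried out inside $\Algs(C)$, using the idempotent monad $\Com_C^\op\Com_C$ and the restricted adjunction $\Com_C \dashv \Com_C^\op$ of \ref{eq:cmt_adjn_over_c}, and invoking that $A$ is saturated exactly when $u_A$ is invertible in $\Algs(C)$ (this is the parenthetical equivalence built into the definition of saturation, which holds because $u_A$ is a morphism of $\Algs(C)$ by \ref{thm:unit_counit_com_adj} and is an isomorphism there precisely when it is one in $\Algs^\s(\C)$). Since every one of the five conditions is thereby shown equivalent to condition $(1)$, all five are mutually equivalent.

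The only point requiring care, and the step I would flag as the main obstacle, is the precise form of the idempotent-monad lemma invoked in $(3) \Rightarrow (1)$: condition $(3)$ supplies an isomorphism $A \cong B^\perp$ with $B^\perp$ in the image of the right adjoint $\Com^\op$, not a priori of the form $X^{\perp\perp}$, so I must use the sharper statement that the unit of an idempotent monad is invertible at every object in the image of the right adjoint (equivalently, that $\eta G$ is invertible for an idempotent adjunction $F \dashv G$) rather than merely the weaker statement that $\eta_{TX}$ is invertible. Beyond this the argument is essentially bookkeeping, since the genuinely hard content, namely idempotence of the commutant adjunction, has already been secured in \ref{thm:com_adj_idem}.
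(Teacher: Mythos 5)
Your proof is correct and follows exactly the route the paper intends: the corollary is stated without proof as an immediate consequence of the idempotence of the commutant adjunction (\ref{thm:com_adj_idem}), the definition of saturation via invertibility of the unit $u$, and the standard facts that for an idempotent adjunction the unit is invertible on the image of the right adjoint and that this class of objects is replete. Your flagged point about needing $\eta G$ invertible (rather than merely $\eta_{TX}$ invertible) is exactly right and is covered by the cited notion of idempotent adjunction in the sense of \cite[2.8]{MacSto}.
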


\begin{rem}\label{rem:satalg_preordered_class}
For each object $C$ of $\C$, the category $\SatAlgs(C)$ is a preordered class, in view of \ref{thm:charns_sat_algs} and the uniqueness in \ref{thm:equiv_charns_b_pipe_a}.
\end{rem}

\begin{cor}\label{thm:cat_sat_algs}
The category of saturated algebras $\SatAlgs^\s(\C)$ is a replete, reflective subcategory of $\Algs^\s(\C)$, with reflector given on objects by $A \mapsto A^{\perp\perp}$.  The adjunction $\Com \dashv \Com^\op$ restricts an equivalence
$$\SatAlgs^\s(\C) \simeq \SatAlgs^\s(\C)^\op$$
given on objects by $A \mapsto A^\perp$.
\end{cor}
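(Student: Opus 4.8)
The plan is to deduce the entire statement formally from the idempotence of the adjunction $\Com \dashv \Com^\op$ established in \ref{thm:com_adj_idem}, together with the definition of saturation and the characterizations in \ref{thm:charns_sat_algs}. By definition an algebra $A$ is saturated exactly when the unit $u_A : A \to A^{\perp\perp}$ is an isomorphism, so $\SatAlgs^\s(\C)$ is precisely the full subcategory of fixed points of the idempotent monad $(-)^{\perp\perp} = \Com^\op\Com$ on $\Algs^\s(\C)$.

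For the reflectivity, I would first observe that $A^{\perp\perp}$ is saturated for every algebra $A$: taking $B = A^\perp$ in \ref{thm:charns_sat_algs}(3) gives $A^{\perp\perp} = (A^\perp)^\perp \cong B^\perp$, so $A^{\perp\perp} \in \SatAlgs^\s(\C)$ (equivalently, this is just the idempotence of the monad). Hence $A \mapsto A^{\perp\perp}$ corestricts to a functor $\Algs^\s(\C) \to \SatAlgs^\s(\C)$, and I would verify that $u_A : A \to A^{\perp\perp}$ exhibits this as the reflection: for saturated $X$ and any $f : A \to X$, the required unique factorization through $u_A$ is exactly the universal property of the unit of the monad at the monad-algebra $X$ (here saturated $=$ monad-algebra, since the monad is idempotent). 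Repleteness is immediate because the defining condition $A \cong A^{\perp\perp}$ of \ref{thm:charns_sat_algs}(2) is invariant under isomorphism. This is the standard fact that the fixed points of an idempotent monad form a reflective subcategory with the monad as reflector, which could alternatively be cited from \cite{MacSto}.

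For the equivalence, I would check that $\Com$ and $\Com^\op$ restrict to the saturated subcategories and that the restrictions are mutually quasi-inverse. First, $\Com$ lands in saturated algebras, since $A^\perp \cong B^\perp$ with $B = A$ is saturated by \ref{thm:charns_sat_algs}(3); thus $\Com$ corestricts to a functor $\SatAlgs^\s(\C) \to \SatAlgs^\s(\C)^\op$, and dually $\Com^\op$ corestricts to $\SatAlgs^\s(\C)^\op \to \SatAlgs^\s(\C)$. The composite of these restrictions is $A \mapsto A^{\perp\perp}$, and the unit $u : 1 \Rightarrow \Com^\op\Com$, being an isomorphism on saturated algebras by definition, furnishes a natural isomorphism $1 \cong \Com^\op\Com$ on $\SatAlgs^\s(\C)$; symmetrically, the counit $u^\op$ of \ref{notn:unit_counit} gives $\Com\Com^\op \cong 1$ on $\SatAlgs^\s(\C)^\op$. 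Hence the restricted adjunction is an adjoint equivalence $\SatAlgs^\s(\C) \simeq \SatAlgs^\s(\C)^\op$, given on objects by $A \mapsto A^\perp$.

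The arguments are all formal, so I do not anticipate a serious obstacle; the only point requiring care is bookkeeping with the opposite category — specifically, confirming that the fixed points of the comonad $\Com\Com^\op$ on $\Algs^\s(\C)^\op$ are exactly $\SatAlgs^\s(\C)^\op$, which holds because by \ref{notn:unit_counit} the comonad counit is $u^\op$, whose components are invertible precisely for the saturated algebras. Indeed the whole statement is a direct instance of the standard theory of idempotent adjunctions, whose monad- and comonad-fixed-point subcategories are respectively reflective and coreflective and are equivalent via the adjoint pair, so I would fall back on citing \cite{MacSto} for this if a more economical writeup is preferred.
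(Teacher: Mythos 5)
Your proposal is correct and follows exactly the route the paper intends: the corollary is stated without proof precisely because it is the standard consequence of the idempotent adjunction $\Com \dashv \Com^\op$ from \ref{thm:com_adj_idem} together with the definition of saturation as invertibility of the unit and the characterizations in \ref{thm:charns_sat_algs}. Your bookkeeping with the opposite category (identifying the comonad counit with $u^\op$ via \ref{notn:unit_counit}) is the only point of care, and you handle it correctly.
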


\begin{rem}\label{rem:sat_alg_faithful}
Every saturated algebra $(\T,A)$ is \textbf{faithful}, meaning that the $\V$-functor $A:\T \rightarrow \C$ is faithful.  Indeed, this follows from the fact that $B^\perp:\U^\perp_B \rightarrow \C$ is faithful for every algebra $(\U,B)$, by \ref{para:cmt}.
\end{rem}

\begin{defn}
A \textbf{commutant bifold algebra} is a bifold algebra $D$ that is both left-commutant and right-commutant, meaning that $D_\ell \cong D_r^\perp$ and $D_r \cong D_\ell^\perp$ in $\Algs(C)$, where $C = |D|$.  We write $\ComBAlg^\sx(\C)$ to denote the full subcategory of $\BAlg^\sx(\C)$ consisting of the commutant bifold algebras.
\end{defn}

\begin{prop}\label{thm:charn_comm_balg}
Given a bifold algebra $D$ on an object $C$ of $\C$, the following are equivalent: (1) $D$ is commutant, (2) $D$ is left-commutant and $D_r$ is saturated, (3) $D$ is right-commutant and $D_\ell$ is saturated.
\end{prop}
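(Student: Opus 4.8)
The plan is to prove the equivalences by establishing $(1)\Leftrightarrow(3)$ directly and then deducing $(1)\Leftrightarrow(2)$ from it by the left--right symmetry supplied by the anti-involution $\varbigcirc$. The two workhorses will be the functoriality of the commutant on a fixed carrier, namely the functor $\Com_C:\Algs(C)\rightarrow\Algs(C)^\op$ of \ref{eq:cmt_adjn_over_c}, which in particular preserves isomorphisms, together with the characterizations of saturated algebras in \ref{thm:charns_sat_algs}.

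First I would prove $(1)\Rightarrow(3)$. Assume $D$ is commutant. Then $D$ is right-commutant, so (3) requires only that $D_\ell$ be saturated. Being commutant, $D$ is also left-commutant, so that $D_\ell\cong D_r^\perp$ and $D_r\cong D_\ell^\perp$ in $\Algs(C)$. Applying $\Com_C$ to the second isomorphism gives $D_r^\perp\cong D_\ell^{\perp\perp}$, whence $D_\ell\cong D_r^\perp\cong D_\ell^{\perp\perp}$; by \ref{thm:charns_sat_algs} this shows $D_\ell$ is saturated, so (3) holds.

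Next I would prove $(3)\Rightarrow(1)$. Assume $D$ is right-commutant, so $D_r\cong D_\ell^\perp$, and $D_\ell$ is saturated, so $D_\ell\cong D_\ell^{\perp\perp}$ by \ref{thm:charns_sat_algs}. Applying $\Com_C$ to $D_r\cong D_\ell^\perp$ yields $D_r^\perp\cong D_\ell^{\perp\perp}\cong D_\ell$, that is $D_\ell\cong D_r^\perp$, so $D$ is left-commutant as well, and therefore commutant. This establishes $(1)\Leftrightarrow(3)$.

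Finally, to obtain $(1)\Leftrightarrow(2)$ I would apply the equivalence $(1)\Leftrightarrow(3)$ just proved to the bifold algebra $D^\circ$. Recalling from \ref{notn:bif} that $D^\circ_\ell=D_r$ and $D^\circ_r=D_\ell$, and from \ref{rem:charns_lcom_balgs} that $D$ is left-commutant iff $D^\circ$ is right-commutant (so that $D$ is commutant iff $D^\circ$ is), condition (1) for $D^\circ$ is condition (1) for $D$, while condition (3) for $D^\circ$---namely that $D^\circ$ is right-commutant and $D^\circ_\ell$ is saturated---translates into the statement that $D$ is left-commutant and $D_r$ is saturated, which is exactly condition (2) for $D$. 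Thus $(1)\Leftrightarrow(2)$, completing the argument. I do not anticipate a genuine obstacle here; the only points demanding care are the repeated use of the fact that the carrier-fixed commutant functor $\Com_C$ sends isomorphisms to isomorphisms, and the bookkeeping identification of $(D_\ell^\perp)^\perp$ with the double commutant $D_\ell^{\perp\perp}$ that enters the definition of saturation.
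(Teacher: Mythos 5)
Your proof is correct and is essentially the paper's own argument up to a left--right reflection: the paper proves $(1)\Leftrightarrow(2)$ directly (noting that $(1)\Rightarrow(2)$ follows at once from \ref{thm:charns_sat_algs}, since $D_r\cong D_\ell^\perp$ exhibits $D_r$ as a commutant) and then obtains $(1)\Leftrightarrow(3)$ by passing to $D^\circ$, exactly mirroring your use of $\Com_C$, \ref{thm:charns_sat_algs}, and the $D^\circ$ symmetry.
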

\begin{proof}
Clearly $(1)\Rightarrow(2)$ by \ref{thm:charns_sat_algs}, while if (2) holds then $D_\ell \cong D_r^\perp$ in $\Algs(C)$, so by \ref{eq:cmt_adjn_over_c} we deduce that $D_\ell^\perp \cong D_r^{\perp\perp} \cong D_r$ in $\Algs(C)$, since $D_r$ is saturated, so (1) holds.  By considering $D^\circ$ the equivalence of (1) and (3) now follows.
\end{proof}

\begin{cor}\label{thm:a_sat_iff_rcom_is_cbalg}
Given an algebra $A$ in $\C$, the following are equivalent: (1) $A$ is saturated, (2) $\RCom A$ is a commutant bifold algebra, (3) $\LCom A$ is a commutant bifold algebra.
\end{cor}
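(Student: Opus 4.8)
The plan is to reduce everything to the structural results already in hand, namely Theorem~\ref{thm:rcom_lcom} together with the characterizations of right-commutant, left-commutant, and commutant bifold algebras in Proposition~\ref{thm:charns_rcom_balg}, Remark~\ref{rem:charns_lcom_balgs}, and Proposition~\ref{thm:charn_comm_balg}. First I would record the explicit faces: by Theorem~\ref{thm:rcom_lcom} the bifold algebra $\RCom A$ has left face $(\RCom A)_\ell = A$ and right face $(\RCom A)_r = A^\perp$, while $\LCom A$ has left face $A^\perp$ and right face $(\LCom A)_r = A$; in both cases the carrier is $C = |A|$ (for $\RCom$ this is Remark~\ref{rem:rcom_cmt_w_carrier}, and dually for $\LCom$). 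The second preliminary observation is that $\RCom A$ is \emph{automatically} a right-commutant bifold algebra, since it lies in the image of $\RCom$ and hence satisfies condition~(2) of Proposition~\ref{thm:charns_rcom_balg}; dually, $\LCom A$ is automatically left-commutant by the analogous characterization recalled in Remark~\ref{rem:charns_lcom_balgs}.

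With these in place the equivalences are immediate. For $(1)\Leftrightarrow(2)$ I would invoke Proposition~\ref{thm:charn_comm_balg} in the form $(1)\Leftrightarrow(3)$, which states that a bifold algebra is commutant precisely when it is right-commutant and its left face is saturated. Since $\RCom A$ is always right-commutant and has left face $A$, this says exactly that $\RCom A$ is a commutant bifold algebra if and only if $A$ is saturated. Symmetrically, for $(1)\Leftrightarrow(3)$ I would apply Proposition~\ref{thm:charn_comm_balg} in the form $(1)\Leftrightarrow(2)$ (commutant iff left-commutant and right face saturated) to $\LCom A$, which is always left-commutant and has right face $A$; hence $\LCom A$ is commutant if and only if $A$ is saturated.

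There is essentially no obstacle here, as the content resides entirely in the earlier results: the only care required is to pair $\RCom$ with the left face and the half $(1)\Leftrightarrow(3)$ of Proposition~\ref{thm:charn_comm_balg}, and to pair $\LCom$ with the right face and the half $(1)\Leftrightarrow(2)$. As an alternative route for $(2)\Leftrightarrow(3)$, one could argue directly from $\varbigcirc\RCom = \LCom^\op$ in \eqref{eq:lcom_vs_rcom}, using that the anti-involution $\varbigcirc$ carries commutant bifold algebras to commutant bifold algebras because it exchanges the two faces via $D \mapsto D^\circ$ (\ref{notn:bif}), and then combine this with either of the equivalences above; but the symmetric treatment via Proposition~\ref{thm:charn_comm_balg} is cleaner and avoids the anti-involution altogether.
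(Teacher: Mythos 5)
Your proof is correct and is essentially the argument the paper intends (the corollary is stated without proof, as an immediate consequence of Proposition \ref{thm:charn_comm_balg} together with the identification of the faces of $\RCom A$ and $\LCom A$ from Theorem \ref{thm:rcom_lcom} and the automatic right-/left-commutancy from Proposition \ref{thm:charns_rcom_balg} and Remark \ref{rem:charns_lcom_balgs}). The pairing of $\RCom$ with the equivalence $(1)\Leftrightarrow(3)$ and of $\LCom$ with $(1)\Leftrightarrow(2)$ is exactly right.
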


\begin{thm}\label{thm:cmt_bifold_algs}
The equivalence $\RComBAlg^\sx(\C) \simeq \Algs^\s(\C)$ in \ref{thm:rcmt_corefl} restricts to an equivalence
$$\ComBAlg^\sx(\C) \simeq \SatAlgs^\s(\C)$$
between the category of commutant bifold algebras with strong cross-morphisms and the category of saturated algebras with strong morphisms.  $\ComBAlg^\sx(\C)$ is a replete, \mbox{reflective} subcategory of $\RComBAlg^\sx(\C)$.  Similarly, the equivalence $\LComBAlg^\sx(\C) \simeq \Algs^\s(\C)^\op$ restricts to an equivalence
$$\ComBAlg^\sx(\C) \simeq \SatAlgs^\s(\C)^\op,$$
and $\ComBAlg^\sx(\C)$ is a replete, coreflective subcategory of $\LComBAlg^\sx(\C)$.
\end{thm}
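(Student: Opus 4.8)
The plan is to transport the two statements along the equivalences of \ref{thm:rcmt_corefl} and reduce everything to the already-established facts about saturated algebras in \ref{thm:cat_sat_algs}, \ref{thm:charn_comm_balg}, and \ref{thm:a_sat_iff_rcom_is_cbalg}. Note first that all four subcategories in play are full and replete: $\SatAlgs^\s(\C)$ is replete by \ref{thm:cat_sat_algs}, while $\ComBAlg^\sx(\C) = \RComBAlg^\sx(\C) \cap \LComBAlg^\sx(\C)$ is an intersection of the replete subcategories of \ref{thm:rcmt_corefl}. Because these subcategories are full and replete, an equivalence of the ambient categories restricts to an equivalence of the subcategories as soon as the two object-classes correspond under it; the action on morphisms then restricts automatically.

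First I would verify the object-correspondence for the equivalence $\RComBAlg^\sx(\C) \simeq \Algs^\s(\C)$, which is furnished by $\RCom$ and the restriction of $L$. In one direction, \ref{thm:a_sat_iff_rcom_is_cbalg} states precisely that $A$ is saturated if and only if $\RCom A$ is a commutant bifold algebra, so $\RCom$ carries $\SatAlgs^\s(\C)$ into $\ComBAlg^\sx(\C)$. In the other direction, for a right-commutant bifold algebra $D$, characterization (3) of \ref{thm:charn_comm_balg} says that $D$ is commutant if and only if $D_\ell = LD$ is saturated; hence $L$ carries $\ComBAlg^\sx(\C)$ into $\SatAlgs^\s(\C)$ and, together with the previous direction, the object-classes correspond. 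Therefore the equivalence restricts to $\ComBAlg^\sx(\C) \simeq \SatAlgs^\s(\C)$.

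Next I would deduce the reflectivity assertion. By \ref{thm:cat_sat_algs}, $\SatAlgs^\s(\C)$ is a replete reflective subcategory of $\Algs^\s(\C)$; reflectivity of a replete full subcategory is preserved under any equivalence of categories, and the equivalence $\RComBAlg^\sx(\C) \simeq \Algs^\s(\C)$ matches $\ComBAlg^\sx(\C)$ with $\SatAlgs^\s(\C)$ by the previous paragraph, so $\ComBAlg^\sx(\C)$ is replete and reflective in $\RComBAlg^\sx(\C)$. (Explicitly, the reflector sends a right-commutant $D$ to $\RCom(D_\ell^{\perp\perp})$, by \ref{thm:charns_sat_algs}.)

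For the left-commutant counterparts I would run the same template on the equivalence $\LComBAlg^\sx(\C) \simeq \Algs^\s(\C)^\op$ of \ref{thm:rcmt_corefl}, furnished by $\LCom$ and the restriction of $R$, now using characterization (2) of \ref{thm:charn_comm_balg} (that a left-commutant $D$ is commutant iff $D_r = RD$ is saturated) together with \ref{thm:a_sat_iff_rcom_is_cbalg}. This yields $\ComBAlg^\sx(\C) \simeq \SatAlgs^\s(\C)^\op$. Since $\SatAlgs^\s(\C)$ is reflective in $\Algs^\s(\C)$, its opposite is coreflective in $\Algs^\s(\C)^\op$, and transporting along this equivalence gives that $\ComBAlg^\sx(\C)$ is coreflective in $\LComBAlg^\sx(\C)$. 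The only delicate point is the bookkeeping around the opposite categories---ensuring that ``reflective'' passes to ``coreflective'' under $(-)^\op$ and that I invoke characterization (2) rather than (3) so as to match the right face functor $R$; I expect this purely formal bookkeeping, rather than any genuine difficulty, to be the main obstacle.
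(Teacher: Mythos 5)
Your proposal is correct and follows essentially the same route as the paper's (much terser) proof: both reduce the object-level correspondence to \ref{thm:charn_comm_balg} and \ref{thm:a_sat_iff_rcom_is_cbalg}, obtain repleteness from \ref{thm:rcmt_corefl}, and transport (co)reflectivity from \ref{thm:cat_sat_algs} along the equivalences, handling the left-commutant case by the symmetric argument. Your write-up just makes explicit the bookkeeping that the paper leaves implicit.
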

\begin{proof}
The first claim follows from \ref{thm:charn_comm_balg} and \ref{thm:a_sat_iff_rcom_is_cbalg}.  Consequently, the reflectivity of the full subcategory $\ComBAlg^\sx(\C)$ of $\RComBAlg^\sx(\C)$ follows from \ref{thm:cat_sat_algs}, while its repleteness follows immediately from \ref{thm:rcmt_corefl}.  The remaining claims can be established similarly.
\end{proof}

\section{Commuting pairs of algebras versus bifold algebras}\label{sec:cmt_algs}

\begin{defn}\label{defn:pairs_over_various_theories}
Equipping the category $\Algs^\s(\C)$ with the functor $\ca{\text{$-$}}:\Algs^\s(\C) \rightarrow \C_\si$, let us write $\Pair^\s(\C)$ for the fibre product of $\Algs^\s(\C)$ with itself over $\C_\si$ in $\CAT$.  Hence, the objects of $\Pair^\s(\C)$ are pairs of algebras $(A,B)$ with $|A| = |B|$, which we call \textbf{algebra pairs}.  A morphism in $\Pair^\s(\C)$ is a pair $(f,g)$ consisting of morphisms in $\Algs^\s(\C)$ with $|f| = |g|$.  Let us write $\CPair^\s(\C)$ to denote the full subcategory of $\Pair^\s(\C)$ consisting of \textbf{commuting algebra pairs}, i.e., algebra pairs $(A,B)$ such that $A$ commutes with $B$.
\end{defn}

\begin{prop}\label{thm:comm_closed_under_incoming_morphs}
If $(f,g):(X,Y) \rightarrow (A,B)$ is a morphism in $\Pair^\s(\C)$ and $(A,B)$ is a commuting algebra pair, then $(X,Y)$ is a commuting algebra pair.  Consequently, the full subcategory $\CPair^\s(\C) \hookrightarrow \Pair^\s(\C)$ is replete.
\end{prop}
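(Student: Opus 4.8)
The plan is to exhibit a morphism $Y \to X^\perp$ in $\Algs(C')$, where $C' = |X| = |Y|$; by \ref{defn:cmt} this is precisely what it means for $X$ to commute with $Y$, i.e.\ for $(X,Y)$ to be a commuting algebra pair. The construction uses the functoriality of the commutant, packaged in the self-adjoint functor $\Com = (-)^\perp$ of \ref{thm:cmtnt_adjn}, together with its compatibility with carriers recorded in \ref{rem:com_cmt_w_car}.

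First I would unwind the hypotheses. Writing $C = |A| = |B|$, the commuting pair $(A,B)$ supplies the canonical morphism $[B|A]:B \to A^\perp$ in $\Algs(C)$ from \ref{defn:cmt}, whose underlying map $|[B|A]|$ is $1_C$. The morphism $(f,g)$ in $\Pair^\s(\C)$ consists of strong morphisms $f:X \to A$ and $g:Y \to B$ in $\Algs^\s(\C)$ with $|f| = |g|$; since morphisms in $\Algs^\s(\C)$ have invertible underlying maps \pref{para:str_mor_algs}, I set $c = |f| = |g| : C' \to C$.

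Next I would apply $\Com$ to $f$. As $\Com$ is contravariant, $\Com f$, regarded as a morphism $A^\perp \to X^\perp$ in $\Algs^\s(\C)$, has underlying map $|f|^{-1} = c^{-1}:C \to C'$ by the commuting square of \ref{rem:com_cmt_w_car} (recall $i$ there sends each isomorphism to its inverse). I would then form the composite in $\Algs^\s(\C)$
$$Y \xrightarrow{\;g\;} B \xrightarrow{\;[B|A]\;} A^\perp \xrightarrow{\;\Com f\;} X^\perp\;.$$
Applying the carrier functor $\ca{\text{$-$}}$, the underlying map of this composite is $c^{-1}\circ 1_C \circ c = 1_{C'}$, so the composite lies in the fibre $\Algs(C')$ \pref{para:algs_on_c}. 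This is the desired morphism $Y \to X^\perp$, establishing that $X$ commutes with $Y$.

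The only delicate point—care rather than real difficulty—is the bookkeeping of directions and underlying maps through the opposite category: verifying that $\Com f$ genuinely runs $A^\perp \to X^\perp$ and that \ref{rem:com_cmt_w_car} forces its carrier component to be $c^{-1}$ rather than $c$, so that the three underlying maps telescope to $1_{C'}$. Once the main statement is in hand, the repleteness of $\CPair^\s(\C)$ in $\Pair^\s(\C)$ is immediate: an isomorphism $(X,Y) \cong (A,B)$ in $\Pair^\s(\C)$ is in particular a morphism into the commuting pair $(A,B)$, so $(X,Y)$ is commuting by the first part, and a full subcategory closed under such isomorphisms is replete.
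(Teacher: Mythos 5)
Your proof is correct and coincides with the paper's own argument: both form the composite $Y \xrightarrow{g} B \xrightarrow{[B|A]} A^\perp \xrightarrow{f^\perp} X^\perp$ in $\Algs^\s(\C)$, use \ref{rem:com_cmt_w_car} to see that $|f^\perp| = |f|^{-1}$ so the underlying map telescopes to $1_{C'}$, and conclude via \ref{defn:cmt}. The repleteness deduction is likewise identical.
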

\begin{proof}
Let $C = |A| = |B|$ and $C' = |X| = |Y|$.  By applying the functor $(-)^\perp:\Algs^\s(\C) \rightarrow \Algs^\s(\C)^\op$ of \ref{thm:cmtnt_adjn} we obtain a morphism $f^\perp:A^\perp \rightarrow X^\perp$ in $\Algs^\s(\C)$, and by \ref{rem:com_cmt_w_car} we know that $|f^\perp| = |f|^{-1}$.   Also, since $A$ commutes with $B$, there is a unique morphism $[B|A]:B \rightarrow A^\perp$ in the subcategory $\Algs(C)$ of $\Algs^\s(\C)$, so we obtain a composite morphism
$$h = \bigl(Y \xrightarrow{g} B \xrightarrow{[B|A]} A^\perp \xrightarrow{f^\perp} X^\perp\bigr)$$
in $\Algs^\s(\C)$ with $|h| = |f|^{-1} \cdot 1_C \cdot |g| = 1_{C'}$ since $|f| = |g|$, so that $h:Y \rightarrow X^\perp$ is a morphism in $\Algs(C')$.  Therefore $X$ commutes with $Y$, by \ref{defn:cmt}.
\end{proof}

\begin{defn}\label{para:cat_ts_algs}
Let $\T$ and $\U$ be $\J$-theories.  Equipping both $\Alg{\T}(\C)$ and $\Alg{\U}(\C)$ with their `carrier' $\V$-functors valued in $\C$, let us write $\AlgPair{\T}{\U}(\C)$ for the fibre product $\A \times_\C \B$ of $\A = \Alg{\T}(\C)$ and $\B = \Alg{\U}(\C)$ over $\C$ in $\VCAT$, which exists as we assume $\V_0$ has pullbacks \pref{para:given_data}.  The objects $(A,B)$ of $\AlgPair{\T}{\U}(\C)$ consist of $\T$- and $\U$-algebras $A$ and $B$, respectively, with $|A| = |B|$.  Let
$$\CAlgPair{\T}{\U}(\C)$$
denote the full sub-$\V$-category of $\AlgPair{\T}{\U}(\C)$ consisting of the commuting $\T$-$\U$-algebra pairs.  For each object $C$ of $\C$, let us write $\CAlgPair{\T}{\U}(C)$ to denote the fibre over $C$ of the `carrier' functor $\CAlgPair{\T}{\U}(\C)_0 \rightarrow \C_0$.  Therefore $\CAlgPair{\T}{\U}(C)$ may be identified with the full subcategory of $\Alg{\T}(C) \times \Alg{\U}(C)$ consisting of commuting $\T$-$\U$-algebra pairs on $C$.  By \ref{thm:comm_closed_under_incoming_morphs} we obtain the following:
\end{defn}

\begin{cor}\label{thm:calg_pair_repl}
The full subcategory $\CAlgPair{\T}{\U}(\C)$ of $\AlgPair{\T}{\U}(\C)$ is replete.
\end{cor}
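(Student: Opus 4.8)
The plan is to exhibit any isomorphism in $\AlgPair{\T}{\U}(\C)$ whose target is a commuting pair as an \emph{incoming} morphism in $\Pair^\s(\C)$, and then to invoke Proposition \ref{thm:comm_closed_under_incoming_morphs}. Concretely, suppose $(A,B)$ is an object of $\CAlgPair{\T}{\U}(\C)$ and that $(A',B')$ is an object of $\AlgPair{\T}{\U}(\C)$ equipped with an isomorphism $(A',B') \to (A,B)$ in $\AlgPair{\T}{\U}(\C)$. Since $\CAlgPair{\T}{\U}(\C)$ is a full sub-$\V$-category of $\AlgPair{\T}{\U}(\C)$ \pref{para:cat_ts_algs}, it suffices to show that $A'$ commutes with $B'$, for then $(A',B')$ lies in $\CAlgPair{\T}{\U}(\C)$ and repleteness follows.

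First I would unwind the isomorphism. By the construction of $\AlgPair{\T}{\U}(\C) = \Alg{\T}(\C) \times_\C \Alg{\U}(\C)$ as a fibre product in $\VCAT$ \pref{para:cat_ts_algs}, an isomorphism $(A',B') \to (A,B)$ in the underlying ordinary category consists of an isomorphism $\phi:A' \to A$ in $\Alg{\T}(\C)$ and an isomorphism $\psi:B' \to B$ in $\Alg{\U}(\C)$ satisfying $|\phi| = |\psi|$.

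Next I would repackage these as strong morphisms of algebras. Taking identity theory morphisms, the pairs $(1_\T,\phi):(\T,A') \to (\T,A)$ and $(1_\U,\psi):(\U,B') \to (\U,B)$ are morphisms in $\Algs^\s(\C)$ \pref{para:str_mor_algs}, since $\phi$ and $\psi$ are isomorphisms of $\T$- and $\U$-algebras respectively, with $1_\T^*(A) = A$ and $1_\U^*(B) = B$. Their carrier components are $|\phi|$ and $|\psi|$, which coincide by the fibre product condition; hence the pair $\bigl((1_\T,\phi),(1_\U,\psi)\bigr)$ is a morphism $(A',B') \to (A,B)$ in $\Pair^\s(\C)$ \pref{defn:pairs_over_various_theories}.

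Finally, since $(A,B)$ lies in $\CAlgPair{\T}{\U}(\C)$, the algebra $A$ commutes with $B$, so $(A,B)$ is a commuting algebra pair; Proposition \ref{thm:comm_closed_under_incoming_morphs} applied to the incoming morphism just constructed then yields that $(A',B')$ is a commuting algebra pair, i.e. $A'$ commutes with $B'$. This is precisely the condition for $(A',B')$ to belong to the full sub-$\V$-category $\CAlgPair{\T}{\U}(\C)$, establishing repleteness. The only point requiring care is the identification of an isomorphism in the $\VCAT$-fibre product with a morphism in $\Pair^\s(\C)$ — that is, recognizing each algebra isomorphism $\phi$, $\psi$ as a strong morphism of algebras with identity theory component and matching carrier — after which everything is a direct appeal to the cited proposition.
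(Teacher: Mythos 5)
Your proof is correct and follows exactly the route the paper intends: the corollary is stated as an immediate consequence of Proposition \ref{thm:comm_closed_under_incoming_morphs}, obtained precisely by repackaging an isomorphism in $\AlgPair{\T}{\U}(\C)$ as an incoming morphism in $\Pair^\s(\C)$ with identity theory components. Your careful check that the two algebra isomorphisms have matching carrier components is the only point of substance, and you handle it correctly.
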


\begin{cor}\label{thm:cmtn_via_normaln}
Let $(\T,A)$ and $(\U,B)$ be algebras on an object $C$ of $\C$.  The following are equivalent: (1) $A$ commutes with $B$, (2) $A^\nml$ commutes with $B^\nml$, (3) $[A]$ commutes with $[B]$ in the sense of \ref{para:cmtn}.
\end{cor}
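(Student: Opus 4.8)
The plan is to prove the cycle $(1)\Leftrightarrow(2)\Leftrightarrow(3)$ by reducing to normal algebras, where the algebra-level notion of commutation \pref{defn:cmt} and the morphism-level notion \pref{para:cmtn} become directly comparable. First I would establish $(1)\Leftrightarrow(2)$ by transporting commutation along the normalization isomorphisms. By \ref{para:normal_talgs} there are isomorphisms $\varphi_A:A \xrightarrow{\sim} A^\nml$ in $\Alg{\T}(C)$ and $\varphi_B:B \xrightarrow{\sim} B^\nml$ in $\Alg{\U}(C)$, each with identity carrier; these underlie isomorphisms in $\Algs^\s(\C)$ with identity theory-component, and together they constitute an isomorphism $(A,B) \xrightarrow{\sim} (A^\nml,B^\nml)$ in $\Pair^\s(\C)$ \pref{defn:pairs_over_various_theories}. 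Since the full subcategory $\CPair^\s(\C)$ of commuting algebra pairs is replete \pref{thm:comm_closed_under_incoming_morphs}, the pair $(A,B)$ is commuting if and only if $(A^\nml,B^\nml)$ is, which is exactly $(1)\Leftrightarrow(2)$.

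Next I would establish $(2)\Leftrightarrow(3)$. By \ref{defn:ind_mor} we have $[A]=[A^\nml]$ and $[B]=[B^\nml]$, so statement $(3)$ is precisely the assertion that the morphisms of $\J$-theories $[A^\nml]$ and $[B^\nml]$ commute in the sense of \ref{para:cmtn}. As $A^\nml$ and $B^\nml$ are normal algebras on $C$, the last paragraph of \ref{para:cmtn} identifies this with the condition that $[B^\nml]$ factor through the subtheory embedding $[(A^\nml)^\perp]:\T^\perp_{A^\nml} \hookrightarrow \C_C$. On the other hand, applying the elementary description of commutation in \ref{para:elem_descn_cmtn} to the algebras $A^\nml$ and $B^\nml$ shows that $(2)$—that $A^\nml$ commutes with $B^\nml$—is equivalent to this very same factorization condition. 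Chaining the two equivalences yields $(2)\Leftrightarrow(3)$, completing the cycle.

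The one genuinely delicate ingredient is the identification—underlying the last paragraph of \ref{para:cmtn}, via \cite[7.10]{Lu:Cmt}—of the commutant of the morphism of $\J$-theories $[A^\nml]$ with the commutant $\T^\perp_{A^\nml}$ of the algebra $A^\nml$. This is exactly why I route the argument through normalizations: it ensures that the symmetric morphism-level notion of \ref{para:cmtn} is confronted with the algebra-level notion of \ref{defn:cmt} only for normal algebras, where both collapse to factorization through $[(A^\nml)^\perp]$. Everything else is a uniform transfer of the commutation property along the normalization isomorphisms, handled by repleteness of $\CPair^\s(\C)$.
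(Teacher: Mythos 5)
Your proof is correct and follows essentially the same route as the paper: the equivalence $(1)\Leftrightarrow(2)$ by transporting commutation along the normalization isomorphisms and invoking repleteness of the subcategory of commuting pairs (the paper uses \ref{thm:calg_pair_repl}, a direct corollary of the repleteness result \ref{thm:comm_closed_under_incoming_morphs} that you cite), and $(2)\Leftrightarrow(3)$ by reducing both notions of commutation, in the normal case, to the factorization of $[B^\nml]$ through the subtheory embedding $[(A^\nml)^\perp]$, using $[A]=[A^\nml]$ and $[B]=[B^\nml]$. No gaps.
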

\begin{proof}
Since $(A,B) \cong (A^\nml,B^\nml)$ in $\AlgPair{\T}{\U}(\C)$, the equivalence of (1) and (2) follows from \ref{thm:calg_pair_repl}.  In the case where $A$ and $B$ are normal, we noted in \ref{para:cmtn} that $[A]$ commutes with $[B]$ if and only if $[B]:\U \rightarrow \C_C$ factors through $[A^\perp]:\T^\perp_A \hookrightarrow \C_C$, which by \ref{defn:cmt} is equivalent to the statement that $A$ commutes with $B$.  Without the assumption of normality, we can apply this to the pair $A^\nml$, $B^\nml$ to deduce that (2) is equivalent to (3), using the fact that $[A^\nml] = [A]$ and $[B^\nml] = [B]$ by \ref{defn:ind_mor}.
\end{proof}

\begin{thm}\label{thm:commutation_and_tu_algs}
Let $(\T,A)$ and $(\U,B)$ be algebras on an object $C$ of $\C$.  The following are equivalent: (1) $A$ commutes with $B$; (2) there exists a $(\T,\U)$-algebra $D$ on $C$ such that $D_\ell \cong A$ in $\Alg{\T}(C)$ and $D_r \cong B$ in $\Alg{\U}(C)$; (3) there exists a $(\T,\U)$-algebra $D:\T \otimes \U \rightarrow \C$ with $(D_\ell,D_r) \cong (A,B)$ in the pullback $\Alg{\T}(\C) \times_\C \Alg{\U}(\C) = \AlgPair{\T}{\U}(\C)$.  If $B$ is normal, then these conditions are equivalent to the following: (4) There exists a $(\T,\U)$-algebra $D$ in $\C$ with $D_\ell = A$ and $D_r = B$.
\end{thm}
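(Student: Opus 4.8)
The plan is to establish the cycle $(2)\Rightarrow(3)\Rightarrow(1)\Rightarrow(2)$ and then, when $B$ is normal, the further cycle $(1)\Rightarrow(4)\Rightarrow(2)$. Two of the implications are essentially formal. For $(2)\Rightarrow(3)$: the hypothesised isomorphisms $D_\ell\cong A$ in $\Alg{\T}(C)$ and $D_r\cong B$ in $\Alg{\U}(C)$ are $\T$- and $\U$-homomorphisms lying over $1_C$, so they assemble into a single isomorphism $(D_\ell,D_r)\xrightarrow{\sim}(A,B)$ in the fibre product $\AlgPair{\T}{\U}(\C)$; and an equality as in (4) is a fortiori such an isomorphism, which gives $(4)\Rightarrow(2)$. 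For $(3)\Rightarrow(1)$ I would invoke \ref{thm:dl_commutes_w_dr}, by which the faces of any bifold algebra commute, so that $(D_\ell,D_r)$ lies in the full subcategory $\CAlgPair{\T}{\U}(\C)$ of commuting pairs; since that subcategory is replete (\ref{thm:calg_pair_repl}) and $(A,B)\cong(D_\ell,D_r)$ in $\AlgPair{\T}{\U}(\C)$, the pair $(A,B)$ is commuting too, i.e.\ $A$ commutes with $B$.

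The substance of the theorem lies in the one constructive implication, $(1)\Rightarrow(2)$. Assuming $A$ commutes with $B$, \ref{defn:cmt} furnishes a morphism of $\J$-theories $[B|A]:\U\to\T^\perp_A$ with $A^\perp[B|A]\cong B$ in $\Alg{\U}(C)$. Recalling from the proof of \ref{thm:rcom_lcom} the $(\T,\T^\perp_A)$-algebra $\Gamma^A:\T\otimes\T^\perp_A\to\C$ underlying $\RCom(\T,A)$, whose left face is $A$ and whose right face is $A^\perp$, I would set
$$D \;=\; \Gamma^A\circ(1_\T\otimes[B|A])\;:\;\T\otimes\U\longrightarrow\C\,,$$
so that $D(J,K)=\Gamma^A(J,[B|A]K)$; this is exactly the reindexing of $\Gamma^A$ along $(1_\T,[B|A])$ recorded in \ref{defn:bifold_algs_str_cross_morphs}. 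Since $[B|A]$ preserves $\J$-cotensors and $\Gamma^A$ preserves $\J$-cotensors in each variable separately, so does $D$, whence $D$ is a $(\T,\U)$-algebra. As $[B|A]$ is identity-on-objects and hence fixes $I$, the left face is $D_\ell=\Gamma^A(-,I)=A$, so in particular $|D|=A(I)=C$; and the right face is $D_r=A^\perp\circ[B|A]=A^\perp[B|A]\cong B$ in $\Alg{\U}(C)$. Thus $D$ witnesses (2). When $B$ is normal, \ref{defn:cmt} gives $A^\perp[B|A]=B$ strictly, so the same $D$ satisfies $D_\ell=A$ and $D_r=B$, establishing $(1)\Rightarrow(4)$.

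I expect the only real work to be in $(1)\Rightarrow(2)$: checking that the composite $D$ preserves $\J$-cotensors separately in each variable, and identifying its two faces. The identification $D_\ell=A$ is immediate from $[B|A]I=I$, while the identification $D_r=A^\perp[B|A]$ reduces the remaining claim $D_r\cong B$ to precisely condition (1) of \ref{thm:equiv_charns_b_pipe_a} (with strict equality in the normal case). Everything else follows formally from \ref{thm:dl_commutes_w_dr} and the repleteness of $\CAlgPair{\T}{\U}(\C)$ established in \ref{thm:calg_pair_repl}.
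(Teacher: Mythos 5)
Your proof is correct and takes essentially the same route as the paper's: the paper phrases the key construction $(1)\Rightarrow(2)$ as the transpose of the $\U$-algebra $\Gamma_A[B|A]:\U\rightarrow\Alg{\T}(\C)$ (introducing an auxiliary condition asserting the existence of such a lift), and this transpose is precisely your $D=\Gamma^A\circ(1_\T\otimes[B|A])$. The remaining implications --- $(2)\Rightarrow(3)$, $(3)\Rightarrow(1)$ via \ref{thm:dl_commutes_w_dr} and \ref{thm:calg_pair_repl}, and the normal case giving $(4)$ --- are handled identically.
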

\begin{proof}
We shall show moreover that (1)-(3) are equivalent to the following condition: \textit{(5) There exists a $\U$-algebra $P:\U \rightarrow \Alg{\T}(\C)$ on $A$ such that $G^\T P \cong B$ in $\Alg{\U}(C)$.} Clearly $(2)\Rightarrow(3)$, and the implication $(3)\Rightarrow(1)$ follows from \ref{thm:dl_commutes_w_dr} and  \ref{thm:calg_pair_repl}.  Also $(5)\Rightarrow(2)$, because if $P$ is as in (5) then by letting $D:\T \otimes \U \rightarrow \C$ be the transpose of $P$ \eqref{eq:equiv_descns_tualgs} we find that (2) holds.  Next we prove $(1)\Rightarrow(5)$. Suppose (1) holds. By \ref{para:cmt}, $\Gamma_A:\T^\perp_A \rightarrow \Alg{\T}(\C)$ is a $\T^\perp_A$-algebra on $A$, so by composing with the morphism of $\J$-theories $[B|A]:\U \rightarrow \T^\perp_A$ of \eqref{eq:b_pipe_a} we obtain a $\U$-algebra $\Gamma_A [B|A]:\U \rightarrow \Alg{\T}(\C)$ on $A$.  By the definition of $A^\perp$ in \eqref{eq:a_lowerperp} we compute that $G^\T \Gamma_A [B|A] = A^\perp [B|A] \cong B$ in $\Alg{\U}(C)$ by \ref{defn:cmt}, so (5) holds.  Note also that in the case where $B$ is normal, we have moreover that $G^\T \Gamma_A [B|A] = A^\perp [B|A] = B$ by \ref{defn:cmt}, so by letting $D:\T\otimes \U \rightarrow \C$ be the transpose of $\Gamma_A[B|A]:\U \rightarrow \Alg{\T}(\C)$ we deduce that (4) holds.

This shows that (1)-(3) and (5) are equivalent, and that if $B$ is normal then $(1)\Rightarrow(4)$.  Clearly $(4)\Rightarrow(2)$.  
\end{proof}

\begin{para}\label{para:func_cpair}
Given morphisms of $\J$-theories $M:\T \rightarrow \T'$ and $N:\U \rightarrow \U'$, if $(A,B)$ is a commuting $\T'$-$\U'$-algebra pair in $\C$, then it follows from \ref{thm:commutation_and_tu_algs} together with the functoriality of $\Alg{(-,\blanktwo)}(\C)$ in \ref{defn:tu_alg} that $(AM,BN)$ is a commuting $\T$-$\U$-algebra pair.  Thus we obtain a $\V$-functor $(M,N)^*:\CAlgPair{\T'}{\U'}(\C) \rightarrow \CAlgPair{\T}{\U}(\C)$ that is given on objects by $(M,N)^*(A,B) = (AM,BN)$.  In this way, we obtain a functor $\CAlgPair{-}{\blanktwo}(\C):\ThJ^\op \times \ThJ^\op \rightarrow \VCAT$.
\end{para}

Our next objective is to show that $\Alg{(\T,\U)}(\C) \simeq \CAlgPair{\T}{\U}(\C)$, and for this we shall need the following general material:

\begin{defn}\label{def:lcart}
We say that a commutative square
$$
\xymatrix{
\D \ar[d]_F \ar[r]^G & \B \ar[d]^Q\\
\A \ar[r]_P & \E
}
$$
in $\VCAT$ is \textbf{locally cartesian} if the induced $\V$-functor $(F,G):\D \rightarrow \A \times_{\E} \B$ is fully faithful, where $\A \times_{\E} \B$ is the pullback of $P,Q$ in $\VCAT$---equivalently, if for every pair of objects $X,Y$ in $\D$ the associated commutative square in $\V$ (consisting of $F_{XY},G_{XY},P_{FX,FY},Q_{GX,GY}$) is a pullback.
\end{defn}

One source of locally cartesian squares is the following:

\begin{lem}\label{prop:source_of_loc_cart_sq}
Let $F:\A \rightarrow \A'$ and $G:\B \rightarrow \B'$ be identity-on-objects $\V$-functors, and let $\E$ be a $\V$-category.  Then the following is a locally cartesian square in $\eCAT{\V'}$:
$$
\xymatrix{
[\A'\otimes\B',\E] \ar[d]_{[1\otimes G,\E]} \ar[r]^(.51){[F\otimes 1,\E]} & [\A\otimes\B',\E] \ar[d]^{[1\otimes G,\E]}\\
[\A'\otimes\B,\E] \ar[r]_{[F\otimes 1,\E]} & [\A\otimes\B,\E]
}
$$
\end{lem}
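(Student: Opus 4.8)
The plan is to verify the defining condition of local cartesianness in \ref{def:lcart} (applied with $\V'$ in place of $\V$) directly, by showing that for every pair of $\V$-functors $D,D'\colon\A'\otimes\B'\to\E$ the square of hom-$\V'$-objects induced by the four restriction functors is a pullback in $\V'$. First I would compute these hom-objects as ends in $\V'$ and separate the two directions using the Fubini theorem for ends \cite{Ke:Ba}. Put $W'(a,a')=\int_{b\in\B'}\E(D(a,b),D'(a',b))$ and $W(a,a')=\int_{b\in\B}\E(D(a,b),D'(a',b))$, which are $\V'$-functors $(\A')^\op\otimes\A'\to\V'$, and let $\rho\colon W'\Rightarrow W$ be the $\V'$-natural transformation restricting the inner ends along $G$. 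Writing $\int_{\A'}W'$ for $\int_{a\in\A'}W'(a,a)$ and similarly for the other three corners, Fubini identifies the four hom-objects with $\int_{\A'}W'$, $\int_{\A}W'$, $\int_{\A'}W$, $\int_{\A}W$, the horizontal maps being restriction along $F$ and the vertical maps being $\int\rho$. This reduces the lemma to the one-variable assertion that, for any identity-on-objects $F\colon\A\to\A'$ and any $\V'$-natural $\rho\colon W'\Rightarrow W$ whose components are monomorphisms, the resulting square of ends is a pullback.

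The key point, and the place where the hypothesis on $G$ is used, is that each component $\rho_{a,a'}$ is indeed a monomorphism. Since $G$ is identity-on-objects it is in particular bijective on objects, so $W'(a,a')$ and $W(a,a')$ are both subobjects of the \emph{same} product $\prod_{b\in\ob\B}\E(D(a,b),D'(a',b))$, cut out by the $\V$-wedge conditions over $\B'$ and over $\B$ respectively; as the former conditions subsume the latter, $\rho_{a,a'}$ is the inclusion of a subobject. Equivalently, the restriction $\V'$-functor $[G,\E]\colon[\B',\E]\to[\B,\E]$ is faithful, because a $\V$-natural transformation is determined by its object-indexed components and $G$ is surjective on objects.

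To conclude, I would prove the reduced square is a pullback by testing against an arbitrary object $Z$ of $\V'$: the representables $\V'(Z,-)$ preserve ends and monomorphisms and jointly reflect limits, so it suffices to treat the case $\V'=\Set$ with $\rho$ pointwise injective. There the comparison identifies the pullback $\int_{\A}W'\times_{\int_{\A}W}\int_{\A'}W$ with the set of families $(x_a)$ satisfying the $\A$-wedge condition for $W'$ and whose image $(\rho_{a,a}x_a)$ satisfies the full $\A'$-wedge condition for $W$; $\V'$-naturality of $\rho$ shows at once that $\int_{\A'}W'$ lands in this pullback, while conversely, for each $f\in\A'(a,a')$, naturality rewrites the $\A'$-wedge identity for $W$ as $\rho_{a,a'}$ applied to the two sides of the corresponding $\A'$-wedge identity for $W'$, so injectivity of $\rho_{a,a'}$ forces that identity and places $(x_a)$ in $\int_{\A'}W'$. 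The main obstacle is precisely the exploitation of this monomorphism property: the square is \emph{not} a pullback for a general natural transformation in place of $\rho$ (e.g.\ taking $W$ constant at a terminal object collapses the bottom row and leaves the proper inclusion $\int_{\A'}W'\hookrightarrow\int_{\A}W'$), so the argument must hinge on the faithfulness supplied by $G$ being identity-on-objects, and one must check that passing to representables preserves that faithfulness --- which it does.
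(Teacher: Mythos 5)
Your proof is correct, but it takes a genuinely different route from the paper's. The paper's own argument is a short, symmetric one: since $F$ and $G$ are identity-on-objects, a generalized element $V \rightarrow [\A'\otimes\B',\E](H,K)$ of any of the four hom-objects amounts to the same raw data, namely a family $f_{XY}:V \rightarrow \E(H(X,Y),K(X,Y))$ indexed by $\ob\A \times \ob\B$, subject to various naturality constraints; one then invokes the standard equivalence between joint and separate $\V$-naturality in two variables to see that the constraint cutting out the top-left corner is exactly the conjunction of the constraints cutting out the two middle corners, which is the pullback assertion. You instead break the symmetry: Fubini reduces the square to a one-variable pullback lemma about ends taken, along an identity-on-objects functor, of a componentwise-monic transformation $\rho$, with the monomorphy (equivalently, faithfulness of $[G,\E]$) established first and then exploited by a cancellation argument. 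Both are sound. The paper's route needs no monomorphy or cancellation at all --- separate naturality in $Y \in \B'$ is extracted from joint naturality over $\A\otimes\B'$ simply by fixing $X$, not by descending through $\rho$ --- whereas your route isolates a reusable one-variable statement and, via your terminal-$W$ counterexample, pinpoints exactly where the identity-on-objects hypotheses are indispensable. One caveat on your final step: ``reducing to $\V'=\Set$'' should be understood as testing with generalized elements $Z \rightarrow (-)$, since the indexing categories remain $\V'$-categories and the wedge conditions are equations between morphisms into cotensors $[\A'(a,a'),W'(a,a')]$; with that reading your cancellation goes through verbatim, because $[\A'(a,a'),-]$ preserves monomorphisms.
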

\begin{proof}
The proof is a straightforward exercise using the following observation.  Let $\X = \ob \A = \ob\A'$ and $\Y = \ob\B = \ob\B'$, let $H,K$ be objects of $[\A'\otimes\B',\E]$, and let $V$ be an object of $\V$.  Since $F$ and $G$ are identity-on-objects, a family $f_{XY}:V \rightarrow \E(H(X,Y),K(X,Y))$ $(X \in \X, Y \in \Y)$ is (extraordinarily) $\V$-natural in $X \in \A'$ and $Y \in \B'$ if and only if it is $\V$-natural in each variable separately, if and only if both of the following hold: (1) $f_{XY}:V \rightarrow \E(H(X,GY),K(X,GY))$ is $\V$-natural in $X \in \A'$ and $Y \in \B$ and (2) $f_{XY}:V \rightarrow \E(H(FX,Y),K(FX,Y))$ is $\V$-natural in $X \in \A$ and $Y \in \B'$.
\end{proof}

\begin{para}\label{notn:incl}
Viewing the identity $\V$-functor $1:\T \otimes \U \rightarrow \T \otimes \U$ as a bifunctor and fixing its second argument as the unit object $I \in \ob\J = \ob\U$, we obtain a $\V$-functor $1(-,I)$ \cite[(1.20)]{Ke:Ba} that we write as $(-,I) :\T \rightarrow \T \otimes \U$, so that $(-,I)$ is given on objects by $J \mapsto (J,I)$.  We define $(I,-):\U \rightarrow \T \otimes \U$ analogously.
\end{para}

\begin{thm}\label{thm:tsalgs_equiv_talgs_in_salg}
There is an equivalence of $\V$-categories
\begin{equation}\label{eq:tualg_tualgpair_equiv}\Alg{(\T,\U)}(\C) \;\;\simeq\;\; \CAlgPair{\T}{\U}(\C)\end{equation}
that sends each $(\T,\U)$-algebra $D$ to the commuting $\T$-$\U$-algebra pair $(D_\ell,D_r)$ (\ref{notn:bif}, \ref{thm:dl_commutes_w_dr}).  In particular, $\Alg{(\T,\U)}(\C)$ necessarily exists as a $\V$-category.  For each object $C$ of $\C$, the equivalence \eqref{eq:tualg_tualgpair_equiv} restricts to an equivalence
\begin{equation}\label{eq:tualgc_tualgpairc_equiv}\Alg{(\T,\U)}(C) \;\simeq\; \CAlgPair{\T}{\U}(C)\;.\end{equation}
\end{thm}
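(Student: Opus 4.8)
The plan is to show that the functor $\Phi\colon\Alg{(\T,\U)}(\C)\to\CAlgPair{\T}{\U}(\C)$ sending $D$ to $(D_\ell,D_r)$ is an equivalence. First I would check $\Phi$ is well defined: for a $(\T,\U)$-algebra $D$, the faces $D_\ell$ and $D_r$ are a $\T$-algebra and a $\U$-algebra with common carrier $|D|=D(I,I)$ \pref{notn:bif}, and they commute by \ref{thm:dl_commutes_w_dr}, so $(D_\ell,D_r)$ lands in the full sub-$\V$-category $\CAlgPair{\T}{\U}(\C)$ of the pullback $\AlgPair{\T}{\U}(\C)=\Alg{\T}(\C)\times_\C\Alg{\U}(\C)$, which exists in $\VCAT$ since $\V_0$ has pullbacks \pref{para:given_data}, \pref{para:cat_ts_algs}. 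Crucially, $\Phi$ is precisely the $\V'$-functor into this pullback induced by the left and right face functors $L'=(-)_\ell$, $R'=(-)_r$ and the commuting square
\[
\xymatrix{
\Alg{(\T,\U)}(\C) \ar[d]_{R'} \ar[r]^(.58){L'} & \Alg{\T}(\C) \ar[d]^{G^\T} \\
\Alg{\U}(\C) \ar[r]_{G^\U} & \C\;,
}
\]
so that, by \ref{def:lcart}, $\Phi$ is fully faithful exactly when this square is locally cartesian.

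The key step is therefore to prove this square locally cartesian, and for this I would apply \ref{prop:source_of_loc_cart_sq} to the identity-on-objects structural $\V$-functors $\tau_\T\colon\J^\op\to\T$ and $\tau_\U\colon\J^\op\to\U$ \pref{para:jth} with $\E=\C$, obtaining a locally cartesian square relating $[\T\otimes\U,\C]$ to $[\T\otimes\J^\op,\C]$, $[\J^\op\otimes\U,\C]$ and $[\J^\op\otimes\J^\op,\C]$ along the evident restriction functors. Since $\Alg{(\T,\U)}(\C)=[\T,\U;\C]_\J$ is a full sub-$\V'$-category of $[\T\otimes\U,\C]$ \pref{defn:tu_alg}, local cartesianness, being a condition on hom-objects, applies to any pair $D,E$ of $(\T,\U)$-algebras. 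I would then rewrite the three restricted corners using $\J$-cotensor preservation: because $D,E$ preserve $\J$-cotensors in the second variable and $\tau_\U(L)=[L,I]$, restriction along $\T\otimes\tau_\U$ carries $D$ to the $\V$-functor $(J,L)\mapsto[L,D_\ell J]$, whose transpose is $\Xi\circ D_\ell$ for the fully faithful $\Xi$ of \ref{para:jop_algs}; hence
\[
[\T\otimes\J^\op,\C]\bigl(D(\T\otimes\tau_\U),E(\T\otimes\tau_\U)\bigr)\;\cong\;[\T,[\J^\op,\C]](\Xi D_\ell,\Xi E_\ell)\;\cong\;\Alg{\T}(\C)(D_\ell,E_\ell),
\]
using that $\Xi$ is fully faithful (equivalently $\Alg{\J^\op}(\C)\simeq\C$). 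Symmetrically, preservation in the first variable identifies the $[\J^\op\otimes\U,\C]$-corner with $\Alg{\U}(\C)(D_r,E_r)$ and the $[\J^\op\otimes\J^\op,\C]$-corner with $\C(|D|,|E|)$, while the two restriction legs become the carrier functors $G^\T$ and $G^\U$ (each being evaluation at the unit object). This makes the displayed square locally cartesian, so $\Phi$ is fully faithful.

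For essential surjectivity, given a commuting pair $(A,B)\in\CAlgPair{\T}{\U}(\C)$ with carrier $C=|A|=|B|$, condition (1) of \ref{thm:commutation_and_tu_algs} yields (via its condition (3)) a $(\T,\U)$-algebra $D$ with $(D_\ell,D_r)\cong(A,B)$ in $\AlgPair{\T}{\U}(\C)$; thus $\Phi$ is essentially surjective onto the replete \pref{thm:calg_pair_repl} subcategory $\CAlgPair{\T}{\U}(\C)$, and being fully faithful it is an equivalence onto it. Since $\CAlgPair{\T}{\U}(\C)$ is a $\V$-category and the hom-objects of $\Alg{(\T,\U)}(\C)$ are thereby isomorphic to objects of $\V$, it follows that $\Alg{(\T,\U)}(\C)$ exists as a $\V$-category and that \eqref{eq:tualg_tualgpair_equiv} is an equivalence of $\V$-categories.

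Finally, for the fibrewise statement, I note that $\Phi$ strictly preserves carriers, since $|(D_\ell,D_r)|=D_\ell(I)=|D|$, so it restricts to a functor between the fibres over $C$; fullness and faithfulness are inherited, and essential surjectivity on fibres follows from condition (2) of \ref{thm:commutation_and_tu_algs}, which supplies a $(\T,\U)$-algebra $D$ on $C$ with $D_\ell\cong A$ in $\Alg{\T}(C)$ and $D_r\cong B$ in $\Alg{\U}(C)$, i.e.\ an isomorphism $\Phi D\cong(A,B)$ of carrier $1_C$. This yields \eqref{eq:tualgc_tualgpairc_equiv}. I expect the main obstacle to be the bookkeeping in the second paragraph, namely verifying that the canonical comparison isomorphism of \ref{prop:source_of_loc_cart_sq}, after the three $\J$-cotensor identifications, is exactly the comparison induced by the face functors $L',R'$ and the carrier functors $G^\T,G^\U$.
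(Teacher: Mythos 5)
Your proposal is correct and follows essentially the same route as the paper: full faithfulness via local cartesianness of the face/carrier square, reduced by \ref{prop:source_of_loc_cart_sq} applied to the identity-on-objects functors $\tau$ and $\upsilon$ together with the equivalences of \ref{para:jop_algs}, and essential surjectivity (globally and fibrewise) from \ref{thm:commutation_and_tu_algs}. The ``bookkeeping'' you flag at the end is exactly what the paper packages into the commutative cube \eqref{eq:cube}, whose vertical arrows are equivalences commuting strictly with the restriction functors, so that local cartesianness of the top face transfers directly to the inducing square.
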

\begin{proof}
We shall establish an equivalence of $\V'$-categories of the form \eqref{eq:tualg_tualgpair_equiv}, from which it then follows that $\Alg{(\T,\U)}(\C)$ exists as a $\V$-category, since $\CAlgPair{\T}{\U}(\C)$ is a $\V$-category.  For brevity, let us write $\A = [\T,\C]_\J$, $\B = [\U,\C]_\J$, and $\D = [\T,\U;\C]_\J$ with the notation of \eqref{eq:jcot_pres_func_cat}, \ref{notn:pres_jcots_in_each_var}.  There are $\V'$-functors $(-,I)^*:\D \rightarrow \A$ and $(I,-)^*:\D \rightarrow \B$ given by pre-composition with the $\V$-functors $(-,I)$ and $(I,-)$ defined in \ref{notn:incl}.  On objects $(-,I)^*(D) = D(-,I) = D_\ell$ and $(I,-)^*(D) = D(I,-) = D_r$.  The square
\begin{equation}\label{eq:inducing_square}
\xymatrix{
\D \ar[d]_{(-,I)^*}\ar[r]^(.5){(I,-)^*} & \B \ar[d]^{\Ev_I}\\
\A \ar[r]_{\Ev_I} & \C
}
\end{equation}
commutes, where we write $\Ev_I$ for $G^\T$ and $G^\U$, so we obtain an induced $\V'$-functor $\mathsf{Faces} := \bigl((-,I)^*,(I,-)^*\bigl)\;:\;\D \rightarrow \A \times_\C \B$, recalling that $\A \times_\C \B = \AlgPair{\T}{\U}(\C)$.  Given any object $D$ of $\D$, we deduce by \ref{thm:dl_commutes_w_dr} that $\mathsf{Faces} D = (D_\ell,D_r)$ is a commuting $\T$-$\U$-algebra pair.  Therefore $\mathsf{Faces}$ factors through $\CAlgPair{\T}{\U}(\C) \hookrightarrow \A \times_\C \B$ by way of a unique $\V'$-functor $\mathsf{Faces}':\D \rightarrow \CAlgPair{\T}{\U}(\C)$, which commutes with the `carrier' $\V'$-functors valued in $\C$ and so restricts to a functor $\mathsf{Faces}'_C:\Alg{(\T,\U)}(C) \rightarrow \CAlgPair{\T}{\U}(C)$ for each object $C$ of $\C$.  By \ref{thm:commutation_and_tu_algs}, each $\mathsf{Faces}'_C$ is essentially surjective on objects, so $\mathsf{Faces}'$ is also essentially surjective on objects.  Hence it suffices to show that $\mathsf{Faces}$ is fully faithful.  In view of \ref{def:lcart}, it suffices to show that the square \eqref{eq:inducing_square} is locally cartesian.  We have a commutative cube
\begin{equation}\label{eq:cube}
\xymatrix@!0@C=17ex @R=8ex{
[\T,\U;\C]_\J \ar@{=}[dd] \ar[dr]|{[1,\upsilon;\C]_\J} \ar[rr]^{[\tau,1;\C]_\J} & & [\J^\op,\U;\C]_\J \ar@{..>}[dd]^(.7){(I,-)^*} \ar[dr]^{[1,\upsilon;\C]_\J} &\\
& [\T,\J^\op;\C]_\J \ar[dd]^(.7){(-,I)^*} \ar[rr]^(.3){[\tau,1;\C]_\J} & & [\J^\op,\J^\op;\C]_\J \ar[dd]^{\Ev_{(I,I)}}\\
[\T,\U;\C]_\J \ar[dr]_{(-,I)^*} \ar@{..>}[rr]^(.7){(I,-)^*} & & [\U,\C]_\J \ar[dr]^{\Ev_I} & \\
& [\T,\C]_\J \ar[rr]_{\Ev_I} & & \C
}
\end{equation}
in $\eCAT{\V'}$, in which each $\V'$-functor is given either by evaluation at $I$ in certain arguments or by pre-composition, where $\tau:\J^\op \rightarrow \T$ and $\upsilon:\J^\op \rightarrow \U$ are the unique morphisms in $\ThJ$ \pref{para:jth} and we employ the notation of \ref{defn:tu_alg}.  The base of this cube is the square \eqref{eq:inducing_square} that induces $\mathsf{Faces}$, and we claim that the vertical arrows in this cube are all equivalences.  Indeed, the $\V'$-functor $(I,-)^*:[\J^\op,\U;\C]_\J \rightarrow [\U,\C]_\J$ is a composite 
$$[\J^\op,\U;\C]_\J \xrightarrow{\sim} [\J^\op,[\U,\C]_\J]_\J \xrightarrow{\Ev_I} [\U,\C]_\J$$
whose second factor $\Ev_I$ is an equivalence by \ref{para:jop_algs}.  Similarly, $(-,I)^*:[\T,\J^\op;\C] \rightarrow [\T,\C]_\J$ is an equivalence, and the vertical arrow $\Ev_{(I,I)}$ is a composite
$$[\J^\op,\J^\op;\C]_\J \xrightarrow{\sim} [\J^\op,[\J^\op,\C]_\J]_\J \xrightarrow{\Ev_I} [\J^\op,\C]_\J \xrightarrow{\Ev_I} \C,$$
which is an equivalence, by two applications of \ref{para:jop_algs}.

Hence it suffices to show that the top face of the cube \eqref{eq:cube} is locally cartesian.  But the top face is induced by the $\V$-functors $\tau \otimes 1:\J^\op \otimes \U \rightarrow \T \otimes \U$ and $1 \otimes \upsilon:\T \otimes \J^\op \rightarrow \T \otimes \U$, which are identity-on-objects, so this follows from \ref{prop:source_of_loc_cart_sq}.
\end{proof}

\begin{rem}\label{rem:nat_equiv}
The equivalence $\Alg{(\T,\U)}(\C) \rightarrow \CAlgPair{\T}{\U}(\C)$ in \eqref{eq:tualg_tualgpair_equiv} is (strictly) natural in $\T,\U \in \ThJ$, as is \eqref{eq:tualgc_tualgpairc_equiv}.
\end{rem}

The following shows that the $(\T,\U)$-algebras obtained in \ref{thm:commutation_and_tu_algs}(2,3) are unique up to two respective notions of isomorphism:

\begin{cor}\label{thm:assoc_tu_alg_on_c}
Let $(A,B)$ be a commuting $\T$-$\U$-algebra pair on an object $C$ of $\C$.  
\begin{enumerate}
\item Up to $\V$-natural isomorphism, there is a unique $(\T,\U)$-algebra $D:\T \otimes \U \rightarrow \C$ with $(D_\ell,D_r) \cong (A,B)$ in the pullback $\Alg{\T}(\C) \times_\C \Alg{\U}(\C) = \AlgPair{\T}{\U}(\C)$.
\item Up to isomorphism in $\Alg{(\T,\U)}(C)$, there is a unique $(\T,\U)$-algebra $D$ on $C$ with $D_\ell \cong A$ in $\Alg{\T}(C)$ and $D_r \cong B$ in $\Alg{\U}(C)$.
\end{enumerate}
\end{cor}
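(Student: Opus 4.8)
The plan is to derive both statements as immediate formal consequences of the equivalence of $\V$-categories furnished by Theorem~\ref{thm:tsalgs_equiv_talgs_in_salg}, invoking only two generalities about equivalences: they are essentially surjective on objects, and, being fully faithful, they reflect isomorphisms. No new construction is needed; the content lies entirely in that theorem.

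For part (1), I would proceed as follows. The functor $\mathsf{Faces}'$ underlying \eqref{eq:tualg_tualgpair_equiv}, given on objects by $D \mapsto (D_\ell,D_r)$, is an equivalence $\Alg{(\T,\U)}(\C) \simeq \CAlgPair{\T}{\U}(\C)$ by Theorem~\ref{thm:tsalgs_equiv_talgs_in_salg}. Since $(A,B)$ is a commuting $\T$-$\U$-algebra pair, it is an object of $\CAlgPair{\T}{\U}(\C)$, so essential surjectivity yields a $(\T,\U)$-algebra $D$ with $(D_\ell,D_r) \cong (A,B)$ in $\CAlgPair{\T}{\U}(\C)$; because $\CAlgPair{\T}{\U}(\C)$ is a full, replete subcategory of the pullback $\AlgPair{\T}{\U}(\C)$ (\ref{thm:calg_pair_repl}), this is equally an isomorphism in $\AlgPair{\T}{\U}(\C)$, giving existence. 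For uniqueness, if $D'$ is a second such algebra, then $\mathsf{Faces}'(D') \cong (A,B) \cong \mathsf{Faces}'(D)$ in $\AlgPair{\T}{\U}(\C)$, hence in the full subcategory $\CAlgPair{\T}{\U}(\C)$; as $\mathsf{Faces}'$ is fully faithful it reflects isomorphisms, so $D$ and $D'$ are $\V$-naturally isomorphic. For part (2) I would run the identical argument on the restricted equivalence \eqref{eq:tualgc_tualgpairc_equiv}, $\mathsf{Faces}'_C:\Alg{(\T,\U)}(C) \simeq \CAlgPair{\T}{\U}(C)$, noting that under the identification of \ref{para:cat_ts_algs} the category $\CAlgPair{\T}{\U}(C)$ is the full subcategory of $\Alg{\T}(C) \times \Alg{\U}(C)$ on the commuting pairs, so that the hypothesis $D_\ell \cong A$ in $\Alg{\T}(C)$ and $D_r \cong B$ in $\Alg{\U}(C)$ is precisely the condition $\mathsf{Faces}'_C(D) \cong (A,B)$ there.

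The only step requiring genuine care — and it amounts to bookkeeping rather than a real obstacle — is keeping straight the several ambient categories in which ``isomorphism'' is read: one must observe that isomorphisms in $\CAlgPair{\T}{\U}(\C)$ coincide with those in $\AlgPair{\T}{\U}(\C)$ (by fullness together with repleteness), and that an isomorphism in $\CAlgPair{\T}{\U}(C) \subseteq \Alg{\T}(C)\times\Alg{\U}(C)$ is exactly a pair consisting of an isomorphism in $\Alg{\T}(C)$ and one in $\Alg{\U}(C)$. Once this translation is in place, both existence and uniqueness follow formally, and the corollary records nothing more than the essential surjectivity and fully faithfulness of the two equivalences already established.
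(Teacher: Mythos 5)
Your proposal is correct and follows the same route as the paper, which simply cites Theorem \ref{thm:tsalgs_equiv_talgs_in_salg} together with the fact that $\CAlgPair{\T}{\U}(C)$ is a full subcategory of $\Alg{\T}(C) \times \Alg{\U}(C)$; you have merely spelled out the standard consequences (essential surjectivity for existence, full faithfulness for uniqueness) that the paper leaves implicit. The only cosmetic remark is that fullness alone already identifies isomorphisms between objects of $\CAlgPair{\T}{\U}(\C)$ with those in $\AlgPair{\T}{\U}(\C)$, so your appeal to repleteness, while harmless, is not needed.
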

\begin{proof}
This follows from \ref{thm:tsalgs_equiv_talgs_in_salg}, since $\CAlgPair{\T}{\U}(C)$ is a full subcategory of the product $\Alg{\T}(C) \times \Alg{\U}(C)$ \pref{para:cat_ts_algs}
\end{proof}

\begin{notn}\label{notn:tualg_ind_by_tucpair}
If $(A,B)$ is a commuting $\T$-$\U$-algebra pair on an object $C$ of $\C$, then we write $\langle A,B\rangle:\T \otimes \U \rightarrow \C$ for the $(\T,\U)$-algebra $D$ in \ref{thm:assoc_tu_alg_on_c}(2).  Thus $\langle A,B\rangle$ is defined up to isomorphism in $\Alg{(\T,\U)}(C)$.  If $B$ is normal, then in view of \ref{thm:commutation_and_tu_algs} we can and will choose $\langle A,B \rangle$ in such a way that $\langle A,B\rangle(-,I) = A$ and $\langle A,B \rangle(I,-) = B$.
\end{notn}

By \ref{thm:assoc_tu_alg_on_c}, the $(\T,\U)$-algebra $D = \langle A,B\rangle$ may also be characterized uniquely up to \textit{$\V$-natural isomorphism} by the property that $(D_\ell,D_r) \cong (A,B)$ in $\AlgPair{\T}{\U}(\C)$.

\begin{para}\label{thm:funcs_ind_by_cmt_mor_th}
Let $\sS$ be a $\J$-theory, let $M:\T \rightarrow \sS$ and $N:\U \rightarrow \sS$ be morphisms of $\J$-theories, and suppose that $M$ commutes with $N$. Then $(M,N)$ is a commuting pair of normal algebras on $I$ (\ref{para:mor_jth}, \ref{defn:cmt}), so by \ref{notn:tualg_ind_by_tucpair} we obtain a $(\T,\U)$-algebra $\langle M,N\rangle:\T \otimes \U \rightarrow \sS$ on $I$ with left and right faces $M$ and $N$, respectively.  For each $\sS$-algebra $A:\sS \rightarrow \C$ the composite $A\langle M,N\rangle:\T \otimes \U \rightarrow \C$ is a $(\T,\U)$-algebra on $|A|$ whose left and right faces are precisely $AM$ and $AN$, respectively, so $(AM,AN)$ is a commuting $\T$-$\U$-algebra pair, and $A\langle M,N\rangle = \langle AM,AN\rangle$ with the notation of \ref{notn:tualg_ind_by_tucpair}.  Since $\langle M,N\rangle(-,I) = M$ and $\langle M,N\rangle(I,-) = N$, we obtain a commutative diagram
$$
\xymatrix{
&\Alg{\sS}(\C) \ar[dl]_{M^*} \ar[d]|{\langle M,N \rangle^*} \ar[dr]^{N^*} & \\
\Alg{\T}(\C)     &\Alg{(\T,\U)}(\C) \ar[l]^(.55){(-,I)^*}  \ar[r]_(.55){(I,-)^*} &\Alg{\U}(\C)
}
$$
in $\VCAT$, with the notation of \eqref{eq:inducing_square}, where $\langle M,N \rangle^*$ is given by pre-composing with $\langle M,N\rangle$.
\end{para}

Next we use Theorem \ref{thm:tsalgs_equiv_talgs_in_salg} to show that the category $\BAlg^\sx(\C)$ of bifold algebras and strong cross-morphisms is equivalent to a category of commuting algebra pairs $\CPair^\sx(\C)$ that we now define:

\begin{defn}
Let us regard both $\Algs^\s(\C)$ and its opposite $\Algs^\s(\C)^\op$ as objects of the slice $\CAT \slash \C_\si$ by way of the functor $\ca{\text{$-$}}:\Algs^\s(\C) \rightarrow \C_\si$ and the composite
$$\Algs^\s(\C)^\op \xrightarrow{|-|^\op} \C_\si^\op \xrightarrow{i^\op} \C_\si$$
with the notation of \eqref{eq:lr_commt_w_carrier}.  Let us write $\Pair^\sx(\C)$ for the product of $\Algs^\s(\C)$ and $\Algs^\s(\C)^\op$ in $\CAT \slash \C_\si$.  Hence, the objects of $\Pair^\sx(\C)$ are algebra pairs $(A,B)$ in $\C$, in the sense of \ref{defn:pairs_over_various_theories}.  A morphism $(f,g):(A,B) \rightarrow (X,Y)$ in $\Pair^\sx(\C)$ consists of morphisms $f:A \rightarrow X$ and $g:Y \rightarrow B$ in $\Algs^\s(\C)$ (noting the different directions) with $|f| = |g|^{-1}:|A| = |B| \rightarrow |X| = |Y|$.  We call these morphisms \textbf{strong cross-morphisms of algebra pairs}.  We write
$$\CPair^\sx(\C)$$
to denote the full subcategory of $\Pair^\sx(\C)$ consisting of commuting algebra pairs.  We write
\begin{equation}\label{eqn:cpair_sc_projns}L':\CPair^\sx(\C) \rightarrow \Algs^\s(\C),\;\;\;R':\CPair^\sx(\C) \rightarrow \Algs^\s(\C)^\op\end{equation}
to denote the evident projection functors, given by $L'(A,B) = A$ and $R'(A,B) = B$.
\end{defn}

\begin{para}\label{para:functor_inducing_cpair_sc}
For each pair of $\J$-theories $\T$ and $\U$, let us write $\CAlgPair{\T}{\U}_\si(\C)$ to denote the groupoid core of $\CAlgPair{\T}{\U}(\C)$.  Using \ref{para:func_cpair}, we obtain an evident functor
$$\Upsilon = \CAlgPair{-}{\blanktwo}_\si(\C)\;:\;\ThJ^\op \times \ThJ^\op \longrightarrow \CAT$$
that is given on objects by $(\T,\U) \mapsto \CAlgPair{\T}{\U}_\si(\C)$.  A brief and straightforward verification now yields the following:   
\end{para}

\begin{prop}\label{thm:cpairsc_as_two-sided_fibr}
$\CPair^\sx(\C)$ is isomorphic to the category $\mathsf{TwoSided}(\ThJ,\ThJ^\op,\Upsilon)$ with the notation of \ref{para:str_gr_constr_two-sided} and \ref{para:functor_inducing_cpair_sc}.
\end{prop}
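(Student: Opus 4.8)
The plan is to construct the claimed isomorphism explicitly, matching both categories against a common description of their objects and morphisms and then checking that composition and identities agree; since the objects and morphisms correspond bijectively, one really obtains an isomorphism and not merely an equivalence. First I would treat objects. By \ref{para:str_gr_constr_two-sided}, an object of $\mathsf{TwoSided}(\ThJ,\ThJ^\op,\Upsilon)$ is a triple $(\T,\U,X)$ with $X \in \Upsilon(\T,\U) = \CAlgPair{\T}{\U}_\si(\C)$, i.e.\ $X = (A,B)$ a commuting $\T$-$\U$-algebra pair; this corresponds exactly to the object $((\T,A),(\U,B))$ of $\CPair^\sx(\C)$, and the assignment is a bijection on objects.

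Next I would unwind morphisms on each side. By \ref{para:str_gr_constr_two-sided}, a morphism $(\T,\U,X) \to (\T',\U',X')$ in $\mathsf{TwoSided}(\ThJ,\ThJ^\op,\Upsilon)$ consists of $M:\T \rightarrow \T'$ in $\ThJ$, a morphism $\U \rightarrow \U'$ in $\ThJ^\op$ (that is, $N:\U' \rightarrow \U$ in $\ThJ$), and an isomorphism $x:b_!(X) \rightarrow a^*(X')$ in $\Upsilon(\T,\U')$. Using the formula of \ref{para:func_cpair} I would compute $b_!(A,B) = (A,BN)$ and $a^*(A',B') = (A'M,B')$, so that $x$ is an isomorphism $(A,BN) \rightarrow (A'M,B')$ in $\CAlgPair{\T}{\U'}(\C)$; since the latter is a full subcategory of the pullback $\Alg{\T}(\C) \times_\C \Alg{\U'}(\C) = \AlgPair{\T}{\U'}(\C)$ \pref{para:cat_ts_algs}, this $x$ amounts to a pair of isomorphisms $x_1:A \rightarrow A'M$ in $\Alg{\T}(\C)$ and $x_2:BN \rightarrow B'$ in $\Alg{\U'}(\C)$ with $|x_1| = |x_2|$. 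On the other side, a strong cross-morphism $((\T,A),(\U,B)) \rightarrow ((\T',A'),(\U',B'))$ consists of $f = (M,f_0):(\T,A) \rightarrow (\T',A')$ and $g = (N,g_0):(\U',B') \rightarrow (\U,B)$ in $\Algs^\s(\C)$ with $|f| = |g|^{-1}$, where $f_0:A \rightarrow A'M$ and $g_0:B' \rightarrow BN$ are the algebra isomorphisms. The identification $x_1 = f_0$ and $x_2 = g_0^{-1}$ then gives a bijection on morphisms under which the constraint $|x_1| = |x_2|$ becomes precisely $|f_0| = |g_0|^{-1}$, i.e.\ $|f| = |g|^{-1}$.

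Then I would verify functoriality. Identities match on the nose. For composition, the content is to confirm that the twisted composite $a^*(y) \cdot d_!(x)$ of \ref{para:str_gr_constr_two-sided} agrees, under the above dictionary, with the composite in $\CPair^\sx(\C)$, which is computed coordinatewise by the Grothendieck-style composition in $\Algs^\s(\C)$ in the first coordinate and in $\Algs^\s(\C)^\op$ in the second. Concretely, since $a^* = M^*$ reindexes only the left coordinate and $d_!$ only the right, the first component of $a^*(y) \cdot d_!(x)$ works out to $M^*(y_1) \circ x_1$, which under $x_1 = f_0$, $y_1 = f_0'$ is exactly the left component $M^*(f_0') \circ f_0$ of the $\Algs^\s(\C)$-composite; the second component is handled analogously, the inversion $x_2 = g_0^{-1}$ converting the backward $\Algs^\s(\C)$-composite of the $g$'s into the forward composite of the $x_2$'s.

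The main obstacle I expect is purely bookkeeping the variances and opposite directions: the right-hand theory morphisms run $\U' \rightarrow \U$, the second algebra isomorphisms are inverted under $x_2 = g_0^{-1}$, and the two-sided rule combines a pullback $a^*$ with a pushforward $d_!$ inside a single hom-category $\Upsilon(\T,\U'')$, whereas in $\CPair^\sx(\C)$ the two coordinates compose separately in $\Algs^\s(\C)$ and in its opposite. Keeping these conventions aligned so that the two composites genuinely coincide is the only real work; once the dictionary $x_1 = f_0$, $x_2 = g_0^{-1}$ is fixed, the remaining checks are the routine verification that warrants stating the result as a brief exercise.
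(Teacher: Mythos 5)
Your proposal is correct and is exactly the "brief and straightforward verification" that the paper omits: the paper gives no explicit proof of this proposition, and your dictionary $x_1 = f_0$, $x_2 = g_0^{-1}$, together with the check that the twisted composite $a^*(y)\cdot d_!(x)$ matches the componentwise composite in $\Algs^\s(\C)$ and $\Algs^\s(\C)^\op$, is precisely the intended argument. The variance bookkeeping you flag is handled correctly (in particular, every morphism of $\Upsilon(\T,\U')$ is automatically an isomorphism since it is a groupoid core, and fullness of $\CAlgPair{\T}{\U'}(\C)$ in the pullback $\AlgPair{\T}{\U'}(\C)$ justifies splitting $x$ into the pair $(x_1,x_2)$).
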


\begin{thm}\label{thm:bif_sc_equiv_cpair_sc}
The category of bifold algebras and strong cross-morphisms, $\BAlg^\sx(\C)$, is equivalent to the category of commuting algebra pairs with strong cross-morphisms, $\CPair^\sx(\C)$.  Indeed,
$$\BAlg^\sx(\C) \;\;\simeq\;\;\CPair^\sx(\C)$$
by way of an equivalence $\Theta:\BAlg^\sx(\C) \rightarrow \CPair^\sx(\C)$ that is given on objects by $D \mapsto (D_\ell,D_r)$ and has $L'\Theta = L$ and $R'\Theta = R$ with the notation of \ref{notn:bif} and \ref{para:lr_face_functors}. 
\end{thm}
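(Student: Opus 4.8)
The plan is to exhibit $\Theta$ as the functor induced on two-sided Grothendieck constructions by the natural equivalence of \ref{thm:tsalgs_equiv_talgs_in_salg}. By \ref{defn:bifold_algs_str_cross_morphs} we have $\BAlg^\sx(\C) = \mathsf{TwoSided}(\ThJ, \ThJ^\op, \Psi)$ with $\Psi = \Alg{(-,\blanktwo)}_\si(\C)$, while by \ref{thm:cpairsc_as_two-sided_fibr} we have $\CPair^\sx(\C) \cong \mathsf{TwoSided}(\ThJ, \ThJ^\op, \Upsilon)$ with $\Upsilon = \CAlgPair{-}{\blanktwo}_\si(\C)$; both $\Psi$ and $\Upsilon$ are functors $\ThJ^\op \times \ThJ^\op \to \CAT$ valued in groupoids. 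Since every functor preserves isomorphisms, the equivalence $\mathsf{Faces}': \Alg{(\T,\U)}(\C) \to \CAlgPair{\T}{\U}(\C)$ of \ref{thm:tsalgs_equiv_talgs_in_salg} restricts to the groupoid cores, giving an equivalence $\alpha_{\T,\U}: \Alg{(\T,\U)}_\si(\C) \to \CAlgPair{\T}{\U}_\si(\C)$ which is $D \mapsto (D_\ell, D_r)$ on objects; by \ref{rem:nat_equiv} these components assemble into a strictly natural transformation $\alpha: \Psi \Rightarrow \Upsilon$, each of whose components is an equivalence.

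First I would record the general fact that a strictly natural transformation $\alpha: \Psi \Rightarrow \Upsilon$ of functors $\A^\op \times \B \to \CAT$ induces a functor $\mathsf{TwoSided}(\A, \B, \alpha)$ that is the identity on the $\A$- and $\B$-components and acts by $\alpha_{A,B}$ on the fibre over $(A,B)$: it sends a morphism $(a,b,x)$, with $x: b_!(X) \to a^*(X')$ in $\Psi(A,B')$, to $(a,b,\alpha_{A,B'}(x))$, and strict naturality guarantees both that $\alpha_{A,B'}(x)$ again has the form $b_!(\alpha X) \to a^*(\alpha X')$ and that composition is preserved---this is exactly the routine bookkeeping behind \ref{para:str_gr_constr_two-sided}. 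Taking $\Theta := \mathsf{TwoSided}(\ThJ, \ThJ^\op, \alpha)$ and post-composing with the isomorphism of \ref{thm:cpairsc_as_two-sided_fibr} yields a functor $\Theta: \BAlg^\sx(\C) \to \CPair^\sx(\C)$ that is $D \mapsto (D_\ell, D_r)$ on objects. Since $\alpha$ sends a strong cross-morphism to the pair of its left and right faces, which is precisely the data that $L$ and $R$ extract (\ref{notn:bif}, \ref{para:lr_face_functors}), the equations $L'\Theta = L$ and $R'\Theta = R$ then hold by direct inspection of the projections $L'$, $R'$ of \eqref{eqn:cpair_sc_projns}.

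It then remains to see that $\Theta$ is an equivalence, and the two crucial hypotheses are that every fibre of $\Psi$ and $\Upsilon$ is a groupoid and that every $\alpha_{\T,\U}$ is fully faithful and essentially surjective. For essential surjectivity, given an object $(\T,\U,Y)$ of the target I would pick $X$ with an isomorphism $\phi: \alpha_{\T,\U}(X) \to Y$ in the groupoid $\Upsilon(\T,\U)$; as the transition functors $(1_\T)_!$ and $(1_\U)^*$ of $\Upsilon$ are identities, $(1_\T, 1_\U, \phi)$ is then an isomorphism $\Theta(\T,\U,X) \to (\T,\U,Y)$ in the total category. For full faithfulness, fixing the base components $(M,N)$ of a cross-morphism, the induced map on fibre data is exactly the hom-bijection furnished by full faithfulness of the appropriate $\alpha_{\T,\U'}$, with domains and codomains matched on the nose by strict naturality. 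The hard part will be this last alignment: one must check that the twisted morphisms of the two-sided fibration (those involving $b_!$ and $a^*$) are transported bijectively, so that strict naturality genuinely identifies the relevant hom-sets; this is a finite diagram chase rather than a conceptual difficulty, and the repleteness of \ref{thm:calg_pair_repl} together with the joint conservativity of \ref{rem:pq_jcons} can be used to streamline the handling of isomorphisms.
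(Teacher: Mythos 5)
Your proposal is correct and follows essentially the same route as the paper, whose proof is a one-line citation of exactly the ingredients you use: the two-sided Grothendieck presentations of \ref{defn:bifold_algs_str_cross_morphs} and \ref{thm:cpairsc_as_two-sided_fibr}, together with Theorem \ref{thm:tsalgs_equiv_talgs_in_salg} and the strict naturality of Remark \ref{rem:nat_equiv}. You have merely written out the routine verification (left implicit in the paper) that a strictly natural family of equivalences between groupoid-valued functors induces an equivalence of the associated two-sided Grothendieck constructions, and your handling of essential surjectivity and full faithfulness there is sound.
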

\begin{proof}
In view of \ref{defn:bifold_algs_str_cross_morphs} and \ref{thm:cpairsc_as_two-sided_fibr}, this follows from Theorem \ref{thm:tsalgs_equiv_talgs_in_salg} and Remark \ref{rem:nat_equiv}.
\end{proof}

\begin{para}\label{para:langle_rangle_functor}
Given a commuting algebra pair $(A,B)$, the bifold algebra $D = \langle A,B \rangle$ of \ref{notn:tualg_ind_by_tucpair} has $(D_\ell,D_r) \cong (A,B)$ in $\CPair^\sx(\C)$.  Hence, by \ref{thm:bif_sc_equiv_cpair_sc}, there is an equivalence of categories
$$\langle-,\blanktwo\rangle\;:\;\CPair^\sx(\C) \xrightarrow{\sim} \BAlg^\sx(\C)$$
that is given on objects by $(A,B) \mapsto \langle A,B\rangle$ and is a pseudo-inverse of $\Theta$.

For example, if $A$ is an algebra, then since $\RCom A$ is a bifold algebra with left and right faces $A$ and $A^\perp$ \pref{thm:rcom_lcom}, respectively, we find that $\RCom A \cong \langle A,A^\perp\rangle$
in $\BAlg^\sx(\C)$, and similarly $\LCom A \cong \langle A^\perp,A\rangle$.
\end{para}

\begin{cor}\label{thm:rcom_lcom_cpairs}
The functor $L':\CPair^\sx(\C) \rightarrow \Algs^\s(\C)$ has a fully faithful left adjoint $\RCom' = \Theta \RCom$, and the functor $R':\CPair^\sx(\C) \rightarrow \Algs^\s(\C)^\op$ has a fully faithful right adjoint $\LCom' = \Theta\LCom$.  On objects, $\RCom'A = (A,A^\perp)$ and $\LCom'A = (A^\perp,A)$.
\end{cor}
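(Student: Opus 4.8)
The plan is to transport the two face adjunctions $\RCom \dashv L$ and $R \dashv \LCom$ of \ref{thm:rcom_lcom} across the equivalence $\Theta:\BAlg^\sx(\C) \xrightarrow{\sim} \CPair^\sx(\C)$ of \ref{thm:bif_sc_equiv_cpair_sc}, making essential use of the strict identities $L'\Theta = L$ and $R'\Theta = R$ recorded there. First I would take the functor $\langle-,\blanktwo\rangle$ of \ref{para:langle_rangle_functor} as a chosen pseudo-inverse of $\Theta$, together with natural isomorphisms $\Theta\langle-,\blanktwo\rangle \cong 1$ and $\langle-,\blanktwo\rangle\Theta \cong 1$ exhibiting an adjoint equivalence (so that $\Theta$ is both left and right adjoint to $\langle-,\blanktwo\rangle$). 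Precomposing $L'\Theta = L$ with $\langle-,\blanktwo\rangle$ and using $\Theta\langle-,\blanktwo\rangle \cong 1$ yields $L' \cong L\langle-,\blanktwo\rangle$, and similarly $R' \cong R\langle-,\blanktwo\rangle$.

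Next I would invoke the standard transport of adjunctions across an equivalence. Concretely, from $\RCom \dashv L$ one computes, for $A \in \Algs^\s(\C)$ and $P \in \CPair^\sx(\C)$,
$$\CPair^\sx(\C)(\Theta\RCom A,P) \cong \BAlg^\sx(\C)(\RCom A,\langle-,\blanktwo\rangle P) \cong \Algs^\s(\C)(A,L\langle-,\blanktwo\rangle P),$$
using $\Theta \dashv \langle-,\blanktwo\rangle$ and then $\RCom \dashv L$; hence $\Theta\RCom \dashv L\langle-,\blanktwo\rangle$, and therefore $\Theta\RCom \dashv L'$ by invariance of adjoints under natural isomorphism of the right adjoint. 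Dually, from $R \dashv \LCom$ one obtains $R\langle-,\blanktwo\rangle \dashv \Theta\LCom$, using $\langle-,\blanktwo\rangle \dashv \Theta$ on the outside, so that $\Theta\LCom$ is right adjoint to $R\langle-,\blanktwo\rangle \cong R'$. Thus $\RCom' := \Theta\RCom$ and $\LCom' := \Theta\LCom$ are the asserted adjoints. Full faithfulness is then immediate: $\RCom$ and $\LCom$ are fully faithful by \ref{thm:rcom_lcom} and $\Theta$ is fully faithful as an equivalence, so the composites are fully faithful.

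For the descriptions on objects, I would recall from \ref{thm:rcom_lcom} that $\RCom A$ is a bifold algebra with left face $A$ and right face $A^\perp$, and that $\Theta$ acts by $D \mapsto (D_\ell,D_r)$ (\ref{thm:bif_sc_equiv_cpair_sc}); hence $\RCom'A = \Theta\RCom A = ((\RCom A)_\ell,(\RCom A)_r) = (A,A^\perp)$. Symmetrically, $\LCom A$ has left face $A^\perp$ and right face $A$, so $\LCom'A = \Theta\LCom A = (A^\perp,A)$, in agreement with $\RCom A \cong \langle A,A^\perp\rangle$ and $\LCom A \cong \langle A^\perp,A\rangle$ from \ref{para:langle_rangle_functor}.

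I anticipate no genuine obstacle here, since the statement is a formal consequence of \ref{thm:rcom_lcom} and \ref{thm:bif_sc_equiv_cpair_sc}; the only care required is bookkeeping on the $R$-side, namely tracking that $R':\CPair^\sx(\C) \to \Algs^\s(\C)^\op$ lands in the opposite category and that the transported structure there is a \emph{right} adjoint, and confirming that the identities $L = L'\Theta$ and $R = R'\Theta$ hold strictly so that the isomorphisms $L' \cong L\langle-,\blanktwo\rangle$ and $R' \cong R\langle-,\blanktwo\rangle$ are available without further hypotheses.
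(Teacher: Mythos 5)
Your proposal is correct and takes essentially the same route as the paper, whose proof simply cites \ref{thm:rcom_lcom}, \ref{thm:cmtnt_adjn}, and \ref{thm:bif_sc_equiv_cpair_sc}; you have merely spelled out the standard transport of the two face adjunctions across the equivalence $\Theta$ using $L'\Theta = L$ and $R'\Theta = R$, which is exactly what is intended.
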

\begin{proof}
This follows from \ref{thm:rcom_lcom}, \ref{thm:cmtnt_adjn}, \ref{thm:bif_sc_equiv_cpair_sc}.
\end{proof}

\begin{defn}\label{defn:rcom_pair}
Let $(A,B)$ be an algebra pair on an object $C$ of $\C$.  We say that $A$ \textbf{is the commutant of} $B$ if $A \cong B^\perp$ in $\Algs(C)$.  $(A,B)$ is a \textbf{right-commutant algebra pair} (resp. a \textbf{left-commutant algebra pair}) if $B$ is the commutant of $A$ (resp. $A$ is the commutant of $B$).  $(A,B)$ is a \textbf{commutant algebra pair} if $(A,B)$ is both left-commutant and right-commutant, in which case we say that \textbf{$A$ and $B$ are mutual commutants}.  We write $\RComPair^\sx(\C)$, $\LComPair^\sx(\C)$, and $\ComPair^\sx(\C)$ for the full subcategories of $\CPair^\sx(\C)$ consisting of right-commutant, left-commutant, and commutant algebra pairs, respectively.  All of these full subcategories of $\CPair^\sx(\C)$ are replete, simply because of the functoriality of $\Com_C = (-)^\perp$ in \ref{eq:cmt_adjn_over_c}.
\end{defn}

\begin{rem}\label{rem:com_balg_com_pair}
By \ref{para:alg_com_w_cmt}, \ref{thm:comm_symm}, and \ref{thm:comm_closed_under_incoming_morphs}, if $(A,B)$ is a right-commutant algebra pair, then $(A,B)$ is a commuting algebra pair.  A commuting algebra pair $(A,B)$ on an object $C$ is right-commutant iff the unique morphism $[B|A]:B \rightarrow A^\perp$ in $\Algs(C)$ is an isomorphism, iff the morphism of $\J$-theories $[B|A]$ is an isomorphism \eqref{eq:b_pipe_a}.  Note also that a bifold algebra $D$ is right-commutant iff the algebra pair $(D_\ell,D_r)$ is right-commutant.  Hence, by \ref{para:langle_rangle_functor} and \ref{defn:rcom_pair}, it follows that a commuting algebra pair $(A,B)$ is right-commutant iff the bifold algebra $\langle A,B\rangle$ is right-commutant, iff $(A,B) \cong (X,X^\perp)$ in $\CPair^\sx(\C)$ for some algebra $X$ (in view of \ref{thm:charns_rcom_balg}, \ref{para:langle_rangle_functor}, \ref{thm:rcom_lcom_cpairs}).  Similar remarks apply to left-commutant algebra pairs.  Hence, the equivalence in \ref{thm:bif_sc_equiv_cpair_sc}, \ref{para:langle_rangle_functor} restricts to equivalences $\RComBAlg^\sx(\C) \simeq \RComPair^\sx(\C)$, $\LComBAlg^\sx(\C) \simeq \LComPair^\sx(\C)$, and $\ComBAlg^\sx(\C) \simeq \ComPair^\sx(\C)$.
\end{rem}

\begin{cor}
$\RComPair^\sx(\C)$ is a replete, coreflective subcategory of $\CPair^\sx(\C)$, and $\RComPair^\sx(\C) \simeq \Algs^\s(\C)$.  $\LComPair^\sx(\C)$ is a replete, reflective subcategory of $\CPair^\sx(\C)$, and $\LComPair^\sx(\C) \simeq \Algs^\s(\C)^\op$.  $\ComPair^\sx(\C)$ is a both a coreflective subcategory of $\LComPair^\sx(\C)$ and a reflective subcategory of $\RComPair^\sx(\C)$, and $\ComPair^\sx(\C) \simeq \SatAlgs^\s(\C) \simeq \SatAlgs^\s(\C)^\op$.
\end{cor}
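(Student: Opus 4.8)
The plan is to prove everything by transporting the corresponding structural facts about the three commutant subcategories of $\BAlg^\sx(\C)$ across the equivalence $\Theta:\BAlg^\sx(\C) \xrightarrow{\sim} \CPair^\sx(\C)$ of \ref{thm:bif_sc_equiv_cpair_sc}, whose pseudo-inverse $\langle-,\blanktwo\rangle$ is recorded in \ref{para:langle_rangle_functor}. The essential input is \ref{rem:com_balg_com_pair}, which states that $\Theta$ restricts to equivalences $\RComBAlg^\sx(\C) \simeq \RComPair^\sx(\C)$, $\LComBAlg^\sx(\C) \simeq \LComPair^\sx(\C)$, and $\ComBAlg^\sx(\C) \simeq \ComPair^\sx(\C)$, compatibly with the inclusions. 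Repleteness of all three subcategories of $\CPair^\sx(\C)$ is already furnished by \ref{defn:rcom_pair}, so only the (co)reflectivity claims and the displayed equivalences remain to be proved.

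The one ingredient I would isolate is the elementary observation that an equivalence which restricts compatibly to full subcategories carries replete (co)reflective subcategories to replete (co)reflective subcategories. Precisely: if $F:\mathcal{B} \xrightarrow{\sim} \mathcal{B}'$ is an equivalence with chosen pseudo-inverse $F'$, and $\mathcal{A} \hookrightarrow \mathcal{B}$, $\mathcal{A}' \hookrightarrow \mathcal{B}'$ are full subcategories such that $F$ sends $\mathcal{A}$ into $\mathcal{A}'$ and $F'$ sends $\mathcal{A}'$ into $\mathcal{A}$, with the restrictions forming an equivalence $\mathcal{A} \simeq \mathcal{A}'$, then $\mathcal{A}$ is coreflective in $\mathcal{B}$ (with right adjoint $G$ to the inclusion) if and only if $\mathcal{A}'$ is coreflective in $\mathcal{B}'$; a coreflector on the $\mathcal{B}'$ side is given by the conjugate $F G F'$, which lands in $\mathcal{A}'$ because $G$ lands in $\mathcal{A}$. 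The reflective case is dual. I would state this as a one-line remark, since the verification is routine bookkeeping with the triangle identities.

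Applying the observation with $F = \Theta$ then finishes the proof in three strokes. From \ref{thm:rcmt_corefl}, $\RComBAlg^\sx(\C)$ is coreflective and $\LComBAlg^\sx(\C)$ is reflective in $\BAlg^\sx(\C)$, so $\RComPair^\sx(\C)$ is coreflective and $\LComPair^\sx(\C)$ is reflective in $\CPair^\sx(\C)$; composing the restricted equivalences of \ref{rem:com_balg_com_pair} with the equivalences $\RComBAlg^\sx(\C) \simeq \Algs^\s(\C)$ and $\LComBAlg^\sx(\C) \simeq \Algs^\s(\C)^\op$ of \ref{thm:rcmt_corefl} yields $\RComPair^\sx(\C) \simeq \Algs^\s(\C)$ and $\LComPair^\sx(\C) \simeq \Algs^\s(\C)^\op$. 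From \ref{thm:cmt_bifold_algs}, $\ComBAlg^\sx(\C)$ is reflective in $\RComBAlg^\sx(\C)$ and coreflective in $\LComBAlg^\sx(\C)$; since $\Theta$ restricts to equivalences on all three pairs of subcategories at once, the same observation gives that $\ComPair^\sx(\C)$ is reflective in $\RComPair^\sx(\C)$ and coreflective in $\LComPair^\sx(\C)$, while the equivalences $\ComBAlg^\sx(\C) \simeq \SatAlgs^\s(\C) \simeq \SatAlgs^\s(\C)^\op$ transport to the displayed $\ComPair^\sx(\C) \simeq \SatAlgs^\s(\C) \simeq \SatAlgs^\s(\C)^\op$.

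I expect no genuine obstacle here; the proof is pure transport of structure. The only point deserving a moment's attention is the compatibility of the restricted equivalences with the inclusions, needed so that the conjugated adjoints genuinely (co)restrict to the subcategories in question. This is immediate from the fact that $\Theta$ and $\langle-,\blanktwo\rangle$ act on objects by $D \mapsto (D_\ell,D_r)$ and $(A,B) \mapsto \langle A,B\rangle$ and interchange the defining conditions of the commutant subcategories by \ref{rem:com_balg_com_pair}.
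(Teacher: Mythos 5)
Your proposal is correct and follows essentially the same route as the paper, which likewise deduces the corollary by transporting \ref{thm:rcmt_corefl} and \ref{thm:cmt_bifold_algs} across the equivalence of \ref{thm:bif_sc_equiv_cpair_sc} using the restricted equivalences of \ref{rem:com_balg_com_pair}. Your explicit statement of the transport-of-structure observation is just a slightly more detailed write-up of the same argument.
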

\begin{proof}
In view of \ref{thm:bif_sc_equiv_cpair_sc} and \ref{rem:com_balg_com_pair}, this follows from \ref{thm:rcmt_corefl} and \ref{thm:cmt_bifold_algs}.
\end{proof}

\section{Preservation and reflection of commuting algebra pairs}\label{sec:pres_refl_calgpair}

We now pause to record some results on preservation and reflection of commutation that we shall need in the next section.

\begin{para}\label{para:pres_refl_calgpairs}
Let $H:\X \rightarrow \Y$ be a $\V$-functor that preserves $\J$-cotensors, where $\X$ and $\Y$ are $\V$-categories with $\J$-cotensors.   Then $H$ necessarily \textit{preserves commuting algebra pairs}, in the sense that if $(\T,A)$, $(\U,B)$ is a commuting algebra pair in $\X$, then the algebra pair $(\T,HA)$, $(\U,HB)$ commutes.  Indeed, in view of \ref{notn:tualg_ind_by_tucpair}, $D = H\langle A,B\rangle$ is a $(\T,\U)$-algebra in $\Y$ with $(D_\ell,D_r) \cong (HA,HB)$ in $\AlgPair{\T}{\U}(\Y)$, so $HA$ commutes with $HB$ by \ref{thm:commutation_and_tu_algs}.  Let us say that $H$ \textbf{reflects commuting algebra pairs} if for every algebra pair $(\T,A)$, $(\U,B)$ in $\X$, if $HA$ commutes with $HB$ then $A$ commutes with $B$.
\end{para}

\begin{prop}\label{thm:ff_refl_calgpairs}
Let $H:\X \rightarrow \Y$ be a faithful $\V$-functor that preserves $\J$-cotensors, where $\X$ and $\Y$ are $\V$-categories with $\J$-cotensors.  Then $H$ reflects commuting algebra pairs.
\end{prop}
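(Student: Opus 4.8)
The plan is to reduce the statement, by way of the elementary description of commutation in \ref{para:elem_descn_cmtn}, to the assertion that a certain square of hom-objects is a pullback, and then to establish that square by showing that $H$ \emph{reflects} the property of a carrier morphism underlying a $\T$-homomorphism. Let $(\T,A)$ and $(\U,B)$ be an algebra pair in $\X$, so $X := |A| = |B|$, and suppose $HA$ commutes with $HB$; since commutation is invariant under isomorphism in the relevant fibres (by \ref{thm:cmtn_via_normaln} and \ref{thm:calg_pair_repl}, applied in $\X$ and $\Y$ via \ref{para:jth_enr_in_vprime}), I would first replace $A,B$ by their normalizations and so assume $A$ and $B$ normal. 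By \ref{para:morph_between_full_ths}, applied to the faithful $\J$-cotensor-preserving $\V$-functor $H$, there is a subtheory embedding $H_X:\X_X \to \Y_{HX}$ with $H_X[A]=[HA]$ and $H_X[B]=[HB]$; by \eqref{eq:ind_mor_comps_via_normaln} its component $(H_X)_{JI}:\X([J,X],X) \to \Y([J,HX],HX)$ is the monomorphism induced by $H$ and satisfies $(H_X)_{JI}\circ B^\nml_{JI} = (HB)^\nml_{JI}$ for every $J \in \ob\J$.

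Next I would record, for each $J \in \ob\J$, the square
\[
\xymatrix{
\Alg{\T}(\X)([J,A],A) \ar[r]^(.58){\iota^\X_J} \ar[d]_{\kappa_J} & \X([J,X],X) \ar[d]^{(H_X)_{JI}}\\
\Alg{\T}(\Y)([J,HA],HA) \ar[r]_(.58){\iota^\Y_J} & \Y([J,HX],HX)
}
\]
where $\iota^\X_J = G^\T_{[J,A],A}$ and $\iota^\Y_J = G^\T_{[J,HA],HA}$ are the monomorphisms of \eqref{eq:gtjaa} and $\kappa_J$ is induced by postcomposition with $H$ (which carries $\T$-algebras and $\T$-homomorphisms in $\X$ to such in $\Y$, as $H$ preserves $\J$-cotensors). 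The square commutes because $G^\T$ is evaluation at $I$ and $H$ commutes with this evaluation. The key claim is that this square is a pullback. Granting the claim, the argument finishes as follows: by \ref{para:elem_descn_cmtn} applied in $\Y$, the hypothesis that $HA$ commutes with $HB$ gives a factorization $(HB)^\nml_{JI} = \iota^\Y_J \circ \beta_J$; since then $(H_X)_{JI}\circ B^\nml_{JI} = (HB)^\nml_{JI} = \iota^\Y_J\circ\beta_J$, the pullback property yields a unique factorization of $B^\nml_{JI}$ through $\iota^\X_J$ for every $J$, which by \ref{para:elem_descn_cmtn} applied in $\X$ says exactly that $A$ commutes with $B$.

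It remains to establish the key claim, and this is the main obstacle. All four maps are monomorphisms ($\iota^\X_J,\iota^\Y_J$ by faithfulness of $G^\T$, \ref{para:carrier_func}, and $(H_X)_{JI},\kappa_J$ by faithfulness of $H$ together with the fact that ends of monomorphisms are monomorphisms in $\V'$), so the square is a pullback precisely when a morphism $f:[J,X]\to X$ underlies a $\T$-homomorphism $[J,A]\to A$ as soon as $Hf$ underlies a $\T$-homomorphism $[J,HA]\to HA$. The forward implication is immediate since $H$ preserves $\J$-cotensors. For the converse—the crux—I would use that, for normal algebras, a $\T$-homomorphism is determined by its carrier and its component at each object $t$ is the corresponding $\J$-cotensor of the carrier (this is the content of the pullback presentation \eqref{eq:pb_normal_talgs} of $\Alg{\T}^\nml$ together with faithfulness of $G^\T$). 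Thus the candidate homomorphism $[J,A]\to A$ has components $[t,f]$, and the assertion that this family is $\V$-natural—i.e.\ that $f$ is a $\T$-homomorphism—is an equational ($\V$-naturality) condition in $\X$; applying the $\V$-functor $H$ turns these equations into the $\V$-naturality equations for the family $[t,Hf]=H[t,f]$, which hold because $Hf$ is a $\T$-homomorphism, and faithfulness of $H$ then reflects them back to $\X$. Equivalently, and perhaps more cleanly, one can obtain the pullback functorially by comparing the squares \eqref{eq:pb_normal_talgs} for $\X$ and $\Y$ under $H$, the point again being that $H$ induces monomorphisms on hom-objects and hence reflects the wedge conditions cutting out the ends $\int_{t\in\T}\X([J,At],At)$. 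The essential difficulty throughout is precisely this interplay: $\J$-cotensor-preservation supplies the candidate components, while faithfulness reflects the naturality equations that define a homomorphism.
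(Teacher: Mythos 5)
Your strategy is sound and does prove the proposition, but it takes a genuinely different route from the paper. The paper's proof is a three-line reduction: it invokes \ref{para:morph_between_full_ths} to produce the subtheory embedding $H_X:\X_X \rightarrow \Y_{HX}$ with $H_X[A]=[HA]$ and $H_X[B]=[HB]$, translates commutation of algebras into commutation of the theory morphisms $[A],[B]$ via \ref{thm:cmtn_via_normaln}, and then cites the already-established fact that subtheory embeddings reflect commutation of cospans of $\J$-theories \cite[5.15]{Lu:Cmt}. You share the first step (the same subtheory embedding $H_X$ and the identity $(H_X)_{JI}\circ B^\nml_{JI}=(HB)^\nml_{JI}$), but instead of outsourcing the reflection step you unwind everything to the elementary factorization criterion of \ref{para:elem_descn_cmtn} and prove the required pullback of hom-objects from scratch. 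In effect you are re-deriving, inline and in the special case at hand, the content of the cited lemma: that faithfulness plus $\J$-cotensor preservation lets one reflect the property ``this carrier morphism underlies a $\T$-homomorphism.'' What your approach buys is self-containedness and an explicit picture of where faithfulness and cotensor preservation each enter; what the paper's approach buys is brevity and reuse of machinery already set up for theory morphisms.

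Two points in your proof of the key claim need more care, though neither is fatal. First, the candidate components of the lift are not literally $[t,f]$: the designated cotensor $[J,A]$ in $\Alg{\T}(\X)$ has $[J,A]t=[J,At]=[J,[t,X]]$ rather than $[t,[J,X]]$, so $[J,A]$ need not be normal, and likewise $H$ is only assumed to \emph{preserve} $\J$-cotensors, so $H[t,f]$ and $[t,Hf]$ agree only up to the comparison isomorphisms $H[t,X]\cong[t,HX]$; the appeal to \eqref{eq:pb_normal_talgs} therefore does not apply verbatim, and the argument should be phrased in terms of the general fact that any morphism between $\J$-cotensor-preserving $\V$-functors is determined by its $I$-component, with its $t$-component conjugate to the cotensor of that component. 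Second, you implicitly identify the hypothesized lift of $Hf$ over $\Alg{\T}(\Y)([J,HA],HA)$ with the canonical candidate family built from $Hf$ --- this identification is what guarantees that its components lie in the image of the (mono)morphisms $H_{[J,A]t,At}$, and it deserves to be stated, since it is exactly the step where ``a homomorphism is determined by its carrier'' is used in $\Y$ rather than in $\X$. With those repairs (and with the generalized-element reading of your pullback condition made explicit, since the hom-objects live in $\V'$ rather than $\Set$), the argument goes through.
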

\begin{proof}
Let $(\T,A)$, $(\U,B)$ be an algebra pair on an object $X$ of $\X$, and suppose that $HA$ commutes with $HB$.  By \ref{para:morph_between_full_ths}, there is a subtheory embedding $H_X:\X_X \rightarrow \Y_{HX}$ such that $H_X[A] = [HA]$ and $H_X[B] = [HB]$.  Hence $H_X[A]$ commutes with $H_X[B]$ by \ref{thm:cmtn_via_normaln}, so $[A]$ commutes with $[B]$ by \cite[5.15]{Lu:Cmt}, whence $A$ commutes with $B$ by \ref{thm:cmtn_via_normaln}.
\end{proof}

The results on preservation and reflection of commutation for algebra pairs in \ref{para:pres_refl_calgpairs} and \ref{thm:ff_refl_calgpairs} can be seen as analogues of results for morphisms of monads and theories in \cite[4.3]{Kock:DblDln} and \cite[5.15]{Lu:Cmt}.  We now go further to treat also \textit{joint reflection} of commutation, thus obtaining also a result in the presence of an enriched generator:

\begin{para}\label{para:jrefl_calg_pairs}
Let $\Lambda$ be a (possibly large) set, and for each $\lambda \in \Lambda$ let $H_\lambda:\X \rightarrow \Y_\lambda$ be a $\V$-functor that preserves $\J$-cotensors, where $\X$ and $\Y_\lambda$ are $\V$-categories with $\J$-cotensors.  By definition, the family $(H_\lambda)_{\lambda \in \Lambda}$ \textbf{jointly reflects commuting algebra pairs} provided that for every algebra pair $(\T,A)$, $(\U,B)$ in $\X$, if $H_\lambda A$ commutes with $H_\lambda B$ for every $\lambda \in \Lambda$, then $A$ commutes with $B$.  Let us say that $(H_\lambda)_{\lambda \in \Lambda}$ is \textbf{jointly faithful} (in the enriched sense) if the induced $\V'$-functor $H = (H_\lambda)_{\lambda \in \Lambda}:\X \rightarrow \prod_{\lambda \in \Lambda}\Y_\lambda$ is faithful, where the product is taken in $\eCAT{\V'}$.
\end{para}

\begin{prop}
In the situation of \ref{para:jrefl_calg_pairs}, if the family of $\V$-functors $H_\lambda:\X \rightarrow \Y_\lambda$ $(\lambda \in \Lambda)$ is jointly faithful, then this family jointly reflects commuting algebra pairs.
\end{prop}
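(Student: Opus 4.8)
The plan is to bundle the family $(H_\lambda)_{\lambda\in\Lambda}$ into the single induced $\V'$-functor $H=(H_\lambda)_{\lambda\in\Lambda}:\X\to\Y$, where $\Y:=\prod_{\lambda\in\Lambda}\Y_\lambda$ is the product in $\eCAT{\V'}$, and then to reduce the claim to Proposition \ref{thm:ff_refl_calgpairs} applied to $H$. By hypothesis $H$ is faithful, and since $\J$-cotensors in $\Y$ are computed componentwise (as $[J,(Y_\lambda)_\lambda]=([J,Y_\lambda])_\lambda$) and each $H_\lambda$ preserves $\J$-cotensors, $H$ preserves $\J$-cotensors as well. Regarding $\X$ and $\Y$ as $\V'$-categories with $\J$-cotensors by way of \ref{para:jth_enr_in_vprime}, Proposition \ref{thm:ff_refl_calgpairs} will then apply to $H$, so that $H$ reflects commuting algebra pairs. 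Hence it will suffice to prove that $HA$ commutes with $HB$ whenever $H_\lambda A$ commutes with $H_\lambda B$ for every $\lambda\in\Lambda$; in other words, the crux is that commutation into a product $\V'$-category is detected componentwise.

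To establish this, I would assemble the componentwise witnesses of commutation. Fix an algebra pair $(\T,A)$, $(\U,B)$ in $\X$ with $H_\lambda A$ commuting with $H_\lambda B$ for all $\lambda$. For each $\lambda$ set $D_\lambda:=\langle H_\lambda A,H_\lambda B\rangle:\T\otimes\U\to\Y_\lambda$, the $(\T,\U)$-algebra furnished by \ref{notn:tualg_ind_by_tucpair}, so that $((D_\lambda)_\ell,(D_\lambda)_r)\cong(H_\lambda A,H_\lambda B)$ in $\AlgPair{\T}{\U}(\Y_\lambda)$. Let $D:=(D_\lambda)_{\lambda\in\Lambda}:\T\otimes\U\to\Y$ be the induced $\V'$-functor into the product. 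Because $\J$-cotensors in $\Y$ are pointwise and each $D_\lambda$ preserves $\J$-cotensors in each variable separately, $D$ does too, so $D$ is a $(\T,\U)$-algebra in $\Y$, with faces $D_\ell=((D_\lambda)_\ell)_\lambda$ and $D_r=((D_\lambda)_r)_\lambda$ (\ref{notn:bif}).

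It remains to observe that the componentwise face-isomorphisms assemble into a single isomorphism $(D_\ell,D_r)\cong(HA,HB)$ in the fibre product $\AlgPair{\T}{\U}(\Y)=\Alg{\T}(\Y)\times_\Y\Alg{\U}(\Y)$. Indeed, an isomorphism in a product $\V'$-category is precisely a family of isomorphisms, and the carrier-compatibility condition defining the fibre product is checked on each factor; thus the pairs of isomorphisms witnessing $((D_\lambda)_\ell,(D_\lambda)_r)\cong(H_\lambda A,H_\lambda B)$ in $\AlgPair{\T}{\U}(\Y_\lambda)$ glue to the required isomorphism in $\AlgPair{\T}{\U}(\Y)$. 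By \ref{thm:commutation_and_tu_algs} this shows that $HA$ commutes with $HB$, and then \ref{thm:ff_refl_calgpairs} yields that $A$ commutes with $B$, as desired. The main obstacle is precisely the gluing step of this last paragraph: one must check carefully that $(\T,\U)$-algebras, their faces, and isomorphisms in the relevant fibre product are all computed componentwise in $\Y$, so that the componentwise data of \ref{thm:commutation_and_tu_algs}(3) really does produce a global witness of commutation in the product; everything else is a formal reduction to the single faithful functor $H$ and Proposition \ref{thm:ff_refl_calgpairs}.
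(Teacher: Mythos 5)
Your argument is correct and coincides with the paper's own proof: both assemble the componentwise bifold algebras $\langle H_\lambda A, H_\lambda B\rangle$ into a single $(\T,\U)$-algebra valued in the product $\prod_{\lambda}\Y_\lambda$ (using that $\J$-cotensors there are pointwise), invoke \ref{thm:commutation_and_tu_algs} to conclude that $HA$ commutes with $HB$, and then apply \ref{thm:ff_refl_calgpairs} to the faithful induced functor $H$. The gluing step you flag as the crux is handled in the paper exactly as you describe, so no gap remains.
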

\begin{proof}
Let $(\T,A)$, $(\U,B)$ be an algebra pair such that $H_\lambda A$ commutes with $H_\lambda B$ for every $\lambda \in \Lambda$.  Then for each $\lambda \in \Lambda$, the bifold algebra $E^\lambda = \langle H_\lambda A,H_\lambda B\rangle$ has $(E^\lambda_\ell,E^\lambda_r) \cong (H_\lambda A,H_\lambda B)$ in $\AlgPair{\T}{\U}(\Y_\lambda)$.  Thus we obtain an induced bifold algebra $E = (E^\lambda)_{\lambda \in \Lambda}:\T \otimes \U \rightarrow \Y$ where $\Y = \prod_{\lambda \in \Lambda} \Y_\lambda$, since $\J$-cotensors in the product $\Y$ are pointwise, and we find that $(E_\ell,E_r) \cong (H A,H B)$ in $\AlgPair{\T}{\U}(\Y)$ where $H:\X \rightarrow \Y$ is defined as in \ref{para:jrefl_calg_pairs}.  Hence $HA$ commutes with $HB$, by \ref{thm:commutation_and_tu_algs}, but $H$ is faithful, so $A$ commutes with $B$ by \ref{thm:ff_refl_calgpairs}.
\end{proof}

A (possibly large) set of objects $\G$ of $\C$ is a \textit{generating class} for $\C$ (in the enriched sense) if the $\V$-functors $\C(X,-):\C \rightarrow \V$ $(X \in \G)$ are jointly faithful.  For example, $\ob\C$ is a generating class for $\C$, by the Yoneda lemma.

\begin{cor}\label{thm:commutation_in_v}
Given a generating class $\G$ for $\C$, the $\V$-functors $\C(X,-):\C \rightarrow \V$ $(X \in \ob\G)$ jointly reflect commuting algebra pairs.  Hence, if $(\T,A)$, $(\U,B)$ is an algebra pair in $\C$, then $A$ commutes with $B$ if and only if $\C(X,A-)$ commutes with $\C(X,B-)$ for all $X \in \ob\G$.
\end{cor}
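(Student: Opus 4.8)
The plan is to apply the Proposition immediately above to the family of $\V$-functors $H_X = \C(X,-):\C \rightarrow \V$ indexed by $X \in \ob\G$. To invoke that result I must check the two standing hypotheses from \ref{para:jrefl_calg_pairs}: that each $H_X$ preserves $\J$-cotensors, and that the family $(\C(X,-))_{X \in \ob\G}$ is jointly faithful. Granting these, the Proposition delivers the first assertion---that the family jointly reflects commuting algebra pairs---with no further work.

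Preservation of $\J$-cotensors by each $\C(X,-)$ is immediate from the defining universal property of a cotensor: for $J \in \ob\J$ and $C \in \ob\C$ there is a $\V$-natural isomorphism $\C(X,[J,C]) \cong [J,\C(X,C)]$, where the cotensor on the right is computed in $\V$ itself, which has $\J$-cotensors furnished by its internal hom via $j:\J \hookrightarrow \V$. Hence $\C(X,-)$ carries the designated $\J$-cotensor $[J,C]$ to a $\J$-cotensor in $\V$, so it preserves $\J$-cotensors. Joint faithfulness of the family is, by the definition recalled just before the corollary, precisely the statement that $\G$ is a generating class for $\C$. With both hypotheses verified, the Proposition applies.

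It remains to deduce the final biconditional. The reverse implication is exactly the joint reflection of commuting algebra pairs just established: if $\C(X,A-)$ commutes with $\C(X,B-)$ for every $X \in \ob\G$, then $A$ commutes with $B$. For the forward implication I would appeal to \ref{para:pres_refl_calgpairs}: since each $\C(X,-)$ preserves $\J$-cotensors, it preserves commuting algebra pairs, so if $A$ commutes with $B$ in $\C$ then $\C(X,A-)$ commutes with $\C(X,B-)$ in $\V$ for every $X$. Combining the two directions gives the stated equivalence.

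I do not expect a genuine obstacle here: the corollary is essentially a direct instantiation of the preceding Proposition, and the only mathematical content beyond bookkeeping is the routine observation that enriched representables $\C(X,-)$ preserve cotensors, together with the translation of ``generating class'' into ``jointly faithful.'' If anything merits a moment's care, it is simply confirming that $\C(X,A-)$ is again a $\T$-algebra (and $\C(X,B-)$ a $\U$-algebra) in $\V$, which follows because both $A$ and $\C(X,-)$ preserve $\J$-cotensors.
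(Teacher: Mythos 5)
Your proposal is correct and matches the paper's intent exactly: the corollary is stated without proof because it is a direct instantiation of the preceding Proposition, with joint faithfulness supplied by the definition of generating class, preservation of $\J$-cotensors by representables giving the forward direction via \ref{para:pres_refl_calgpairs}, and joint reflection giving the converse. No gaps.
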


\section{Commutative, contracommutative, and balanced algebras}\label{sec:comm_contracomm_bal}

\begin{defn}\label{defn:commutative}
An algebra $A$ in $\C$ is \textbf{commutative} if $A$ commutes with itself, i.e., if there exists a (necessarily unique) morphism $[A|A]:A \rightarrow A^\perp$ in $\Algs^\s(C)$, where $C = |A|$.  Let us write $\CAlgs^\s(\C)$ for the full subcategory of $\Algs^\s(\C)$ consisting of commutative algebras.
\end{defn}

\begin{prop}\label{thm:calgs_replete}
If $f:X \rightarrow A$ is a morphism in $\Algs^\s(\C)$ and $A$ is commutative, then $X$ is commutative.  Consequently, the full subcategory $\CAlgs^\s(\C) \hookrightarrow \Algs^\s(\C)$ is replete.
\end{prop}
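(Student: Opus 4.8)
The plan is to deduce this immediately from the stability of commuting algebra pairs under incoming morphisms established in \ref{thm:comm_closed_under_incoming_morphs}. The key observation is that, combining \ref{defn:commutative} with \ref{defn:pairs_over_various_theories}, an algebra $A$ is commutative precisely when the diagonal pair $(A,A)$ is a commuting algebra pair, i.e. an object of $\CPair^\s(\C)$.

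First I would take the given morphism $f:X \rightarrow A$ in $\Algs^\s(\C)$ and form the pair $(f,f):(X,X) \rightarrow (A,A)$. This is a morphism in $\Pair^\s(\C)$, since the defining condition $|f| = |f|$ holds trivially. Because $A$ is commutative, $(A,A)$ is a commuting algebra pair, so \ref{thm:comm_closed_under_incoming_morphs} applies and shows that $(X,X)$ is a commuting algebra pair as well; that is, $X$ commutes with itself and so is commutative. The repleteness of the full subcategory $\CAlgs^\s(\C) \hookrightarrow \Algs^\s(\C)$ then follows at once by specializing $f$ to an arbitrary isomorphism $X \rightarrow A$ with $A$ commutative.

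I expect no real obstacle here; the only point requiring care is the bookkeeping of carriers, and this is already subsumed in \ref{thm:comm_closed_under_incoming_morphs}. Should one prefer a self-contained argument, the underlying construction is equally direct: applying the commutant functor $\Com = (-)^\perp$ of \ref{thm:cmtnt_adjn} to $f$ yields $f^\perp:A^\perp \rightarrow X^\perp$ with $|f^\perp| = |f|^{-1}$ by \ref{rem:com_cmt_w_car}, and then the composite $X \xrightarrow{f} A \xrightarrow{[A|A]} A^\perp \xrightarrow{f^\perp} X^\perp$ has underlying morphism $|f|^{-1}\cdot 1_{|A|} \cdot |f| = 1_{|X|}$, hence is a morphism $X \rightarrow X^\perp$ in the fibre over $|X|$, witnessing that $X$ is commutative in the sense of \ref{defn:cmt} and \ref{defn:commutative}.
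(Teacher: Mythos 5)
Your proposal is correct and coincides with the paper's own proof, which likewise applies Proposition \ref{thm:comm_closed_under_incoming_morphs} to the morphism $(f,f):(X,X) \rightarrow (A,A)$ in $\Pair^\s(\C)$. The self-contained alternative you sketch is just the argument of \ref{thm:comm_closed_under_incoming_morphs} unwound in this special case, so nothing further is needed.
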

\begin{proof}
We may apply \ref{thm:comm_closed_under_incoming_morphs} to the morphism $(f,f):(X,X) \rightarrow (A,A)$ in $\Pair^\s(\C)$.
\end{proof}

\begin{prop}\label{thm:charns_comm_th}
Let $\T$ be a $\J$-theory.  The following are equivalent: (1) $\T$ is commutative \pref{para:cmtn}; (2) every $\T$-algebra in $\V$ is commutative; (3) every $\T$-algebra in every $\V$-category $\X$ with $\J$-cotensors is commutative; (4) $(\T,1_\T)$ is a commutative algebra.
\end{prop}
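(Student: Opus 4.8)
The plan is to prove the cycle $(1)\Leftrightarrow(4)$, $(1)\Rightarrow(3)\Rightarrow(2)\Rightarrow(1)$, treating the theory $\T$ itself as the ambient category by way of \ref{para:jth_enr_in_vprime}.

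First I would dispatch $(1)\Leftrightarrow(4)$ by unwinding definitions. The identity $\V$-functor $1_\T:\T\to\T$ has carrier $|1_\T| = 1_\T(I) = I$ and sends the designated $\J$-cotensors of $I$ to themselves, so it is a normal $\T$-algebra on $I$ in $\T$; under the correspondence of \ref{para:mor_jth} it is precisely the identity morphism of $\J$-theories. Condition (1) asserts that $1_\T$ commutes with $1_\T$ as morphisms of $\J$-theories \pref{para:cmtn}, whereas (4) asserts that the algebra $(\T,1_\T)$ commutes with itself \pref{defn:commutative}, i.e. that $1_\T$ commutes with $1_\T$ as algebras on $I$ in $\T$ \pref{defn:cmt}. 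The observation following \ref{defn:cmt} — that algebra-commutation specializes to theory-morphism-commutation for morphisms regarded as normal algebras — identifies these two assertions, giving $(1)\Leftrightarrow(4)$.

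Next I would prove $(1)\Rightarrow(3)$ by applying \ref{thm:funcs_ind_by_cmt_mor_th} with $\sS=\T$ and $M=N=1_\T$: if $\T$ is commutative then $1_\T$ commutes with $1_\T$, so for every $\T$-algebra $A:\T\to\X$ in any $\V$-category $\X$ with $\J$-cotensors (using \ref{para:jth_enr_in_vprime} to replace $\C$ by $\X$) the pair $(A\cdot 1_\T,A\cdot 1_\T)=(A,A)$ is a commuting $\T$-$\T$-algebra pair, i.e. $A$ is commutative. The implication $(3)\Rightarrow(2)$ is then immediate on taking $\X=\V$, which has $\J$-cotensors since $\V$ is closed and $\J\hookrightarrow\V$.

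The main step is $(2)\Rightarrow(1)$, which I would prove by joint reflection of commutation along the corepresentables of $\T$. For each $J\in\ob\T$ the $\V$-functor $\T(J,-):\T\to\V$ preserves all cotensors that exist, hence preserves $\J$-cotensors and is a $\T$-algebra in $\V$; by hypothesis (2) it commutes with itself. Since $\ob\T$ is a generating class for $\T$ by the enriched Yoneda lemma, the family $(\T(J,-))_{J\in\ob\T}$ is jointly faithful and, by \ref{thm:commutation_in_v} applied to the $\J$-admissible $\V'$-category $\T$ \pref{para:jth_enr_in_vprime}, jointly reflects commuting algebra pairs. Applying this to the algebra pair $((\T,1_\T),(\T,1_\T))$ and noting $\T(J,1_\T(-))=\T(J,-)$, I conclude that $1_\T$ commutes with $1_\T$, which is (1). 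The one point requiring care is the passage between $\V$ and $\V'$: the corepresentables land in $\V$, whereas the reflection result is invoked in the $\V'$-enriched setting for the ambient category $\T$. This is bridged by observing that the inclusion $\V\hookrightarrow\V'$ preserves $\J$-cotensors and hence preserves commutation \pref{para:pres_refl_calgpairs}, so that each $\T(J,-)$ remains self-commuting when regarded as a $\T$-algebra in $\V'$. I expect this $\V$/$\V'$ bookkeeping to be the only genuine obstacle; the remaining implications are formal consequences of the results established earlier in the paper.
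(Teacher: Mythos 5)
Your proof is correct and follows essentially the same route as the paper, whose own proof simply cites \ref{para:pres_refl_calgpairs} (preservation of commuting pairs along $\J$-cotensor-preserving $\V$-functors, which you invoke via the closely related \ref{thm:funcs_ind_by_cmt_mor_th}) for the downward implications and \ref{thm:commutation_in_v} (joint reflection along the corepresentables $\T(J,-)$) for $(2)\Rightarrow(1)$. Your explicit treatment of $(1)\Leftrightarrow(4)$ and of the $\V$/$\V'$ bookkeeping just spells out what the paper leaves tacit.
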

\begin{proof}
This follows readily from \ref{para:pres_refl_calgpairs} and \ref{thm:commutation_in_v}.
\end{proof}

\begin{prop}\label{thm:sat_alg_comm_iff_th_comm}
Let $(\T,A)$ be an algebra in $\C$.  If $A$ is faithful, then $(\T,A)$ is commutative if and only if $\T$ is commutative.  In particular, a saturated algebra $(\T,A)$ is commutative if and only if $\T$ is commutative.
\end{prop}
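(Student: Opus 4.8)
The plan is to prove the two implications separately, using the characterizations of commutative theories in \ref{thm:charns_comm_th} together with the preservation and reflection results of \ref{para:pres_refl_calgpairs} and \ref{thm:ff_refl_calgpairs}. The faithfulness hypothesis will be needed for only one of the two implications.

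First I would dispatch the implication that $\T$ commutative implies $(\T,A)$ commutative, which in fact needs no hypothesis on $A$. Since $\C$ is $\J$-admissible it has $\J$-cotensors, and $A:\T \rightarrow \C$ is a $\T$-algebra in $\C$, so the commutativity of $\T$ forces $A$ to be a commutative algebra by the equivalence of conditions (1) and (3) in \ref{thm:charns_comm_th}; that is, $(\T,A)$ is commutative \pref{defn:commutative}. This direction is immediate.

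For the converse, assume $(\T,A)$ is commutative and $A$ is faithful. I would view $A:\T \rightarrow \C$ as a faithful, $\J$-cotensor-preserving $\V$-functor between $\V$-categories with $\J$-cotensors, the domain $\T$ possessing $\J$-cotensors as a $\J$-theory \pref{para:jth}. Consider the identity $\T$-algebra $1_\T:\T \rightarrow \T$ on the object $I$ of $\T$, which forms an algebra pair $(1_\T,1_\T)$ on $I$ in $\T$. Applying $A$ yields the pair $(A \cdot 1_\T, A \cdot 1_\T) = (A,A)$ in $\C$, and $A$ commutes with $A$ precisely because $(\T,A)$ is commutative \pref{defn:commutative}. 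Since a faithful $\J$-cotensor-preserving $\V$-functor reflects commuting algebra pairs by \ref{thm:ff_refl_calgpairs}, it follows that $1_\T$ commutes with $1_\T$ in $\T$; equivalently, $(\T,1_\T)$ is a commutative algebra, whence $\T$ is commutative by the equivalence of (1) and (4) in \ref{thm:charns_comm_th}.

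The only substantive point—and the sole place where faithfulness is used—is this reduction of the commutativity of the theory to the commutativity of the identity algebra via reflection along $A$; the remaining steps are the routine verifications that $A \cdot 1_\T = A$ and that the two readings of ``$1_\T$ commutes with $1_\T$'' (for the theory $\T$ and for the algebra $(\T,1_\T)$) agree, which is exactly what \ref{thm:charns_comm_th} records. Finally, the ``in particular'' clause is immediate, since every saturated algebra is faithful by \ref{rem:sat_alg_faithful}, so the displayed equivalence applies to any saturated $(\T,A)$.
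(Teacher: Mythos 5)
Your proof is correct and follows essentially the same route as the paper, whose own proof simply cites \ref{para:pres_refl_calgpairs} and \ref{thm:ff_refl_calgpairs}: preservation of commuting algebra pairs along the $\J$-cotensor-preserving $\V$-functor $A$ gives one direction, and reflection along the faithful $A$ applied to the pair $(1_\T,1_\T)$ gives the other. Your appeal to \ref{thm:charns_comm_th} just makes explicit the translation between commutativity of $\T$ and commutativity of the algebra $(\T,1_\T)$, which the paper leaves implicit.
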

\begin{proof}
This follows from \ref{para:pres_refl_calgpairs} and \ref{thm:ff_refl_calgpairs}.
\end{proof}

For commutative $\J$-theories, \ref{thm:funcs_ind_by_cmt_mor_th} entails the following:

\begin{cor}\label{thm:cor_for_cmt_th}
Let $\T$ be a commutative $\J$-theory.  Then there is a $(\T,\T)$-algebra $\langle 1_\T, 1_\T\rangle:\T \otimes \T \rightarrow \T$ whose left and right faces are both $1_\T$, and there is a $\V$-functor $\langle 1_\T, 1_\T\rangle^*:\Alg{\T}(\C) \rightarrow \Alg{(\T,\T)}(\C)$
that is given on objects by $A \mapsto \langle A,A\rangle$ and is a common section of $(-,I)^*$ and $(I,-)^*$, with the notation of \ref{thm:funcs_ind_by_cmt_mor_th}.
\end{cor}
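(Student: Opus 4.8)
The plan is to obtain this corollary as the special case of \ref{thm:funcs_ind_by_cmt_mor_th} in which $\sS = \T = \U$ and $M = N = 1_\T$. First I would verify the single hypothesis of that result in this instance, namely that $1_\T$ commutes with $1_\T$; but this is exactly the definition of $\T$ being a commutative $\J$-theory \ref{para:cmtn}, which is our standing assumption. Hence \ref{thm:funcs_ind_by_cmt_mor_th} applies and directly yields a $(\T,\T)$-algebra $\langle 1_\T,1_\T\rangle : \T \otimes \T \to \T$ on $I$ whose left and right faces are both $1_\T$, which is the first assertion.

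Next I would read off the induced functor and its action on objects. By \ref{thm:funcs_ind_by_cmt_mor_th} there is a $\V$-functor $\langle 1_\T,1_\T\rangle^* : \Alg{\T}(\C) \to \Alg{(\T,\T)}(\C)$ given by precomposition with $\langle 1_\T,1_\T\rangle$. For each $\T$-algebra $A$ in $\C$, the same result supplies the identity $A\langle 1_\T,1_\T\rangle = \langle A1_\T, A1_\T\rangle$, and since $A1_\T = A$ this equals $\langle A,A\rangle$ in the notation of \ref{notn:tualg_ind_by_tucpair}; thus $\langle 1_\T,1_\T\rangle^*$ acts on objects by $A \mapsto \langle A,A\rangle$, as claimed.

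It remains to establish the section property, and for this I would specialize the commutative diagram furnished by \ref{thm:funcs_ind_by_cmt_mor_th}. With $M = N = 1_\T$, both slanted legs $M^*$ and $N^*$ of that triangle coincide with $1_\T^*$, which is the identity functor $1_{\Alg{\T}(\C)}$, since precomposing a $\T$-algebra with $1_\T$ leaves it unchanged. The two commuting triangles therefore read $(-,I)^* \circ \langle 1_\T,1_\T\rangle^* = 1_{\Alg{\T}(\C)}$ and $(I,-)^* \circ \langle 1_\T,1_\T\rangle^* = 1_{\Alg{\T}(\C)}$, which is precisely the statement that $\langle 1_\T,1_\T\rangle^*$ is a common section of $(-,I)^*$ and $(I,-)^*$.

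I do not expect any genuine obstacle here: the corollary is a direct specialization of \ref{thm:funcs_ind_by_cmt_mor_th}, and the only points requiring (immediate) verification are that $1_\T^*$ is the identity functor and that $\langle A1_\T, A1_\T\rangle = \langle A,A\rangle$. Both are trivial, so the entire argument amounts to substituting $\sS = \T = \U$ and $M = N = 1_\T$ into the cited result.
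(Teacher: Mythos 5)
Your proposal is correct and is exactly the paper's intended argument: the corollary is stated as an immediate consequence of \ref{thm:funcs_ind_by_cmt_mor_th} (the paper gives no separate proof), obtained by setting $\sS = \T = \U$ and $M = N = 1_\T$ and noting that the hypothesis ``$1_\T$ commutes with $1_\T$'' is the definition of commutativity of $\T$ from \ref{para:cmtn}. The remaining observations --- that $1_\T^* = 1_{\Alg{\T}(\C)}$ and $\langle A1_\T, A1_\T\rangle = \langle A,A\rangle$ --- are exactly the trivial verifications needed to read off the section property from the commutative diagram in \ref{thm:funcs_ind_by_cmt_mor_th}.
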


\begin{defn}\label{defn:contra}
An algebra $A$ in $\C$ is \textbf{contracommutative} if $A^\perp$ is commutative.
\end{defn}

While contracommutativity is defined in terms of commutativity, it also turns out that commutativity can be characterized in terms of contracommutativity:

\begin{prop}\label{thm:a_comm_iff_aperp_contracomm}
Given an algebra $A$ in $\C$, the following are equivalent:\newline (1) $A$ is commutative, (2) $A^\perp$ is contracommutative, (3) $A^{\perp\perp}$ is commutative, (4) $A$ commutes with $A^{\perp\perp}$.
\end{prop}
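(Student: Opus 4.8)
The plan is to treat all four conditions as assertions about the existence of morphisms in $\Algs(C)$, where $C = |A|$, noting that $A$, $A^\perp$, and $A^{\perp\perp}$ all share the carrier $C$, so that they are objects of the single category $\Algs(C)$ on which the commutant adjunction $\Com_C \dashv \Com_C^\op$ acts \pref{eq:cmt_adjn_over_c}. First I would dispose of $(2)\Leftrightarrow(3)$, which is immediate upon unfolding \ref{defn:contra}: to say that $A^\perp$ is contracommutative is by definition to say that $(A^\perp)^\perp = A^{\perp\perp}$ is commutative, which is precisely $(3)$. The remaining task is to prove $(1)\Leftrightarrow(4)\Leftrightarrow(3)$ by direct morphism-chasing, using that commutation is witnessed by morphisms into commutants \pref{defn:cmt} and is symmetric \pref{thm:comm_symm}.

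For $(1)\Rightarrow(4)$, recall that by \ref{defn:commutative} commutativity of $A$ provides a morphism $[A|A]:A \rightarrow A^\perp$ in $\Algs(C)$; applying the commutant functor $\Com_C = (-)^\perp:\Algs(C) \rightarrow \Algs(C)^\op$ of \ref{eq:cmt_adjn_over_c} yields, after tracking the variance, a morphism $A^{\perp\perp} \rightarrow A^\perp$ in $\Algs(C)$, which witnesses that $A$ commutes with $A^{\perp\perp}$, i.e.\ $(4)$. For $(4)\Rightarrow(1)$, condition $(4)$ furnishes a morphism $A^{\perp\perp} \rightarrow A^\perp$ in $\Algs(C)$, and precomposing it with the unit $u_A = [A|A^\perp]:A \rightarrow A^{\perp\perp}$ of the commutant adjunction \pref{thm:unit_counit_com_adj} produces a morphism $A \rightarrow A^\perp$, witnessing that $A$ commutes with itself, i.e.\ $(1)$.

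For $(3)\Leftrightarrow(4)$, the key input is that $A^\perp$ is saturated, since it is a commutant \pref{thm:charns_sat_algs}; hence the unit $u_{A^\perp}:A^\perp \rightarrow A^{\perp\perp\perp}$ is an isomorphism in $\Algs(C)$, so that $A^{\perp\perp\perp} \cong A^\perp$ (equivalently, this isomorphism follows from the idempotency of the monad $(-)^{\perp\perp}$ in \ref{thm:com_adj_idem}). Condition $(3)$ says $A^{\perp\perp}$ commutes with itself, i.e.\ there is a morphism $A^{\perp\perp} \rightarrow (A^{\perp\perp})^\perp = A^{\perp\perp\perp}$ in $\Algs(C)$; composing with the isomorphism $A^{\perp\perp\perp} \cong A^\perp$ converts this into a morphism $A^{\perp\perp} \rightarrow A^\perp$, which is $(4)$, and composing with the inverse isomorphism gives the reverse implication. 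Chaining $(2)\Leftrightarrow(3)$, $(3)\Leftrightarrow(4)$, and $(1)\Leftrightarrow(4)$ then yields the stated equivalence of all four conditions.

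I do not expect a serious obstacle: the substantive functorial work has already been carried out, and what remains is a short diagram chase. The only points requiring care are bookkeeping the variance of $(-)^\perp$ in $(1)\Rightarrow(4)$ and recognizing that $(2)$ collapses to $(3)$ on unfolding \ref{defn:contra}; the single genuinely load-bearing fact is the saturation of $A^\perp$, which underwrites the isomorphism $A^{\perp\perp\perp} \cong A^\perp$ used in $(3)\Leftrightarrow(4)$.
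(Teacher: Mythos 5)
Your proof is correct and rests on exactly the same two ingredients as the paper's: the fibrewise commutant adjunction $\Com_C \dashv \Com_C^\op$ of \ref{eq:cmt_adjn_over_c} and the isomorphism $A^{\perp\perp\perp} \cong A^\perp$ coming from idempotency/saturation. The paper merely packages your morphism chases into a single chain of hom-set bijections $\Algs(C)(A,A^\perp) \cong \Algs(C)(A^{\perp\perp},A^\perp) \cong \Algs(C)(A^{\perp\perp},A^{\perp\perp\perp})$, so this is essentially the same argument.
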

\begin{proof}
Let $B = A^{\perp\perp}$.  Then $\Algs(C)(A,B^\perp) \cong \Algs(C)(B,A^\perp)$ by \ref{eq:cmt_adjn_over_c}, and $B^\perp \cong A^\perp$ in $\Algs(C)$ by  \ref{thm:com_adj_idem}, so $\Algs(C)(A,A^\perp) \cong \Algs(C)(B,A^\perp) \cong \Algs(C)(B,B^\perp)$.
\end{proof}

\begin{para}\label{para:cats_contra_com}
Let us write $\CoCAlgs^\s(\C)$, $\SatCAlgs^\s(\C)$, $\SatCoCAlgs^\s(\C)$ to denote the full subcategories of $\Algs^\s(\C)$ consisting of contracommutative, saturated commutative, and saturated contracommutative algebras, respectively.  Each of these subcategories is replete, in view of  \ref{thm:cmtnt_adjn}, \ref{thm:cat_sat_algs}, \ref{thm:calgs_replete}.
\end{para}

\begin{cor}\label{thm:calgs_cocalgs_adjn}
The idempotent adjunction $\Com \dashv \Com^\op:\Algs^\s(\C)^\op \rightarrow \Algs^\s(\C)$ of \eqref{eq:cmt_adj} and \ref{thm:com_adj_idem} restricts to an idempotent adjunction
$$
\xymatrix{
\CAlgs^\s(\C) \ar@{}[rr]|\top \ar@/_2ex/[rr]_{(-)^\perp} & & \CoCAlgs^\s(\C)^\op. \ar@/_2ex/[ll]_{{(-)^\perp}^\op}
}
$$
between the categories of commutative and of contracommutative algebras in $\C$.  This adjunction restricts to an equivalence
\begin{equation}\label{eq:equiv_satcalgs}\SatCAlgs^\s(\C) \;\simeq\; \SatCoCAlgs^\s(\C)^\op\;.\end{equation}
\end{cor}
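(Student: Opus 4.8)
The plan is to exhibit the restricted adjunction by showing that each of the two functors $\Com$ and $\Com^\op$ carries the relevant full subcategory into the other, and then to obtain the equivalence \eqref{eq:equiv_satcalgs} by identifying the fixed points of the resulting idempotent adjunction. First I would check the two preservation facts. That $\Com = (-)^\perp$ sends $\CAlgs^\s(\C)$ into $\CoCAlgs^\s(\C)^\op$ is immediate from the equivalence of conditions (1) and (2) in \ref{thm:a_comm_iff_aperp_contracomm}: if $A$ is commutative then $A^\perp$ is contracommutative. Dually, that $\Com^\op$ sends $\CoCAlgs^\s(\C)^\op$ into $\CAlgs^\s(\C)$ is immediate from the very Definition \ref{defn:contra}: if $B$ is contracommutative then $B^\perp$ is commutative. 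Since these subcategories are full and replete (\ref{thm:calgs_replete}, \ref{para:cats_contra_com}), only the object-level containments just noted are needed.

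Next I would argue that the idempotent adjunction $\Com \dashv \Com^\op$ of \ref{thm:cmtnt_adjn} and \ref{thm:com_adj_idem} restricts along these fully faithful inclusions. Since both functors preserve the subcategories, and since $\CAlgs^\s(\C)$ is full, the unit component $u_A:A \to A^{\perp\perp}$ at a commutative algebra $A$ is a morphism between objects of $\CAlgs^\s(\C)$ (its codomain is commutative because $A$ is, by $(1)\Leftrightarrow(3)$ of \ref{thm:a_comm_iff_aperp_contracomm}), hence it is a morphism of the subcategory; dually for the counit $u^\op$. The triangle identities are inherited verbatim, giving a restricted adjunction $(-)^\perp \dashv {(-)^\perp}{}^\op$ between $\CAlgs^\s(\C)$ and $\CoCAlgs^\s(\C)^\op$. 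Its idempotency is inherited as well, since its unit and counit are the restrictions of those of the idempotent adjunction $\Com \dashv \Com^\op$, and an isomorphism of a full subcategory is precisely an ambient isomorphism lying in it; in particular the restricted double-dual monad $(-)^{\perp\perp}$ is again idempotent.

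Finally, for the equivalence I would invoke the general fact that an idempotent adjunction restricts to an equivalence between the full subcategory of objects whose unit is invertible and the full subcategory of objects whose counit is invertible (the fixed points, in the sense of \cite{MacSto}). For the restricted adjunction, the left-hand fixed points are the commutative algebras $A$ with $u_A$ invertible, i.e.\ the commutative \emph{saturated} algebras $\SatCAlgs^\s(\C)$, and the right-hand fixed points are the saturated contracommutative algebras $\SatCoCAlgs^\s(\C)^\op$ (using that the counit is $u^\op$, \ref{notn:unit_counit}). Concretely, $(-)^\perp$ already effects the equivalence $\SatAlgs^\s(\C) \simeq \SatAlgs^\s(\C)^\op$ of \ref{thm:cat_sat_algs}, and it carries one class to the other and back: for $A$ saturated commutative, $A^\perp$ is saturated (every commutant is saturated, $(3)$ of \ref{thm:charns_sat_algs}) and contracommutative (as $A^{\perp\perp}\cong A$ is commutative), and symmetrically. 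I expect the main obstacle to be purely one of bookkeeping the variances across the opposite categories and verifying that the fixed-point subcategories of the restricted adjunction coincide on the nose with $\SatCAlgs^\s(\C)$ and $\SatCoCAlgs^\s(\C)^\op$; once the two preservation facts of the first paragraph are secured, everything else is a formal consequence of the theory of idempotent adjunctions already available in \ref{thm:com_adj_idem} and \ref{thm:cat_sat_algs}.
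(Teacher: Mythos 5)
Your proposal is correct and is essentially the argument the paper intends: the statement is presented as a corollary of \ref{thm:a_comm_iff_aperp_contracomm} (commutativity of $A$ is equivalent to contracommutativity of $A^\perp$), the repleteness observations of \ref{para:cats_contra_com}, the idempotency of \ref{thm:com_adj_idem}, and the fixed-point description of saturated algebras, and your write-up supplies exactly these steps. The two object-level preservation facts plus fullness of the subcategories do all the work, and the identification of the fixed points of the restricted idempotent adjunction with $\SatCAlgs^\s(\C)$ and $\SatCoCAlgs^\s(\C)^\op$ is carried out correctly.
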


The following definition provides a convenient new way of describing $\J$-theories $\T$ that are \textit{balanced} with respect to a $\T$-algebra $A$ in the sense of \cite[3.3.5]{Lu:FDistn}:

\begin{defn}
An algebra $A$ in $\C$ is \textbf{balanced} if $A \cong A^\perp$ in $\Algs(C)$, where $C = |A|$.  Let us write $\BalAlgs^\s(\C)$ for the full subcategory of $\Algs^\s(\C)$ consisting of all balanced algebras.
\end{defn}

\begin{rem}\label{rem:bal_alg}
An algebra $A$ on an object $C$ of $\C$ is balanced if and only if $A$ is commutative and the unique morphism $[A|A]:A \rightarrow A^\perp$ in $\Algs(C)$ \pref{defn:commutative} is an isomorphism (equivalently, if its underlying morphism of $\J$-theories is an isomorphism).
\end{rem}

\newpage

\begin{prop}\label{thm:charns_balalgs}
Let $A$ be an algebra in $\C$.  Then 
\begin{enumerate}
\item $A$ is balanced if and only if $A$ is commutative, contracommutative, and saturated.
\item $A^\perp$ is balanced if and only if $A$ is commutative and contracommutative.
\end{enumerate}
\end{prop}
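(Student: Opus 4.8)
The plan is to prove part (1) directly and then deduce part (2) by applying part (1) to the algebra $A^\perp$. Two standing observations will be used throughout: first, every commutant $A^\perp$ is saturated, since $A^\perp \cong A^\perp$ exhibits it in the form $B^\perp$ with $B = A$ \pref{thm:charns_sat_algs}; and second, $\SatAlgs(C)$ is a preordered class \pref{rem:satalg_preordered_class}, so between two saturated algebras on $C$ a morphism in each direction automatically upgrades to an isomorphism.

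For the forward implication of (1), I would suppose $A$ is balanced, so that we are handed an isomorphism $A \cong A^\perp$ in $\Algs(C)$. Saturation is then immediate from \ref{thm:charns_sat_algs}, this isomorphism is in particular a morphism $A \to A^\perp$ in $\Algs(C)$ and hence witnesses that $A$ is commutative \pref{defn:commutative}, and contracommutativity follows because $A^\perp \cong A$ is commutative while $\CAlgs^\s(\C)$ is replete \pref{thm:calgs_replete}. For the converse, assume $A$ is commutative, contracommutative, and saturated. Commutativity supplies a morphism $[A|A]:A \to A^\perp$ in $\Algs(C)$. In the other direction, contracommutativity makes $A^\perp$ commutative, which yields a morphism $[A^\perp|A^\perp]:A^\perp \to A^{\perp\perp}$; composing this with the inverse of the unit isomorphism $u_A:A \to A^{\perp\perp}$ furnished by saturation produces a morphism $A^\perp \to A$ in $\Algs(C)$. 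Since $A$ and $A^\perp$ both lie in the preordered class $\SatAlgs(C)$, the existence of morphisms in both directions forces $A \cong A^\perp$, so $A$ is balanced.

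For part (2), I would apply the already-proved part (1) to $A^\perp$: it is balanced if and only if $A^\perp$ is commutative, contracommutative, and saturated. The saturation condition is automatic by the first standing observation. By the definition of contracommutativity \pref{defn:contra}, $A^\perp$ is commutative exactly when $A$ is contracommutative, and by the equivalence $(1)\Leftrightarrow(2)$ of \ref{thm:a_comm_iff_aperp_contracomm} (commutativity of $A$ is equivalent to contracommutativity of $A^\perp$), $A^\perp$ is contracommutative exactly when $A$ is commutative. Hence $A^\perp$ is balanced if and only if $A$ is both commutative and contracommutative, as claimed.

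The only genuinely delicate point is the backward direction of (1), where one must produce an actual isomorphism rather than merely morphisms both ways; this is exactly where the preorder structure of $\SatAlgs(C)$ is essential, the uniqueness clause of \ref{thm:equiv_charns_b_pipe_a} guaranteeing that the two round-trip composites collapse to identities. Some care is also needed to confirm that the constructed morphism $A^\perp \to A$ really lives over the identity on $C$—so that it is a morphism in the fibre $\Algs(C)$ and not merely in $\Algs^\s(\C)$—which holds because both $[A^\perp|A^\perp]$ and $u_A$ are morphisms in $\Algs(C)$.
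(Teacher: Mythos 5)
Your proof is correct, but it inverts the paper's decomposition: the paper proves (2) directly and then deduces (1) from it, whereas you prove (1) directly and deduce (2). The paper's proof of (2) observes that $A^\perp$ is balanced iff there exist morphisms $A^\perp \rightarrow A^{\perp\perp}$ and $A^{\perp\perp} \rightarrow A^\perp$ in $\Algs(C)$ (both objects being saturated and $\SatAlgs(C)$ being a preordered class), and these two morphisms are, essentially by definition, the contracommutativity of $A$ and the statement that $A$ commutes with $A^{\perp\perp}$, the latter being equivalent to commutativity of $A$ by \ref{thm:a_comm_iff_aperp_contracomm}; part (1) then reduces to the observation that $A$ is balanced iff $A^\perp$ is balanced and $A$ is saturated. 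Your route instead runs the preorder argument one level down, between $A$ and $A^\perp$ rather than between $A^\perp$ and $A^{\perp\perp}$: commutativity gives $[A|A]:A \rightarrow A^\perp$, while contracommutativity plus saturation gives $u_A^{-1}\cdot[A^\perp|A^\perp]:A^\perp \rightarrow A$, and repleteness of $\CAlgs^\s(\C)$ handles the forward direction. Both arguments rest on the same three pillars (\ref{rem:satalg_preordered_class}, \ref{thm:a_comm_iff_aperp_contracomm}, \ref{thm:charns_sat_algs}); the paper's version of (2) is slightly more economical because saturation is automatic at the level of $A^\perp$ and $A^{\perp\perp}$ and so never needs to be invoked as a hypothesis, whereas your version of (1) must carry saturation of $A$ as an explicit assumption and use it via the unit $u_A$. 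Your closing remarks correctly identify the two delicate points (upgrading a two-way preorder to an isomorphism, and checking that the composite $u_A^{-1}\cdot[A^\perp|A^\perp]$ lies in the fibre $\Algs(C)$), so there is no gap.
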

\begin{proof}
Letting $C = |A|$, we first prove (2).  $A^\perp$ is balanced iff $A^\perp \cong A^{\perp\perp}$ in $\Algs(C)$, but $A^\perp$ and $A^{\perp\perp}$ are saturated and $\SatAlgs(C)$ is a preordered class \pref{rem:satalg_preordered_class}, so $A^\perp$ is balanced iff there exist morphisms $A^\perp \rightarrow A^{\perp\perp}$ and $A^{\perp\perp} \rightarrow A^\perp$ in $\Algs(C)$.  By definition, the existence of a morphism $A^\perp \rightarrow A^{\perp\perp}$ in $\Algs(C)$ is the contracommutativity of $A$, while the existence of a morphism $A^{\perp\perp} \rightarrow A^\perp$ in $\Algs(C)$ is the statement that $A$ commutes with $A^{\perp\perp}$, which is equivalent to the commutativity of $A$, by \ref{thm:a_comm_iff_aperp_contracomm}.

We now prove (1).  By (2) it suffices to show that $A$ is balanced if and only if $A^\perp$ is balanced and $A$ is saturated.  If $A$ is balanced, then $A \cong A^\perp$ in $\Algs(C)$ and hence $A$ is saturated, by \ref{thm:charns_sat_algs}, so $A^\perp \cong A \cong A^{\perp\perp}$ in $\Algs(C)$ and hence $A^\perp$ is balanced.  Conversely, if $A^\perp$ is balanced and $A$ is saturated, then $A^\perp \cong A^{\perp\perp}$ and $A^{\perp\perp} \cong A$ in $\Algs(C)$, so $A$ is balanced.
\end{proof}

\begin{rem}\label{rem:balalgs_repl}
The full subcategory $\BalAlgs^\s(\C) \hookrightarrow \Algs^\s(\C)$ is replete, in view of \ref{para:cats_contra_com} and \ref{thm:charns_balalgs}.
\end{rem}

\begin{cor}\label{thm:balalg_equiv_balalgop}
The equivalence \eqref{eq:equiv_satcalgs} restricts to an equivalence $\BalAlgs^\s(\C) \simeq \BalAlgs^\s(\C)^\op$ given by $A \mapsto A^\perp$.  Furthermore, $\BalAlgs^\s(\C)$ is a groupoid, and the latter equivalence is isomorphic to the identity-on-objects isomorphism $\BalAlgs^\s(\C) \rightarrow \BalAlgs^\s(\C)^\op$ given on morphisms by $f \mapsto f^{-1}$.
\end{cor}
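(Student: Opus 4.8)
The plan is to handle the three assertions in sequence, with a single rigidity observation about saturated algebras carrying most of the weight. I will freely use that balanced algebras are exactly the commutative, contracommutative, saturated ones (\ref{thm:charns_balalgs}(1)) and that $\BalAlgs^\s(\C)$ is replete (\ref{rem:balalgs_repl}).

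First I would show that \eqref{eq:equiv_satcalgs} restricts as claimed. A balanced algebra is commutative, contracommutative, and saturated, so $\BalAlgs^\s(\C)$ sits as a full subcategory of both $\SatCAlgs^\s(\C)$ and $\SatCoCAlgs^\s(\C)$. If $A$ is balanced then $A$ is commutative and contracommutative, so $A^\perp$ is balanced by \ref{thm:charns_balalgs}(2); thus the commutant functor underlying \eqref{eq:equiv_satcalgs} maps balanced algebras to balanced algebras. Moreover, for balanced $B$ one has $B \cong B^{\perp\perp} = (B^\perp)^\perp$ with $B^\perp$ balanced, so the balanced full subcategory lies in the essential image. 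Since fullness and faithfulness are inherited from \eqref{eq:equiv_satcalgs}, it restricts to an equivalence $\BalAlgs^\s(\C) \simeq \BalAlgs^\s(\C)^\op$ given by $A \mapsto A^\perp$.

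The technical core, which I expect to be the main obstacle, is a rigidity lemma: for saturated algebras $X,Y$ and an isomorphism $d:|X| \to |Y|$ in $\C$, there is at most one morphism $X \to Y$ in $\Algs^\s(\C)$ with underlying carrier $d$. The difficulty is that such a morphism changes both the carrier and the theory at once, so one cannot argue directly in a single fibre. I would transport $Y$ along $d^{-1}$ via \ref{para:transp_str} to obtain an isomorphism $r:Y \to Y'$ over $d^{-1}$ with $Y'$ an algebra on $|X|$, noting that $Y'$ is saturated because saturated algebras form a replete subcategory (\ref{thm:cat_sat_algs}). Any two morphisms $p,q:X \to Y$ over $d$ then give $rp, rq : X \to Y'$ over $1_{|X|}$, i.e. morphisms in $\Algs(|X|)$ between the saturated algebras $X$ and $Y'$, which coincide because $\SatAlgs(|X|)$ is a preordered class (\ref{rem:satalg_preordered_class}); since $r$ is invertible, $p=q$.

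Granting the lemma, the remaining two claims follow quickly. For the groupoid property, let $g:A \to B$ in $\BalAlgs^\s(\C)$ with $c = |g|$; the definition of balanced supplies isomorphisms $\alpha:A^\perp \to A$ and $\beta:B^\perp \to B$ over the respective identities, and applying $\Com$ (\ref{thm:cmtnt_adjn}) to $g$ yields $g^\perp:B^\perp \to A^\perp$ with $|g^\perp| = c^{-1}$ by \ref{rem:com_cmt_w_car}. Setting $h = \alpha \circ g^\perp \circ \beta^{-1}:B \to A$, the composites $hg$ and $gh$ have carriers $1_{|A|}$ and $1_{|B|}$, so they lie in the fibres $\Algs(|A|)$ and $\Algs(|B|)$ and equal $1_A$ and $1_B$ by the preorder property; hence $g$ is invertible and $\BalAlgs^\s(\C)$ is a groupoid. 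Being a groupoid, it admits the identity-on-objects isomorphism $\mathcal{I}:\BalAlgs^\s(\C) \to \BalAlgs^\s(\C)^\op$ sending $f \mapsto f^{-1}$. To identify $\mathcal{I}$ with $(-)^\perp$, I would let $\theta_A:A^\perp \to A$ be the unique isomorphism furnished by the preorder property (both $A$ and $A^\perp$ being saturated on $|A|$), giving a candidate natural isomorphism $\mathcal{I} \Rightarrow (-)^\perp$. Naturality is immediate from the rigidity lemma, since for $g:A \to B$ the two legs of the naturality square are morphisms $B^\perp \to A$ in $\Algs^\s(\C)$ with the same underlying carrier $c^{-1}$ between the saturated algebras $B^\perp$ and $A$, hence equal; as each $\theta_A$ is invertible, $\theta$ is a natural isomorphism and the identification is complete.
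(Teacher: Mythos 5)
Your proposal is correct and follows essentially the same route as the paper: restriction of \eqref{eq:equiv_satcalgs} via \ref{thm:charns_balalgs}, construction of the candidate inverse as a conjugate of $g^\perp$ by the balancing isomorphisms $A\cong A^\perp$, $B\cong B^\perp$, and the observation that the resulting endomorphisms land in the preordered classes $\SatAlgs(C)$, $\SatAlgs(C')$ and hence are identities. Your ``rigidity lemma'' (at most one morphism between saturated algebras over a given carrier isomorphism, proved by transporting along \ref{para:transp_str} into a single fibre) is just a mild repackaging of the same mechanism, which you then also use to verify the naturality that the paper leaves implicit in ``the result follows.''
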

\begin{proof}
The first claim follows from \ref{thm:charns_balalgs}.  For the second, let $A$ and $B$ be balanced algebras in $\C$, let $C = |A|$ and $C' = |B|$, and let $f:A \rightarrow B$ be a morphism in $\Algs^\s(\C)$.  Then we obtain a morphism $f^\perp:B^\perp \rightarrow A^\perp$ in $\Algs^\s(\C)$ with $|f^\perp| = |f|^{-1}$ in $\C_\si$, by \ref{rem:com_cmt_w_car}.  But $[A|A]:A \rightarrow A^\perp$ and $[B|B]:B \rightarrow B^\perp$ are isomorphisms in the (non-full) subcategories $\Algs(C)$ and $\Algs(C')$ of $\Algs^\s(\C)$, respectively.  Hence, there is a unique morphism $g:B \rightarrow A$ in $\Algs^\s(\C)$ with $[A|A] \cdot g = f^\perp \cdot [B|B]$, and we find that $|g| = |f|^{-1}$.  Therefore, since $A$ and $B$ are saturated, it follows that $g \cdot f$ and $f \cdot g$ are endomorphisms in the preordered classes $\SatAlgs(C)$ and $\SatAlgs(C')$ \pref{rem:satalg_preordered_class} and so must be identities.  Therefore $f$ is an isomorphism in $\BalAlgs^\sx(\C)$ with $f^{-1} = g$, and the result follows.
\end{proof}

\section{Commutant bifold algebras with one commutative face}\label{sec:one_comm_face}

By Theorem \ref{thm:rcmt_corefl} and Proposition \ref{thm:calgs_replete}, the category of commutative algebras, $\CAlgs^\s(\C)$, is equivalent to the full subcategory of $\BAlg^\sx(\C)$ consisting of right-commutant bifold algebras whose left face is commutative.  If $D$ is a right- or left-commutant bifold algebra, then its left face $D_\ell$ is commutative if and only if its right face $D_r$ is \mbox{contracommutative}, by \ref{thm:calgs_replete}, \ref{thm:a_comm_iff_aperp_contracomm}, and \ref{para:cats_contra_com}.  For commutant bifold algebras, we arrive at the following corollary to \ref{thm:cmt_bifold_algs} and \ref{thm:calgs_cocalgs_adjn}:

\begin{thm}\label{thm:balgs_w_lface_comm}
The equivalent categories $\SatCAlgs^\s(\C)$ and $\SatCoCAlgs^\s(\C)^\op$ are equivalent to the full subcategory of $\BAlg^\sx(\C)$ consisting of commutant bifold algebras whose left face is commutative (equivalently, whose right face is contracommutative).
\end{thm}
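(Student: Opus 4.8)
The plan is to obtain the statement by restricting two equivalences already established, namely $\ComBAlg^\sx(\C) \simeq \SatAlgs^\s(\C)$ from \ref{thm:cmt_bifold_algs} and $\SatCAlgs^\s(\C) \simeq \SatCoCAlgs^\s(\C)^\op$ from \ref{thm:calgs_cocalgs_adjn}. Write $\E$ for the full subcategory of $\BAlg^\sx(\C)$ whose objects are the commutant bifold algebras $D$ with $D_\ell$ commutative; by the remarks preceding the theorem (which rest on \ref{thm:a_comm_iff_aperp_contracomm}), this is the same full subcategory as the one cut out by requiring $D_r$ contracommutative, so the two descriptions agree. Since $\E$ is contained in $\ComBAlg^\sx(\C)$, which by \ref{thm:rcmt_corefl} and \ref{thm:cmt_bifold_algs} is equivalent to $\SatAlgs^\s(\C)$ via the left face functor $D \mapsto LD = (\T,D_\ell)$ with quasi-inverse $\RCom$, the heart of the proof is to show that this equivalence carries $\E$ onto the full subcategory $\SatCAlgs^\s(\C)$.

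First I would check the matching of objects under $L$ and $\RCom$. For $D \in \E$, the left face $D_\ell$ is saturated---because $D$, being a commutant bifold algebra, is in particular right-commutant with $D_\ell$ saturated by \ref{thm:charn_comm_balg}---and commutative by the defining condition on $\E$; hence $LD = (\T, D_\ell) \in \SatCAlgs^\s(\C)$. Conversely, for a saturated commutative algebra $A \in \SatCAlgs^\s(\C)$, the bifold algebra $\RCom A$ is a commutant bifold algebra by \ref{thm:a_sat_iff_rcom_is_cbalg} (saturation of $A$ being precisely the hypothesis needed there) and has commutative left face $(\RCom A)_\ell = A$ by \ref{thm:rcom_lcom}, so $\RCom A \in \E$.

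Next I would invoke repleteness together with the standard fact that an equivalence of categories restricts to an equivalence between full, replete subcategories that its two functors carry into one another. Here $\SatCAlgs^\s(\C)$ is replete by \ref{para:cats_contra_com}, while $\E$ is replete because $\ComBAlg^\sx(\C)$ is replete in $\BAlg^\sx(\C)$ (by \ref{thm:rcmt_corefl} and \ref{thm:cmt_bifold_algs}) and the condition ``$D_\ell$ commutative'' is isomorphism-invariant, the functor $L$ sending isomorphisms to isomorphisms and $\CAlgs^\s(\C)$ being replete by \ref{thm:calgs_replete}. Thus $L$ and $\RCom$ restrict to mutually quasi-inverse functors furnishing $\E \simeq \SatCAlgs^\s(\C)$, and composing with \ref{thm:calgs_cocalgs_adjn} delivers the full chain $\SatCAlgs^\s(\C) \simeq \E \simeq \SatCoCAlgs^\s(\C)^\op$.

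I expect no serious obstacle: the argument is a bookkeeping assembly of prior equivalences, and the only point demanding care is the object-level correspondence of the second paragraph---verifying that the commutant condition forces $D_\ell$ to be saturated (so that commutativity of $D_\ell$ indeed lands in $\SatCAlgs^\s(\C)$) and, dually, that it is precisely saturation of $A$ that promotes $\RCom A$ from a merely right-commutant to a genuinely commutant bifold algebra.
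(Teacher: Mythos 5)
Your proposal is correct and follows exactly the route the paper intends: the theorem is stated there as a corollary of \ref{thm:cmt_bifold_algs} and \ref{thm:calgs_cocalgs_adjn}, with the preceding paragraph supplying the equivalence of the two descriptions of the subcategory, and your argument simply makes explicit the object-matching and repleteness checks that the paper leaves implicit.
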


Every right-commutant bifold algebra with commutative left face has a central morphism from its left face to its right face:

\begin{prop}\label{thm:rcom_lcommutative_central_morph}
Let $(\T,\U,D)$ be a right-commutant bifold algebra whose left face $(\T,D_\ell)$ is commutative.  Then (1) there is a unique morphism $M:(\T,D_\ell) \rightarrow (\U,D_r)$ in $\Algs(C)$, where $C = |D|$, (2) the morphism of $\J$-theories $M:\T \rightarrow \U$ is central, and (3) there is a $(\T,\U)$-algebra $\langle M,1_\U\rangle:\T \otimes \U \rightarrow \U$ with left and right faces $M$ and $1_\U$, respectively.
\end{prop}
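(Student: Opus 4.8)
The plan is to handle the three parts in order, reducing (3) to (2) and (2) to the reflection principle \ref{thm:ff_refl_calgpairs}; throughout, I write $A = D_\ell$ and $B = D_r$, algebras on $C = |D|$. For (1), note that since $A$ is commutative \pref{defn:commutative} there is a unique morphism $[A|A]:A \rightarrow A^\perp$ in $\Algs(C)$, and since $D$ is right-commutant the canonical morphism $[D_r|D_\ell]:D_r \rightarrow D_\ell^\perp = A^\perp$ is an \emph{isomorphism} in $\Algs(C)$ \pref{thm:charns_rcom_balg}. I would then define $M$ to be the composite $A \xrightarrow{[A|A]} A^\perp \xrightarrow{([D_r|D_\ell])^{-1}} D_r = B$ in $\Algs(C)$. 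For uniqueness, any two morphisms $M_1,M_2:A \rightarrow B$ in $\Algs(C)$ become, after postcomposition with the isomorphism $[D_r|D_\ell]$, two morphisms $A \rightarrow A^\perp$, which coincide by the at-most-one clause of \ref{thm:equiv_charns_b_pipe_a}; hence $M_1 = M_2$. Recording that $M:(\T,D_\ell) \rightarrow (\U,D_r)$ in $\Algs(C)$ means $D_\ell \cong M^*(D_r) = D_r M$ in $\Alg{\T}(C)$, I extract the key identity $[D_\ell] = [D_r]M$ in $\ThJ$ \pref{defn:ind_mor}, which feeds into (2).

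Part (2) is the crux, and I would prove it by reflecting commutation along the subtheory embedding $[D_r]$. First, $[D_r] = [D_\ell^\perp]\,[D_r|D_\ell]$ \pref{para:elem_descn_cmtn} is a composite of the subtheory embedding $[D_\ell^\perp]:\T^\perp_{D_\ell} \hookrightarrow \C_C$ of \eqref{eq:subth_emb_cmt} with the isomorphism $[D_r|D_\ell]$ \pref{thm:charns_rcom_balg}, so $[D_r]:\U \rightarrow \C_C$ is a faithful morphism of $\J$-theories. Regarding $\U$ and $\C_C$ as $\V$-categories with $\J$-cotensors \pref{para:jth_enr_in_vprime}, Proposition \ref{thm:ff_refl_calgpairs} shows that $[D_r]$ reflects commuting algebra pairs. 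I would then consider the algebra pair $(M,1_\U)$ on $I$ in $\U$: its image under $[D_r]$ is $([D_r]M,[D_r]\,1_\U) = ([D_\ell],[D_r])$, and $[D_\ell]$ commutes with $[D_r]$ by \ref{thm:dl_commutes_w_dr} together with \ref{thm:cmtn_via_normaln} (identifying commutation of these morphisms into $\C_C$ with commutation of the corresponding normal algebras on $I$, as in the remark following \ref{defn:cmt}). Reflection then yields that $M$ commutes with $1_\U$ in $\U$, i.e. $M:\T \rightarrow \U$ is central \pref{para:cmtn}.

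Finally, part (3) is immediate: since $M$ is central, $M$ commutes with $1_\U:\U \rightarrow \U$, so applying \ref{thm:funcs_ind_by_cmt_mor_th} with $\sS = \U$ and $N = 1_\U$ produces the $(\T,\U)$-algebra $\langle M,1_\U\rangle:\T \otimes \U \rightarrow \U$ on $I$ with left and right faces $M$ and $1_\U$, respectively. I expect the main obstacle to lie not in any computation but in the bookkeeping of part (2): verifying that the commutation and reflection machinery of Sections \ref{sec:comm_pairs_algs}--\ref{sec:pres_refl_calgpair}, developed for algebras in the fixed $\J$-admissible category $\C$, applies verbatim to algebras on the carrier $I$ in the ambient $\J$-theories $\U$ and $\C_C$ via the enlargement \ref{para:jth_enr_in_vprime}, and that the passage between cospan commutation and commutation of algebras on $I$ is invoked consistently at each step.
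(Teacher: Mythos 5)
Your proof is correct and follows essentially the same route as the paper's: part (1) by composing the unique morphism $[A|A]:A \rightarrow A^\perp$ with the inverse of the isomorphism $[D_r|D_\ell]$, part (2) by reflecting the commutation of the two faces of $D$ back along a faithful $\J$-cotensor-preserving functor via \ref{thm:ff_refl_calgpairs}, and part (3) via \ref{thm:funcs_ind_by_cmt_mor_th}. The only cosmetic difference is in part (2): the paper reflects along $D_r:\U \rightarrow \C$ itself, whose faithfulness comes from $D_r$ being saturated (\ref{thm:charns_sat_algs}, \ref{rem:sat_alg_faithful}), whereas you reflect along $[D_r]:\U \rightarrow \C_C$, whose faithfulness you obtain from the factorization through the subtheory embedding $[D_\ell^\perp]$ — both are valid, and your explicit attention to the $\V'$-enlargement needed to apply the reflection lemma to algebras in $\U$ and $\C_C$ is exactly the right bookkeeping.
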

\begin{proof}
Let $A = D_\ell$ and $B = D_r$.  Then $(\U,B) \cong (\T,A)^\perp$ in $\Algs(C)$, so (1) follows from the fact that $(\T,A)$ is commutative.  By \ref{para:algs_on_c}, $A \cong BM$ in $\Alg{\T}(C)$, but $A$ commutes with $B$ \pref{thm:dl_commutes_w_dr} and hence $BM$ commutes with $B$, by \ref{thm:calg_pair_repl}.  But $B$ is saturated and hence faithful, by \ref{thm:charns_sat_algs} and \ref{rem:sat_alg_faithful}, so since $BM$ commutes with $B1_\U$, we deduce by \ref{thm:ff_refl_calgpairs} that $M$ commutes with $1_\U$, i.e. $M$ is central \pref{para:cmtn}.  (3) now follows, by \ref{thm:funcs_ind_by_cmt_mor_th}.
\end{proof}

\begin{thm}\label{thm:rc_comm_ualg_to_tu_alg}
Let $(\T,\U,D)$ be a right-commutant bifold algebra whose left face is commutative.  Then every $\U$-algebra $B$ in $\C$ is the right face of a $(\T,\U)$-algebra.  Moreover, there is a $\V$-functor $\langle M, 1_\U\rangle^*:\Alg{\U}(\C) \rightarrow \Alg{(\T,\U)}(\C)$ that is a section of $(I,-)^*$, with the notation of \ref{thm:funcs_ind_by_cmt_mor_th} and \ref{thm:rcom_lcommutative_central_morph}.  Explicitly, $\langle M,1_\U\rangle^*$ sends each $\U$-algebra $B$ to the $(\T,\U)$-algebra $B\langle M,1_\U\rangle = \langle BM,B\rangle$.
\end{thm}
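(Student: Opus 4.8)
The plan is to obtain the result as an immediate specialization of \ref{thm:funcs_ind_by_cmt_mor_th}, applied to the central morphism that \ref{thm:rcom_lcommutative_central_morph} has already produced. First I would invoke \ref{thm:rcom_lcommutative_central_morph}: since $(\T,\U,D)$ is a right-commutant bifold algebra whose left face is commutative, that proposition supplies a central morphism of $\J$-theories $M:\T \rightarrow \U$ together with a $(\T,\U)$-algebra $\langle M,1_\U\rangle:\T \otimes \U \rightarrow \U$ whose left and right faces are $M$ and $1_\U$, respectively. Thus the bulk of the work---establishing the existence and centrality of $M$---has been carried out before we begin.

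The key observation is that, by definition of centrality in \ref{para:cmtn}, the morphism $M$ commutes with $1_\U$, which is exactly the input required to apply \ref{thm:funcs_ind_by_cmt_mor_th} with $\sS = \U$, with first morphism $M:\T \rightarrow \U$, and with second morphism $N = 1_\U:\U \rightarrow \U$. That result then yields directly the $\V$-functor $\langle M,1_\U\rangle^*:\Alg{\U}(\C) \rightarrow \Alg{(\T,\U)}(\C)$ given by pre-composition with $\langle M,1_\U\rangle$, along with the explicit identity $B\langle M,1_\U\rangle = \langle BM, B1_\U\rangle = \langle BM, B\rangle$ for each $\U$-algebra $B$, using that $1_\U^*(B) = B1_\U = B$. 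Since the right face of $\langle BM,B\rangle$ is $B$, this shows that every $\U$-algebra $B$ is indeed the right face of a $(\T,\U)$-algebra, which settles the first assertion.

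Finally, the section property is read off from the commutative triangle in \ref{thm:funcs_ind_by_cmt_mor_th}, which gives $(I,-)^*\,\langle M,1_\U\rangle^* = N^* = 1_\U^*$; since pre-composition with the identity morphism $1_\U$ is the identity functor on $\Alg{\U}(\C)$, we conclude that $(I,-)^*\,\langle M,1_\U\rangle^* = \id$, so $\langle M,1_\U\rangle^*$ is a section of $(I,-)^*$, as claimed. I do not anticipate any genuine obstacle: the theorem is essentially a corollary of \ref{thm:funcs_ind_by_cmt_mor_th}, and the only points needing (routine) verification are the translation of centrality of $M$ into commutation with $1_\U$ and the recognition that $1_\U^*$ is the identity functor on $\Alg{\U}(\C)$.
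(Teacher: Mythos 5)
Your proposal is correct and matches the paper's own proof, which likewise derives the theorem directly from \ref{thm:rcom_lcommutative_central_morph} (supplying the central morphism $M$ and the $(\T,\U)$-algebra $\langle M,1_\U\rangle$) and \ref{thm:funcs_ind_by_cmt_mor_th} (supplying the induced functor, the identity $B\langle M,1_\U\rangle = \langle BM,B\rangle$, and the section property via $N = 1_\U$). The routine verifications you flag are exactly the ones implicit in the paper's one-line proof.
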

\begin{proof}
This follows from \ref{thm:funcs_ind_by_cmt_mor_th} and \ref{thm:rcom_lcommutative_central_morph}.
\end{proof}

\begin{para}\label{para:f_an}
A \textbf{functional-analytic context} \cite{Lu:FDistn}, $(\V,\J,\T,A)$, consists of an eleutheric system of arities $\J \hookrightarrow \V$, a commutative $\J$-theory $\T$, and a saturated $\T$-algebra $A$.  But a saturated algebra $(\T,A)$ is commutative if and only if $\T$ is commutative \pref{thm:sat_alg_comm_iff_th_comm}, and $\V$ itself is always $\J$-admissible when $\J$ is eleutheric \pref{para:talg}.  Hence, by Theorem \ref{thm:balgs_w_lface_comm}, a functional-analytic context over an eleutheric system of arities $\J \hookrightarrow \V$ is equivalently given by a commutant bifold algebra whose left face is commutative, and Theorem \ref{thm:rc_comm_ualg_to_tu_alg} applies.
\end{para}

\section{Balanced bifold algebras}\label{sec:bal_bif}

\begin{defn}
A \textbf{balanced bifold algebra} is a commutant bifold algebra $D$ such that $D_\ell \cong D_r$ in $\Algs(C)$, where $C = |D|$.  We write $\BalBAlg^\sx(\C)$ to denote the full subcategory of $\BAlg^\sx(\C)$ consisting of balanced bifold algebras.
\end{defn}

\begin{prop}\label{thm:charns_bal_balg}
Given a bifold algebra $D$ in $\C$, the following are equivalent: (1) $D$ is a balanced bifold algebra, (2) $D$ is a commutant bifold algebra whose left and right faces $D_\ell$ and $D_r$ are both commutative, (3) $D \cong \langle A,A\rangle$ in $\BAlg^\sx(\C)$ for some balanced algebra $A$ in $\C$, (4) $D$ is a commutant bifold algebra whose left and right faces are both balanced, (5) the algebras $D_\ell$, $D_r$, $D_\ell^\perp,D_r^\perp$ are all isomorphic in $\Algs(C)$, where $C = |D|$.  Consequently, the full subcategory $\BalBAlg^\sx(\C) \hookrightarrow \BAlg^\sx(\C)$ is replete.
\end{prop}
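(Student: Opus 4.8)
The plan is to prove the cyclic chain $(1)\Rightarrow(4)\Rightarrow(2)\Rightarrow(5)\Rightarrow(1)$ and then to attach $(3)$ by proving $(1)\Rightarrow(3)$ and $(3)\Rightarrow(2)$; since $(2)$ already lies on the cycle, this closes the loop and yields the equivalence of all five conditions. Throughout I write $C = |D|$, and I would lean repeatedly on two facts: that a commutant bifold algebra satisfies $D_\ell \cong D_r^\perp$ and $D_r \cong D_\ell^\perp$ in $\Algs(C)$ by definition, and that taking commutants is functorial on $\Algs(C)$ via $\Com_C$ of \ref{eq:cmt_adjn_over_c}, so that an isomorphism in $\Algs(C)$ induces an isomorphism of commutants.

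For $(1)\Rightarrow(4)$, if $D$ is balanced then it is commutant with $D_\ell \cong D_r$, so $D_\ell \cong D_r \cong D_\ell^\perp$ exhibits $D_\ell$ as balanced, and symmetrically $D_r$ is balanced. The implication $(4)\Rightarrow(2)$ is immediate from \ref{thm:charns_balalgs}(1), since a balanced algebra is in particular commutative. For $(2)\Rightarrow(5)$ I would first invoke \ref{thm:charn_comm_balg}: since $D$ is commutant, both $D_\ell$ and $D_r$ are saturated. As $D$ is commutant we have $D_r \cong D_\ell^\perp$, and $D_r$ is commutative by hypothesis, so $D_\ell^\perp$ is commutative, i.e. $D_\ell$ is contracommutative; being also commutative and saturated, $D_\ell$ is balanced by \ref{thm:charns_balalgs}(1), and likewise $D_r$. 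Then all four algebras agree up to isomorphism in $\Algs(C)$: $D_\ell^\perp \cong D_\ell$ and $D_r^\perp \cong D_r$ since each face is balanced, while $D_r \cong D_\ell^\perp \cong D_\ell$ by the commutant relation, giving $(5)$. Finally $(5)\Rightarrow(1)$ is a direct reading, since $D_r \cong D_\ell^\perp$ and $D_\ell \cong D_r^\perp$ say $D$ is commutant and $D_\ell \cong D_r$ says it is balanced.

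To incorporate $(3)$, for $(1)\Rightarrow(3)$ I would set $A = D_\ell$, which is balanced, hence commutative, so $(A,A)$ is a commuting pair and $\langle A,A\rangle$ is defined. Since $D_r \cong D_\ell = A$ in $\Algs(C)$, the pair $(D_\ell,D_r)$ is isomorphic to $(A,A)$ in $\CPair^\sx(\C)$, so applying the equivalence $\langle-,\blanktwo\rangle$ of \ref{para:langle_rangle_functor} (a pseudo-inverse of $\Theta$, with $\Theta D = (D_\ell,D_r)$) yields $D \cong \langle A,A\rangle$ in $\BAlg^\sx(\C)$. For $(3)\Rightarrow(2)$ I would first verify that $\langle A,A\rangle$ itself satisfies $(2)$ when $A$ is balanced: its faces are isomorphic in $\Algs(C)$ to $A$, hence commutative, while $\langle A,A\rangle_r \cong A \cong A^\perp \cong \langle A,A\rangle_\ell^\perp$ (transporting $A\cong A^\perp$ through $\Com_C$) and symmetrically, so $\langle A,A\rangle$ is commutant with commutative faces. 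Property $(2)$ then transfers along $D \cong \langle A,A\rangle$ using the repleteness of $\ComBAlg^\sx(\C)$ in $\BAlg^\sx(\C)$ (from \ref{thm:cmt_bifold_algs} and \ref{thm:rcmt_corefl}) together with the repleteness of $\CAlgs^\s(\C)$ in \ref{thm:calgs_replete}, applied to $D_\ell \cong A$ and $D_r \cong A$ in $\Algs^\s(\C)$. The concluding repleteness assertion then drops out immediately, since condition $(3)$ is visibly closed under isomorphism in $\BAlg^\sx(\C)$ and is equivalent to $(1)$.

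The main obstacle will be the careful bookkeeping of isomorphisms across the several ambient categories in play—$\Algs(C)$ with fixed carrier, $\Algs^\s(\C)$ with strong morphisms, $\CPair^\sx(\C)$, and $\BAlg^\sx(\C)$—and in particular confirming at each step that commutativity, saturation, balancedness, and the commutant operation are all preserved by the relevant isomorphisms. The enabling ingredients here are precisely the functoriality of $\Com_C$ in \ref{eq:cmt_adjn_over_c} and the repleteness results, which together let me pass freely between these settings; the one step demanding genuine care is $(2)\Rightarrow(5)$, where balancedness of each face must be extracted from commutativity and saturation of the faces by routing commutativity across the commutant relation to obtain contracommutativity.
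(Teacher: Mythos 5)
Your proposal is correct and follows essentially the same route as the paper: the key step in both is extracting balancedness of the faces from (2) by combining saturation (from the commutant property) with contracommutativity obtained by transporting commutativity across the relations $D_r \cong D_\ell^\perp$, $D_\ell \cong D_r^\perp$ and then invoking \ref{thm:charns_balalgs}, and condition (3) is handled in both via the equivalence $\langle-,\blanktwo\rangle$ of \ref{para:langle_rangle_functor} together with repleteness. The only difference is a minor reorganization of the implication graph (the paper treats $(1)\Leftrightarrow(5)\Leftrightarrow(4)$ as immediate and routes (3) through (4)/(5) rather than (2)), which does not change the substance.
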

\begin{proof}
It is immediate from the definitions that $(1)\Leftrightarrow(5)\Leftrightarrow(4)$, while $(4)\Rightarrow(2)$ by \ref{thm:charns_balalgs}.  Conversely, if (2) holds, then $D_\ell \cong D_r^\perp$ and $D_r \cong D_\ell^\perp$ in $\Algs(C)$, so since $D_\ell$ and $D_r$ are commutative we deduce that $D_\ell$ and $D_r$ are contracommutative \pref{defn:contra}, but $D_\ell$ and $D_r$ are also saturated, by \ref{thm:charns_sat_algs}, so (4) holds, by \ref{thm:charns_balalgs}.  Next, if the equivalent conditions (4) and (5) hold, then $D \cong \langle D_\ell,D_r\rangle \cong \langle D_\ell,D_\ell \rangle$ in $\BAlg^\sx(\C)$ by \ref{para:langle_rangle_functor}, and hence (3) holds.  Lastly, supposing (3), it suffices to prove (4).  But the properties of $D$ required for (4) are stable under isomorphism in $\BAlg^\sx(\C)$, by \ref{thm:cmt_bifold_algs} and \ref{rem:balalgs_repl}, so we may assume that $D = \langle A,A\rangle$ with $A$ balanced.  Hence $|A| = C$, $A \cong A^\perp$ in $\Algs(C)$, and $D_\ell \cong A \cong D_r$ in $\Algs(C)$ by \ref{notn:tualg_ind_by_tucpair}, so (5) holds, and hence (4) holds.
\end{proof}

\begin{cor}\label{thm:charn_of_balalgs_via_bifoldalgs}
Given an algebra $A$, the following are equivalent: (1) $A$ is balanced, (2) $\RCom A$ is balanced, (3) $\LCom A$ is balanced, (4) $A$ is commutative and $\langle A,A\rangle$ is balanced, (5) $A$ is commutative and $\langle A,A\rangle$ is commutant.  If $A$ is balanced then $\RCom A \cong \LCom A \cong \langle A,A\rangle$ in $\BAlg^\sx(\C)$.
\end{cor}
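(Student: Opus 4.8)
The plan is to route everything through Proposition \ref{thm:charns_bal_balg} and Corollary \ref{thm:a_sat_iff_rcom_is_cbalg}, exploiting that $A$ occurs as a face of each of $\RCom A$ and $\LCom A$: by \ref{thm:rcom_lcom} the left face of $\RCom A$ is $A$ and its right face is $A^\perp$, while $\LCom A$ has left face $A^\perp$ and right face $A$. Two structural remarks organize the argument. First, by the definition of a balanced bifold algebra, a bifold algebra is balanced exactly when it is a commutant bifold algebra with isomorphic faces in $\Algs(C)$. Second, whenever $A$ is commutative the bifold algebra $\langle A,A\rangle$ satisfies $\langle A,A\rangle_\ell \cong A \cong \langle A,A\rangle_r$ in $\Algs(C)$, by \ref{notn:tualg_ind_by_tucpair}.

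For $(1) \Leftrightarrow (2)$ I would combine these remarks with Corollary \ref{thm:a_sat_iff_rcom_is_cbalg}: $\RCom A$ is balanced iff it is commutant and its faces $A$ and $A^\perp$ are isomorphic in $\Algs(C)$; by \ref{thm:a_sat_iff_rcom_is_cbalg} the former holds iff $A$ is saturated, and the latter is by definition the balancedness of $A$. Since a balanced algebra is automatically saturated (by \ref{thm:charns_balalgs}, or \ref{thm:charns_sat_algs}), the conjunction collapses to ``$A$ is balanced'', giving $(1) \Leftrightarrow (2)$; the identical argument with the faces of $\LCom A$ gives $(1) \Leftrightarrow (3)$. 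The equivalence $(4) \Leftrightarrow (5)$ is then immediate from the second structural remark: for commutative $A$, $\langle A,A\rangle$ has isomorphic faces, so it is balanced precisely when it is commutant. For $(1) \Leftrightarrow (4)$, the forward direction uses that a balanced algebra is commutative (\ref{thm:charns_balalgs}), so $\langle A,A\rangle$ is defined and is balanced by \ref{thm:charns_bal_balg}(3); conversely, if $\langle A,A\rangle$ is balanced then its left face, isomorphic to $A$ in $\Algs(C)$, is balanced by \ref{thm:charns_bal_balg}, and balancedness is invariant under isomorphism in $\Algs(C)$ \pref{rem:balalgs_repl}.

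For the final sentence I would start from $A$ balanced, hence commutative, and transport the isomorphism $A \cong A^\perp$ in $\Algs(C)$ --- witnessed by $[A|A]$ \pref{rem:bal_alg} --- into an isomorphism $(A,A) \cong (A,A^\perp)$ of commuting algebra pairs in $\CPair^\sx(\C)$, taking the identity in one slot and $[A|A]^{\pm 1}$ in the other. Applying the equivalence $\langle-,\blanktwo\rangle$ of \ref{para:langle_rangle_functor} then yields $\langle A,A\rangle \cong \langle A,A^\perp\rangle \cong \RCom A$, and symmetrically $\langle A,A\rangle \cong \langle A^\perp,A\rangle \cong \LCom A$, which is exactly the claimed chain $\RCom A \cong \LCom A \cong \langle A,A\rangle$. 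The main obstacle --- and really the only step demanding care --- is this last transport, where the unusual variance of strong cross-morphisms of algebra pairs must be respected: the two components of a cross-morphism point in opposite directions and their carriers must be mutually inverse, so I would verify the carrier condition $|1_A| = |[A|A]|^{-1} = 1_C$ before invoking $\langle-,\blanktwo\rangle$. Everything else reduces to direct citations of \ref{thm:charns_bal_balg}, \ref{thm:charns_balalgs}, and \ref{thm:a_sat_iff_rcom_is_cbalg}.
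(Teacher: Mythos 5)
Your proposal is correct and takes essentially the same route as the paper: the paper proves the final chain of isomorphisms first via the equivalence $\langle-,\blanktwo\rangle$ of \ref{para:langle_rangle_functor} exactly as you do, and then declares that the five equivalences "follow readily" from \ref{thm:charns_bal_balg}, \ref{notn:tualg_ind_by_tucpair}, and \ref{rem:balalgs_repl} --- your argument is a faithful unpacking of that, with the additional (and apt) use of \ref{thm:a_sat_iff_rcom_is_cbalg} and \ref{thm:charns_balalgs} to handle $(1)\Leftrightarrow(2)$ and $(1)\Leftrightarrow(3)$ directly from the faces of $\RCom A$ and $\LCom A$. Your care about the variance and carrier condition for the cross-isomorphism $(A,A)\cong(A,A^\perp)$ is exactly the right check and is implicit in the paper's appeal to \ref{para:langle_rangle_functor}.
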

\begin{proof}
If $A$ is balanced, then $A \cong A^\perp$ in $\Algs(C)$, so $\RCom A \cong \langle A,A^\perp \rangle \cong \langle A,A\rangle \cong \langle A^\perp,A\rangle \cong \LCom A$ in $\BAlg^\sx(\C)$ by \ref{para:langle_rangle_functor}.  The needed equivalences follow readily from \ref{thm:charns_bal_balg}, using \ref{notn:tualg_ind_by_tucpair} and \ref{rem:balalgs_repl}.
\end{proof}

\begin{cor}
There is an equivalence of categories
$$\BalAlgs^\s(\C) \;\simeq\; \BalBAlg^\sx(\C)$$
that sends each balanced algebra $A$ to the balanced bifold algebra $\langle A,A\rangle$.
\end{cor}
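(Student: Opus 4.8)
The plan is to obtain this equivalence as a restriction of the equivalence $L : \ComBAlg^\sx(\C) \xrightarrow{\sim} \SatAlgs^\s(\C)$ of \ref{thm:cmt_bifold_algs}, whose pseudo-inverse is the restriction of $\RCom$. First I would record the two inclusions of full subcategories that make such a restriction possible. Every balanced algebra is saturated, by \ref{thm:charns_balalgs}(1), so $\BalAlgs^\s(\C)$ is a full subcategory of $\SatAlgs^\s(\C)$; and every balanced bifold algebra is by definition a commutant bifold algebra, so $\BalBAlg^\sx(\C)$ is a full subcategory of $\ComBAlg^\sx(\C)$. Both are moreover replete, by \ref{rem:balalgs_repl} and \ref{thm:charns_bal_balg}, respectively.

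Next I would check that the two functors carry the relevant subcategories into one another. For a balanced bifold algebra $D$, the algebra $LD = D_\ell$ is balanced: this is the implication $(1)\Rightarrow(4)$ of \ref{thm:charns_bal_balg}, which asserts that a balanced bifold algebra has both faces balanced. Conversely, for a balanced algebra $A$, the bifold algebra $\RCom A$ is balanced, by \ref{thm:charn_of_balalgs_via_bifoldalgs}(2). Thus $L$ restricts to a functor $\BalBAlg^\sx(\C) \rightarrow \BalAlgs^\s(\C)$ and $\RCom$ restricts to a functor $\BalAlgs^\s(\C) \rightarrow \BalBAlg^\sx(\C)$.

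Then I would assemble the equivalence. Since $L$ and $\RCom$ restrict to functors between these two full subcategories, the composites $\RCom L$ and $L\RCom$ preserve the subcategories, so the natural isomorphisms $\RCom L \cong 1$ and $L \RCom \cong 1$ furnished by \ref{thm:cmt_bifold_algs} have, at each object of the relevant subcategory, components that are morphisms within that (full) subcategory. Hence the restricted functors constitute an equivalence $\BalAlgs^\s(\C) \simeq \BalBAlg^\sx(\C)$. To identify the stated object assignment, I would invoke the last sentence of \ref{thm:charn_of_balalgs_via_bifoldalgs}: for balanced $A$ one has $\RCom A \cong \langle A,A\rangle$ in $\BAlg^\sx(\C)$, so the pseudo-inverse $\RCom$ is naturally isomorphic to the functor $A \mapsto \langle A,A\rangle$, exactly as claimed.

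I do not anticipate a genuine obstacle here: the content is assembled entirely from \ref{thm:cmt_bifold_algs}, \ref{thm:charns_bal_balg}, and \ref{thm:charn_of_balalgs_via_bifoldalgs}, and the only point requiring care is the routine bookkeeping that an equivalence of categories restricts to a pair of full subcategories that correspond under it, which follows formally once both functors are seen to preserve the subcategories in question.
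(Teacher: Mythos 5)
Your proposal is correct and follows essentially the same route as the paper, which obtains the equivalence by restricting the equivalence $\SatAlgs^\s(\C) \simeq \ComBAlg^\sx(\C)$ of \ref{thm:cmt_bifold_algs} and then cites \ref{thm:charns_bal_balg} and \ref{thm:charn_of_balalgs_via_bifoldalgs} for exactly the facts you spell out (both faces of a balanced bifold algebra are balanced, $\RCom A$ is balanced when $A$ is, and $\RCom A \cong \langle A,A\rangle$). Your write-up merely makes explicit the routine bookkeeping that the paper compresses into ``the result readily follows.''
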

\begin{proof}
The equivalence $\SatAlgs^\s(\C) \xrightarrow{\sim} \ComBAlg^\sx(\C)$
in \ref{thm:cmt_bifold_algs} is given on objects by $A \mapsto \RCom A = \langle A,A^\perp \rangle$, and the result readily follows, by using \ref{thm:charns_bal_balg} and \ref{thm:charn_of_balalgs_via_bifoldalgs}.
\end{proof}

\section{Examples}\label{sec:exa_bif}

\begin{para}[\textbf{Basic example: Bimodules}]\label{exa:dln_mod}
In this example, we consider the system of arities $\{\ZZ\} \hookrightarrow \Ab$ consisting of just the unit object $\ZZ$ of $\V = \Ab$ \pref{para:exa_sys_ar}.  Algebras in $\Ab$ for this system of arities are \textit{left modules}, i.e., pairs $(T,A)$ in which $T$ is a (unital) ring and $A = \tensor[_T]{A}{}$ is a left $T$-module.  The commutant $T^\perp_A$ is the ring $\End(\tensor[_T]{A}{})$ of $T$-linear endomorphisms of $A$, which is equally the ring-theoretic commutant (or centralizer) of the image $\varphi(T) \hookrightarrow \End(\tensor[_\ZZ]{A}{})$ of the canonical ring homomorphism $\varphi:T \rightarrow \End(\tensor[_\ZZ]{A}{})$, where $\End(\tensor[_\ZZ]{A}{})$ is the ring of endomorphisms of the abelian group underlying $A$.  The commutant $(T,A)^\perp$ of $(T,A)$ is obtained by regarding $A$ as a left $\End(\tensor[_T]{A}{})$-module.  $(T,A)$ is saturated iff the canonical ring homomorphism from $T$ into the double centralizer of the subring $\varphi(T) \hookrightarrow \End(\tensor[_\ZZ]{A}{})$ is an isomorphism.  $(T,A)$ is commutative iff the ring $\varphi(T)$ commutative; if $A$ is faithful (meaning that $\varphi$ is injective) then $(T,A)$ is commutative iff $T$ is commutative.  $(T,A)$ is contracommutative iff $\End(\tensor[_T]{A}{})$ is a commutative ring.  $(T,A)$ is balanced (equivalently: commutative, contracommutative, and saturated, \ref{thm:charns_balalgs}) iff $T$ is commutative and the resulting ring homomorphism $T \rightarrow \End(\tensor[_T]{A}{})$ is an isomorphism.  For example, if $T$ is commutative and we take $A = \tensor[_T]{T}{}$, then $(T,A) = (T,\tensor[_T]{T}{})$ is balanced.

By \ref{exa:bim}, bifold algebras in $\Ab$ for this system of arities $\{\ZZ\} \hookrightarrow \Ab$ are \textit{bimodules}, i.e., triples of the form $(T,U,D) = (T,S^\op,D)$ where $T$ and $S$ are rings and $D = \tensor[_T]{D}{_S}$ is a $T$-$S$-bimodule.  The left face $D_\ell$ of $D$ is the left $T$-module $\tensor[_T]{D}{}$, while the right face $D_r$ is the right $S$-module $\tensor{D}{_S}$ (regarded as a left $S^\op$-module).  $(T,U,D)$ is right-commutant (resp. left-commutant) iff the canonical ring homomorphism $S^\op \rightarrow \End(\tensor[_T]{D}{})$ (resp. $T \rightarrow \End(\tensor{D}{_S})$) is an isomorphism.  For example, given any ring $T$, if we regard $T$ itself as a $T$-$T$-bimodule then by \cite[8.14]{Lu:Cmt} we obtain a commutant bifold algebra $(T,U,D) = (T,T^\op,\tensor[_T]{T}{_T})$, which is balanced iff $T$ is commutative.

Given any ring $T$ and any left $T$-module $A$, we obtain a right-commutant bifold algebra $D = \RCom(T,A) = (T,S^\op,\tensor[_T]{A}{_S})$ with $S^\op = \End(\tensor[_T]{A}{})$, by regarding $A$ as a $T$-$S$-bimodule, and $D$ is a commutant bifold algebra iff $(T,A)$ is saturated \pref{thm:a_sat_iff_rcom_is_cbalg}, in which case $D$ is balanced iff $T$ and $\End(\tensor[_T]{A}{})$ are both commutative \pref{thm:charns_bal_balg}, iff $(T,A)$ is balanced \pref{thm:charn_of_balalgs_via_bifoldalgs}.

A strong morphism of algebras $(m,f):(T,A) \rightarrow (U,B)$ in $\Algs^\s(\Ab)$ for this system of arities $\{\ZZ\}$ consists of a ring homomorphism $m:T \rightarrow U$ and an isomorphism of $T$-modules $f:A \rightarrow \tensor[_T]{B}{}$, where $\tensor[_T]{B}{}$ is the $T$-module obtained from $B$ by restriction of scalars along $m$.  A strong cross-morphism of bifold algebras $(m,n^\op,f):(T_1,S_1^\op,D) \rightarrow (T_2,S_2^\op,E)$ is given by a pair of ring homomorphisms $m:T_1 \rightarrow T_2$ and $n:S_2 \rightarrow S_1$ together with an isomorphism of $T_1$-$S_2$-bimodules $f:\tensor[_{T_1}]{D}{_{S_2}} \rightarrow \tensor[_{T_1}]{E}{_{S_2}}$, where $\tensor[_{T_1}]{D}{_{S_2}}$ and $\tensor[_{T_1}]{E}{_{S_2}}$ are obtained from $D$ and $E$ by restriction of scalars along $n$ and $m$, respectively.
\end{para}

\subsection{Examples in cartesian closed categories, with finite arities}

Let $\V$ be a cartesian closed category, and suppose that $\V_0$ is countably complete and countably cocomplete.  We now discuss several examples involving the system of arities $j:\DF(\V) \rightarrow \V$ of \ref{para:exa_sys_ar}(5), recalling that $\DF(\V)$-theories are called discretely finitary-algebraic theories enriched in $\V$, which here we call simply \textbf{theories} for brevity.  

\begin{exasub}[\textbf{Modules for internal rings and rigs in $\V$}]\label{exa:internal_rig_bifold_alg}
Let us write $\CMon(\V)$ and $\Ab(\V)$ for the categories of (internal) commutative monoids in $\V$ and abelian groups in $\V$, respectively.  Then $\CMon(\V)$ and $\Ab(\V)$ underlie symmetric monoidal closed categories \cite[6.3.2--6.3.7]{Lu:FDistn}.  A monoid in the monoidal category $\CMon(\V)$ is equivalently described as an \textbf{(internal) rig $R$ in $\V$}, i.e., an internal unital semiring in $\V$ \cite[2.7, 6.4.1]{Lu:FDistn}.  Left (resp. right) $R$-modules for the monoid $R$ in $\CMon(\V)$ are called simply \textbf{left (\textnormal{resp.} right) $R$-modules} for the rig $R$ in $\V$.  Similarly, monoids in $\Ab(\V)$ are \textbf{(internal) rings} in $\V$ and may be regarded equivalently as internal rigs whose underlying additive monoid is an internal group \cite[2.7]{Lu:FDistn}.

Given a rig $R$ in $\V$, we obtain by \ref{para:exa_sys_ar}(4) a $\CMon(\V)$-category $\Mod{R}$, which has an underlying $\V$-category that we denote also by $\Mod{R}$, or $\Mod{R}(\V)$, obtained by change of base along the symmetric (lax) monoidal forgetful functor $U:\CMon(\V) \rightarrow \V$ \cite[3.4.1, 6.3.6]{Lu:FDistn}.   There is a theory $\Mat(R)$ enriched in $\V$, in which $\Mat(R)(m,n) = R^{n \times m}$ is the object of $n \times m$-matrices over $R$, with composition given by (internal) matrix multiplication \cite[6.4.5]{Lu:FDistn}, and $\Alg{\Mat(R)}^\nml(\V) \cong \Mod{R}(\V)$ \cite[6.4.6]{Lu:FDistn}, so that we may identify normal $\Mat(R)$-algebras with left $R$-modules in $\V$.

Regarded as a left (resp. right) $R$-module, $R$ itself is a normal $\Mat(R)$-algebra (resp. $\Mat(R^\op)$-algebra).  By \cite[7.2]{Lu:FDistn}, the algebras $(\Mat(R),R)$ and $(\Mat(R^\op),R)$ are mutual commutants, so by \ref{rem:com_balg_com_pair} these algebras are the left and right faces of a commutant bifold algebra that we may write as $(\Mat(R),\Mat(R^\op),R)$.  If $R$ is commutative, then $R^\op = R$, and the algebras $(\Mat(R),R)$ and $(\Mat(R^\op),R)$ are identical, so $(\Mat(R),\Mat(R),R)$ is a balanced bifold algebra and $(\Mat(R),R)$ is a balanced algebra (and in particular, $\Mat(R)$ is commutative).

In view of \cite[6.3]{Lu:FDistn}, the commutative rig of natural numbers $\NN$ determines an internal commutative rig in $\V$, namely the copower $\NN \cdot 1$ in $\V$ of the terminal object $1$ of $\V$.  Also, $\NN \cdot 1$ is the unit object of the monoidal category $\CMon(\V)$, by \cite[3.4.3]{Lu:FDistn}.  Hence, $\CMon(\V)$ may be identified with the category of internal $(\NN \cdot 1)$-modules in $\V$.  By the above, $\NN \cdot 1$ underlies a balanced algebra for the $\V$-enriched theory $\Mat(\NN\cdot 1)$ of internal commutative monoids.  The same reasoning yields analogous conclusions with regard to the internal commutative ring $\ZZ \cdot 1$ in $\V$ and its category of modules in $\V$, which may be identified with $\Ab(\V)$.
\end{exasub}

\begin{exasub}[\textbf{Internal left $R$-affine spaces and pointed right $R$-modules}]\label{exa:r_aff}
Let $R$ be an internal rig in $\V$.  There is a subtheory $\Mat^\aff(R)$ of $\Mat(R)$ for which $\Mat^\aff(R)(m,n)$ is \textit{the object of all $n \times m$-matrices over $R$ in which each row has sum $1$} \cite[8.4]{Lu:FDistn}.  More precisely, $\Mat^\aff(R)$ is the \textit{affine core} of $\Mat(R)$ \cite[8.3]{Lu:FDistn}.  Normal $\Mat^\aff(R)$-algebras in $\V$ are called \textbf{(left) $R$-affine spaces in $\V$} \cite[8.5]{Lu:FDistn}, and we write $\Aff{R} = \Alg{\Mat^\aff(R)}^\nml(\V)$.  By way of the subtheory embedding $\Mat^\aff(R) \hookrightarrow \Mat(R)$, every left $R$-module has an underlying left $R$-affine space.

In particular, $R$ itself is a left $R$-affine space, and by \cite[9.3]{Lu:FDistn} the resulting algebra $(\Mat^\aff(R),R)$ is always the commutant of another algebra that we now describe.  A \textbf{pointed (left) $R$-module} is a left $R$-module $M$ equipped with a designated morphism $*:1 \rightarrow M$ in $\V$.  Pointed left $R$-modules are the objects of a $\V$-category $\Mod{R}^*$ that is isomorphic to the $\V$-category $\Alg{\Mat^*(R)}(\V)$ of normal $\Mat^*(R)$-algebras for a theory $\Mat^*(R)$ \cite[9.2]{Lu:FDistn}.  A \textbf{pointed right $R$-module} is a pointed left $R^\op$-module.  Let us regard $R$ as a pointed right $R$-module, whose designated morphism $1 \rightarrow R$ is the multiplicative identity of $R$, and write $R$ also to denote the corresponding $\Mat^*(R^\op)$-algebra.

The algebra $(\Mat^\aff(R),R)$ is the commutant of the algebra $(\Mat^*(R^\op),R)$, by \cite[9.3]{Lu:FDistn}.  Hence $(\Mat^\aff(R),R)$ is saturated, and, by \ref{rem:com_balg_com_pair}, $R$ carries the structure of a left-commutant bifold algebra $(\Mat^\aff(R),\Mat^*(R^\op),R)$ with left and right faces $(\Mat^\aff(R),R)$ and $(\Mat^*(R^\op),R)$, respectively.  If $R$ is a commutative rig in $\V$, then the theory $\Mat^\aff(R)$ is commutative \cite[8.7]{Lu:FDistn}, so $(\Mat^*(R^\op),R)$ is contracommutative.

If the internal rig $R$ is a ring, then by \cite[10.5]{Lu:FDistn} the algebra $(\Mat^*(R^\op),R)$ is the commutant of $(\Mat^\aff(R),R)$, so that $(\Mat^\aff(R),R)$ and $(\Mat^*(R^\op),R)$ are mutual commutants, and hence $(\Mat^\aff(R),\Mat^*(R^\op),R)$ is a commutant bifold algebra.  Hence, if $R$ is, moreover, a commutative ring in $\V$, then by \ref{thm:rc_comm_ualg_to_tu_alg}, every pointed right $R$-module carries the structure of a $(\Mat^\aff(R),\Mat^*(R))$-algebra.

On the other hand, for arbitrary \textit{rigs} $R$ in $\V$, it is \textit{not} in general the case that $(\Mat^*(R^\op),R)$ is the commutant of $(\Mat^\aff(R),R)$, by \cite[\S 8]{Lu:CvxAffCmt}, and we shall illustrate this in \ref{exa:dualn_semilattices}.  But there are interesting examples of rigs that are not rings and yet still have the desired property; the following provides a class of examples:
\end{exasub}

\begin{exasub}[\textbf{$R$-convex spaces and pointed $R_+$-modules}]
A \textbf{preordered ring in $\V$} is a ring $R$ in $\V$ equipped with a sub-rig $R_+ \hookrightarrow R$, called the \textbf{positive part} of $R$; see \cite[8.8]{Lu:FDistn} for the reasons for this terminology.  For example, the ring of real numbers $\RR$ is a preordered ring in $\Set$ with $\RR_+ = [0,\infty)$.  By definition, a \textbf{(left) $R$-convex space} in $\V$ is a (left) $R_+$-affine space in $\V$, so with the notation of \ref{exa:r_aff} we call $\Mat^\aff(R_+)$ the \textit{theory of left $R$-convex spaces}.  We denote the $\V$-category $\Aff{R_+} = \Alg{\Mat^\aff(R_+)}^\nml(\V)$ by $\Cvx{R}$.  By the preceding Example \ref{exa:r_aff}, we may regard $R_+$ as both a left $R$-convex space and a pointed right $R_+$-module, and the resulting algebra $(\Mat^\aff(R_+),R_+)$ is the commutant of $(\Mat^*(R_+^\op),R_+)$.  In particular, $R_+$ carries the structure of a left-commutant bifold algebra $(\Mat^\aff(R_+),\Mat^*(R_+^\op),R_+)$.

Theorem 10.6 in \cite{Lu:FDistn} gives sufficient conditions that entail that $(\Mat^*(R_+^\op),R_+)$ is also the commutant of $(\Mat^\aff(R),R_+)$, so that $(\Mat^\aff(R_+),\Mat^*(R_+^\op),R_+)$ is then a commutant bifold algebra.  Indeed, for this it suffices to assume, as we now shall, that $R_+ \hookrightarrow R$ is a strong monomorphism in $\V_0$ and that $\V_0$ has a generating class $\G$ such that for each object $X \in \G$, the preordered ring $\V_0(X,R)$ in $\Set$ is a \textit{firmly archimedean preordered algebra over the dyadic rationals} \cite[10.1, 10.6]{Lu:FDistn}.  For example, the preordered ring $\RR$ in $\Set$ satisfies these assumptions, because $\RR \cong \Set(1,\RR)$ itself is a firmly archimedean preordered algebra over the dyadic rationals \cite[10.21]{Lu:CvxAffCmt}.  As another example, if $\V$ is the category of convergence spaces \pref{para:exa_sys_ar}, then the preordered ring object $\RR$ in $\V$ (with its usual convergence structure) satisfies these assumptions \cite[10.9]{Lu:FDistn}, because $\V_0(1,\RR)$ is again the usual preordered ring of reals.  As a further example, in the case where $\V$ is the Cahiers topos \pref{para:exa_sys_ar}, the line object $R$ in $\V$ also satisfies these assumptions \cite[10.12]{Lu:FDistn}.  Let us assume also that $R$ is commutative, as is the case in these three examples, so that $R = R^\op$.  Under these assumptions $(\Mat^\aff(R_+),\Mat^*(R_+),R_+)$ is a commutant bifold algebra whose left face is commutative, and by \ref{thm:rc_comm_ualg_to_tu_alg}, every pointed $R_+$-module carries the structure of a $(\Mat^\aff(R_+),\Mat^*(R_+))$-algebra.
\end{exasub}

\begin{exasub}[\textbf{Several kinds of semilattices}]\label{exa:dualn_semilattices}
We follow \cite{Joh:StSp} in taking the term \textit{join semilattice} to mean a partially ordered set with finite joins (and, in particular, a bottom element $\bot$).  By contrast, a partially ordered set that is only assumed to have a join of each \textit{pair} of elements we call a \textit{binary-join semilattice} (while many authors call these join semilattices).  Join semilattices (resp. binary-join semilattices) are the objects of a category $\SLat_{\vee\bot}$ (resp $\SLat_{\vee}$) in which the morphisms are mappings that preserve finite joins (resp. binary joins).

The two-element set $2 = \{0,1\}$ is a distributive lattice, so we may equip $2$ with the structure of a commutative rig by taking $\vee$ as addition and $\wedge$ as multiplication.  The category $\Mod{2}$ of $2$-modules for this rig $2$ is isomorphic to $\SLat_{\vee\bot}$, while the category $\Aff{2}$ of $2$-affine spaces is isomorphic to $\SLat_{\vee}$.  Thus we obtain special cases of \ref{exa:internal_rig_bifold_alg} and \ref{exa:r_aff} with $\V = \Set$ and $R = 2$.  Firstly, by \ref{exa:internal_rig_bifold_alg} we obtain a balanced bifold algebra $(\Mat(2),\Mat(2),2)$.  Secondly, by \ref{exa:r_aff} we obtain a left-commutant bifold algebra $(\Mat^\aff(2),\Mat^*(2),2)$, but this bifold algebra is \textit{not} a right-commutant bifold algebra, by \cite[8.2]{Lu:CvxAffCmt}.  Hence, the algebra $(\Mat^*(2),2)$ is not saturated, by \ref{thm:charn_comm_balg}, while it is contracommutative by \ref{exa:r_aff}.

On the other hand, $(\Mat^\aff(2),2)$ is a saturated algebra, by \ref{exa:r_aff}, so by \ref{thm:a_sat_iff_rcom_is_cbalg} it induces a commutant bifold algebra $\RCom(\Mat^\aff(2),2)$, which we now describe.  Let $\SLat_{\vee\bot\top}$ denote the category whose objects are join semilattices with a top element (in addition to the bottom element that we require in any join semilattice), with maps that preserve not only finite joins (and, in particular, the bottom element) but also the top element.  This category $\SLat_{\vee\bot\top}$ is isomorphic to the category of normal $\U$-algebras for a Lawvere theory $\U$ that is described in \cite[8.1]{Lu:CvxAffCmt}.  The join semilatttice $2$ also has a top element, so the set $2$ carries the structure of a $\U$-algebra.  By \cite[8.2]{Lu:CvxAffCmt}, the algebras $(\Mat^\aff(2),2)$ and $(\U,2)$ are mutual commutants, so by \ref{rem:com_balg_com_pair} we find that $2$ carries the structure of a commutant bifold algebra $(\Mat^\aff(2),\U,2)$, whose left face is commutative, so $(\U,2)$ is a saturated contracommutative algebra.  By \ref{thm:rc_comm_ualg_to_tu_alg}, every join semilattice with top element carries the structure of a $(\Mat^\aff(2),\U)$-algebra.  The Lawvere theories $\T = \Mat^\aff(2)$ and $\U$ are not isomorphic, e.g. because $\T(0,1) = \emptyset$ while $\U(0,1) \cong 2$ \cite[3.3, 8.1]{Lu:CvxAffCmt}, so $(\Mat^\aff(2),\U,2)$ is not balanced.  Hence, the saturated contracommutative algebra $(\U,2)$ is not commutative, by \ref{thm:charns_bal_balg}, and in particular $\U$ is not commutative, by \ref{thm:charns_comm_th}.
\end{exasub}

\begin{exasub}[\textbf{Locally compact groups and the circle group}]\label{exa:lcgroups_circle_group}
In this example, we let $\V = \Set$ and consider the Lawvere theory $\Mat(\ZZ)$ of abelian groups.  Let us write $\Top$ for the category of topological spaces and $\LCHaus$ for the full subcategory of $\Top$ consisting of locally compact Hausdorff spaces.  Then $\LCHaus$ has finite products, formed as in $\Top$.  The category of normal $\Mat(\ZZ)$-algebras in $\C = \LCHaus$ is isomorphic to the category $\LCAb = \Ab(\LCHaus)$ of locally compact Hausdorff topological abelian groups, which we call simply \textit{locally compact groups}.  In particular, the circle group $\TT \cong \RR \slash \ZZ$ is an object of $\LCAb$ and so carries the structure of an algebra $(\Mat(\ZZ),\TT)$ in $\LCHaus$.  The \textit{Pontryagin duality} is an adjoint equivalence $\Hom(-,\TT) \dashv \Hom(-,\TT):\LCAb^\op \rightarrow \LCAb$ under which each locally compact group $A$ corresponds to its \textit{Pontryagin dual} $\Hom(A,\TT)$, which is the abelian group of all continuous group homomorphisms from $A$ to $\TT$, equipped with the compact-open topology; see e.g. \cite{Mor}, \cite{Roe}.  Hence for each each locally compact group $A$ we have an isomorphism
\begin{equation}\label{eq:pontryagin_double_dual_iso}A \xrightarrow{\sim} \Hom(\Hom(A,\TT),\TT)\end{equation}
in $\LCAb$.  Regarding $\ZZ$ as a discrete group and so as a locally compact group, the Pontryagin dual of $\ZZ$ is isomorphic to $\TT$, i.e. $\Hom(\ZZ,\TT) \cong \TT$ in $\LCAb$ \cite[p. 51]{Mor}.  Since $\LCAb$ is an additive category \cite{Roe}, finite products in $\LCAb$ are biproducts, so for each $n \in \NN$, the power $\ZZ^n$ in $\LCAb$ (which is again discrete) is also a copower in $\LCAb$ and is sent by the equivalence $\Hom(-,\TT):\LCAb^\op \rightarrow \LCAb$ to a power $\Hom(\ZZ^n,\TT) \cong \Hom(\ZZ,\TT)^n \cong \TT^n$ in $\LCAb$.

The Lawvere theory $\Mat(\ZZ)$ is commutative \pref{exa:internal_rig_bifold_alg}, so the algebra $(\Mat(\ZZ),\TT)$ in $\LCHaus$ is commutative \pref{thm:charns_comm_th}, and hence there is a canonical morphism of Lawvere theories
$$[\TT|\TT]:\Mat(\ZZ) \rightarrow \Mat(\ZZ)^\perp_\TT$$
valued in the commutant of $\Mat(\ZZ)$ with respect to the $\Mat(\ZZ)$-algebra $\TT$ in $\LCHaus$ \pref{eq:b_pipe_a}.  For each $n \in \ob\Mat(\ZZ) = \NN$,
\begin{equation}\label{eq:tt_lcab}[\TT|\TT]_{n1}:\Mat(\ZZ)(n,1) = \ZZ^{1\times n} = \ZZ^n \rightarrow \Mat(\ZZ)^\perp_\TT(n,1) = \LCAb(\TT^n,\TT)\end{equation}
is the map that sends each row vector $(a_1,a_2,...,a_n)$ to the homomorphism $\TT^n \rightarrow \TT$ given by $(t_1,t_2,...,t_n) \mapsto \sum_{i=1}^n a_it_i$, where we use additive notation in the group $\TT$.  This map $[\TT|\TT]_{n1}$ underlies the canonical isomorphism
\begin{equation}\label{eq:zn_double-dual_iso}\ZZ^n \overset{\sim}{\longrightarrow} \Hom(\Hom(\ZZ^n,\TT),\TT) \cong \Hom(\TT^n,\TT)\;\;\;\;\text{in $\LCAb$}\end{equation}
that exhibits the discrete group $\ZZ^n$ as isomorphic to its Pontryagin double-dual.  Hence, by \ref{para:mor_jth}, $[\TT|\TT]$ is an isomorphism of Lawvere theories $\Mat(\ZZ) \cong \Mat(\ZZ)^\perp_\TT$.  Consequently, $(\Mat(\ZZ),\TT)$ is a balanced algebra in $\LCHaus$, so $\TT$ carries the structure of a balanced bifold algebra in $\LCHaus$ that we may write as $(\Mat(\ZZ),\Mat(\ZZ),\TT)$.
\end{exasub}

\begin{exasub}[\textbf{Convergence groups and the circle group}]\label{exa:conv_grps_circle_grp}
We now develop a variation on Example \ref{exa:lcgroups_circle_group} in which we instead take the base of enrichment $\V$ to be the cartesian closed category of convergence spaces, $\V = \Conv$ \pref{para:conv_cah}.  $\LCHaus$ may be identified with a full subcategory of $\V$ \pref{para:conv_cah}, and in this example we may take $\C$ to be either $\V$ itself or its full sub-$\V$-category $\LCHaus$.  Abelian groups in $\Conv$ are called \textbf{convergence abelian groups} \cite{Binz} and are the objects of a symmetric monoidal closed category $\Ab(\Conv)$, by \ref{exa:internal_rig_bifold_alg}.  Since the inclusions $\LCHaus \hookrightarrow \Top \hookrightarrow \Conv$ preserve finite products (\ref{exa:lcgroups_circle_group}, \ref{para:conv_cah}), $\LCAb = \Ab(\LCHaus)$ underlies a full subcategory of $\Ab(\Conv)$.  By \ref{exa:internal_rig_bifold_alg}, $\Ab(\Conv)$ may be identified with the category of internal left $\ZZ$-modules for the discrete internal ring $\ZZ = \ZZ \cdot 1$ in $\Conv$, and $\Ab(\Conv)$ underlies a $\V$-category $\Mod{\ZZ}(\V) \cong \Alg{\Mat(\ZZ)}^\nml(\V)$ for a theory $\Mat(\ZZ)$ that is enriched in $\V$ (but is discrete in the sense of \ref{para:conv_cah}).

Given any convergence abelian group $A$, we write $\Hom(A,\TT)$ for the internal hom from $A$ to the circle group $\TT$ in $\Ab(\Conv)$.  This generalizes our notation in \ref{exa:lcgroups_circle_group}, because if $A$ is a locally compact group, then $\Hom(A,\TT)$ carries (the convergence structure determined by) the compact-open topology, by \cite{Binz}.  Writing $\Mat(\ZZ)^\perp_\TT$ for the $\V$-enriched commutant of the discrete $\V$-enriched theory $\Mat(\ZZ)$ with respect to $\TT$, we do not know \textit{a priori} that $\Mat(\ZZ)^\perp_\TT$ is discrete, because in general $\Conv$-enriched commutants of discrete theories need not be discrete.  But since \eqref{eq:zn_double-dual_iso} is an isomorphism in $\LCAb$ and hence in $\Ab(\Conv)$, we find that \eqref{eq:tt_lcab} is an isomorphism in $\Conv$, so in this $\Conv$-enriched setting $[\TT|\TT]:\Mat(\ZZ) \rightarrow \Mat(\ZZ)^\perp_\TT$ is an isomorphism of theories enriched in $\Conv$, and in particular $\Mat(\ZZ)^\perp_\TT$ is discrete since $\Mat(\ZZ)$ is so.  Thus we have shown that $(\Mat(\ZZ),\TT)$ is again a balanced algebra and that $\TT$ carries the structure of a balanced bifold algebra $(\Mat(\ZZ),\Mat(\ZZ),\TT)$, where all these concepts are now interpreted in the $\Conv$-enriched sense.
\end{exasub}

\subsection{Examples with arbitrary arities over \texorpdfstring{$\Set$}{Set}}

In the following examples, we let $\V = \Set$ and we take $\J = \Set$ to be the system of arities consisting of \textit{all} (small) sets, so that in this case $\J$-theories are precisely Lawvere-Linton theories, which are equivalently described as arbitrary monads on $\Set$ \pref{para:exa_sys_ar}.  Given a monad $\TT$ on $\Set$, the opposite of the Kleisli category of $\TT$ is a Lawvere-Linton theory $\T$ for which $\Alg{\T}^!(\Set) \cong \Set^\TT$.

\begin{exasub}[\textbf{Complete sup-lattices}]
Let $\Sup$ denote the category of complete lattices with maps that preserve joins of arbitrary families of elements.  We call the objects of this category \textbf{sup-lattices}.  It is well known that $\Sup$ is strictly monadic over $\Set$ \cite[20.5]{AHS} and that the free sup-lattice on a set $X$ is the powerset $\mathcal{P}(X)$, which we identify with the power $2^X$ in $\Sup$, where $2$ denotes the two-element chain.  The resulting monad $\mathcal{P}$ on $\Set$ is called the powerset monad, and its Kleisli category is isomorphic to the category $\Rel$ whose objects are sets and whose morphisms are relations \cite[20B]{AHS}.  But $\Rel \cong \Rel^\op$, so $\Rel$ is a Lawvere-Linton theory, and $\Sup$ is isomorphic to the category $\Alg{\Rel}^!(\Set)$ of normal $\Rel$-algebras in $\Set$.  In particular, $2$ carries the structure of a normal $\Rel$-algebra that we write as $2:\Rel \rightarrow \Set$.  For each set $X$, the map 
\begin{equation}\label{eq:str_map_2_sup}2_{X1}:\Rel(X,1) = 2^X \longrightarrow \Set(2^X,2)\end{equation}
sends each map $A:X \rightarrow 2$ to the operation $\bigvee_A:2^X \rightarrow 2$ given by $\bigvee_A(B) = \bigvee_{x \in X} A(x) \wedge B(x)$.  
But $2^X$ is the free sup-lattice on $X$, and $\bigvee_A$ is also the sup-lattice homomorphism induced by $A$, so the map \eqref{eq:str_map_2_sup} factors through the inclusion
$\Sup(2^X,X) \hookrightarrow \Set(2^X,2)$ by way of a bijection $2^X = \Set(X,2) \xrightarrow{\sim} \Sup(2^X,2)$.  The commutant $\Rel^\perp_2$ of the Lawvere-Linton theory $\Rel$ has $\Rel^\perp_2(X,1) = \Sup(2^X,2)$ for each set $X$, so $(\Rel,2)$ is a commutative algebra, by \eqref{eq:diagr_b_pipe_a}, and the resulting morphism of Lawvere-Linton theories $[2|2]:\Rel \rightarrow \Rel^\perp_2$ is an isomorphism, by \ref{para:mor_jth}.  Hence $(\Rel,2)$ is a balanced algebra, by \ref{rem:bal_alg} (so in particular $(\Rel,2)$ is saturated and $\Rel$ is commutative).  Therefore, $2$ carries the structure of a balanced bifold algebra $(\Rel,\Rel,2)$, by \ref{thm:charn_of_balalgs_via_bifoldalgs}.
\end{exasub}

The following example demonstrates that the question of whether a given commutative algebra is balanced can in general depend on set-theoretic assumptions on the existence of large cardinals.  For an introduction to measurable cardinals, the reader is referred to \cite{Jech}.

\begin{exasub}[\textbf{Abelian groups and a theorem of Ehrenfeucht and \L o\'s}]\label{exa:ab_ehr_los}
The category of (small) abelian groups, $\Ab$, is strictly monadic over the category of small sets, $\Set$, and the induced monad is commutative, so we may identify $\Ab$ with the category $\Alg{\T_{\Ab}}^\nml(\Set)$ of normal $\T_{\Ab}$-algebras in $\Set$ for a commutative Lawvere-Linton theory $\T_{\Ab}$.  For each small set $J$, $\T_{\Ab}(J,1)$ is the (set underlying the) free abelian group on $J$, that is, the copower $J \cdot \ZZ = \oplus_{j \in J}\ZZ$ in $\Ab$.  Writing $A^* = \Hom_\ZZ(A,\ZZ)$ for the ($\ZZ$-linear) dual of each abelian group $A$, note that the dual of $J \cdot \ZZ$ is $(J \cdot \ZZ)^* \cong \ZZ^J$, so the double dual of $J \cdot \ZZ$ is $(J\cdot\ZZ)^{**}  \cong \Hom_\ZZ(\ZZ^J,\ZZ)$.  Since $\T_{\Ab}$ is commutative, the algebra $(\T_{\Ab},\ZZ)$ is commutative, so we may consider the canonical morphism of theories $[\ZZ|\ZZ]:\T_{\Ab} \rightarrow (\T_{\Ab})^\perp_\ZZ$ valued in the commutant of $\T_{\Ab}$ with respect to $\ZZ$ \pref{eq:b_pipe_a}.  For each small set $J$, the map
$$[\ZZ|\ZZ]_{J1}:\T_{\Ab}(J,1) = J \cdot \ZZ \rightarrow (\T_{\Ab})^\perp_\ZZ(J,1) = \Hom_\ZZ(\ZZ^J,\ZZ)$$
may be identified with the canonical homomorphism
$$J \cdot \ZZ \overset{\varphi}{\longrightarrow} (J \cdot \ZZ)^{**} \cong \Hom_\ZZ(\ZZ^J,\ZZ)$$
from $J \cdot \ZZ$ into its double-dual.  By a theorem of Ehrenfeucht and \L o\'s \cite{EhrLos} (also see \cite{Eda,Shela}), if the cardinality of $J$ is less than the first measurable cardinal, or if there are no measurable cardinals, then $J \cdot \ZZ$ is a reflexive abelian group, meaning that the canonical homomorphism $\varphi$ is an isomorphism, while if the cardinality of $J$ is greater than or equal to the first measurable cardinal, then $J \cdot \ZZ$ is not reflexive.  Therefore, by \ref{rem:bal_alg} and \ref{para:mor_jth} we deduce that
$$
\textit{$(\T_{\Ab},\ZZ)$ is a balanced algebra if and only if there are no measurable cardinals in $\Set$.}
$$
Since $(\T_{\Ab},\ZZ)$ is a commutative algebra, $\ZZ$ carries the structure of a bifold algebra $(\T_{\Ab},\T_{\Ab},\ZZ)$, and by \ref{thm:charn_of_balalgs_via_bifoldalgs} we deduce the following:
$$
\begin{minipage}{5.2in}
\textit{$(\T_{\Ab},\T_{\Ab},\ZZ)$ is a commutant bifold algebra if and only if there are no measurable cardinals in $\Set$, in which case $(\T_{\Ab},\T_{\Ab},\ZZ)$ is a balanced bifold algebra.}
\end{minipage}
$$
\end{exasub}

\begin{exasub}[\textbf{Compact groups and discrete groups}]
Let us write $\Ab$ for the category of abelian groups, which we identify with discrete topological abelian groups, and $\textnormal{CAb}$ for the category of compact Hausdorff topological abelian groups, which we call simply \textit{compact groups}.  By \cite[\S 5]{Lin:Eq}, there are Lawvere-Linton theories $\T$ and $\U$ whose categories of normal $\T$- and $\U$-algebras may be identified with $\Ab$ and $\textnormal{CAb}$, respectively.  For each set $J$, let us write $F^\T J$ and $F^\U J$ for the free normal $\T$- and $\U$-algebras on $J$.  Then $\T(J,1)$ is the underlying set of the free abelian group $F^\T J = \oplus_{j \in J} \ZZ$ on $J$, while $\U(J,1)$ is the underlying set of the free compact group $F^\U J$ on $J$.  Writing $\TT$ to denote circle group, considered as an object of $\textnormal{CAb}$, we shall also write $\TT_d$ for the discrete abelian group underlying $\TT$.  Hence the circle group underlies an algebra pair $(\T,\TT_d)$, $(\U,\TT)$, and we may consider the commutants  $\T^\perp_{\TT_d}$ and $\U^\perp_{\TT}$.  For each set $J$, we write $\TT^J$ for the $J$-th power of $\TT$ in $\textnormal{CAb}$ and $(\TT^J)_d$ for its underlying discrete group, which is the $J$-th power of $\TT_d$ in $\Ab$, so that $\T^\perp_{\TT_d}(J,1) = \Ab\bigl((\TT^J)_d,\TT_d\bigr)$ and $\U^\perp_{\TT}(J,1) = \textnormal{CAb}(\TT^J,\TT)$.

We now employ the material in Example \ref{exa:lcgroups_circle_group} on Pontryagin duality, as well as the fact that the Pontryagin duality restricts also to an equivalence $\textnormal{CAb}^\op \simeq \Ab$ \cite[Thm. 12]{Mor}.  For each set $J$, the discrete group $F^\T J = \oplus_{j \in J}\ZZ$ serves also as a copower of $\ZZ$ by $J$ in $\LCAb$, so by \ref{exa:lcgroups_circle_group} its Pontryagin dual is $\Hom(F^\T J,\TT) \cong \Hom(\ZZ,\TT)^J \cong \TT^J$, the $J$-th power of $\TT$ in $\textnormal{CAb}$.  By Pontryagin duality, we obtain an isomorphism
$$\oplus_{j \in J} \ZZ = F^\T J \rightarrow \Hom(\Hom(F^\T J,\TT),\TT) \cong \Hom(\TT^J,\TT)$$
in $\LCAb$ whose underlying bijective map provides a factorization
$$
\xymatrix{
\T(J,I) \ar[drr]_{(\TT_d)_{JI}} \ar@{-->}[rr]^(.4){[\TT_d|\TT]_{JI}} & & \textnormal{CAb}(\TT^J,\TT) = \U^\perp_\TT(J,I) \ar@{^(->}[d]\\
                          & & \Set(\TT^J,\TT)
}
$$
that witnesses not only that $(\U,\TT)$ commutes with $(\T,\TT_d)$, by \eqref{eq:diagr_b_pipe_a}, but also that the induced morphism of Lawvere-Linton theories $[\TT_d|\TT]:\T \rightarrow \U^\perp_\TT$ is an isomorphism (by \ref{para:mor_jth}).  Hence, by \ref{rem:com_balg_com_pair} $(\T,\TT_d)$, $(\U,\TT)$ is a left-commutant algebra pair, so the circle group underlies a left-commutant bifold algebra $(\T,\U,\TT)$.  For each set $J$, the Pontryagin dual $\Hom(F^\U J,\TT)$ of the compact group $F^\U J$ is discrete and carries the pointwise abelian group structure, and its underlying set $\textnormal{CAb}(F^\U J,\TT)$ is canonically isomorphic to $\Set(J,\TT)$, so $\Hom(F^\U J,\TT) \cong (\TT^J)_d$ in $\LCAb$.  We have a canonical morphism of Lawvere-Linton theories $[\TT|\TT_d]:\U \rightarrow \T^\perp_{\TT_d}$, and for each set $J$, its component $[\TT|\TT_d]_{J1}:\U(J,1) \rightarrow \Ab\bigl((\TT^J)_d,\TT_d\bigr)$ underlies the canonical isomorphism
$$F^\U J \rightarrow \Hom(\Hom(F^\U J,\TT),\TT) \cong \Hom\bigl((\TT^J)_d,\TT\bigr)$$
that exhibits $F^\U J$ as isomorphic to its Pontryagin double-dual.  Hence $[\TT|\TT_d]$ is an isomorphism, by \ref{para:mor_jth}, so $(\T,\TT_d)$, $(\U,\TT)$ is a commutant algebra pair.  This proves the following:
$$
\begin{minipage}{4.5in}\textit{The circle group $\TT$ underlies a commutant bifold algebra $(\T,\U,\TT)$, where $\T$ and $\U$ are the Lawvere-Linton theories of abelian groups and of compact Hausdorff abelian groups, respectively.}
\end{minipage}
$$
\end{exasub}

\bibliographystyle{amsplain}
\bibliography{bib}

\end{document}